\renewcommand*\l@section{\@dottedtocline{1}{1.5em}{2.3em}}
\theoremstyle{plain}
\newtheorem{theorem}{Theorem}
\newtheorem{proposition}[theorem]{Proposition}
\newtheorem{lemma}[theorem]{Lemma}
\newtheorem{example}[theorem]{Example}
\newtheorem{corollary}[theorem]{Corollary}
\theoremstyle{definition}
\newtheorem{definition}{Definition}
\newtheoremstyle{myrem}
 {3pt}
 {3pt}
 {\normalsize}
 { }
 {\itshape}
 {:}
 { }
 {}
 \theoremstyle{myrem}
 \appto\remark{\leftskip\parindent}
 \appto\remark{\rightskip\parindent}
\numberwithin{equation}{section}
\numberwithin{theorem}{section}
\begin{document}

\begin{center}
~~~
\bigskip
~~~\\
{\Large{\textbf{Persistent  bundles  over   configuration  spaces 
and  obstructions  for  regular  embeddings  
 }}}

 \vspace{0.58cm}
 
 Shiquan Ren 

\bigskip

\bigskip

 \parbox{24cc}{{\small

{\textbf{Abstract}.}  
We  construct  persistent    bundles  over  
   configuration  spaces  of  hard  spheres  and
     use  the  characteristic  classes  of  these persistent  bundles  
 to  give  obstructions  for    
  embedding   problems.  
 The  configuration  spaces  of  $k$-hard  spheres  ${\rm  Conf}_k(X,r)$,  
 $r\geq  0$,   
 give  a  $\Sigma_k$-equivariant   filtration  of  the  configuration  space  of  $k$-points
 ${\rm  Conf}_k(X)$.  
 The  filtered  covering  map  
 from  ${\rm  Conf}_k(X,-)$  to   ${\rm  Conf}_k(X,-)/\Sigma_k$
    gives  a  canonical  persistent     bundle   $\boldsymbol{\xi}(X,k,-)$.     
 We  use  the  Stiefel-Whitney  class  of  $\boldsymbol{\xi}(X,k,-)$,  which  is  
  in  the  mod  $2$  persistent  cohomology  ring  of  ${\rm  Conf}_k(X,-)/\Sigma_k$, 
  to  give  obstructions  for  $(k,r)$-regular  embeddings  
   and  use  the  Chern class  of  $\boldsymbol{\xi}(X,k,-)\otimes \mathbb{C}$,  which  is  
  in  the  integral  persistent  cohomology  ring  of  ${\rm  Conf}_k(X,-)/\Sigma_k$, 
  to  give  obstructions  for  complex  $(k,r)$-regular  embeddings.   
  As  applications,  we   discuss the  geometric  realizations  of   the independence  complexes    
 given  by  the  regular  embeddings. 
 With  the  help  of  the  persistent  homology tools, 
 the  $k$-regular  embedding problems  of  manifolds, 
 the sphere-packing  problems  on  manifolds  and  the  
 geometric  realization  problems  of  the  independence  complexes  of  graphs  are  
 prospectively  to  be  
 computed  approximately.  
  }

\bigskip

}

\begin{quote}
 {\bf 2020 Mathematics Subject Classification.}  	Primary  55R80,  55R91; Secondary   53A70, 	53C40.

{\bf Keywords and Phrases.}    configuration  spaces,   persistent  homology,  
   characteristic  classes,  obstructions  for  embeddings
\end{quote}

\end{center}

\vspace{1cc}

\section{Introduction}

      Configuration  spaces  
    attract  attention  in    various   areas  of  mathematics.        
    For  instance,     
    the  configuration  spaces  of  hard  spheres 
    (for  example,  \cite{confcomp,imrn1,phys})  is  a  geometric  model  of  
        the  sphere  packing  problem   and  have  applications  in 
        statistical  mechanics
         (for  example,   \cite{phys-hl});   
           the  topology  of  configuration  spaces  has  intrinsic     
        connections  with  iterated  loop  spaces 
          (for  example,   \cite{cohen1978,bundle1978,salvatore})
        and  braid  groups     (for  example,   \cite{cohen1978, cohen2010});   
         etc.

Let $X$ be a   CW-complex.  Let  $k$  be  a positive integer.  The 
ordered  configuration space 
${\rm  Conf}_k(X)$ is the subspace of the Cartesian product $X^k$ consisting of the  points 
 $(x_1,x_2,\cdots,x_k)$ such that $x_i\neq x_j$ if $i\neq j$. 
The symmetric group $\Sigma_k$  on $k$-letters  acts on ${\rm  Conf}_k(X)$  from the  left  by 
\begin{eqnarray}\label{eq-25-01-31-1}
\sigma(x_1,x_2,\cdots,x_k)=(x_{\sigma(1)},x_{\sigma(2)},\cdots,x_{\sigma(k)}) ,\text{\ \ \  } \sigma\in \Sigma_k.  
\end{eqnarray}
  This  induces a covering map from ${\rm  Conf}_k(X)$ to ${\rm  Conf}_k(X)/\Sigma_k$.  
The associated vector bundle of this covering map is 
\begin{eqnarray}\label{01292016-e1}
\xi(X,k): \mathbb{R}^k\longrightarrow  {\rm  Conf}_k(X)\times_{\Sigma_k}\mathbb{R}^k\longrightarrow {\rm  Conf}_k(X)/\Sigma_k   
\end{eqnarray}
where $\Sigma_k$ acts  on $\mathbb{R}^k$   from the right by 
 \begin{eqnarray}\label{eq-25-01-31-2}
(a_1,a_2,\cdots,a_k)\sigma=(a_{\sigma^{-1}(1)},a_{\sigma^{-1}(2)},\cdots,a_{\sigma^{-1}(k)}) ,\text{\ \ \  } \sigma\in \Sigma_k.   
\end{eqnarray}
The  complexification  of  $\xi(X,k)$  is  a  complex  vector  bundle 
\begin{eqnarray}\label{01292016-e2}
\xi(X,k)\otimes \mathbb{C}: \mathbb{C}^k\longrightarrow  {\rm  Conf}_k(X)\times_{\Sigma_k}\mathbb{C}^k\longrightarrow {\rm  Conf}_k(X)/\Sigma_k.    
\end{eqnarray}
Let  $\mathbb{F}$  be  the  real  numbers  $\mathbb{R}$  or  the  complex  
           numbers  $\mathbb{C}$. 
We  write  (\ref{01292016-e1})  and  (\ref{01292016-e2})  uniformly  as  
$\xi(X,k;\mathbb{F})$  where  $\mathbb{F}=\mathbb{R}$  gives    (\ref{01292016-e1}) 
and  $\mathbb{F}=\mathbb{C}$  gives   (\ref{01292016-e2}).

The     bundles    $\xi(X,k;\mathbb{F})$  
have  various  applications  in    topology  and  geometry   and  
have been extensively studied   since  1970's  (for  example,  \cite{high1,high2,homol, bundle1983,bundle1989,cohen1,bundle1978, swy}).  
 Among  these  applications,  
 the  regular  embedding  problem  (cf.  \cite{high1,high2,chi,cohen1,handel2,1996,handel3,reg2024})   is  interesting  and  attracts 
   attention  during the  last  half century.

           The   regular  embedding   problem  is   initially  studied  
           by  K.  Borsuk \cite{Borsuk}   in  1957  and  later  developed  
           by  F.R. Cohen and D. Handel  \cite{cohen1}  in  1978,  
           M.E. Chisholm \cite{chi} in 1979,  
           D. Handel and J. Segal \cite{handel3}  in  1980,  
           D.  Handel  \cite{1996}  in  1996,  
           P. Blagojevi\'{c},   W. L{\"u}ck and G. Ziegler \cite{high1} in 2016,  
           P. Blagojevi\'{c},  F.R. Cohen, W. L{\"u}ck and G. Ziegler  \cite{high2} in 2016,  
           etc.  
           A  map   $f:  X\longrightarrow  \mathbb{F}^N$  is  called  
           {\it  (real  or  complex)  $k$-regular}    
             if   for  any  distinct  $k$  points in  $X$,  
           their  images  are  linearly  independent  in  $\mathbb{F}^N$.
           When  $k\geq  2$,  such a  map  is  an  embedding.  
           By  the  Haar-Kolmogorov-Rubinstein  Theorem  (cf. \cite{handel4} and \cite[pp. 237-242]{singer}), 
           one  of  the  motivations    to  study  $k$-regular  maps  is 
            the   \v{C}eby\v{s}ev approximation.

     The    existence  problems  of   $k$-regular maps  
     as  well  as    their   obstructions    
     are  investigated.   
     For  example,    D.  Handel    studied  
      obstructions  for  $3$-regular  maps      \cite{handel1}  
     in  1979  and    studied  
        some   existence  and   non-existence  problems   of  
    $k$-regular  maps     \cite{handel2}   in  1980.

 The   bundles    $\xi(X,k;\mathbb{F})$
  can  be     
   applied  to  give  obstructions  for   the  existence  of  $k$-regular  maps. 
   Some  obstructions  using  the  Stiefel-Whitney  classes  of $\xi(X,k)$  
     for  the  existence  of  $k$-regular  maps  were  proved  by  
   F. Cohen and D. Handel  \cite{cohen1},  restated  in    \cite[Lemma~4.2]{topapp}.  
  Some  obstructions  using  the  Chern  classes  of  $\xi(X,k;\mathbb{C})$  
     for  the  existence  of   complex  $k$-regular  maps  were  proved  by  
   P. Blagojevi\'{c},  F. Cohen, W. L{\"u}ck and G. Ziegler   \cite{high2},    
   restated   in  \cite[Lemma~4.3]{topapp}.

In  this  paper,  we  apply  the  method  of  persistent  homology  to    
the  bundles  over  configuration  spaces.    
Then  we  weaken  the  $k$-regularity  to    define  a  $(k,r)$-regularity  
condition  for  embeddings  (cf.  Definition~\ref{def-7.1})
 and   
give  obstructions  for  the  $(k,r)$-regular embedding  problem.

Precisely,  
we  give  a  filtration   
  of  ${\rm  Conf}_k(X)$  by  the  configuration  spaces  of  hard spheres  
   and  construct  a  family  of  pull-back 
bundles  of    $\xi(X,k; \mathbb{F})$. 
As  the  persistent  version   of  $\xi(X,k; \mathbb{F})$,  
 we  define  the  persistent  bundles  
     $ \boldsymbol{\xi}(X,k,-; \mathbb{F})$    (cf.  eq.  (\ref{eq-3.96})).   
We  generalize  the  $k$-regularity and  define  the  $(k,r)$-regular  embeddings
 for  any  $r\geq  0$.  
 We  equip a  metric  $d$  on  $X$  and  call  a  map  
 $f:  X\longrightarrow  \mathbb{F}^N$  to  be  {\it  $(k,r)$-regular}  
 if  for  any  $k$   points   in  $X$  whose  pairwise  distances  are  greater than    $2r$,  
 their images are  linearly  independent   (cf.  Definition~\ref{def-7.1}).  
 We  prove  some   obstructions  for  the  existence  of   the   $(k,r)$-regular  maps 
by  using  the  characteristic  
classes  of      $ \boldsymbol{\xi}(X,k,-; \mathbb{F})$,
   which  are  elements  in   the  persistent  cohomology  rings
   of  filtrations  of  the  unordered  configuration  space  ${\rm  Conf}_k(X)/\Sigma_k$.

 Let  $(X,d)$  be  a  metric  space.  For  any  positive  integer  $k$
 and  any  nonnegative  number  $r$,  
the  {\it  ordered  configuration  space  of  hard spheres of  radius  $r$},
denoted  by  ${\rm  Conf}_k(X,r)$,     
is  the  subspace  of  $X^k$  consisting of the  points 
 $(x_1,x_2,\cdots,x_k)$   whose  coordinates  satisfy $d(x_i,x_j)>2r$  for  any  $i\neq  j$. 
In  particular,  the  space  ${\rm  Conf}_k(X,0)$ is ${\rm  Conf}_k(X)$.   
   The  symmetric  group  
  $\Sigma_k$  acts  on ${\rm  Conf}_k(X,r)$  
  freely  and  properly  discontinuously. 
  The  orbit  space  ${\rm  Conf}_k(X,r)/\Sigma_k$ 
   is called   the    {\it  unordered  configuration  space  
    of  hard spheres  of  radius  $r$}.  
        The  family  ${\rm  Conf}_k(X,r)$  for  all  $r\geq  0$,  denoted by  ${\rm  Conf}_k(X,-)$,  
  gives  a  filtration  of  ${\rm  Conf}_k(X)$.   
    The   $\Sigma_k$-action  on    ${\rm  Conf}_k(X)$  
     is  equivariant  with  respect  to  the  filtration  thus  it  induces
      a  filtered   orbit  space  
    ${\rm  Conf}_k(X,-)/\Sigma_k$    and  a   covering  map  of  filtered  spaces 
    from  ${\rm  Conf}_k(X,-)$  to  ${\rm  Conf}_k(X,-)/\Sigma_k$.

   Let   ${\rm  Conf}_\bullet(X,-)$  be  the  collection  of   ${\rm  Conf}_k(X,-)$   and   
   let  ${\rm  Conf}_\bullet(X,-)/\Sigma_\bullet$    be  the  collection  of  
   ${\rm  Conf}_k(X,-)/\Sigma_k$  for  $k\geq  1$.  
       A  persistent  double  complex    for  
       ${\rm  Conf}_\bullet(X,-)$ 
     will  be  constructed  in  Theorem~\ref{th-25-5.1.1}.  
    In  particular,  if   we  let   $X$      be   a  Riemannian  manifold  $M$,  
    then  the  differential  forms  on   ${\rm  Conf}_\bullet(M,-)$  
   will  give  a  persistent  double  complex.   The  $\Sigma_\bullet$-invariant  forms  
   on   ${\rm  Conf}_\bullet(M,-)$,  or  alternatively  the  differential  forms  on    
   ${\rm  Conf}_\bullet(M,-)/\Sigma_k$,  
   will  give  a  persistent  double  complex  as  well  (cf.   Subsection~\ref{ss3.2}).   
      Moreover,  if  we  let  $X$  be  a  graph  $G$  and  let  
      $d$  be   the  canonical  geodesic distance  of   $G$,  
      then  ${\rm  Conf}_\bullet(V_G,-)/\Sigma_\bullet$    will  
      give a  filtration  of  the  independence  complex  of  $G$ 
      (cf.   Subsection~\ref{ss3.1}).

        Embed    the  discrete  fibre  of  the   covering  map  
         from  ${\rm  Conf}_k(X,-)$  to  ${\rm  Conf}_k(X,-)/\Sigma_k$
        into   $\mathbb{F}^k$   as  the  unit  vectors  of  the  $k$  axes.  
       We  obtain a    persistent  vector  bundle   $\boldsymbol{\xi}(X,k,-;\mathbb{F})$  
    (cf.  Definition~\ref{def-25-01-3}  and  Theorem~\ref{pr-3.2aaz})  
    over  the  filtered  space  ${\rm  Conf}_k(X,-)/\Sigma_k$.  
    The  Stiefel-Whitney  class  of  $\boldsymbol{\xi}(X,k,-;\mathbb{R})$  
    is  a  persistent  cohomology  class  in  the   persistent cohomology  ring 
    $H^*({\rm  Conf}_k(X,-)/\Sigma_k;\mathbb{Z}_2)$  and  the  
    Chern  class  of  $\boldsymbol{\xi}(X,k,-;\mathbb{C})$  
    is  a  persistent  cohomology  class   in   the  persistent cohomology  ring 
    $H^*({\rm  Conf}_k(X,-)/\Sigma_k;\mathbb{Z})$.  
        The  next  theorem  will  be  proved  in  
    Theorem~\ref{25-cor1}   and  Theorem~\ref{25-cor2}.  
    
    \begin{theorem}[Main  Result]
    \label{th-main-25-01-29}
    \begin{enumerate}[(1)]
    \item
   If
 $f: X\longrightarrow \mathbb{R}^N$ is  
 a $(k,r)$-regular map,   
 then  
 \begin{eqnarray}\label{eq-3.1x-introd-xx}
 N\geq  \sup  \{ t(k)  \mid    
 \bar w_{t(k)}( \boldsymbol{\xi}(X,k,-+r;\mathbb{R}))\neq 0  \} +k;  
 \end{eqnarray}
    \item
   If
 $f: X\longrightarrow \mathbb{C}^N$  is  
 a  complex   $(k,r)$-regular map,   
then   
 \begin{eqnarray}\label{eq-3.1y-introd-yy}
 N\geq   \sup\{ t(k)\mid   \bar  c_{t(k)}( \boldsymbol{\xi}(X,k,-+r;\mathbb{C}))\neq 0 \}+k.
 \end{eqnarray}
    \end{enumerate}
        \end{theorem}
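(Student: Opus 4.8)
The plan is to lift the Cohen--Handel obstruction for $k$-regular maps (\cite{cohen1}, \cite[Lemma~4.2]{topapp}) and its complex refinement of Blagojevi\'{c}--Cohen--L\"uck--Ziegler (\cite{high2}, \cite[Lemma~4.3]{topapp}) to the persistent setting: a $(k,r)$-regular map $f$ produces, at every filtration value, a bundle monomorphism of $\boldsymbol{\xi}(X,k,-+r;\mathbb{F})$ into a trivial persistent bundle of rank $N$; splitting off the orthogonal complement and invoking the Whitney sum formula together with the vanishing of characteristic classes above the rank then forces the claimed inequality. \emph{Step 1 (the equivariant filtered frame map).} Let $f\colon X\longrightarrow\mathbb{F}^N$ be $(k,r)$-regular in the sense of Definition~\ref{def-7.1}, and fix $s\ge 0$. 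If $(x_1,\dots,x_k)\in{\rm Conf}_k(X,s+r)$, then $d(x_i,x_j)>2(s+r)\ge 2r$ for all $i\neq j$, so $f(x_1),\dots,f(x_k)$ are $\mathbb{F}$-linearly independent; hence $(x_1,\dots,x_k)\longmapsto\big(e_i\mapsto f(x_i)\big)$ defines a map $\Phi_s\colon{\rm Conf}_k(X,s+r)\longrightarrow\mathrm{Mon}(\mathbb{F}^k,\mathbb{F}^N)$ to the space of $\mathbb{F}$-linear monomorphisms $\mathbb{F}^k\to\mathbb{F}^N$. Each $\Phi_s$ is $\Sigma_k$-equivariant for the action (\ref{eq-25-01-31-1}) on the source and the action on $\mathrm{Mon}(\mathbb{F}^k,\mathbb{F}^N)$ induced by the action (\ref{eq-25-01-31-2}) of $\Sigma_k$ on $\mathbb{F}^k$, and $\Phi_{s'}$ is the restriction of $\Phi_s$ along ${\rm Conf}_k(X,s'+r)\subseteq{\rm Conf}_k(X,s+r)$ whenever $s\le s'$. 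Thus $\{\Phi_s\}_{s\ge 0}$ is a morphism of $\Sigma_k$-equivariant filtered spaces from ${\rm Conf}_k(X,-+r)$ to the constant filtered space $\mathrm{Mon}(\mathbb{F}^k,\mathbb{F}^N)$.

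\emph{Step 2 (descent and splitting).} Passing to the free $\Sigma_k$-quotients, exactly as in the construction of $\boldsymbol{\xi}(X,k,-;\mathbb{F})$ (Definition~\ref{def-25-01-3}, Theorem~\ref{pr-3.2aaz}), the equivariant morphism $\{\Phi_s\}$ descends to a bundle monomorphism (i.e.\ a fibrewise-injective morphism of persistent $\mathbb{F}$-vector bundles)
\[
\boldsymbol{\Phi}\colon \boldsymbol{\xi}(X,k,-+r;\mathbb{F})\hookrightarrow \boldsymbol{\varepsilon}^N
\]
over ${\rm Conf}_k(X,-+r)/\Sigma_k$, where $\boldsymbol{\varepsilon}^N$ denotes the trivial persistent $\mathbb{F}$-bundle of rank $N$. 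Fix the standard metric on $\mathbb{F}^N$ (Hermitian if $\mathbb{F}=\mathbb{C}$). Since the structure maps of a persistent bundle are pull-backs along the filtration inclusions, the fibrewise orthogonal complements of the image of $\boldsymbol{\Phi}$ are compatible with these pull-backs and therefore assemble into a persistent $\mathbb{F}$-bundle $\boldsymbol{\eta}$ of constant rank $N-k$, with
\[
\boldsymbol{\xi}(X,k,-+r;\mathbb{F})\oplus\boldsymbol{\eta}\;\cong\;\boldsymbol{\varepsilon}^N .
\]

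\emph{Step 3 (characteristic classes).} For $\mathbb{F}=\mathbb{R}$, naturality of Stiefel--Whitney classes and the Whitney sum formula give, in the persistent cohomology ring $H^*({\rm Conf}_k(X,-+r)/\Sigma_k;\mathbb{Z}_2)$, the identity $w\big(\boldsymbol{\xi}(X,k,-+r;\mathbb{R})\big)\cdot w(\boldsymbol{\eta})=w(\boldsymbol{\varepsilon}^N)=1$, hence $w(\boldsymbol{\eta})=w\big(\boldsymbol{\xi}(X,k,-+r;\mathbb{R})\big)^{-1}=\bar w\big(\boldsymbol{\xi}(X,k,-+r;\mathbb{R})\big)$. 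As $\boldsymbol{\eta}$ has rank $N-k$, its total Stiefel--Whitney class has no component in degrees above $N-k$, so $\bar w_{t}\big(\boldsymbol{\xi}(X,k,-+r;\mathbb{R})\big)=0$ for every $t>N-k$. Therefore, if $\bar w_{t(k)}\big(\boldsymbol{\xi}(X,k,-+r;\mathbb{R})\big)\neq 0$, then $t(k)\le N-k$, i.e.\ $N\ge t(k)+k$; taking the supremum over all such $t(k)$ yields (\ref{eq-3.1x-introd-xx}). For $\mathbb{F}=\mathbb{C}$, the identical argument with Chern classes in $H^*({\rm Conf}_k(X,-+r)/\Sigma_k;\mathbb{Z})$ gives $c(\boldsymbol{\eta})=\bar c\big(\boldsymbol{\xi}(X,k,-+r;\mathbb{C})\big)$, and since $\boldsymbol{\eta}$ has complex rank $N-k$ we obtain $\bar c_{t(k)}\big(\boldsymbol{\xi}(X,k,-+r;\mathbb{C})\big)=0$ for $t(k)>N-k$, which is (\ref{eq-3.1y-introd-yy}).

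\emph{Main obstacle.} Everything above is a levelwise copy of the classical argument, so the work is confined to verifying that the persistent machinery supports each step: that the levelwise orthogonal complements genuinely form a persistent bundle $\boldsymbol{\eta}$ (compatibility with the filtration restriction maps), that the Whitney sum formula and the vanishing of characteristic classes above the rank hold for the persistent Stiefel--Whitney and Chern classes, and that ``$\bar w_{t(k)}(\boldsymbol{\xi})\neq 0$'' (resp.\ ``$\bar c_{t(k)}(\boldsymbol{\xi})\neq 0$'') in the persistent cohomology ring really forces the degree bound at some filtration value $s$, where the ordinary splitting $\xi_s\oplus\eta_s\cong\varepsilon^N_s$ applies. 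These points should follow from the general properties of persistent bundles and their characteristic classes set up before the statement (in particular Theorem~\ref{pr-3.2aaz}); granting them, the equivariance and the characteristic-class computation are routine adaptations of \cite{cohen1,high2,topapp}.
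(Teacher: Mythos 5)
Your proposal is correct, and it reaches the inequality by a route that differs in one genuine respect from the paper's. The paper (Theorems~\ref{25-cor1} and \ref{25-cor2}) does not construct a complementary bundle; instead it builds from $f$ a persistent classifying map ${\rm Conf}_k(X,-+r)/\Sigma_k \to G_k(\mathbb{F}^N) \to G_k(\mathbb{F}^\infty)$ and uses the presentation $H^*(G_k(\mathbb{R}^N);\mathbb{Z}_2)\cong \mathbb{Z}_2[w_1,\dots,w_k]/(\bar w_{N-k+1},\dots,\bar w_N)$ to conclude that $(\iota\circ f)^*\bar w_j=0$ for $N-k+1\le j\le N$; since this only forbids $t(k)$ from lying in the window $[N-k+1,N]$, the paper then needs an extra stabilization step (composing with linear embeddings $\mathbb{R}^N\hookrightarrow\mathbb{R}^M$ for $M=N+1,N+2,\dots$) to rule out $t(k)\ge N+1$. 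Your argument --- descending the equivariant frame map to a fibrewise monomorphism $\boldsymbol{\xi}(X,k,-+r;\mathbb{F})\hookrightarrow\boldsymbol{\varepsilon}^N$, splitting off the orthogonal complement $\boldsymbol{\eta}$ of rank $N-k$, and reading off $\bar w_t(\boldsymbol{\xi})=w_t(\boldsymbol{\eta})=0$ for \emph{all} $t>N-k$ --- gets the full vanishing range in one step and so dispenses with the stabilization trick; it is the Cohen--Handel form of the obstruction rather than the Grassmannian form, and the two are of course equivalent (the ideal relation in $H^*(G_k(\mathbb{F}^N))$ is exactly the statement that the complementary tautological bundle has rank $N-k$). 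The persistent bookkeeping you flag at the end (compatibility of the levelwise orthogonal complements with the filtration pull-backs, the persistent Whitney sum formula, and the fact that nonvanishing of a persistent class means nonvanishing at some filtration value where the levelwise splitting applies) is exactly what the paper's Section~\ref{25-s2} machinery (Lemma~\ref{le-0.ba1}, Corollaries~\ref{cor-2.5a} and \ref{cor-2.7a}) supplies, so those points close as you expect.
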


    If  $X$  can  be  embedded  in  a  finite  dimensional  Euclidean  space,   
    then  by  \cite{high1,high2},  
    $X$   can  be $k$-regularly  embedded  in  certain  
    finite  dimensional  Euclidean  spaces    
    hence  for  any  $r\geq  0$,   $X$   can  be   $(k,r)$-regularly  embedded  in  
    certain  finite  dimensional  Euclidean  spaces.     
  In  this  case,  the  supremum   in  both  (\ref{eq-3.1x-introd-xx})  
    and  (\ref{eq-3.1y-introd-yy})  is  maximum.  
   For  example,   by  the Whitney  Embedding  Theorem,   
   any  smooth  manifold  $M$  can  be   
    $(k,r)$-regularly  embedded  in  certain  finite  dimensional  Euclidean  spaces.  
        Moreover,  since  any  graph $G$   can  be  embedded  in  $\mathbb{R}^3$,  
        $G$  can  be   
    $(k,r)$-regularly  embedded  in  certain  finite  dimensional  Euclidean  spaces. 
    Consequently,  for  both  manifolds  $M$  and  graphs  $G$,  
    the  supremum   in  both  (\ref{eq-3.1x-introd-xx})  
    and  (\ref{eq-3.1y-introd-yy})  is  maximum.

      Let  $(X,d)$  and  $(X',d')$  be  metric  spaces  such  that  $X\subseteq  X'$.  
        Then  $d$  is  inherited  from  $d'$   iff   
     $d(x,y)=d'(x,y)$  for  any  $x,y\in  X$.  
     In  particular,  if  we  let  $X'$  be  a  Riemannian  manifold  $M'$  and  let  
     $X$  be  a  Riemannian  submanifold  $M\subseteq  M'$,  
     then  we  call  $M$  a  {\it  strong  totally  geodesic  submanifold}  of  $M'$  if  
     $d_M(p,q)=d_{M'}(p,q)$  for any  $p,q\in  M$, 
     where  $d_M$   and  $d_{M'}$   are  the  geodesic  distances  of  $M$  and  $M'$  
     respectively.   
     The  next  theorem  is  a  consequence  of  Theorem~\ref{th-main-25-01-29}
      and  will  be  proved  in  Subsection~\ref{ss7.1}.

\begin{theorem} [Theorem~\ref{25-cor1zzz}] 
\label{25-cor1zzz-introd} 
Let  $M'$  be  a    Riemannian  manifold. 
Let $M$  run   over  all     strong  totally  geodesic   submanifolds  
 of   $M'$.   
\begin{enumerate}[(1)]
\item

If
 $f: M'\longrightarrow \mathbb{R}^N$  is  
 a $(k,r)$-regular map,   
 then 
  \begin{eqnarray*} 
 N\geq   \max\{t(k)\mid  \bar w_{t(k)}( \boldsymbol{\xi}(M,k,-+r;\mathbb{R}))\neq 0 \}+k;   
 \end{eqnarray*}
 
 \item
 If
 $f: M'\longrightarrow \mathbb{C}^N$  is   
 a  complex  $(k,r)$-regular map,   
 then 
  \begin{eqnarray*} 
 N\geq   \max\{t(k)\mid  \bar  c_{t(k)}( \boldsymbol{\xi}(M,k,-+r;\mathbb{C}))\neq 0 \}+k.   
 \end{eqnarray*}
 \end{enumerate} 
\end{theorem}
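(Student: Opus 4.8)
The plan is to reduce the statement to Theorem~\ref{th-main-25-01-29} by restricting $f$ to each strong totally geodesic submanifold. Fix such an $M\subseteq M'$; by definition $d_M(p,q)=d_{M'}(p,q)$ for all $p,q\in M$, so the geodesic metric of $M$ is exactly the one inherited from $M'$. The first step is to check that the restriction $f|_M$ is again $(k,r)$-regular. Given distinct $x_1,\dots,x_k\in M$ with $d_M(x_i,x_j)>2r$ for $i\neq j$, the equality $d_M=d_{M'}$ on $M$ yields $d_{M'}(x_i,x_j)>2r$; hence $x_1,\dots,x_k$ are $k$ points of $M'$ with pairwise $d_{M'}$-distance greater than $2r$, so by $(k,r)$-regularity of $f$ their images are linearly independent in $\mathbb{R}^N$ (resp.\ $\mathbb{C}^N$), and these images coincide with $(f|_M)(x_1),\dots,(f|_M)(x_k)$. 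This is the one place where the hypothesis is used in an essential way: for a merely totally geodesic submanifold one only has $d_{M'}\le d_M$, possibly with strict inequality, and then $d_M(x_i,x_j)>2r$ need not imply $d_{M'}(x_i,x_j)>2r$.

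Next, since $M$ is a smooth (hence triangulable and metric) manifold, I would apply Theorem~\ref{th-main-25-01-29}(1) to $f|_M:M\to\mathbb{R}^N$, obtaining $N\ge \sup\{t(k)\mid \bar w_{t(k)}(\boldsymbol{\xi}(M,k,-+r;\mathbb{R}))\neq 0\}+k$, and Theorem~\ref{th-main-25-01-29}(2) to $f|_M:M\to\mathbb{C}^N$ in the complex case. As noted after Theorem~\ref{th-main-25-01-29}, every smooth manifold embeds in a finite-dimensional Euclidean space and therefore admits $(k,r)$-regular embeddings, so each of these suprema is finite and attained, i.e.\ is a maximum. Finally, this inequality holds for \emph{every} strong totally geodesic submanifold $M$ of $M'$; since $N$ is a fixed integer, the set $\{t(k)\mid \exists\,M\ \text{strong totally geodesic in}\ M'\ \text{with}\ \bar w_{t(k)}(\boldsymbol{\xi}(M,k,-+r;\mathbb{R}))\neq 0\}$ is a set of non-negative integers bounded above by $N-k$, hence has a maximum, which equals $\sup_M \max\{t(k)\mid \bar w_{t(k)}(\boldsymbol{\xi}(M,k,-+r;\mathbb{R}))\neq 0\}$. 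Substituting this maximum for the per-$M$ value gives part~(1), and the identical argument with Chern classes $\bar c_{t(k)}$ gives part~(2).

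The main obstacle — really the only non-bookkeeping point — is the verification in the first step that $(k,r)$-regularity passes to the restriction, which is exactly what forces the hypothesis $d_M=d_{M'}$ rather than ordinary total geodesy, and the care needed to interpret ``$(k,r)$-regular'' for $f|_M$ with respect to the inherited metric $d_M$. It is also worth recording, for coherence with the persistent-bundle formalism, that $d_M=d_{M'}$ on $M$ makes the inclusions $\mathrm{Conf}_k(M,s)\subseteq \mathrm{Conf}_k(M',s)$ filtration-preserving and $\Sigma_k$-equivariant, so that $\boldsymbol{\xi}(M,k,-;\mathbb{F})$ is the pullback of $\boldsymbol{\xi}(M',k,-;\mathbb{F})$ along the induced map of filtered orbit spaces; by naturality of the Stiefel--Whitney and Chern classes this explains why the characteristic classes of all strong totally geodesic submanifolds are controlled inside the ambient $M'$, although for the argument above only Theorem~\ref{th-main-25-01-29} applied to $f|_M$ is strictly needed.
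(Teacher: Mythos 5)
Your proposal is correct and follows essentially the same route as the paper: the paper proves the result by combining Lemma~\ref{le-25jan09z} (the restriction of a $(k,r)$-regular map to a strong totally geodesic submanifold is again $(k,r)$-regular, which rests on the distance equality $d_M=d_{M'}$ from Lemma~\ref{le-250118-abc1}~(1) and observation~(5) after Definition~\ref{def-7.1}) with Theorems~\ref{25-cor1} and~\ref{25-cor2}, exactly as you do. Your explicit verification that the distance equality, rather than mere total geodesy, is what makes the restriction step work, and your remark on why the supremum is attained, are just spelled-out versions of steps the paper leaves implicit.
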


 Let  $G'$  be  a  graph  and  let  $G$  be  a  subgraph  of  $G'$.  
 Let  $|G|$  and   $|G'|$  be  
 the  geometric  realizations  of  $G$  and  $G'$  respectively,  i.e.    
  $|G|$  and   $|G'|$  are   the  $1$-dimensional  CW-complexes 
 consisting  of  the  vertices  and  the  edges  of  $G$  and  $G'$.     
 Then  $|G|$  is  a   subcomplex  of  $|G'|$  respectively.

 Let  $w':  E_{G'}\longrightarrow  (0,+\infty)$  be  a  weight  on  the  edges  of  $G'$.
 Then  $w'$  induces  a  metric  $d_{w'}$  on  $|G'|$.   
 The  restriction  of  $w'$  on  $E_G$  gives  a  weight $w$  on  the  edges  of  $G$.  
 This  induces a  metric  $d_w$  on  $|G|$.  
 Denote  $|G|_{w} =  (|G|, d_{w})$  and   $|G'|_{w'} =  (|G'|, d_{w'})$.  
 As  metric  spaces,  $|G|_w$  is  a  subspace  of  $|G'|_{w'}$.   
 The  next  theorem  is  a  consequence  of  Theorem~\ref{th-main-25-01-29}
      and  will  be  proved  in  Subsection~\ref{ss7.2}.  
 
 \begin{theorem}[Theorem~\ref{25-cor2zzz}]
 \label{co-25-01-graph-introd}
Let  $(G',w')$  be  a  graph  $G'$  with
  a   positive-valued   weight  $w'$  on  the  edges  of  $G'$.  
 \begin{enumerate}[(1)]
\item
If 
 $f: |G'|_{w'}\longrightarrow \mathbb{R}^N$ is  
 a $(k,r)$-regular map,    
 then 
  \begin{eqnarray*}\label{eq-3.1zzz-g-intro}
 N\geq   \max\{t(k)\mid  \bar w_{t(k)}( \boldsymbol{\xi}(|G|_w,k,-+r;\mathbb{R}))\neq 0 \}+k;  
 \end{eqnarray*}
 \item
 If  
 $f: |G'|_{w'}\longrightarrow \mathbb{C}^N$  is   
 a  complex  $(k,r)$-regular map,    
 then 
  \begin{eqnarray*}\label{eq-3.1hhh-g-intro}
 N\geq   \max\{t(k)\mid  \bar  c_{t(k)}( \boldsymbol{\xi}(|G|_w,k,-+r;\mathbb{C}))\neq 0 \}+k.   
 \end{eqnarray*}
 \end{enumerate} 
 Here  in  both  (1)  and  (2),      
    $G$  runs  over  all  the   subgraphs  
 of   $ G' $  satisfying  
  \begin{eqnarray*}
  d_w(x,y)=  d_{w'}(x,y) 
  \end{eqnarray*}
    for  any  $x,y\in  |G|_w$
 where    $w=w'|_G$.
\end{theorem}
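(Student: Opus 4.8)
The plan is to deduce this from the Main Result (Theorem~\ref{th-main-25-01-29}) by a restriction argument, in exact parallel with the deduction of the manifold case (Theorem~\ref{25-cor1zzz-introd}): the hypothesis $d_w(x,y)=d_{w'}(x,y)$ plays here the role that the ``strong totally geodesic'' condition plays there. First I would fix a subgraph $G\subseteq G'$ with $w=w'|_{E_G}$ such that $d_w(x,y)=d_{w'}(x,y)$ for all $x,y\in |G|_w$, so that $|G|_w$ is isometrically embedded as a metric subspace of $|G'|_{w'}$. This is precisely where the extra hypothesis is needed: without it, the geodesic distance measured inside the larger weighted graph could be strictly smaller than $d_w$, and the reduction would fail. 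Given a (complex) $(k,r)$-regular map $f\colon |G'|_{w'}\to\mathbb{F}^N$, set $g:=f|_{|G|_w}\colon |G|_w\to\mathbb{F}^N$.

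Next I would check that $g$ is again (complex) $(k,r)$-regular in the sense of Definition~\ref{def-7.1}. Take any $k$ points $x_1,\dots,x_k\in |G|_w$ with $d_w(x_i,x_j)>2r$ for all $i\ne j$. Since $d_w$ is the restriction of $d_{w'}$ on $|G|_w$, these same $k$ points, regarded in $|G'|_{w'}$, still have pairwise distance $>2r$; hence by the $(k,r)$-regularity of $f$ their images $g(x_i)=f(x_i)$ are linearly independent in $\mathbb{F}^N$. Thus $g$ satisfies the required independence condition, so it is $(k,r)$-regular (and complex $(k,r)$-regular when $\mathbb{F}=\mathbb{C}$).

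Then I would apply Theorem~\ref{th-main-25-01-29} to $g\colon |G|_w\to\mathbb{F}^N$. In the real case this gives
\[
 N\ge \sup\{t(k)\mid \bar w_{t(k)}(\boldsymbol{\xi}(|G|_w,k,-+r;\mathbb{R}))\ne 0\}+k,
\]
and in the complex case the analogous inequality with the persistent Chern class $\bar c_{t(k)}(\boldsymbol{\xi}(|G|_w,k,-+r;\mathbb{C}))$. It remains to replace the supremum by a maximum. Since $|G|$ is a finite $1$-dimensional CW-complex it embeds in $\mathbb{R}^3$, so by \cite{high1,high2} it admits a $k$-regular embedding, and hence for every $r\ge 0$ a $(k,r)$-regular embedding, into some finite-dimensional Euclidean space; consequently the set of indices $t(k)$ for which the relevant persistent characteristic class is nonzero is bounded above and the supremum is attained. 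Because the argument applies to every admissible subgraph, the inequality holds with $G$ ranging over all subgraphs of $G'$ satisfying $d_w=d_{w'}$ on $|G|_w$, which is the assertion.

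The hard part is really only conceptual and is concentrated in the first two steps: one must be certain that passing to the subcomplex $|G|$ with the restricted weight introduces no ``shortcuts'' — which is exactly the imposed condition $d_w(x,y)=d_{w'}(x,y)$ — while the preservation of linear independence under restriction is immediate. Everything else is a direct invocation of Theorem~\ref{th-main-25-01-29} together with the embeddability remark already recorded in the introduction, so no further computation is needed.
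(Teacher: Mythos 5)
Your proposal is correct and follows essentially the same route as the paper: the paper likewise restricts $f$ to $|G|_w$ (Lemma~\ref{le-7.25jan0h}, which rests on the observation that an isometrically embedded subspace inherits $(k,r)$-regularity) and then invokes Theorem~\ref{25-cor1} and Theorem~\ref{25-cor2}. Your extra paragraph justifying the passage from supremum to maximum via embeddability of $|G|$ in $\mathbb{R}^3$ matches the remark the paper makes in the introduction, so nothing is missing.
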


    In  addition  of  Theorem~\ref{co-25-01-graph-introd},  
     if  the  weight  $w'$  is  integer-valued,  then  with  the  help  
     of  the  construction  of  the  independence  complex  by  configuration  spaces
      (cf.   Subsection~\ref{ss3.1}),  
     we  can  derive  the  followings  from        
     Theorem~\ref{co-25-01-graph-introd}~(1)  and  (2)  respectively   (cf.  Theorem~\ref{co-25-01-graph}).  

\begin{enumerate}[(1)']
\item
If
 $f: |G'|_{w'}\longrightarrow \mathbb{R}^N$ is   
 a $(k,r)$-regular map,   
 then 
  \begin{eqnarray*} 
 N\geq   
 \max\{t(k)\mid  \bar w_{t(k)}( \boldsymbol{\xi}(V_{{\rm  sd}(G;w)},k,-+r;\mathbb{R}))\neq 0 \}+k;  
 \end{eqnarray*}
 
 \item
If
 $f: |G'|_{w'}\longrightarrow \mathbb{C}^N$  is   
 a  complex  $(k,r)$-regular map,   
 then 
  \begin{eqnarray*} 
 N\geq  
  \max\{t(k)\mid  \bar  c_{t(k)}( \boldsymbol{\xi}(V_{{\rm  sd}(G;w)},k,-+r;\mathbb{C}))\neq 0 \}+k.  
 \end{eqnarray*}
 \end{enumerate} 
Here  $w$  is  the  restriction  of  $w'$  on  $G$  
and  ${\rm  sd}(G;w)$  is  the  subdivision  of  $G$
 with  respect  to  $w$  such  that  
 any  edge  $e\in  E_G$  is  divided  into  
 $w(e)$  edges.

    The    paper  is  organized  as  follows.  
    In  Section~\ref{25-s2},
     we    introduce  the  notions  of  persistent  
    bundles  over  persistent  CW-complexes  
    and  the  characteristic  classes  of  persistent  bundles.    
    In  Section~\ref{25-s3},  
    we    introduce   persistent  chain  complexes over  
    persistent  CW-complexes  and    persistent  manifolds
    as  well  as   persistent  double  chain  complexes    over  
    persistent  $\Delta$-CW-complexes
      and    persistent  $\Delta$-manifolds.    
    Both     Section~\ref{25-s2}  and Section~\ref{25-s3}
  are   
    applications   of    the     persistent homology  theory  
    and  give  preparations    for  later  sections.  
    In  Section~\ref{25-sect4},  
    we  construct  persistent    complexes  and  persistent  vector  bundles  
    for  the  configuration spaces  of  hard  spheres.  
    In  Section~\ref{s5},  
    we  use  the  persistent   bundles  over  configuration  spaces  
    to  give  obstructions  for  strong  totally  geodesic  embeddings.  
    In  Section~\ref{s7},  we   use  the  characteristic  classes  
    of  the  persistent  bundles  to  give  obstructions  for  the  
    $(k,r)$-regular  embeddings  
    and  prove  Theorem~\ref{th-main-25-01-29}  -  \ref{co-25-01-graph-introd}.  
     In  Section~\ref{s-777},  
     we  discuss  some   applications  of  the  regular embeddings  to  the  geometric  realization 
     problem   of  
the  independence  complexes.

 The  aim of  this  paper  is to give   some  prospective  tools   for  
 the  approximate    computations   of  the  $k$-regular  embedding  problems
     of  manifolds,  the  sphere-packing  problems  on  manifolds    
   and  the  geometric  realization  problems  of  the  independence  complexes  of  graphs,
   by using persistent homology.

\section{Persistent  fibre  bundles}\label{25-s2}

In  this  section,  we  introduce  the  definitions   of  persistent  covering  maps  (cf.  Definition~\ref{def-25-01-3}~(1)),  persistent   vector  bundles  (cf.  Definition~\ref{def-25-01-3}~(2))
and  persistent  characteristic  classes  (cf.  Definition~\ref{def-250120-1}  and  Definition~\ref{def-250120-2})  which  will  be  used  in  later  sections.   
All  the  maps  are  assumed  continuous.  

\begin{definition}
\label{def-0.za1}
A  {\it  persistent  CW-complex}  $\mathbf{A}$ 
 is  a  family  of  CW-complexes   $ A(t)$  for   $  t\in \mathbb{R} $
together  with  a  family  of    maps   
$\alpha(t_1,t_2):  A(t_1)\longrightarrow  A(t_2)$  for  $t_1\leq   t_2$  such  that  
\begin{enumerate}[(1)]
\item
 $\alpha(t ,t )$  is  the  identity  map  
    for  any  $t\in \mathbb{R}$;   
\item
   $\alpha(t_1,t_3)
 =\alpha(t_2,t_3)\circ  \alpha(t_1,t_2)$  for  any  $t_1\leq  t_2\leq  t_3$.  
 \end{enumerate}
\end{definition}

\begin{definition}\label{def-25-01-31-1}
The  {\it  persistent  cohomology  ring}  
$H^*(\mathbf{A})$  of  a   persistent  CW-complex  $\mathbf{A}$ 
 is  the  family  of  cohomology  rings  $H^*(A(t))$  for  $t\in \mathbb{R}$  
 together  with  the  family  of  induced  homomorphisms  
 $\alpha(t_1,t_2)^*:  H^*(A(t_2))\longrightarrow   H^*(A(t_1))$  for  $t_1\leq   t_2$.   
\end{definition}

The  persistent  cohomology  ring  
is  an  analog  of  the  persistent  homology  (cf.  \cite{cph1})  
with    extra   consideration  of    cup  products.  
For  the  persistent  cohomology  ring  $H^*(\mathbf{A})$  of  a  persistent  CW-complex
$\mathbf{A}$,     it  follows  from  Definition~\ref{def-0.za1}
  and  Definition~\ref{def-25-01-31-1}  that 
  \begin{enumerate}[(1)]
  \item
$ \alpha(t,t)^*$  is  the  identity  map  for  any  $t\in  \mathbb{R}$; 
 \item
     $\alpha(t_1,t_3)^*
 =  \alpha(t_1,t_2)^*\circ \alpha(t_2,t_3)^*$  for  any  $t_1\leq  t_2\leq  t_3$.
 \end{enumerate}

 Let  ${\bf  A} =\{A(t)\mid   t\in  \mathbb{R}\}$  
    and   ${\bf  B} =\{B(t)\mid   t\in  \mathbb{R}\}$  
    be  two  persistent  CW-complexes 
   with  maps  $\alpha(t_1,t_2):  A(t_1)\longrightarrow  A(t_2)$  and  
       $\beta(t_1,t_2):  B(t_1)\longrightarrow  B(t_2)$ respectively  for  any  $t_1\leq  t_2$.   
       
    \begin{definition}\label{def1ax}   
       A   {\it  persistent  map}  $f:  {\bf  A}\longrightarrow  {\bf   B}$  
       is  a  family   of  maps  $f(t):  A(t)\longrightarrow  B(t)$  for  any   $t\in \mathbb{R}$  
       such  that  the  diagram  
       \begin{eqnarray}\label{eq-diag-000}
       \xymatrix{
       A(t_1)\ar[d]_-{f(t_1)} \ar[rr] ^-{\alpha(t_1,t_2)}  &&  A(t_2)\ar[d]^-{f(t_2)}\\
       B(t_1)\ar[rr] ^-{\beta (t_1,t_2)} &&  B(t_2)
       }
       \end{eqnarray}
       commutes  for  any  $t_1\leq   t_2$.  
       \end{definition}

    Let   $f:  {\bf  A}\longrightarrow  {\bf   B}$   be   a     persistent  map.   
     Applying  the  cohomology  functor  to   
   $f $,  
we  have  an  induced  persistent  homomorphism  
\begin{eqnarray*}
f^*:   H^*(\mathbf{B})\longrightarrow  H^*(\mathbf{A}) 
\end{eqnarray*}
of  persistent  cohomology  rings  such  that  
  $
f(t)^*:   H^*(B(t))\longrightarrow  H^*(A(t)) 
  $  
is  a  homomorphism  of cohomology rings  for  any  $t\in \mathbb{R}$  and  
 the  diagram  
       \begin{eqnarray*}
       \xymatrix{
       H^*(A(t_1))  &  H^*(A(t_2))   \ar[l] _-{\alpha(t_1,t_2)^*}\\
       H^*(B(t_1)) \ar[u]^-{f(t_1)^*} & H^*(B(t_2)) \ar[l] _-{\beta(t_1,t_2)^* } \ar[u]_-{f(t_2)^*}
       }
       \end{eqnarray*}
       commutes  for  any  $t_1\leq   t_2$.

       Let  $\mathbb{F}=\mathbb{R}$  or    $\mathbb{C}$. 
       For  any  positive  integer  $k$,  let  $O(\mathbb{F}^k)$  be  the  
       orthogonal  group  of  the  Euclidean  space  $\mathbb{F}^k$.

        \begin{definition}\label{def-25-01-3}
       We call  $f:  {\bf  A}\longrightarrow  {\bf   B}$   a     {\it  persistent  fibre  bundle}  if    $f(t):  A(t)\longrightarrow  B(t)$ 
       is  a  fibre  bundle   for  any   $t\in \mathbb{R}$   and   the  diagram  (\ref{eq-diag-000})  
       is  a  pull-back  for     any  $t_1\leq   t_2$.    
       In  particular,     we call   $f:  {\bf  A}\longrightarrow  {\bf   B}$   
       \begin{enumerate}[(1)]
       \item
           a     {\it  persistent  covering  map}  if
           $f(t):  A(t)\longrightarrow  B(t)$ 
       is  a  covering  map   for  any   $t\in \mathbb{R}$;  
              \item
         a    {\it  persistent  vector  bundle}  if
        $f(t):  A(t)\longrightarrow  B(t)$ 
       is  a  vector  bundle   for  any   $t\in \mathbb{R}$;
       \item
         a 
       {\it  persistent  $O(\mathbb{F}^k)$-bundle}  if  $f(t):  A(t)\longrightarrow  B(t)$ 
       is  a  $O(\mathbb{F}^k)$-bundle   for  any   $t\in \mathbb{R}$.   
       \end{enumerate}   
       \end{definition}
       
       \begin{lemma}\label{le-000}
         Let   $f:  {\bf  A}\longrightarrow  {\bf   B}$   be   a     persistent  fibre  bundle.  
         Then   the  family  of  fibre  bundles   
         $F(t)\longrightarrow   A(t)\overset{f(t)}{\longrightarrow } B(t)$   
         has  a   uniform   fibre   $F(t)=F$  for  any  $t\in  \mathbb{R}$.   
       \end{lemma}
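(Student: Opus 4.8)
The plan is to read the pull-back hypothesis of Definition~\ref{def-25-01-3} literally and reduce the statement to the classical fact that a pull-back of a fibre bundle has the same fibre. Fix $t_1 \leq t_2$ in $\mathbb{R}$. By hypothesis the square (\ref{eq-diag-000}) is a pull-back, so $A(t_1)$ together with the maps $f(t_1)$ and $\alpha(t_1,t_2)$ is the fibred product $B(t_1)\times_{B(t_2)} A(t_2)$; equivalently, $f(t_1)$ is isomorphic over $B(t_1)$ to the pull-back $\beta(t_1,t_2)^{*}f(t_2)$ of the fibre bundle $f(t_2)\colon A(t_2)\to B(t_2)$ along $\beta(t_1,t_2)\colon B(t_1)\to B(t_2)$.

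First I would verify that the pull-back of a fibre bundle with fibre $F(t_2)$ is again a fibre bundle with the same fibre $F(t_2)$. Concretely, choose a trivializing open cover $\{U_\alpha\}$ of $B(t_2)$ together with homeomorphisms $\phi_\alpha\colon f(t_2)^{-1}(U_\alpha)\xrightarrow{\ \cong\ } U_\alpha\times F(t_2)$ respecting the projections. Then $\{\beta(t_1,t_2)^{-1}(U_\alpha)\}$ is an open cover of $B(t_1)$, and, writing points of the fibred product as pairs $(b,e)$ with $\beta(t_1,t_2)(b)=f(t_2)(e)$, the assignment $(b,e)\mapsto\big(b,\ \mathrm{pr}_{2}\,\phi_\alpha(e)\big)$ is a local trivialization of $f(t_1)$ over $\beta(t_1,t_2)^{-1}(U_\alpha)$ with fibre $F(t_2)$. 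Hence $F(t_1)\cong F(t_2)$. If one instead works with the vector-bundle or $O(\mathbb{F}^k)$-bundle refinements in Definition~\ref{def-25-01-3}(2),(3), the same transition data show the linear structure and structure group are transported along the pull-back, so nothing extra is needed.

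Then I would close the argument by the trivial observation that the index set $\mathbb{R}$ is totally ordered: given arbitrary $t,t'\in\mathbb{R}$, after possibly swapping them we may assume $t\leq t'$, and the previous step gives $F(t)\cong F(t')$. Thus all the fibres $F(t)$ are homeomorphic; denoting their common homeomorphism type by $F$ yields $F(t)=F$ for every $t\in\mathbb{R}$, as asserted in Lemma~\ref{le-000}.

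I do not anticipate a genuine obstacle here. The only points requiring a little care are (i) spelling out that Definition~\ref{def-25-01-3} really identifies $f(t_1)$ with $\beta(t_1,t_2)^{*}f(t_2)$ rather than merely asserting a weaker compatibility, and (ii) the routine bookkeeping that local triviality pulls back. One harmless edge case is $B(t)=\varnothing$ for some $t$: then $A(t)=\varnothing$ as well and there is no fibre to compare, so the statement is to be read as restricted to those $t$ with $B(t)\neq\varnothing$ (and it is vacuous if every $B(t)$ is empty).
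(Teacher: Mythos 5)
Your proposal is correct and follows the same route as the paper: both arguments observe that the pull-back square identifies $f(t_1)$ with $\beta(t_1,t_2)^*f(t_2)$ and invoke the fact that a pull-back bundle has the same fibre; you merely spell out the local trivializations that the paper leaves implicit.
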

       
       \begin{proof}
        For     any  $t_1\leq   t_2$,  since   the  diagram  (\ref{eq-diag-000})  
       is  a  pull-back,  we  have  that     
       $F(t_1)\longrightarrow   A(t_1)\overset{f(t_1)}{\longrightarrow } B(t_1)$
       is  a  pull-back  fibre  bundle  
       of  
       $F(t_2)\longrightarrow   A(t_2)\overset{f(t_2)}{\longrightarrow } B(t_2)$.  
       Hence  
        $F(t_1)=F(t_2)$. 
       \end{proof}
       
         \begin{corollary}
        Let   $f:  {\bf  A}\longrightarrow  {\bf   B}$   be   a     persistent covering  map.  
       Then   the  family  of  covering  maps  $ f(t):   A(t) {\longrightarrow } B(t)$   
         has  a   constant  sheet    for  any  $t\in  \mathbb{R}$. 
       \end{corollary}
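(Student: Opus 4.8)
The plan is to deduce the statement directly from Lemma~\ref{le-000}, since a persistent covering map is, by Definition~\ref{def-25-01-3}, a particular kind of persistent fibre bundle. First I would recall that a covering map $f(t)\colon A(t)\longrightarrow B(t)$, when regarded as a fibre bundle, has a discrete fibre, and that its number of sheets is by definition the cardinality of a fibre. Thus the quantity whose constancy we want to establish is $|F(t)|$, where $F(t)$ denotes the fibre of the bundle $F(t)\longrightarrow A(t)\overset{f(t)}{\longrightarrow}B(t)$.

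Next I would apply Lemma~\ref{le-000} to the persistent fibre bundle $f\colon \mathbf{A}\longrightarrow\mathbf{B}$. The lemma gives a uniform fibre $F(t)=F$ for all $t\in\mathbb{R}$, where $F$ is a fixed (here discrete) space. Consequently the number of sheets of $f(t)$ equals $|F(t)|=|F|$, which does not depend on $t$. This is exactly the assertion that the family of covering maps $f(t)\colon A(t)\longrightarrow B(t)$ has a constant sheet.

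The one point that deserves a word of care — and is essentially the only obstacle — is that for a general covering map over a base with several path components the "number of sheets" may differ between components, so that such a covering is not literally a fibre bundle with a single fibre. However, Definition~\ref{def-25-01-3} already builds into the notion of a persistent covering map that each $f(t)$ be a fibre bundle, so a well-defined fibre $F(t)$ is part of the data; the pull-back condition on the maps $\alpha(t_1,t_2)$ then forces $F(t_1)=F(t_2)$ exactly as in the proof of Lemma~\ref{le-000}. With this observation in place there is nothing further to do, and the corollary follows.
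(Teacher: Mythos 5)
Your proposal is correct and matches the paper's argument: the paper likewise applies Lemma~\ref{le-000} to conclude that the discrete fibres $F(t_1)$ and $F(t_2)$ agree (hence have the same cardinality) for any $t_1\leq t_2$. Your extra remark about multi-component bases is a reasonable clarification but not needed beyond what Definition~\ref{def-25-01-3} already supplies.
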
 
       
       \begin{proof}
        For     any  $t_1\leq   t_2$,  
        by  Lemma~\ref{le-000},  
        the discrete  fibres  $F(t_1)$  and  $F(t_2)$  are  homeomorphic.  
        Hence  they  have  the  same  cardinality.  
              \end{proof}
       
       \begin{definition}\label{def-25-01-97}
       Two  persistent  fibre  bundles $f:  {\bf  A}\longrightarrow  {\bf   B}$   
       and   $f':  {\bf  A}'\longrightarrow  {\bf   B}'$   are  {\it  isomorphic}  if  
       \begin{enumerate}[(1)]
       \item
       the  fibre  bundles  
      $f(t):   A(t)\longrightarrow     B(t)$   
       and   $f'(t):    A' (t)\longrightarrow      B'(t)$   are  isomorphic 
        through  a  homeomorphism  $\varphi(t):  B(t)\longrightarrow  B'(t)$
        for  any  $t\in \mathbb{R}$;
       \item
       the  diagram  commutes  
       \begin{eqnarray*}
       \xymatrix{
     B(t_1)\ar[d]_-{\varphi(t_1)} \ar[rr] ^-{\beta(t_1,t_2)}  &&  B(t_2)\ar[d]^-{\varphi(t_2)}\\
       B'(t_1)\ar[rr] ^-{\beta' (t_1,t_2)} && B'(t_2)
       }
       \end{eqnarray*}
       for any  $t_1\leq  t_2$.  
       \end{enumerate}  
              \end{definition}
       
       By   Lemma~\ref{le-000},  we  can  denote  a  persistent fibre  bundle  
       $f:  {\bf  A}\longrightarrow  {\bf   B}$  as  
       \begin{eqnarray*}
    \boldsymbol{\xi}:    F\longrightarrow  {\mathbf{  A}}\overset{f}{\longrightarrow}  {\mathbf{B}}
       \end{eqnarray*}
       consisting  of  a  family  of  fibre  bundles  
       $
    \xi(t):    F\longrightarrow   A(t)\overset{f(t)}{\longrightarrow}   B(t)
   $
       for  $t\in  \mathbb{R}$  and    a  family  of   pull-backs  of  fibre  bundles 
       $
      \beta(t_1,t_2)^* :  \xi(t_2)\longrightarrow  \xi(t_1)
       $
       for  $t_1\leq  t_2$  such  that   (1)  and  (2)  
       after  Definition~\ref{def-25-01-31-1}  are  satisfied  by  substituting  $\alpha(-,-)^*$  with  
       $\beta(-,-)^*$.  
       
       
       \begin{definition}
       Let  $    \boldsymbol{\xi}:    \mathbb{F}^k\longrightarrow  {\mathbf{  E(  \boldsymbol{\xi})}}\overset{f}{\longrightarrow}  {\mathbf{B}}
$  and     
$ \boldsymbol{\eta}:    \mathbb{F}^l\longrightarrow  {\mathbf{  E(\boldsymbol{\eta})}}\overset{f}{\longrightarrow}  {\mathbf{B}}
$
 be   two  persistent  vector  bundles  over   the  same  persistent  CW-complex  $\mathbf{B}$.  
 \begin{enumerate}[(1)]
 \item
 Their  {\it  persistent  Whitney  sum}   $ \boldsymbol{\xi}\oplus  \boldsymbol{\eta}$  
is  the  persistent  vector  bundle  over  $\mathbf{B}$ 
  consisting  of  the  family  of  vector  bundles 
\begin{eqnarray*}
\xi(t)\oplus \eta(t):    \mathbb{F}^k\oplus   \mathbb{F}^l\longrightarrow  
E(\xi(t)\oplus \eta(t))\longrightarrow  B(t)
\end{eqnarray*}
for  $t\in \mathbb{R}$   together  with  the  family  of  pull-backs  of  vector  bundles 
\begin{eqnarray*}
\beta(t_1,t_2)^*:  \xi(t_2)\oplus\eta(t_2)\longrightarrow  \xi(t_1)\oplus\eta(t_1)
\end{eqnarray*}
such  that 
 $\beta(t_1,t_2)^*( \xi(t_2)\oplus\eta(t_2))=
 \beta(t_1,t_2)^*( \xi(t_2))\oplus\beta(t_1,t_2)^*(\eta(t_2))$ 
 for  $t_1\leq  t_2$;
\item  
Their  {\it  persistent  tensor  product}   $ \boldsymbol{\xi}\otimes  \boldsymbol{\eta}$  
 is  the  persistent  vector  bundle  over  $\mathbf{B}$ 
       consisting  of  the  family  of  vector  bundles 
\begin{eqnarray*}
\xi(t)\otimes \eta(t):    \mathbb{F}^k\otimes   \mathbb{F}^l\longrightarrow  
E(\xi(t)\otimes \eta(t))\longrightarrow  B(t)
\end{eqnarray*}
for  $t\in \mathbb{R}$   together  with  the  family  of  pull-backs  of  vector  bundles 
\begin{eqnarray*}
\beta(t_1,t_2)^*:  \xi(t_2)\otimes\eta(t_2)\longrightarrow  \xi(t_1)\otimes\eta(t_1)
\end{eqnarray*}
such  that 
 $\beta(t_1,t_2)^*( \xi(t_2)\otimes\eta(t_2))=
 \beta(t_1,t_2)^*( \xi(t_2))\otimes\beta(t_1,t_2)^*(\eta(t_2))$
  for  $t_1\leq  t_2$.  
\end{enumerate}
\end{definition}

\begin{corollary}\label{cor-2.2}
 Let  $    \boldsymbol{\xi}:    \mathbb{F}^k\longrightarrow  {\mathbf{  E(  \boldsymbol{\xi})}}\overset{f}{\longrightarrow}  {\mathbf{B}}
$  be  a  persistent  $O(\mathbb{F}^k)$-bundle  and     
$ \boldsymbol{\eta}:    \mathbb{F}^l\longrightarrow  {\mathbf{  E(\boldsymbol{\eta})}}\overset{f}{\longrightarrow}  {\mathbf{B}}
$   be  a  persistent  $O(\mathbb{F}^l)$-bundle.  
Then  their  persistent  Whitney  sum  $\boldsymbol{\xi}\oplus\boldsymbol{\eta}$  
is  a  persistent  $O(\mathbb{F}^{k+l})$-bundle and  
their  persistent  tensor  product  $\boldsymbol{\xi}\otimes\boldsymbol{\eta}$  
is  a  persistent  $O(\mathbb{F}^{kl})$-bundle.  
\end{corollary}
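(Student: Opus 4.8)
The plan is to reduce the persistent statement to the classical fact that a Whitney sum of an $O(\mathbb{F}^k)$-bundle and an $O(\mathbb{F}^l)$-bundle carries a natural structure group $O(\mathbb{F}^{k+l})$, and similarly a tensor product carries structure group $O(\mathbb{F}^{kl})$, together with the observation that this assignment is natural under pull-back. Concretely, for each fixed $t\in\mathbb{R}$ the bundles $\xi(t)$ and $\eta(t)$ are $O(\mathbb{F}^k)$- and $O(\mathbb{F}^l)$-bundles over $B(t)$; their fibrewise orthogonal direct sum $\xi(t)\oplus\eta(t)$ inherits the block-diagonal inner product, and the transition functions are the block matrices $g_{\alpha\beta}(t)\oplus h_{\alpha\beta}(t)\in O(\mathbb{F}^k)\times O(\mathbb{F}^l)\hookrightarrow O(\mathbb{F}^{k+l})$ on a common trivializing cover, so $\xi(t)\oplus\eta(t)$ is a persistent $O(\mathbb{F}^{k+l})$-bundle fibrewise. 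The tensor product case is analogous: the Kronecker product of two orthogonal (resp.\ unitary) matrices is orthogonal (resp.\ unitary) with respect to the tensor inner product, i.e.\ there is a homomorphism $O(\mathbb{F}^k)\times O(\mathbb{F}^l)\to O(\mathbb{F}^{kl})$, $(g,h)\mapsto g\otimes h$, so $\xi(t)\otimes\eta(t)$ is an $O(\mathbb{F}^{kl})$-bundle for each $t$.

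It remains to check the persistence data, namely that the pull-back maps $\beta(t_1,t_2)^*$ respect these $O$-structures. By hypothesis $\boldsymbol{\xi}$ and $\boldsymbol{\eta}$ are persistent $O(\mathbb{F}^k)$- and $O(\mathbb{F}^l)$-bundles in the sense of Definition~\ref{def-25-01-3}, so each square~(\ref{eq-diag-000}) for $E(\boldsymbol{\xi})$ and for $E(\boldsymbol{\eta})$ is a pull-back in the category of $O$-bundles; pulling back a trivializing cover of $B(t_2)$ along $\beta(t_1,t_2)$ gives a trivializing cover of $B(t_1)$ on which the transition functions of $\xi(t_1)$ (resp.\ $\eta(t_1)$) are $g_{\alpha\beta}(t_2)\circ\beta(t_1,t_2)$ (resp.\ $h_{\alpha\beta}(t_2)\circ\beta(t_1,t_2)$). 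Taking block-diagonal sums (resp.\ Kronecker products) of these on the common refinement shows that the transition functions of $(\xi\oplus\eta)(t_1)$ are exactly the $\beta(t_1,t_2)$-pull-backs of those of $(\xi\oplus\eta)(t_2)$, and likewise for $\otimes$. Hence the defining equalities
$\beta(t_1,t_2)^*(\xi(t_2)\oplus\eta(t_2))=\beta(t_1,t_2)^*(\xi(t_2))\oplus\beta(t_1,t_2)^*(\eta(t_2))$
and
$\beta(t_1,t_2)^*(\xi(t_2)\otimes\eta(t_2))=\beta(t_1,t_2)^*(\xi(t_2))\otimes\beta(t_1,t_2)^*(\eta(t_2))$
are equalities of $O(\mathbb{F}^{k+l})$- (resp.\ $O(\mathbb{F}^{kl})$-) bundles, and the coherence conditions (1),(2) after Definition~\ref{def-25-01-31-1} for $\boldsymbol{\xi}\oplus\boldsymbol{\eta}$ and $\boldsymbol{\xi}\otimes\boldsymbol{\eta}$ follow from those for $\boldsymbol{\xi}$ and $\boldsymbol{\eta}$ by functoriality of $\oplus$ and $\otimes$.

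I do not expect a genuine obstacle here: the only mildly delicate point is bookkeeping, namely passing to a common trivializing cover of $B(t)$ for $\xi(t)$ and $\eta(t)$ (a finite or locally finite refinement, available since the base is a CW-complex) and keeping track of how $\beta(t_1,t_2)$ transports these covers, so that the word ``pull-back'' in Definition~\ref{def-25-01-3} is applied to the combined bundle rather than separately to the summands. Once the cover is fixed, every assertion is a pointwise ($t$-by-$t$) instance of the classical statement about structure groups of $\oplus$ and $\otimes$, combined with the naturality of those operations under continuous maps of the base, so the proof is essentially immediate from Lemma~\ref{le-000} and the preceding definitions.
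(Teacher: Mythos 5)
Your proposal is correct and follows essentially the same route as the paper: reduce to the classical fact that, for each fixed $t$, the Whitney sum (resp.\ tensor product) of an $O(\mathbb{F}^k)$-bundle and an $O(\mathbb{F}^l)$-bundle is an $O(\mathbb{F}^{k+l})$-bundle (resp.\ $O(\mathbb{F}^{kl})$-bundle). The paper's proof stops there, leaving the compatibility of the pull-backs $\beta(t_1,t_2)^*$ with the orthogonal structure implicit in the definition of the persistent Whitney sum and tensor product, whereas you verify it explicitly via transition functions; this is additional care, not a different argument.
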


\begin{proof}
For  each  $t\in  \mathbb{R}$,  
the  Whitney  sum  $\xi(t)\oplus  \eta(t)$  is  a  $O(\mathbb{F}^{k+l})$-bundle.  
Thus  the  persistent  vector  bundle  $\boldsymbol{\xi}\oplus\boldsymbol{\eta}$  
is  a  persistent  $O(\mathbb{F}^{k+l})$-bundle. 
For  each  $t\in  \mathbb{R}$,  
the  tensor  product  $\xi(t)\otimes  \eta(t)$  is  a  $O(\mathbb{F}^{kl})$-bundle.  
Thus  the  persistent  vector  bundle  $\boldsymbol{\xi}\otimes\boldsymbol{\eta}$  
is  a  persistent  $O(\mathbb{F}^{kl})$-bundle. 
\end{proof}

       \begin{lemma}\label{le-0.ba1}
       Let  $
    \boldsymbol{\xi}:    \mathbb{F}^k\longrightarrow  {\mathbf{  A}}\overset{f}{\longrightarrow}  {\mathbf{B}}
     $
       be  a  persistent  $O(\mathbb{F}^k)$-bundle.  
       \begin{enumerate}[(1)]
       \item
       Suppose  $\mathbb{F}=\mathbb{R}$.  
       Take  the  persistent  cohomology ring  $H^*(\mathbf{B};\mathbb{Z}_2)$  with   coefficients  
       in  $\mathbb{Z}_2$.   Then  
       \begin{eqnarray}\label{eq-0.cc1}
       \beta(t_1,t_2)^* (w(\xi(t_2))) =  w (\xi(t_1))
       \end{eqnarray}
       for  any  $t_1\leq  t_2$,  where  $w(-)$  is  the  total  Stiefel-Whitney  class;   
       \item
              Suppose  $\mathbb{F}=\mathbb{C}$.  
Take  the  persistent  cohomology ring  $H^*(\mathbf{B};\mathbb{Z} )$  with   coefficients  
       in  $\mathbb{Z} $.    Then  
       \begin{eqnarray}\label{eq-0.cc2}
       \beta(t_1,t_2)^* (c(\xi(t_2))) =  c (\xi(t_1))
       \end{eqnarray}
       for  any  $t_1\leq  t_2$,  where  $c(-)$  is  the  total  Chern  class.   
       \end{enumerate}  
       \end{lemma}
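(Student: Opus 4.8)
The plan is to deduce both identities from the naturality of characteristic classes under pull-backs of vector bundles, using the defining pull-back property of a persistent fibre bundle. Fix $t_1\leq t_2$. Since $\boldsymbol{\xi}$ is a persistent $O(\mathbb{F}^k)$-bundle, in particular a persistent fibre bundle, Definition~\ref{def-25-01-3} guarantees that the square (\ref{eq-diag-000}), with total spaces $A(t_i)$, base spaces $B(t_i)$ and projections $f(t_i)$, is a pull-back. First I would unwind this to the statement that the vector bundle $\xi(t_1):\mathbb{F}^k\longrightarrow A(t_1)\longrightarrow B(t_1)$ is isomorphic, as a vector bundle over $B(t_1)$, to the pull-back bundle $\beta(t_1,t_2)^*\xi(t_2)$, the bundle map covering $\beta(t_1,t_2)$ being $\alpha(t_1,t_2)$; this is precisely the map denoted $\beta(t_1,t_2)^*:\xi(t_2)\longrightarrow \xi(t_1)$ in the notation fixed after Definition~\ref{def-25-01-97}.

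Next, for $\mathbb{F}=\mathbb{R}$, I would invoke naturality of the total Stiefel-Whitney class under pull-backs of real vector bundles to obtain
\begin{eqnarray*}
w\big(\beta(t_1,t_2)^*\xi(t_2)\big)=\beta(t_1,t_2)^*\big(w(\xi(t_2))\big)
\end{eqnarray*}
in $H^*(B(t_1);\mathbb{Z}_2)$, where $\beta(t_1,t_2)^*$ on the right-hand side is the induced ring homomorphism on mod $2$ cohomology. Combining this with the isomorphism $\xi(t_1)\cong \beta(t_1,t_2)^*\xi(t_2)$ from the first step and the invariance of Stiefel-Whitney classes under bundle isomorphism yields $w(\xi(t_1))=\beta(t_1,t_2)^*(w(\xi(t_2)))$, which is (\ref{eq-0.cc1}). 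For $\mathbb{F}=\mathbb{C}$, the same argument applies verbatim with the total Chern class in place of the total Stiefel-Whitney class and integral cohomology in place of mod $2$ cohomology, giving (\ref{eq-0.cc2}). As $t_1\leq t_2$ were arbitrary, both parts follow.

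The only point needing genuine, if still routine, attention is the first step: translating ``the square (\ref{eq-diag-000}) is a pull-back of fibre bundles'' into ``$\xi(t_1)$ is the pull-back vector bundle $\beta(t_1,t_2)^*\xi(t_2)$''. Here I would check that the pull-back of an $O(\mathbb{F}^k)$-bundle along a continuous map is again an $O(\mathbb{F}^k)$-bundle (pulling back the fibrewise inner product), so that $\xi(t_1)$ and $\beta(t_1,t_2)^*\xi(t_2)$ are compared as bundles with the same structure group, and that the transition functions of $\xi(t_1)$ are exactly those of $\xi(t_2)$ pre-composed with $\beta(t_1,t_2)$. Once this identification is in place, no further machinery is required: the lemma is purely a consequence of the functoriality of the total classes $w$ and $c$.
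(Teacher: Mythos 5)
Your proposal is correct and follows the same route as the paper: identify $\xi(t_1)$ with the pull-back $\beta(t_1,t_2)^*\xi(t_2)$ via the defining pull-back square of the persistent bundle, then apply naturality (functoriality) of the total Stiefel--Whitney and Chern classes. The paper's own proof is a two-line version of exactly this argument; your additional care about the structure group surviving the pull-back is a reasonable elaboration but not a different method.
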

       
       \begin{proof}
       Let  $t_1\leq  t_2$.  
       Then   $\xi(t_1)=  \beta(t_1,t_2)^* ( \xi(t_2) )$  is  the  pull-back  of   $O(\mathbb{F}^k)$-bundles.     
       (1)  Suppose   $\mathbb{F}=\mathbb{R}$.  
        By  the  functoriality  of  the  Stiefel-Whitney  class,  we  obtain  (\ref{eq-0.cc1}).   
        (2)   Suppose   $\mathbb{F}=\mathbb{C}$.  
        By  the  functoriality  of  the  Chern  class,  we  obtain  (\ref{eq-0.cc2}).
       \end{proof}
       
       The  next    definition  follows   from   Lemma~\ref{le-0.ba1}~(1).  
       
       \begin{definition}\label{def-250120-1}
        Let  $
    \boldsymbol{\xi}:    \mathbb{R}^k\longrightarrow  {\mathbf{  A}}\overset{f}{\longrightarrow}  {\mathbf{B}}
     $
       be  a  persistent  $O(\mathbb{R}^k)$-bundle.  
       We  define  the  {\it  persistent  Stiefel-Whitney  class}  
       \begin{eqnarray*}
       w(  \boldsymbol{\xi}) = 1+ w_1   (  \boldsymbol{\xi})+ \cdots +   w_k(  \boldsymbol{\xi})
       \end{eqnarray*}
         to  be the  family  of  Stiefel  Whitney  classes 
       \begin{eqnarray*}
       w(\xi(t)) =1+ w_1   (  \xi(t))+ \cdots +   w_k(  \xi(t))
              \end{eqnarray*}
         for  $t\in  \mathbb{R}$  such  that  (\ref{eq-0.cc1})  is  satisfied  for  $t_1\leq  t_2$.   
      \end{definition}

      \begin{corollary}
        Let  $
    \boldsymbol{\xi}:    \mathbb{R}^k\longrightarrow  {\mathbf{  A}}\overset{f}{\longrightarrow}  {\mathbf{B}}
     $
      and  
      $
    \boldsymbol{\xi}':    \mathbb{R}^k\longrightarrow  {\mathbf{  A}'}\overset{f}{\longrightarrow}  {\mathbf{B}'}
     $
       be  two  persistent  $O(\mathbb{R}^k)$-bundles.  
      If  $ \boldsymbol{\xi}$  and  $ \boldsymbol{\xi}'$  are  isomorphic,  
      then  $w(  \boldsymbol{\xi}) =w(  \boldsymbol{\xi}')$.   
      \end{corollary}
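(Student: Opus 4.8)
The plan is to reduce the claim to the classical naturality of the total Stiefel--Whitney class applied at each filtration level, together with a routine check that the resulting family respects the persistence structure. First I would unwind Definition~\ref{def-25-01-97} in the present setting: an isomorphism between the persistent $O(\mathbb{R}^k)$-bundles $\boldsymbol{\xi}$ (over $\mathbf{B}$, with structure maps $\beta(t_1,t_2)$) and $\boldsymbol{\xi}'$ (over $\mathbf{B}'$, with structure maps $\beta'(t_1,t_2)$) is a family of homeomorphisms $\varphi(t)\colon B(t)\to B'(t)$ realizing, for each $t\in\mathbb{R}$, a bundle isomorphism $\xi(t)\cong\varphi(t)^{*}\xi'(t)$, and such that $\varphi(t_2)\circ\beta(t_1,t_2)=\beta'(t_1,t_2)\circ\varphi(t_1)$ for all $t_1\le t_2$. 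Applying the mod~$2$ cohomology functor, the $\varphi(t)$ assemble into an isomorphism $\varphi^{*}\colon H^{*}(\mathbf{B}';\mathbb{Z}_2)\to H^{*}(\mathbf{B};\mathbb{Z}_2)$ of persistent cohomology rings, and the asserted identity $w(\boldsymbol{\xi})=w(\boldsymbol{\xi}')$ is to be read as $\varphi^{*}\bigl(w(\boldsymbol{\xi}')\bigr)=w(\boldsymbol{\xi})$.

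Next, for each fixed $t$ I would invoke the functoriality of the total Stiefel--Whitney class on the isomorphism $\xi(t)\cong\varphi(t)^{*}\xi'(t)$ of $O(\mathbb{R}^k)$-bundles, exactly as in the proof of Lemma~\ref{le-0.ba1}~(1), obtaining $w(\xi(t))=\varphi(t)^{*}\bigl(w(\xi'(t))\bigr)$ in $H^{*}(B(t);\mathbb{Z}_2)$. It then remains to check that the family $\{\varphi(t)^{*}(w(\xi'(t)))\}_{t\in\mathbb{R}}$ satisfies the compatibility relation~(\ref{eq-0.cc1}) over $\mathbf{B}$, so that by Definition~\ref{def-250120-1} it is legitimately identified with the persistent class $w(\boldsymbol{\xi})$. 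Here the commuting square of Definition~\ref{def-25-01-97}~(2) enters: passing to cohomology it gives $\beta(t_1,t_2)^{*}\circ\varphi(t_2)^{*}=\varphi(t_1)^{*}\circ\beta'(t_1,t_2)^{*}$, and combining this with~(\ref{eq-0.cc1}) for $\boldsymbol{\xi}'$, namely $\beta'(t_1,t_2)^{*}(w(\xi'(t_2)))=w(\xi'(t_1))$, yields $\beta(t_1,t_2)^{*}\bigl(\varphi(t_2)^{*}w(\xi'(t_2))\bigr)=\varphi(t_1)^{*}w(\xi'(t_1))$ for all $t_1\le t_2$. Thus the family $\{\varphi(t)^{*}w(\xi'(t))\}$ is exactly the image $\varphi^{*}(w(\boldsymbol{\xi}'))$ and, by the previous paragraph, agrees level-wise with $w(\boldsymbol{\xi})$, which is the conclusion.

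I expect the only slightly delicate point to be the bookkeeping of the identification $\varphi^{*}$: a priori $w(\boldsymbol{\xi})$ and $w(\boldsymbol{\xi}')$ live in the different persistent rings $H^{*}(\mathbf{B};\mathbb{Z}_2)$ and $H^{*}(\mathbf{B}';\mathbb{Z}_2)$, so ``equality'' must be phrased relative to $\varphi^{*}$, and one should note that $\varphi^{*}$ is a well-defined isomorphism of persistent cohomology rings (this follows from Definition~\ref{def-25-01-97}~(2) and the contravariance of $H^{*}$). Every other step is a level-wise application of classical characteristic-class theory, so there is no real obstruction; an entirely parallel argument with the total Chern class and $\mathbb{Z}$-coefficients, using Lemma~\ref{le-0.ba1}~(2) and~(\ref{eq-0.cc2}), would give the complex analogue.
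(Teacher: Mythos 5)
Your argument is correct and is exactly the route the paper takes: the paper's proof is a one-line appeal to Definition~\ref{def-25-01-97} and Definition~\ref{def-250120-1}, and your write-up simply carries out the level-wise naturality of $w$ together with the compatibility check against the structure maps that those definitions encode. Your remark that the equality $w(\boldsymbol{\xi})=w(\boldsymbol{\xi}')$ must be read through the induced isomorphism $\varphi^{*}$ of persistent cohomology rings is a careful point the paper leaves implicit, but it does not change the substance of the argument.
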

      
      \begin{proof}
      The    corollary  follows  from  Definition~\ref{def-25-01-97}
      and  Definition~\ref{def-250120-1}.  
      \end{proof}

      \begin{corollary}\label{cor-2.5a}
       Let  $    \boldsymbol{\xi}:    \mathbb{R}^k\longrightarrow  {\mathbf{  E(  \boldsymbol{\xi})}}\overset{f}{\longrightarrow}  {\mathbf{B}}
$  be  a  persistent  $O(\mathbb{R}^k)$-bundle  and     
$ \boldsymbol{\eta}:    \mathbb{R}^l\longrightarrow  {\mathbf{  E(\boldsymbol{\eta})}}\overset{f}{\longrightarrow}  {\mathbf{B}}
$   be  a  persistent  $O(\mathbb{R}^l)$-bundle.  
Then  
\begin{eqnarray}
\label{eq-sw.1}
 w(  \boldsymbol{\xi}\oplus \boldsymbol{\eta}) = w(\boldsymbol{\xi})  w(\boldsymbol{\eta})   
 \label{eq-sw.2}
\end{eqnarray}
where  the  righthand  side  of  (\ref{eq-sw.1})  is  the  cup-product  in  the  mod  $2$  persistent  
 cohomology  of  $ {\mathbf{B}}$. 
      \end{corollary}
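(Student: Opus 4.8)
The plan is to deduce (\ref{eq-sw.1}) from the classical Whitney product theorem applied at each value of the persistence parameter, combined with the fact that every restriction homomorphism $\beta(t_1,t_2)^*$ is a homomorphism of cohomology rings.

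First I would observe that both sides of (\ref{eq-sw.1}) are genuine elements of the persistent cohomology ring $H^*(\mathbf{B};\mathbb{Z}_2)$. For the left-hand side this is immediate: by Corollary~\ref{cor-2.2} the persistent Whitney sum $\boldsymbol{\xi}\oplus\boldsymbol{\eta}$ is a persistent $O(\mathbb{R}^{k+l})$-bundle, so Lemma~\ref{le-0.ba1}~(1) and Definition~\ref{def-250120-1} show that $w(\boldsymbol{\xi}\oplus\boldsymbol{\eta})=\{w(\xi(t)\oplus\eta(t))\}_{t\in\mathbb{R}}$ is well defined and satisfies (\ref{eq-0.cc1}). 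For the right-hand side I would use that $\beta(t_1,t_2)^*$ carries cup products to cup products — this is recorded right after Definition~\ref{def-25-01-31-1} — so that
\[
\beta(t_1,t_2)^*\big(w(\xi(t_2))\,w(\eta(t_2))\big)=\beta(t_1,t_2)^*\big(w(\xi(t_2))\big)\,\beta(t_1,t_2)^*\big(w(\eta(t_2))\big);
\]
applying Lemma~\ref{le-0.ba1}~(1) to $\boldsymbol{\xi}$ and to $\boldsymbol{\eta}$ separately, the last expression equals $w(\xi(t_1))\,w(\eta(t_1))$, so $\{w(\xi(t))\,w(\eta(t))\}_{t\in\mathbb{R}}$ is a persistent cohomology class as well.

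Finally I would invoke the ordinary Whitney product theorem: for each fixed $t\in\mathbb{R}$,
\[
w(\xi(t)\oplus\eta(t))=w(\xi(t))\,w(\eta(t))
\]
in $H^*(B(t);\mathbb{Z}_2)$. Thus the two persistent classes in (\ref{eq-sw.1}) coincide at every $t\in\mathbb{R}$, and since a persistent cohomology class is precisely such a compatible family, they are equal. The one point that needs attention is not any analytic difficulty but the verification that the naive levelwise cup product assembles into a bona fide persistent class; this is exactly where the ring-homomorphism property of the structure maps $\beta(t_1,t_2)^*$ enters, and once it is in hand the corollary follows at once from the classical formula.
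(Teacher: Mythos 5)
Your proposal is correct and follows essentially the same route as the paper: the paper's proof also applies the classical Whitney product formula $w(\xi(t)\oplus\eta(t))=w(\xi(t))\,w(\eta(t))$ levelwise and invokes Corollary~\ref{cor-2.2} to see that the sum is a persistent $O(\mathbb{R}^{k+l})$-bundle, so that the levelwise identities assemble into an identity of persistent classes. Your additional verification that the family $\{w(\xi(t))\,w(\eta(t))\}_{t\in\mathbb{R}}$ is itself a persistent class (via the ring-homomorphism property of $\beta(t_1,t_2)^*$) is a detail the paper leaves implicit, but it is the same argument.
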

      
      \begin{proof}
      Take  the  persistent  cohomology  ring  $H^*(\mathbf{B};\mathbb{Z}_2)$  
      of  $\mathbf{B}$  with  coefficients in  $\mathbb{Z}_2$.  
      We  have  $ w(   \xi(t)\oplus  \eta(t)) = w( \xi (t))  w( \eta(t) )$  for  any  $t\in  \mathbb{R}$. 
      With  the  help  of  Corollary~\ref{cor-2.2},  we  obtain  (\ref{eq-sw.1}).   
      \end{proof}
      
             The  next    definition  follows   from   Lemma~\ref{le-0.ba1}~(2).

      \begin{definition}\label{def-250120-2}
       Let  $
    \boldsymbol{\xi}:    \mathbb{C}^k\longrightarrow  {\mathbf{  A}}\overset{f}{\longrightarrow}  {\mathbf{B}}
     $
       be  a  persistent  $O(\mathbb{C}^k)$-bundle.  
       We  define  the  {\it  persistent  Chern  class}  
       \begin{eqnarray*}
       c(  \boldsymbol{\xi})  =1 +  c_1(  \boldsymbol{\xi})+ \cdots  +c_k(  \boldsymbol{\xi})
       \end{eqnarray*}
         to  be the  family  of  Chern  classes 
       \begin{eqnarray*}
       c(\xi(t))=1+ c_1 (\xi(t))+\cdots  +c_k(\xi(t))
       \end{eqnarray*}
         for  $t\in  \mathbb{R}$  such  that  (\ref{eq-0.cc2})  is  satisfied  for  $t_1\leq  t_2$.   
       \end{definition}

      \begin{corollary}
         Let  $
    \boldsymbol{\xi}:    \mathbb{C}^k\longrightarrow  {\mathbf{  A}}\overset{f}{\longrightarrow}  {\mathbf{B}}
     $
      and  
      $
    \boldsymbol{\xi}':    \mathbb{C}^k\longrightarrow  {\mathbf{  A}'}\overset{f}{\longrightarrow}  {\mathbf{B}'}
     $
       be  two  persistent  $O(\mathbb{C}^k)$-bundles.  
      If  $ \boldsymbol{\xi}$  and  $ \boldsymbol{\xi}'$  are  isomorphic,  
      then  $c(  \boldsymbol{\xi}) =c(  \boldsymbol{\xi}')$.   
      \end{corollary}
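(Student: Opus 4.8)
The plan is to mirror the proof of the preceding corollary for the persistent Stiefel–Whitney class, replacing the functoriality of $w$ by that of the total Chern class $c$ (Lemma~\ref{le-0.ba1}~(2)). First I would unpack Definition~\ref{def-25-01-97}: an isomorphism $\boldsymbol{\xi}\cong\boldsymbol{\xi}'$ supplies homeomorphisms $\varphi(t):B(t)\longrightarrow B'(t)$, $t\in\mathbb{R}$, such that $\xi(t)$ and $\xi'(t)$ are isomorphic as $O(\mathbb{C}^k)$-bundles through $\varphi(t)$ — concretely $\xi(t)\cong\varphi(t)^*\xi'(t)$ over $B(t)$ — and moreover the square relating $\varphi(t_1),\varphi(t_2)$ to $\beta(t_1,t_2),\beta'(t_1,t_2)$ commutes for all $t_1\leq t_2$.

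Next I would apply the cohomology functor with $\mathbb{Z}$-coefficients. Because each $\varphi(t)$ is a homeomorphism and the squares of the $\varphi(t)$'s commute, the induced maps $\varphi(t)^*:H^*(B'(t);\mathbb{Z})\longrightarrow H^*(B(t);\mathbb{Z})$ are ring isomorphisms that intertwine $\beta'(t_1,t_2)^*$ with $\beta(t_1,t_2)^*$; hence they assemble into an isomorphism $\varphi^*:H^*(\mathbf{B}';\mathbb{Z})\longrightarrow H^*(\mathbf{B};\mathbb{Z})$ of persistent cohomology rings, compatible with the relations recorded after Definition~\ref{def-25-01-31-1}.

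Then, for each fixed $t$, naturality of the total Chern class under the bundle isomorphism $\xi(t)\cong\varphi(t)^*\xi'(t)$ yields $c(\xi(t))=\varphi(t)^*\big(c(\xi'(t))\big)$ in $H^*(B(t);\mathbb{Z})$. Since by Definition~\ref{def-250120-2} the persistent Chern classes $c(\boldsymbol{\xi})$ and $c(\boldsymbol{\xi}')$ are precisely the families $\{c(\xi(t))\}_t$ and $\{c(\xi'(t))\}_t$ satisfying~(\ref{eq-0.cc2}), these per-$t$ identities say exactly that $\varphi^*$ carries $c(\boldsymbol{\xi}')$ to $c(\boldsymbol{\xi})$; under the identification furnished by the isomorphism $\boldsymbol{\xi}\cong\boldsymbol{\xi}'$ this is the asserted equality $c(\boldsymbol{\xi})=c(\boldsymbol{\xi}')$.

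The only delicate point is bookkeeping rather than substance: one must check that the homeomorphism $\varphi(t)$ used to identify $\xi(t)$ with $\xi'(t)$ is the very same one appearing in part~(2) of Definition~\ref{def-25-01-97}, so that the individual Chern-class identities are mutually compatible across the persistence parameter — this is guaranteed by the commuting square. No input beyond Lemma~\ref{le-0.ba1}~(2) and Definitions~\ref{def-25-01-97} and~\ref{def-250120-2} is required, so the argument is a direct analogue of the real case.
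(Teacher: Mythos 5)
Your proposal is correct and follows the same route as the paper, which simply cites Definition~\ref{def-25-01-97} and Definition~\ref{def-250120-2}; you have merely written out the details (naturality of $c$ under the pull-back $\xi(t)\cong\varphi(t)^*\xi'(t)$ and compatibility of the $\varphi(t)^*$ with the structure maps) that the paper leaves implicit. No gap.
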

      
      \begin{proof}
         The   corollary  follows  from  Definition~\ref{def-25-01-97}
      and  Definition~\ref{def-250120-2}.  
      \end{proof}

\begin{corollary}\label{cor-2.7a}
 Let  $    \boldsymbol{\xi}:    \mathbb{C}^k\longrightarrow  {\mathbf{  E(  \boldsymbol{\xi})}}\overset{f}{\longrightarrow}  {\mathbf{B}}
$  be  a  persistent  $O(\mathbb{C}^k)$-bundle  and     
$ \boldsymbol{\eta}:    \mathbb{C}^l\longrightarrow  {\mathbf{  E(\boldsymbol{\eta})}}\overset{f}{\longrightarrow}  {\mathbf{B}}
$   be  a  persistent  $O(\mathbb{C}^l)$-bundle.  
Then  
\begin{eqnarray}
\label{eq-c.1}
 c(  \boldsymbol{\xi}\oplus \boldsymbol{\eta}) = c(\boldsymbol{\xi})    c(\boldsymbol{\eta}) 
\end{eqnarray}
where  the  righthand  side  of  (\ref{eq-c.1})  is  the  cup-product  in  the  integral  persistent  
 cohomology  of  $ {\mathbf{B}}$. 
\end{corollary}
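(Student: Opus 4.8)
The plan is to copy the proof of Corollary~\ref{cor-2.5a}, replacing the Stiefel--Whitney class by the Chern class and the coefficient ring $\mathbb{Z}_2$ by $\mathbb{Z}$. Throughout we work inside the integral persistent cohomology ring $H^*(\mathbf{B};\mathbb{Z})$. First, by Corollary~\ref{cor-2.2} the persistent Whitney sum $\boldsymbol{\xi}\oplus\boldsymbol{\eta}$ is a persistent $O(\mathbb{C}^{k+l})$-bundle over $\mathbf{B}$, so Definition~\ref{def-250120-2} applies and $c(\boldsymbol{\xi}\oplus\boldsymbol{\eta})$ is a well-defined persistent cohomology class; the classes $c(\boldsymbol{\xi})$ and $c(\boldsymbol{\eta})$ are likewise defined. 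The right-hand side of (\ref{eq-c.1}) is to be understood stagewise: for each $t\in\mathbb{R}$ it is the cup product $c(\xi(t))\smile c(\eta(t))$ computed in the ordinary ring $H^*(B(t);\mathbb{Z})$.

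Next I would verify the identity one stage at a time. For a fixed $t$, the classical Whitney sum formula for Chern classes of complex vector bundles gives $c(\xi(t)\oplus\eta(t)) = c(\xi(t))\smile c(\eta(t))$ in $H^*(B(t);\mathbb{Z})$. This establishes (\ref{eq-c.1}) at every single stage.

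Then I would check that the two families of stagewise classes assemble into the same persistent class. For the left-hand side, Lemma~\ref{le-0.ba1}~(2) applied to the persistent $O(\mathbb{C}^{k+l})$-bundle $\boldsymbol{\xi}\oplus\boldsymbol{\eta}$ says exactly that $\{c(\xi(t)\oplus\eta(t))\}_t$ satisfies the compatibility (\ref{eq-0.cc2}), hence equals $c(\boldsymbol{\xi}\oplus\boldsymbol{\eta})$ by Definition~\ref{def-250120-2}. For the right-hand side, $\{c(\xi(t))\}_t$ and $\{c(\eta(t))\}_t$ satisfy (\ref{eq-0.cc2}) by the same lemma applied to $\boldsymbol{\xi}$ and $\boldsymbol{\eta}$, and since each transition map $\beta(t_1,t_2)^*$ is a homomorphism of cohomology rings (as recorded after Definition~\ref{def-25-01-31-1}) it commutes with cup products, so $\beta(t_1,t_2)^*\big(c(\xi(t_2))\smile c(\eta(t_2))\big) = c(\xi(t_1))\smile c(\eta(t_1))$. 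Thus $\{c(\xi(t))\smile c(\eta(t))\}_t$ is a persistent cohomology class, and by the previous paragraph it agrees stagewise with $c(\boldsymbol{\xi}\oplus\boldsymbol{\eta})$; therefore the two persistent classes coincide, which is (\ref{eq-c.1}).

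There is essentially no obstacle here: the content is the ordinary Chern Whitney formula together with the observation that the structure maps of a persistent cohomology ring respect cup products. The only point deserving a line of justification is that the right-hand side of (\ref{eq-c.1}) is a bona fide element of the persistent cohomology ring rather than merely a family of elements of the stagewise rings — and that is precisely the naturality-of-cup-product step just described. No degree or dimension bookkeeping beyond the classical formula is required.
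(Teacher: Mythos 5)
Your proposal is correct and follows essentially the same route as the paper's own proof: apply the classical Whitney sum formula for Chern classes stagewise in $H^*(B(t);\mathbb{Z})$ and invoke Corollary~\ref{cor-2.2} to see that $\boldsymbol{\xi}\oplus\boldsymbol{\eta}$ is a persistent $O(\mathbb{C}^{k+l})$-bundle so the persistent Chern class is defined. The extra paragraph you add checking that the stagewise cup products assemble into a single persistent class via Lemma~\ref{le-0.ba1}~(2) and the ring-homomorphism property of $\beta(t_1,t_2)^*$ is a detail the paper leaves implicit, but it is the same argument.
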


   \begin{proof}
      Take  the  persistent  cohomology  ring  $H^*(\mathbf{B};\mathbb{Z})$  
      of  $\mathbf{B}$  with  coefficients in  $\mathbb{Z}$.  
      We  have  $ c(   \xi(t)\oplus  \eta(t)) = c( \xi(t) )  c( \eta(t) )$  for  any  $t\in  \mathbb{R}$. 
      With  the  help  of  Corollary~\ref{cor-2.2},  we  obtain  (\ref{eq-c.1}).   
      \end{proof}

\section{Persistent   complexes  and  persistent  manifolds} \label{25-s3}

In  this  section,  we  construct  associated  persistent  chain  complexes
(cf.  Definition~\ref{def-250112-7})  for  
persistent  $\Delta$-sets   (cf.  Definition~\ref{def-7.1aa1})  and  construct  associated  persistent  double  complexes  (cf.  Definition~\ref{def-25-02-08})   for  
persistent  bi-$\Delta$-sets  (cf.  Definition~\ref{def-7.187}).  
Then  we    
construct  persistent  double  complexes  for   persistent  $\Delta$-CW-complexes 
(cf.  Definition~\ref{def-7.3})   
and    persistent   $\Delta$-manifolds  (cf.  Definition~\ref{def-001a1}).

\subsection{Persistent  $\Delta$-sets  and  persistent  chain  complexes}

\begin{definition}(cf.  \cite[Chap.~I]{s-s-book})
\label{def-7.1-hrshdjtx}
A  {\it  $\Delta$-set}    is  a  sequence  of  sets  $S_\bullet=(S_n)_{n\in\mathbb{N}}$  with  face  maps 
$\partial^i_n:  S_n\longrightarrow  S_{n-1}$,  $i=0,1,\ldots,n$, 
  satisfying   the $\Delta$-identity
\begin{eqnarray}\label{eq-250112-5} 
\partial_{n-1}^i \partial_n^j= \partial_{n-1}^{j-1} \partial  _n^i 
\end{eqnarray}
  for  any   $0\leq  i<j\leq  n$. 
  \end{definition}   
  
  \begin{definition}\label{def-7.1aa1}
A  {\it  persistent  $\Delta$-set}   $\mathbf{S}_\bullet=\{S_\bullet(t)  \mid  t\in \mathbb{R}\} $  is  a  family   of  $\Delta$-sets  $S_\bullet(t)$  whose  face maps  are $\partial^i_n(t)$
  for  $t\in  \mathbb{R}$   together 
 with  a  family  of  maps  
 \begin{eqnarray}\label{eq-250112-7}
 \alpha_\bullet(t_1,t_2):  S_\bullet(t_1)\longrightarrow  S_\bullet(t_2)
 \end{eqnarray}
    for  $t_1\leq  t_2$  
 such  that  Definition~\ref{def-0.za1}~(1)  and  (2)  are  satisfied   and    
   $ \alpha_{n-1}(t_1,t_2)\circ \partial^i_n(t_1) = \partial^i_n(t_2) \circ \alpha_n(t_1,t_2)$,  i.e. the  diagram  commutes
    \begin{eqnarray}\label{eq-diag-3.a1}
    \xymatrix{
     S_n(t_1)\ar[rr]^-{\alpha_n(t_1,t_2)}\ar[d]_-{\partial^i_n(t_1)}  && 
      S_n(t_2)\ar[d]^-{\partial^i_n(t_2)} \\
     S_{n-1}(t_1)\ar[rr]^-{\alpha_{n-1}(t_1,t_2)}   &&  S_{n-1}(t_2) 
    }
    \end{eqnarray}
    for  any  $n\in  \mathbb{N}$,  any  $1\leq  i\leq  n$  and  any  $t_1\leq  t_2$.
  \end{definition}

A  {\it  chain  complex}   $C_\bullet=(C_n)_{n\in  \mathbb{Z}}$  
consists  of  a  sequence  of  abelian  groups  
$C_n$ 
together  with  a  sequence  of  homomorphisms  
 $\partial_n:  C_n\longrightarrow  C_{n-1}$  such  that  $\partial_{n} \partial_{n+1}=0$     
  for  any  $n\in  \mathbb{Z}$    (cf.  \cite[p. 106]{at}).   
  A  {\it     chain  map}  $\varphi:   C_\bullet\longrightarrow  C'_\bullet$  from a  chain  complex  $C_\bullet$  
  to a  chain  complex  $C'_\bullet$  is  a  sequence  of  homomorphisms  $\varphi_n:  C_n\longrightarrow C'_n$
    such  that  
   $\partial_n  \circ  \varphi_n=\varphi_{n-1}\circ \partial_n$  for  any $n\in  \mathbb{Z}$   (cf.  \cite[p. 111]{at}).   
 The  following  definition  is  a  continuous  version  of   \cite[Definition~3.1]{cph1}.

\begin{definition} 
\label{def-250112-7}
  A  {\it   persistent   chain  complex}   $\mathbf{C}_\bullet= \{C_\bullet(t)\mid  t\in \mathbb{R}\}$  
is  a  family  of  chain  complexes  $C_\bullet(t)$  for  $t\in \mathbb{R}$  together  
with  a  family  of  chain  maps  $\varphi(t_1,t_2):   C_\bullet(t_1)\longrightarrow  C_\bullet(t_2)$  for  $t_1\leq  t_2$  
such  that  
\begin{enumerate}[(1)]
\item
 $\varphi(t,t)$  is  the identity  map  for  any  $t\in  \mathbb{R}$;
 \item  
 $\varphi(t_1,t_3)= \varphi(t_2,t_3) \circ  \varphi(t_1,t_2)$  for any  $t_1\leq  t_2\leq  t_3$.   
 \end{enumerate}
\end{definition}

 Let     $\mathbf{S}_\bullet=\{S_\bullet(t)  \mid  t\in \mathbb{R}\} $  be  a    persistent  $\Delta$-set. 
     Let  $R$  be  a  commutative ring with  unit.  
For  any   $n\in \mathbb{N}$  and  any  $t\in  \mathbb{R}$,  
 let   
 $C(S_n(t)) $  be  the  free  $R$-module  generated  by  
 all  the  elements  in  $S_n(t)$.  
 Let  
 \begin{eqnarray*}
 \partial_n^i(t)_\#:  C(S_n(t)) \longrightarrow  C(S_{n-1}(t))  
 \end{eqnarray*}  
 be  the  induced  homomorphism  of  $R$-modules  for  any  $0\leq  i\leq  n$.  Let  
 \begin{eqnarray}\label{eq-250112-2}
 \partial_n(t)=\sum_{i=0}^n (-1)^i  \partial_n^i(t)_\#.  
 \end{eqnarray}
 Let  
 \begin{eqnarray}\label{eq-250112-1}
 \alpha_\bullet(t_1,t_2)_\#:  C(S_\bullet(t_1)) \longrightarrow  C(S_\bullet(t_2))
 \end{eqnarray}
 be  the   homomorphism  of  $R$-modules  induced  from
  (\ref{eq-250112-7}).

 \begin{lemma}\label{le-250112-1}
 Let     $\mathbf{S}_\bullet=\{S_\bullet(t)  \mid  t\in \mathbb{R}\} $  be  a    persistent  $\Delta$-set.  
 Then  we  have  an  associated  persistent  chain  complex  
 \begin{eqnarray}\label{eq-003.1vbxx22}
C(\mathbf{S}_\bullet) =\{C(S_\bullet(t) ) \mid  t\in \mathbb{R}\} 
 \end{eqnarray}
 with  the  persistent   boundary  map  $\boldsymbol{\partial}_\bullet = \{ \partial_\bullet(t)\mid  t\in \mathbb{R}\}$.   
 \end{lemma}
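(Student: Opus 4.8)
The plan is to verify the three conditions in Definition~\ref{def-250112-7} for the family $C(\mathbf{S}_\bullet) = \{C(S_\bullet(t))\mid t\in\mathbb{R}\}$ equipped with $\boldsymbol{\partial}_\bullet$ and the maps $\alpha_\bullet(t_1,t_2)_\#$. First I would show that for each fixed $t\in\mathbb{R}$ the pair $(C(S_\bullet(t)),\partial_\bullet(t))$ is a genuine chain complex. Since $S_\bullet(t)$ is a $\Delta$-set, its face maps $\partial^i_n(t)$ satisfy the $\Delta$-identity (\ref{eq-250112-5}); passing to the free $R$-modules $C(S_n(t))$ and taking the alternating sum (\ref{eq-250112-2}) is exactly the standard computation showing $\partial_{n-1}(t)\circ\partial_n(t)=0$, because the cross-terms cancel in pairs by (\ref{eq-250112-5}). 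This is routine and identical to the simplicial case in \cite[p.~106]{at}.

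Next I would check that each $\alpha_\bullet(t_1,t_2)_\#\colon C(S_\bullet(t_1))\to C(S_\bullet(t_2))$ is a chain map, i.e. that it commutes with the boundary operators. This follows from the commutativity of the square (\ref{eq-diag-3.a1}) in Definition~\ref{def-7.1aa1}: applying the free-$R$-module functor to that diagram gives $\alpha_{n-1}(t_1,t_2)_\#\circ\partial^i_n(t_1)_\# = \partial^i_n(t_2)_\#\circ\alpha_n(t_1,t_2)_\#$ for every $i$, and forming the alternating sum over $i$ yields $\alpha_{n-1}(t_1,t_2)_\#\circ\partial_n(t_1) = \partial_n(t_2)\circ\alpha_n(t_1,t_2)_\#$, which is precisely the chain-map condition.

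Finally I would verify the two persistence axioms. For condition (1), $\alpha_\bullet(t,t)$ is the identity on $S_\bullet(t)$ by Definition~\ref{def-0.za1}~(1) (invoked through Definition~\ref{def-7.1aa1}), hence the induced map $\alpha_\bullet(t,t)_\#$ on free $R$-modules is the identity. For condition (2), the functoriality of $S_\bullet\mapsto C(S_\bullet)$ turns the composition law $\alpha_\bullet(t_1,t_3)=\alpha_\bullet(t_2,t_3)\circ\alpha_\bullet(t_1,t_2)$ (Definition~\ref{def-0.za1}~(2)) into $\alpha_\bullet(t_1,t_3)_\# = \alpha_\bullet(t_2,t_3)_\#\circ\alpha_\bullet(t_1,t_2)_\#$. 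Together these give the desired persistent chain complex.

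There is no real obstacle here; the statement is essentially a bookkeeping lemma that packages the standard $\Delta$-set-to-chain-complex construction into the persistent setting. The only point requiring mild care is the sign bookkeeping in the first step, where one must match the index shifts in the $\Delta$-identity (\ref{eq-250112-5}) against the alternating signs in (\ref{eq-250112-2}); but this is the classical argument and carries over verbatim. I would present the proof by stating the three verifications in the order above and citing (\ref{eq-250112-5}), (\ref{eq-diag-3.a1}), and Definition~\ref{def-0.za1}~(1)--(2) respectively, leaving the routine alternating-sum cancellations implicit.
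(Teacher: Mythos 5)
Your proposal is correct and follows essentially the same route as the paper's proof: $\partial^2=0$ from the $\Delta$-identity (\ref{eq-250112-5}) and the alternating sum (\ref{eq-250112-2}), the chain-map property from the commutative square (\ref{eq-diag-3.a1}), and the two persistence axioms from Definition~\ref{def-0.za1}~(1)--(2). No gaps.
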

 \begin{proof}
It  follows from  (\ref{eq-250112-5})  and  (\ref{eq-250112-2})   that    
$
\partial_{n-1}(t)\circ    \partial_n(t)=0
$
  for  any  $n\in \mathbb{N}$
and  any  $t\in \mathbb{R}$.  
Hence  $C(S_\bullet(t) )$  is  a  chain  complex  for any  $t\in \mathbb{R}$.   
By  (\ref{eq-diag-3.a1}),  
\begin{eqnarray}\label{eq-250112-3}
\alpha_{n-1}(t_1,t_2)_\#\circ  \partial_n^i(t_1)_\# =    \partial_n^i(t_2)_\#  \circ  \alpha_n(t_1,t_2)_\#
\end{eqnarray}
      for   any  $n\in \mathbb{N}$, any  $0\leq  i\leq  n$    and  any  $t_1\leq  t_2$.  
      It  follows  from  (\ref{eq-250112-2})   and   (\ref{eq-250112-3})   that  
      \begin{eqnarray*}
\alpha_{n-1}(t_1,t_2)_\#\circ  \partial_n(t_1)  =    \partial_n(t_2) \circ  \alpha_n(t_1,t_2)_\#
\end{eqnarray*}
       for   any  $n\in \mathbb{N}$   and  any  $t_1\leq  t_2$.  
       Hence   (\ref{eq-250112-1})  
       is  a  chain  map  for  any  $t_1\leq  t_2$.   
       It  follows from  Definition~\ref{def-0.za1}~(1)   and  (2)  respectively  that  
       Definition~\ref{def-250112-7}~(1)  and  (2)  are  satisfied  for  
       (\ref{eq-250112-1}).  
   Therefore,         
     (\ref{eq-003.1vbxx22})  is  a  persistent  chain  complex.  
        \end{proof}

\begin{definition}(cf.  \cite[Chap.~IV]{s-s-book})
\label{def-7.187}
A  {\it  bi-$\Delta$-set}    is  a  bigraded  sequence  of  sets  $S_{\bullet,\bullet}=(S_{p,q})_{p,q\in\mathbb{N}}$  with  face  maps 
$\partial^i_{p,q}:  S_{p,q}\longrightarrow  S_{p-1,q}$,  $i=0,1,\ldots,p$,  and  
$d^j_{p,q}:  S_{p,q}\longrightarrow  S_{p,q-1}$,    $j=0,1,\ldots,q$, 
 satisfying   the $\Delta$-identities  
\begin{eqnarray}
\partial_{p-1,q}^i \partial_{p,q}^j&=& \partial _{p-1,q}^{j-1} \partial  _{p,q}^i, 
\label{eq-250112-01}\\
d_{p,q-1}^k d_{p,q}^l &=&  d_{p,q-1}^{l-1}  d_{p,q}^k
\label{eq-250112-02}
\end{eqnarray}
  for  any   $0\leq  i<j\leq  p$ and  any  $0\leq  k<l\leq  q$ 
  such  that  the  diagram  commutes 
  \begin{eqnarray}\label{eq-250112-03}
  \xymatrix{
  S_{p,q} \ar[r]  ^-{\partial_{p,q}^i} \ar[d]_-{d_{p,q}^k}   &S_{p-1,q} \ar[d]^-{d_{p-1,q}^k}\\
  S_{p,q-1} \ar[r] ^-{\partial_{p,q-1}^i}  & S_{p-1,q-1}
  }
  \end{eqnarray}
  for  any  $p,q\in \mathbb{N}$,  any  $0\leq  i\leq  p$  and  any  $0\leq  k\leq  q$.  
  \end{definition}   

 \begin{definition}
 A  {\it  persistent    bi-$\Delta$-set}    
 $\mathbf{S}_{\bullet,\bullet} =\{S_{\bullet,\bullet}(t)\mid  t\in\mathbb{R}\}$  
 is  a  family  of  bi-$\Delta$-sets  $S_{\bullet,\bullet}(t)$  whose  face  maps  
 are  $\partial^i_{p,q}(t)$  and  $d^k_{p,q}(t)$  for  $t\in\mathbb{R}$  
 together  with  a  family  of  maps  
 \begin{eqnarray}\label{eq-250112-9}
 \alpha(t_1,t_2):  S_{\bullet,\bullet}(t_1)\longrightarrow  S_{\bullet,\bullet}(t_2)
 \end{eqnarray}  
 such  that  Definition~\ref{def-0.za1}~(1) and  (2) are  satisfied  and  
 \begin{eqnarray}
 \alpha_{p-1,q}(t_1,t_2)\circ \partial^i_{p,q}(t_1) &=& \partial^i_{p,q}(t_2) \circ \alpha_{p,q}(t_1,t_2),
 \label{eq-250112-27}\\ 
  \alpha_{p,q-1}(t_1,t_2)\circ d^k_{p,q}(t_1) &=& d^k_{p,q}(t_2) \circ \alpha_{p,q}(t_1,t_2)
     \label{eq-250112-28}
 \end{eqnarray}
 for  any  $p,q\in \mathbb{N}$,  any  $0\leq  i\leq  p$,  any  $0\leq  k\leq  q$ 
  and  any  $t_1\leq  t_2$.  
 \end{definition}
 
 The   (persistent)  double  complex  is   a   generalization  of  the  (persistent)  chain  complex.  
A  {\it  double   complex}  $C_{\bullet,\bullet}=(C_{p,q})_{p,q\in  \mathbb{Z}}$ 
 consists  of  a  bigraded  family   of  abelian  groups  $C_{p,q}$  together  with 
 two   families  of  homomorphisms  
 $\partial:  C_{p,q}\longrightarrow  C_{p,q-1}$  and  $d:  C_{p,q}\longrightarrow  C_{p-1,q}$  
 for  any  $p,q\in  \mathbb{Z}$  such  that  
 $\partial^2=d^2=0$  and  $\partial  \circ d=d\circ \partial$.  
   A  {\it     morphism}  $\varphi:   C_{\bullet,\bullet}\longrightarrow  C'_{\bullet,\bullet}$ 
    from a  double     complex  $C_{\bullet,\bullet}$  
  to a  double  complex  $C'_{\bullet,\bullet}$  is  a  bigraded  family 
    of  homomorphisms  $\varphi_{p,q}:  C_{p,q}\longrightarrow C'_{p,q}$
    such  that  
   $\partial  \circ  \varphi_{p,q}=\varphi_{p-1,q}\circ \partial $   and  
     $d \circ  \varphi_{p,q}=\varphi_{p,q-1}\circ d$  
     for  any $p,q\in  \mathbb{Z}$.

\begin{definition}\label{def-25-02-08}
  A  {\it   persistent    double  complex}   $\mathbf{C}_{\bullet,\bullet}= \{C_{\bullet,\bullet}(t)\mid  t\in \mathbb{R}\}$  
is  a  family  of  double  complexes  $C_{\bullet,\bullet}(t)$  for  $t\in \mathbb{R}$  together  
with  a  family  of   morphisms   $\varphi(t_1,t_2):   C_{\bullet,\bullet}(t_1)\longrightarrow  C_{\bullet,\bullet}(t_2)$  for  $t_1\leq  t_2$  
such  that  
 Definition~\ref{def-250112-7}~(1)  and  (2)  are  satisfied.     
\end{definition}

Let     $\mathbf{S}_{\bullet,\bullet}=\{S_{\bullet,\bullet}(t)  \mid  t\in \mathbb{R}\} $  be  a    persistent  
bi-$\Delta$-set. 
For  any   $p,q\in \mathbb{N}$  and  any  $t\in  \mathbb{R}$,  
 let   
 $C(S_{p,q}(t)) $  be  the  free  $R$-module  generated  by  
 all  the  elements  in  $S_{p,q}(t)$.  
 Let  
 \begin{eqnarray*}
 \partial_{p,q}^i(t)_\#:  &&  C(S_{p,q}(t)) \longrightarrow  C(S_{p-1,q}(t)),\\ 
 d_{p,q}^k(t)_\#:  &&  C(S_{p,q}(t)) \longrightarrow  C(S_{p,q-1}(t))
 \end{eqnarray*}  
 be  the  induced  homomorphisms  of  $R$-modules  for  any  $0\leq  i\leq  p$ 
 and  any  $0\leq  k\leq  q$.  Let  
 \begin{eqnarray*}
 \partial_{p,q}(t)&=&\sum_{i=0}^p (-1)^i  \partial_{p,q}^i(t)_\#,\\
  d_{p,q}(t)&=& \sum_{k=0}^q (-1)^k d_{p,q}^k(t)_\#.  
 \end{eqnarray*}
 Let  
 \begin{eqnarray}\label{eq-250112-22}
 \alpha(t_1,t_2)_\#:  C(S_{\bullet,\bullet}(t_1)) \longrightarrow  C(S_{\bullet,\bullet}(t_2))
 \end{eqnarray}
 be  the   homomorphism  of  $R$-modules  induced  by  (\ref{eq-250112-9}).  
  The  next  lemma  is  a  generalization of  Lemma~\ref{le-250112-1}.

 \begin{lemma}\label{le-250112-7}
 Let     $\mathbf{S}_{\bullet,\bullet}=\{S_{\bullet,\bullet}(t)  \mid  t\in \mathbb{R}\} $  be  a    persistent 
  bi-$\Delta$-set.  
 Then  we  have  an  associated  persistent  double   complex  
 \begin{eqnarray}\label{eq-003.1vb}
C(\mathbf{S}_{\bullet,\bullet}) =\{C (S_{\bullet,\bullet}(t) ) \mid  t\in \mathbb{R}\} 
 \end{eqnarray}
 with  the  persistent   boundary  maps  $\boldsymbol{\partial}_{p,q} = \{  \partial_{p,q}(t)\mid  t\in \mathbb{R}\}$  and  $\mathbf{d}_{p,q} = \{  d_{p,q}(t)\mid  t\in \mathbb{R}\}$.   
 \end{lemma}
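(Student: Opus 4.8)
The plan is to run the proof of Lemma~\ref{le-250112-1} in each of the two simplicial directions of $\mathbf{S}_{\bullet,\bullet}$ and then record the compatibility between them. First I would fix $t\in\mathbb{R}$ and check that $C(S_{\bullet,\bullet}(t))$, together with $\partial_{\bullet,\bullet}(t)$ and $d_{\bullet,\bullet}(t)$, is a double complex. Applying the free-$R$-module functor to the $\Delta$-identity (\ref{eq-250112-01}) gives $\partial^i_{p-1,q}(t)_\#\,\partial^j_{p,q}(t)_\#=\partial^{j-1}_{p-1,q}(t)_\#\,\partial^i_{p,q}(t)_\#$ for $0\le i<j\le p$, and the usual telescoping of the alternating sum then yields $\partial_{p-1,q}(t)\circ\partial_{p,q}(t)=0$; the verbatim computation applied to (\ref{eq-250112-02}) gives $d_{p,q-1}(t)\circ d_{p,q}(t)=0$. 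For the mixed relation, applying the functor to the commuting square (\ref{eq-250112-03}) produces $d^k_{p-1,q}(t)_\#\,\partial^i_{p,q}(t)_\#=\partial^i_{p,q-1}(t)_\#\,d^k_{p,q}(t)_\#$ for all $0\le i\le p$ and $0\le k\le q$, whence
\begin{eqnarray*}
d_{p,q}(t)\circ\partial_{p,q}(t) &=& \sum_{i=0}^{p}\sum_{k=0}^{q}(-1)^{i+k}\,d^k_{p-1,q}(t)_\#\,\partial^i_{p,q}(t)_\# \\
&=& \sum_{i=0}^{p}\sum_{k=0}^{q}(-1)^{i+k}\,\partial^i_{p,q-1}(t)_\#\,d^k_{p,q}(t)_\# = \partial_{p,q}(t)\circ d_{p,q}(t).
\end{eqnarray*}
Hence $C(S_{\bullet,\bullet}(t))$ is a double complex for each $t$.

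Next I would verify that $\alpha(t_1,t_2)_\#$ from (\ref{eq-250112-22}) is a morphism of double complexes for every $t_1\le t_2$. Applying the free-module functor to the diagrams underlying (\ref{eq-250112-27}) and (\ref{eq-250112-28}) gives
\begin{eqnarray*}
\alpha_{p-1,q}(t_1,t_2)_\#\circ\partial^i_{p,q}(t_1)_\# &=& \partial^i_{p,q}(t_2)_\#\circ\alpha_{p,q}(t_1,t_2)_\#,\\
\alpha_{p,q-1}(t_1,t_2)_\#\circ d^k_{p,q}(t_1)_\# &=& d^k_{p,q}(t_2)_\#\circ\alpha_{p,q}(t_1,t_2)_\#
\end{eqnarray*}
for all $0\le i\le p$ and $0\le k\le q$; multiplying by $(-1)^i$, respectively $(-1)^k$, and summing yields $\alpha(t_1,t_2)_\#\circ\partial_{p,q}(t_1)=\partial_{p,q}(t_2)\circ\alpha(t_1,t_2)_\#$ and $\alpha(t_1,t_2)_\#\circ d_{p,q}(t_1)=d_{p,q}(t_2)\circ\alpha(t_1,t_2)_\#$, which is precisely the definition of a morphism of double complexes.

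Finally, Definition~\ref{def-250112-7}~(1) and (2) for the family $\{\alpha(t_1,t_2)_\#\}$ are inherited from Definition~\ref{def-0.za1}~(1) and (2) for $\{\alpha(t_1,t_2)\}$, because taking free $R$-modules is functorial and hence preserves identities and composites---this is word for word the corresponding step in the proof of Lemma~\ref{le-250112-1}. Assembling the three points shows that (\ref{eq-003.1vb}) is a persistent double complex. I do not expect a genuine obstacle here: every claim reduces to applying a functor to one of the stated identities and then summing against signs, and the only point worth a moment's care is that in the mixed relation the two sign conventions $(-1)^i$ and $(-1)^k$ pass cleanly through the strict commuting square (\ref{eq-250112-03}), producing $(-1)^{i+k}$ on both sides with no sign twist, so that $\partial\circ d=d\circ\partial$ rather than an anticommutation relation.
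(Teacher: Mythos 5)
Your proposal is correct and follows the same route as the paper's proof: deduce $\partial^2=0$, $d^2=0$ and $\partial\circ d=d\circ\partial$ from (\ref{eq-250112-01})--(\ref{eq-250112-03}), show $\alpha(t_1,t_2)_\#$ is a morphism of double complexes via (\ref{eq-250112-27})--(\ref{eq-250112-28}), and inherit the persistence axioms from Definition~\ref{def-0.za1}. You simply spell out the sign bookkeeping that the paper leaves implicit, and your observation that the strict square (\ref{eq-250112-03}) yields commutation rather than anticommutation is exactly what the paper's convention for double complexes requires.
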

 \begin{proof}
 It  follows  from  (\ref{eq-250112-01}),   (\ref{eq-250112-02})  
 and   (\ref{eq-250112-03})
  respectively  that  
   \begin{eqnarray*}
 &  \partial_{p,q}(t)^2=0, ~~~~~~  d_{p,q}(t)^2=0, \\
 &  \partial_{p,q-1}(t)  \circ d_{p,q}(t)=d_{p-1,q}(t)\circ \partial_{p,q}(t)  
   \end{eqnarray*}
 for  any  $p,q\in \mathbb{N}$  and   any  $t\in \mathbb{R}$.    
It  follows  from  (\ref{eq-250112-27})  and  (\ref{eq-250112-28})  
respectively  that  
\begin{eqnarray*}
 \partial_{p,q}(t_2)  \circ  \alpha_{p,q}(t_1,t_2)_\# &=& \alpha_{p-1,q}(t_1,t_2)_\#\circ \partial_{p,q}(t_1),      \\  
 d_{p,q}(t_2)  \circ  \alpha_{p,q}(t_1,t_2)_\# &=& \alpha_{p,q-1}(t_1,t_2)_\#\circ  d_{p,q}(t_1)   
\end{eqnarray*}
for any  $p,q\in \mathbb{N}$ and  any  $t_1\leq  t_2$.  
Hence  (\ref{eq-250112-22})  is  a  morphism  of  double  complexes  for  any  $t_1\leq  t_2$.  
   It  follows from  Definition~\ref{def-0.za1}~(1)   and  (2)  respectively  that  
       Definition~\ref{def-250112-7}~(1)  and  (2)  are  satisfied  for  
       (\ref{eq-250112-22}).  
   Therefore,         
  (\ref{eq-003.1vb})  is  a  persistent  double  complex.  
  \end{proof}

\subsection{Persistent  $\Delta$-CW-complexes}
 
\begin{definition}\label{def-7.2}
 \footnote[2]{The     notion  of   $\Delta$-CW-complexes   here   is   different  from  
 the  notion  of  
 $\Delta$-complexes  in  \cite[p.  103]{at}.   }
A  {\it  $\Delta$-CW-complex}     is  a $\Delta$-set  $A_\bullet= (A_n)_{n\in\mathbb{N}}$
 where  all the  sets  $A_n$  are  CW-complexes  and  all the  face  maps 
 $\partial_n^i:  A_n\longrightarrow  A_{n-1}$   are  cellular.
   \end{definition}   
 
\begin{definition}\label{def-7.3}
 A   {\it   persistent  $\Delta$-CW-complex}    is  a   family  of   $\Delta$-CW-complexes
  \begin{eqnarray*}
  \mathbf{A}_\bullet= \{  A_\bullet(t)\mid   t\in  \mathbb{R}\},  
  \end{eqnarray*}  
 where  $A_\bullet(t)$  is  a  $\Delta$-CW-complex  with  face  maps  
   $\partial^i_n(t)$  for     $t\in  \mathbb{R}$,  together  with  a  family  of   cellular    
   maps  
   \begin{eqnarray}\label{eq-3.9az1}
    \alpha_n(t_1,t_2):   A_n(t_1)\longrightarrow  A_n(t_2)
    \end{eqnarray} 
    for    $n\in  \mathbb{N}$    and    $t_1\leq  t_2$  such  that  
    the  identities  in  
      Definition~\ref{def-0.za1}~(1), (2)  and 
       the  commutative  diagram  (\ref{eq-diag-3.a1})   are  satisfied
          for  any    $0\leq  i\leq  n$.    
    \end{definition}

    Let  $\mathbf{A}_\bullet=  \{A_\bullet(t)\mid  t\in \mathbb{R}\}$  
    be  a  persistent  $\Delta$-CW-complex,  where  $A_\bullet(t)= (A_n(t))_{n\in \mathbb{N}}$  
    is  a  $\Delta$-CW-complex    with  face  maps  
    \begin{eqnarray*}
    \partial_n^i(t):  A_n(t)\longrightarrow  A_{n-1}(t)    
    \end{eqnarray*}
    for    $t\in    \mathbb{R}$.    
     Let  $(C_\bullet(A_n(t)), d(t))$  be  the   cellular  chain  complex  
    with 
    $C_m(A_n(t))$  the  free   $R$-module  spanned  by  the  $m$-cells  of  $A_n(t)$ 
    and  the  
     boundary  maps 
    \begin{eqnarray*}
    d_m(t):  C_m(A_n(t))\longrightarrow  C_{m-1}(A_n(t))
    \end{eqnarray*}
   for    $m\in \mathbb{N}$  (cf.  \cite[p. 139  and  p.  153]{at}).  
    The  next  proposition  is  a  topological  version  of     
       Lemma~\ref{le-250112-7}.  
    
    \begin{proposition}\label{pr-3.2.a1}
    Let  $\mathbf{A}_\bullet=  \{A_\bullet(t)\mid  t\in \mathbb{R}\}$  
    be  a  persistent  $\Delta$-CW-complex.  
   Then   we  have   
      a  persistent  double  complex  
 \begin{eqnarray}\label{eq-3.lllq2}
(C_\bullet(\mathbf{A}_\bullet), \boldsymbol{\partial},  \mathbf{d}) =\{(C_\bullet(A_\bullet(t)), \partial(t),  d(t))\mid  t\in \mathbb{R}\}
 \end{eqnarray}
    where  $\partial(t) =\sum_{i=0}^n(-1)^i  \partial_n^i(t)$  for  any  $t\in \mathbb{R}$.  
    \end{proposition}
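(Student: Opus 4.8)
The plan is to apply the cellular chain functor in the cell-dimension direction and the alternating face-map construction of Lemma~\ref{le-250112-1} in the $\Delta$-direction, and then check that the two interact correctly. Concretely, for $t\in\mathbb{R}$ I set $C_{m,n}(t):=C_m(A_n(t))$, the free $R$-module on the $m$-cells of the CW-complex $A_n(t)$, and equip this bigraded family with two differentials: the cellular boundary $d_m(t)\colon C_m(A_n(t))\to C_{m-1}(A_n(t))$, lowering the cell dimension, and $\partial_n(t):=\sum_{i=0}^n(-1)^i(\partial_n^i(t))_\#\colon C_m(A_n(t))\to C_m(A_{n-1}(t))$, lowering the $\Delta$-degree, where $(\partial_n^i(t))_\#$ is the homomorphism on cellular chains induced by the face map $\partial_n^i(t)$. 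The crucial structural input is that the cellular chain complex is a functor from CW-complexes and cellular maps to chain complexes of $R$-modules; since by Definition~\ref{def-7.2} every face map $\partial_n^i(t)$ is cellular and by Definition~\ref{def-7.3} every structure map $\alpha_n(t_1,t_2)$ is cellular, each $(\partial_n^i(t))_\#$ and each $\alpha_n(t_1,t_2)_\#$ is an honest chain map, i.e. commutes with the cellular boundary $d(t)$.

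Granting this, I verify the double-complex axioms for each fixed $t$. First, $d(t)^2=0$ holds because $(C_\bullet(A_n(t)),d(t))$ is by construction the cellular chain complex of $A_n(t)$. Second, $\partial_n(t)\circ\partial_{n+1}(t)=0$ follows by expanding the double alternating sum and cancelling terms in pairs via the $\Delta$-identity (\ref{eq-250112-5}), read now for the induced homomorphisms $(-)_\#$ on cellular chains; this is verbatim the computation in the proof of Lemma~\ref{le-250112-1}, legitimate because $(-)_\#$ is functorial. Third, $\partial_n(t)\circ d(t)=d(t)\circ\partial_n(t)$: each $(\partial_n^i(t))_\#$ commutes with $d(t)$ because it is a chain map, and the alternating signs attached to the $(\partial_n^i(t))_\#$ are inert with respect to $d(t)$, so their signed sum commutes with $d(t)$ as well. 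Hence each $(C_{\bullet,\bullet}(t),\partial(t),d(t))$ is a double complex in the sense recalled before Definition~\ref{def-25-02-08}, the cellular boundary playing the role of $d$ and the face-map sum the role of $\partial$. This commutation $\partial\circ d=d\circ\partial$ is the one place where the hypothesis that the face maps are cellular is genuinely used, and it is the step I expect to be the main (in fact only) obstacle; everything else parallels Lemma~\ref{le-250112-7}.

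Finally I check the persistence data. For $t_1\leq t_2$ put $\varphi(t_1,t_2):=\alpha(t_1,t_2)_\#=\{\alpha_n(t_1,t_2)_\#\}$. It commutes with the cellular boundary because $\alpha_n(t_1,t_2)$ is cellular, hence induces a chain map; and it commutes with the face differential because the commuting square (\ref{eq-diag-3.a1}), i.e. $\alpha_{n-1}(t_1,t_2)\circ\partial_n^i(t_1)=\partial_n^i(t_2)\circ\alpha_n(t_1,t_2)$, yields after applying $(-)_\#$ the relation $\alpha_{n-1}(t_1,t_2)_\#\circ(\partial_n^i(t_1))_\#=(\partial_n^i(t_2))_\#\circ\alpha_n(t_1,t_2)_\#$, and hence $\alpha_{n-1}(t_1,t_2)_\#\circ\partial_n(t_1)=\partial_n(t_2)\circ\alpha_n(t_1,t_2)_\#$ after taking the signed sum. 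So $\varphi(t_1,t_2)$ is a morphism of double complexes, and conditions (1), (2) of Definition~\ref{def-250112-7} follow from conditions (1), (2) of Definition~\ref{def-0.za1} for the maps $\alpha_n(t_1,t_2)$ together with functoriality of $(-)_\#$, exactly as at the end of the proofs of Lemma~\ref{le-250112-1} and Lemma~\ref{le-250112-7}. This establishes that (\ref{eq-3.lllq2}) is a persistent double complex. The remaining care is purely bookkeeping: making the "cellular map induces a chain map of cellular chain complexes" functoriality explicit, and confirming that the alternating signs in $\partial_n(t)$ do not disturb its commutation with $d(t)$.
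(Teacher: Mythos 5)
Your proposal is correct and follows essentially the same route as the paper's proof: apply the cellular chain functor (so that the cellular face maps and structure maps induce chain maps commuting with $d(t)$), form the alternating sum $\partial_n(t)=\sum_i(-1)^i(\partial_n^i(t))_\#$, get $\partial^2=0$ from the $\Delta$-identity as in Lemma~\ref{le-250112-1}/\ref{le-250112-7}, and read off the persistence compatibility from the commuting square (\ref{eq-diag-3.a1}). The only difference is that you spell out the functoriality and commutation checks that the paper compresses into ``by an analog of Lemma~\ref{le-250112-7}.''
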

    
    \begin{proof}  
   For  any  $t\in \mathbb{R}$,   we  have  an  induced   chain  map 
    \begin{eqnarray*}
    \partial_n^i(t):   (C_\bullet(A_n(t)),   d(t))\longrightarrow   (C_\bullet(A_{n-1}(t)),   d(t)).  
    \end{eqnarray*}
     By  an  analog   of      Lemma~\ref{le-250112-7}  or  \cite[Lemma~3.1]{conf1},   
    we  have  a  persistent  chain  map  
    \begin{eqnarray*}
       \boldsymbol{ \partial}_n^i:   (C_\bullet(\mathbf{A}_n),  \mathbf{d} )\longrightarrow  
       (C_\bullet(\mathbf{A}_{n-1}),  \mathbf{d} ) 
    \end{eqnarray*}
    where  
   $
    \boldsymbol{ \partial}_n^i= \{\partial_n^i(t)\mid  t\in \mathbb{R}\}$,  and       consequently  
       a  persistent  chain  map 
    \begin{eqnarray*}
    \boldsymbol{ \partial}=\sum_{i=0}^n(-1)^i   \boldsymbol{ \partial}_n^i:  
      (C_\bullet(\mathbf{A}_n),  \mathbf{d} )\longrightarrow 
        (C_\bullet(\mathbf{A}_{n-1}),  \mathbf{d} ) 
    \end{eqnarray*} 
    such     that  
    $ \mathbf{d}^2=0$,  $ \boldsymbol{\partial}^2=0$  and  $ \boldsymbol{\partial} \mathbf{ d} =    \mathbf{d} \boldsymbol{ \partial}$.  
Thus       (\ref{eq-3.lllq2})  is  a  persistent  double  complex. 
    \end{proof}

    The  next  corollary  follows  from  Definition~\ref{def-7.2},  Definition~\ref{def-7.3}  
 and  
    Proposition~\ref{pr-3.2.a1}.

    \begin{corollary}
    \label{pr-250112-1}
    Let  $\mathbf{A}_\bullet=  \{A_\bullet(t)\mid  t\in \mathbb{R}\}$  
    be  a  persistent  $\Delta$-CW-complex.  
   Then  we  have  a  persistent  chain  complex  
   \begin{eqnarray}\label{eq-3.lllq77}
(H^*(\mathbf{A}_\bullet), \boldsymbol{\partial}) =\{(H^*(A_\bullet(t)), \partial(t))\mid  t\in \mathbb{R}\}
 \end{eqnarray}
 where  $H^*(\mathbf{A}_n )$  is  the  persistent  cohomology  ring  of  
 $\mathbf{A}_n$  for any  $n\in \mathbb{N}$.   
    \end{corollary}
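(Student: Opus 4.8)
The plan is to deduce the statement from Proposition~\ref{pr-3.2.a1} together with the functoriality of cohomology. Proposition~\ref{pr-3.2.a1} already produces the persistent double complex $(C_\bullet(\mathbf{A}_\bullet),\boldsymbol{\partial},\mathbf{d})$, in which, for each fixed $\Delta$-index $n$, the column $(C_\bullet(A_n(t)),d(t))$ is the cellular chain complex of the CW-complex $A_n(t)$, and $\boldsymbol{\partial}=\sum_i(-1)^i\boldsymbol{\partial}^i_n$ is a persistent chain map lowering $n$, with $\boldsymbol{\partial}^2=0$ and $\boldsymbol{\partial}\mathbf{d}=\mathbf{d}\boldsymbol{\partial}$. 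So the only thing to do is to pass to cohomology in the cellular direction and check that the persistence structure survives.

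First I would fix $n\in\mathbb{N}$ and dualize $(C_\bullet(A_n(t)),d(t))$ over $R$, taking cohomology of the resulting cellular cochain complex to obtain the cohomology ring $H^*(A_n(t))$ with its cup product. The cellular maps $\alpha_n(t_1,t_2)$ of Definition~\ref{def-7.3} induce ring homomorphisms $\alpha_n(t_1,t_2)^*\colon H^*(A_n(t_2))\to H^*(A_n(t_1))$, and the identities of Definition~\ref{def-0.za1}(1),(2) for $\mathbf{A}_\bullet$ become, under the contravariant functor $H^*$, exactly the two properties listed after Definition~\ref{def-25-01-31-1}. Hence for each $n$ the family $H^*(\mathbf{A}_n)=\{H^*(A_n(t))\mid t\in\mathbb{R}\}$ with the maps $\alpha_n(t_1,t_2)^*$ is a persistent cohomology ring.

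Next I would construct $\boldsymbol{\partial}$. Each face map $\partial^i_n(t)\colon A_n(t)\to A_{n-1}(t)$ is cellular (Definition~\ref{def-7.2}), so it induces a ring homomorphism $\partial^i_n(t)^*\colon H^*(A_{n-1}(t))\to H^*(A_n(t))$; applying $H^*$ to the commuting square (\ref{eq-diag-3.a1}) shows that $\boldsymbol{\partial}^i_n=\{\partial^i_n(t)^*\}_t$ is a persistent map $H^*(\mathbf{A}_{n-1})\to H^*(\mathbf{A}_n)$, exactly as in the discussion following Definition~\ref{def1ax}. Putting $\partial(t)=\sum_{i=0}^n(-1)^i\partial^i_n(t)^*$, the $\Delta$-identity (\ref{eq-250112-5})---equivalently the relation $\boldsymbol{\partial}^2=0$ already established for the double complex, which descends to cohomology because $\boldsymbol{\partial}$ commutes with $\mathbf{d}$---gives $\partial(t)\circ\partial(t)=0$ by the standard alternating-sign cancellation. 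The same naturality square shows $\alpha_\bullet(t_1,t_2)^*$ intertwines $\partial(t_1)$ and $\partial(t_2)$, and Definition~\ref{def-250112-7}(1),(2) follow from Definition~\ref{def-0.za1}(1),(2) by functoriality. This is the assertion of the corollary.

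The argument is purely formal, so I do not expect a genuine obstacle; the only point that needs care is bookkeeping the variance, i.e.\ keeping the structure maps of $H^*(\mathbf{A}_n)$ and of the resulting persistent chain complex running toward decreasing parameter, in accordance with the convention for persistent cohomology rings in Definition~\ref{def-25-01-31-1} rather than the increasing convention of Definition~\ref{def-250112-7}.
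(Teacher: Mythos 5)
Your proposal is correct and follows essentially the same route as the paper's proof: both pass from the persistent double complex of Proposition~\ref{pr-3.2.a1} by applying the Hom and cohomology functors to each column $(C_\bullet(A_n(t)),d(t))$, then verify that the cup product and the induced ring homomorphisms $\alpha_n(t_1,t_2)^*$ make $H^*(\mathbf{A}_n)$ a persistent cohomology ring while the face maps induce the differential $\boldsymbol{\partial}$. You are merely more explicit than the paper about the contravariance of the structure maps and about why $\boldsymbol{\partial}^2=0$ descends to cohomology, which is a welcome clarification rather than a different argument.
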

    
    \begin{proof}
    Apply  the  Hom  functor  and  the  cohomology  functor  subsequently  
    to  the  persistent  chain  complex 
    \begin{eqnarray*} 
   (C_\bullet(\mathbf{A}_n),  \mathbf{d} )=  \{ (C_\bullet(A_n(t)), d(t))\mid  t\in \mathbb{R}\}  
     \end{eqnarray*}
     for  any  $n\in \mathbb{N}$.   
    The  persistent  double  complex  
    (\ref{eq-3.lllq2})  
      induces   the  persistent  chain  complex  (\ref{eq-3.lllq77})  
     where  $H^*(\mathbf{A}_n )$  is  the  persistent  cohomology  module  of  
 $\mathbf{A}_n$  for any  $n\in \mathbb{N}$.   By  Definition~\ref{def-7.2},
   we  have  the  cup  product     
   \begin{eqnarray}\label{eq-250112-51}
  \smile:   H^{p}(A_n(t)) \times  H^{q} (A_n(t))\longrightarrow  H^{p+q}(A_n(t))
   \end{eqnarray}  
  which  makes    $H^*(A_n(t))$    to  be    a   cohomology    ring  for  any   $t\in \mathbb{R}$.  
 By       Definition~\ref{def-7.3},     for  any  $t_1\leq  t_2$,  
\begin{eqnarray*}
\alpha_n(t_1,t_2)^*:  H^*(A_n(t_2))\longrightarrow  H^*(A_n(t_1))
\end{eqnarray*}
  is  a  homomorphism  of  cohomology  rings  induced  by  (\ref{eq-3.9az1}).  
 That  is,  
 the  diagram  commutes 
 \begin{eqnarray}\label{eq-250112-52}
 \xymatrix{
 H^{p}(A_n(t_2)) \times  H^{q} (A_n(t_2))  \ar[d]_-{\alpha_n(t_1,t_2)^*} \ar[r]^-{ \smile}  &H^{p+q}(A_n(t_2))  \ar[d]^-{\alpha_n(t_1,t_2)^*}\\
  H^{p}(A_n(t_1)) \times  H^{q} (A_n(t_1))   \ar[r]^-{ \smile}  &H^{p+q}(A_n(t_1)).   
 }
\end{eqnarray} 
  Therefore,    $H^*(\mathbf{A}_n )$  in   (\ref{eq-3.lllq77})  
    is  a  persistent  cohomology  ring.  
    \end{proof}

   \subsection{Persistent  $\Delta$-manifolds}
   
\begin{definition}\label{def-8.2}\cite[Definition~1~(2),  Section~3.1]{conf1}
A  {\it  $\Delta$-manifold}     is  a    $\Delta$-set  $M_\bullet= (M_n)_{n\in\mathbb{N}}$
 where  all the  sets  $M_n$  are  smooth  manifolds  and  all the  face  maps 
 $\partial_n^i:  A_n\longrightarrow  A_{n-1}$   are smooth.  
   \end{definition}   

\begin{definition}\label{def-001a1}
A  {\it   persistent  $\Delta$-manifold}     is  a  family  of   $\Delta$-manifolds 
 \begin{eqnarray*}
  \mathbf{M}_\bullet= \{  M_\bullet(t)\mid   t\in  \mathbb{R}\},  
  \end{eqnarray*}  
 where  $M_\bullet(t)$  is  a  $\Delta$-manifold  with  face  maps  
   $\partial^i_n(t)$  for     $t\in  \mathbb{R}$,  together  with  a  family  of   maps  
   \begin{eqnarray}\label{eq-3.9by1}
    \alpha_n(t_1,t_2):   M_n(t_1)\longrightarrow  M_n(t_2)
    \end{eqnarray} 
    for     $n\in  \mathbb{N}$    and     $t_1\leq  t_2$  such  that  
    the  identities  in  
      Definition~\ref{def-0.za1}~(1), (2)  and   the  commutative  diagram  
      (\ref{eq-diag-3.a1})   are  satisfied
          for   any  $0\leq  i\leq  n$.    
    \end{definition}
    
    Let  $\mathbf{M}_\bullet=  \{M_\bullet(t)\mid  t\in \mathbb{R}\}$  
    be  a  persistent  $\Delta$-manifold,  where  $M_\bullet(t)= (M_n(t))_{n\in \mathbb{N}}$  
    is  a  $\Delta$-manifold  for  any  $t\in    \mathbb{R}$  with  face  maps  
    $\partial_n^i(t):  M_n(t)\longrightarrow  M_{n-1}(t)$   for  any  $n\in \mathbb{N}$  and  any  
    $0\leq  i\leq  n$.  
        Let  $(\Omega^\bullet(M_n(t)), d(t))$  be  the   de-Rham  cochain  complex  
    with 
    $\Omega^m(M_n(t))$  the  real  vector  space   spanned  by  the  $m$-forms  on  $M_n(t)$ 
    and  the  
     coboundary  maps 
    \begin{eqnarray*}
    d^m(t):  \Omega^m(M_n(t))\longrightarrow  \Omega^{m+1}(M_n(t))
    \end{eqnarray*}
      for  any  $m\in  \mathbb{N}$.  
    We  have  the  exterior   product  of  differential  forms  
    \begin{eqnarray}\label{eq-250112-85}
    \wedge:  \Omega^p(M_n(t))\times  \Omega^q(M_n(t))  \longrightarrow  \Omega^{p+q}(M_n(t)).  
    \end{eqnarray}

  \begin{proposition}\label{pr-3.2.b2}
  Let  $\mathbf{M}_\bullet=  \{M_\bullet(t)\mid  t\in \mathbb{R}\}$  
    be  a  persistent  $\Delta$-manifold.  Then   
    we  have  a    persistent  double  complex  
 \begin{eqnarray}\label{eq-3.lllq2aa1}
(\Omega^\bullet(\mathbf{M}_\bullet), \boldsymbol{\partial},  \mathbf{d}) =\{(\Omega^\bullet(M_\bullet(t)), \partial(t),  d(t))\mid  t\in \mathbb{R}\}
 \end{eqnarray}
 with  a  persistent  exterior    product  
 \footnote[3]{
 A {\it  persistent  exterior  product}  (\ref{eq-250112-81}) 
  is  a  family  of  exterior  products  
 (\ref{eq-250112-85})   for  $t\in \mathbb{R}$  
     such  that    for  any  $t_1\leq  t_2$,  
\begin{eqnarray*}
\alpha_n(t_1,t_2)^\#:  \Omega^\bullet(M_n(t_2))\longrightarrow   \Omega^\bullet(M_n(t_1))
\end{eqnarray*}
  is  a    homomorphism  of  algebras  induced  by  (\ref{eq-3.9by1}).  
   }
 \begin{eqnarray}\label{eq-250112-81}
 \wedge: \Omega^p(\mathbf{M}_n)\times  \Omega^q(\mathbf{M}_n) \longrightarrow  
 \Omega^{p+q}(\mathbf{M}_n)   
 \end{eqnarray}
    for  any  $n\in  \mathbb{N}$  
    where  $\partial(t) =\sum_{i=0}^n(-1)^i  \partial_n^i(t)$      for  any  $t\in \mathbb{R}$. 
    \end{proposition}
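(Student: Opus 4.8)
\emph{Proof proposal.} The plan is to transport the proof of Proposition~\ref{pr-3.2.a1} (and the cup-product part of Corollary~\ref{pr-250112-1}) through the de~Rham functor $\Omega^\bullet(-)$; the only structural difference is that $\Omega^\bullet(-)$ is contravariant, so the $\Delta$-direction of the double complex becomes cosimplicial (the face maps act by pullback of forms) and the persistence morphisms reverse direction. First I would fix $t\in\mathbb{R}$. Since $M_\bullet(t)$ is a $\Delta$-manifold, each face map $\partial_n^i(t)\colon M_n(t)\to M_{n-1}(t)$ is smooth, hence induces a cochain map $\partial_n^i(t)^\#\colon \Omega^\bullet(M_{n-1}(t))\to \Omega^\bullet(M_n(t))$; naturality of the exterior derivative gives $\partial_n^i(t)^\#\circ d(t)=d(t)\circ \partial_n^i(t)^\#$. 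Setting $\partial(t)=\sum_{i=0}^n(-1)^i\partial_n^i(t)^\#$ and applying the $\Delta$-identity (\ref{eq-250112-5}) under pullback (which reverses composition but leaves the combinatorics of the alternating sum intact, exactly as in the classical simplicial-cochain computation) yields $\partial(t)^2=0$; since $d(t)^2=0$ and each $\partial_n^i(t)^\#$ commutes with $d(t)$, also $\partial(t)\circ d(t)=d(t)\circ\partial(t)$. Thus $(\Omega^\bullet(M_\bullet(t)),\partial(t),d(t))$ is a double complex for every $t$.

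Next I would install the persistence maps. By Definition~\ref{def-001a1} the maps $\alpha_n(t_1,t_2)\colon M_n(t_1)\to M_n(t_2)$ are smooth, so they induce $\alpha_n(t_1,t_2)^\#\colon \Omega^\bullet(M_n(t_2))\to\Omega^\bullet(M_n(t_1))$, which commute with $d$ by naturality of the exterior derivative. Applying $\Omega^\bullet(-)$ to the commutative square (\ref{eq-diag-3.a1}) gives $\partial_n^i(t_1)^\#\circ\alpha_{n-1}(t_1,t_2)^\#=\alpha_n(t_1,t_2)^\#\circ\partial_n^i(t_2)^\#$, and summing with signs shows that the $\alpha_n(t_1,t_2)^\#$ assemble into a morphism of double complexes from $(\Omega^\bullet(M_\bullet(t_2)),\partial(t_2),d(t_2))$ to $(\Omega^\bullet(M_\bullet(t_1)),\partial(t_1),d(t_1))$. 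The identities Definition~\ref{def-250112-7}~(1),(2) follow from Definition~\ref{def-0.za1}~(1),(2) for $\alpha_n$ together with functoriality of the pullback ($(\mathrm{id})^\#=\mathrm{id}$ and $(g\circ f)^\#=f^\#\circ g^\#$), precisely as in Lemma~\ref{le-250112-7} and Proposition~\ref{pr-3.2.a1} but run through the de~Rham functor. This establishes (\ref{eq-3.lllq2aa1}).

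Finally, for the persistent exterior product (\ref{eq-250112-81}): for each $n$ and $t$ the wedge product (\ref{eq-250112-85}) makes $\Omega^\bullet(M_n(t))$ a graded-commutative algebra, and since pullback along a smooth map is an algebra homomorphism, $\alpha_n(t_1,t_2)^\#$ is an algebra homomorphism for all $t_1\le t_2$. Hence the family $\{\wedge\mid t\in\mathbb{R}\}$ is compatible with the persistence maps in the sense of the footnote to the statement, in exact parallel with the cup-product compatibility diagram (\ref{eq-250112-52}) in Corollary~\ref{pr-250112-1}. Combined with the first two steps, this yields the persistent exterior product on $\Omega^\bullet(\mathbf{M}_n)$ and completes the proof.

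I expect no genuinely hard step: the argument is a routine transport of Proposition~\ref{pr-3.2.a1} and Corollary~\ref{pr-250112-1} along the (contravariant) de~Rham functor, and all the needed facts (naturality of $d$, multiplicativity of pullback, functoriality) are classical. The one point requiring care is the direction/sign bookkeeping forced by contravariance — the face maps of the $\Delta$-manifold now act on forms by pullback, so $\partial(t)$ is a cosimplicial coboundary and the induced double-complex morphisms point from level $t_2$ to level $t_1$; one should either reindex accordingly or simply note, as the statement does, that this still fits Definition~\ref{def-25-02-08} under the standard convention.
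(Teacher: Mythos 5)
Your proposal is correct and follows essentially the same route as the paper, which simply invokes an analogue of Lemma~\ref{le-250112-7} for the double-complex structure and then records the commutative square showing that $\alpha_n(t_1,t_2)^\#$ respects the wedge product; you have merely written out the details (naturality of $d$, the pulled-back $\Delta$-identity, multiplicativity of pullback) that the paper leaves implicit. Your remark on the direction reversal forced by contravariance of $\Omega^\bullet(-)$ is a point the paper glosses over, and flagging it is appropriate rather than a deviation.
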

    
    \begin{proof} 
    By  an  analog    of      Lemma~\ref{le-250112-7}  or  \cite[Lemma~3.1]{conf1},   
    (\ref{eq-3.lllq2aa1})
   is  a  persistent  double  complex.       
    For  any  $t_1\leq  t_2$,  
    the  diagram  commutes
     \begin{eqnarray}\label{eq-250112-72}
 \xymatrix{
 \Omega^{p}(M_n(t_2)) \times  H^{q} (M_n(t_2))  \ar[d]_-{\alpha_n(t_1,t_2)^\#} \ar[r]^-{ \wedge}  &\Omega^{p+q}(M_n(t_2))  \ar[d]^-{\alpha_n(t_1,t_2)^\#}\\
   \Omega^{p}(M_n(t_1)) \times   \Omega^{q} (M_n(t_1))   \ar[r]^-{ \wedge}  & \Omega^{p+q}(M_n(t_1))    
 }
\end{eqnarray} 
    where  $\alpha_n(t_1,t_2)^\#$  is  induced  by   (\ref{eq-3.9by1}).   
    Thus  (\ref{eq-250112-81})  is  a  persistent  exterior  product.  
    \end{proof}

   \begin{corollary}
    \label{pr-250112-2ax}
    Let  $\mathbf{M}_\bullet=  \{M_\bullet(t)\mid  t\in \mathbb{R}\}$  
    be  a  persistent  $\Delta$-manifold.  
  Then  we  have  a  persistent  chain  complex  
   \begin{eqnarray}\label{eq-3.lllq887}
(H^*(\mathbf{M}_\bullet), \boldsymbol{\partial}) =\{(H^*(M_\bullet(t)), \partial(t))\mid  t\in \mathbb{R}\}
 \end{eqnarray}
 where  $H^*(\mathbf{M}_n )$  is  the  persistent   cohomology  ring  of  
 $\mathbf{M}_n$  for any  $n\in \mathbb{N}$.   
    \end{corollary}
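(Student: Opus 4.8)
The plan is to mimic the proof of Corollary~\ref{pr-250112-1}, replacing the cellular double complex by the de~Rham double complex furnished by Proposition~\ref{pr-3.2.b2}. Start from the persistent double complex $(\Omega^\bullet(\mathbf{M}_\bullet), \boldsymbol{\partial},  \mathbf{d})$ of~(\ref{eq-3.lllq2aa1}), equipped with the persistent exterior product~(\ref{eq-250112-81}). Fix $n\in\mathbb{N}$ and regard $(\Omega^\bullet(\mathbf{M}_n), \mathbf{d})$ as a persistent cochain complex of commutative differential graded algebras, with structure maps $\alpha_n(t_1,t_2)^\#$ for $t_1\leq t_2$ as in the footnote to Proposition~\ref{pr-3.2.b2}.

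First I would pass to de~Rham cohomology in the $\mathbf{d}$-direction for each fixed $n$. Since each $\alpha_n(t_1,t_2)^\#$ is a cochain map of commutative differential graded algebras --- this is the content of the commuting square~(\ref{eq-250112-72}) together with the $\mathbf{d}$-compatibility built into Proposition~\ref{pr-3.2.b2} --- it induces a ring homomorphism $\alpha_n(t_1,t_2)^*\colon H^*(M_n(t_2))\to H^*(M_n(t_1))$ on de~Rham cohomology. Functoriality of de~Rham cohomology yields $\alpha_n(t,t)^*=\mathrm{id}$ and $\alpha_n(t_1,t_3)^* = \alpha_n(t_1,t_2)^*\circ\alpha_n(t_2,t_3)^*$ for $t_1\leq t_2\leq t_3$, so that $H^*(\mathbf{M}_n) = \{H^*(M_n(t))\mid t\in\mathbb{R}\}$ is a persistent cohomology ring in the sense of Definition~\ref{def-25-01-31-1}, exactly as $H^*(\mathbf{A}_n)$ is in Corollary~\ref{pr-250112-1}.

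Next I would produce the boundary operator in the $n$-direction on these persistent cohomology rings. By Proposition~\ref{pr-3.2.b2} one has $\boldsymbol{\partial}\,\mathbf{d}=\mathbf{d}\,\boldsymbol{\partial}$ and $\boldsymbol{\partial}^2=0$ on the persistent double complex; hence for each $n$ the persistent cochain map $\boldsymbol{\partial}\colon (\Omega^\bullet(\mathbf{M}_n),\mathbf{d})\to (\Omega^\bullet(\mathbf{M}_{n-1}),\mathbf{d})$ descends to a persistent homomorphism $\boldsymbol{\partial}\colon H^*(\mathbf{M}_n)\to H^*(\mathbf{M}_{n-1})$ commuting with the structure maps $\alpha_n(t_1,t_2)^*$ for all $t_1\leq t_2$. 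The identity $\boldsymbol{\partial}^2=0$ is inherited term by term from $\partial(t)^2=0$, and Definition~\ref{def-250112-7}~(1),(2) for the family $\{\partial(t)\mid t\in\mathbb{R}\}$ follows from Definition~\ref{def-0.za1}~(1),(2). Therefore $(H^*(\mathbf{M}_\bullet),\boldsymbol{\partial})$ is a persistent chain complex whose $n$-th term is the persistent cohomology ring $H^*(\mathbf{M}_n)$, which is the assertion.

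The routine part is the functoriality bookkeeping, already packaged in Proposition~\ref{pr-3.2.b2}; the only point requiring a word of care is that de~Rham cohomology, being a subquotient of $\Omega^\bullet$, genuinely carries the induced ring structure and that $\alpha_n(t_1,t_2)^\#$ --- an algebra map commuting with $\mathbf{d}$ by~(\ref{eq-250112-72}) --- descends to a ring map. This is the standard multiplicativity of pullback of forms on cohomology, so I do not expect any substantive obstacle beyond invoking the de~Rham functor on the persistent double complex of Proposition~\ref{pr-3.2.b2}.
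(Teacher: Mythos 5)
Your proposal is correct and follows essentially the same route as the paper: the paper's own proof is a one-line remark that the statement is the analog of Corollary~\ref{pr-250112-1} obtained by taking de~Rham cohomology and invoking Proposition~\ref{pr-3.2.b2}, and your argument simply writes out the details of that analogy (passing to cohomology in the $\mathbf{d}$-direction, using the persistent exterior product for the ring structure, and descending $\boldsymbol{\partial}$ to cohomology via $\boldsymbol{\partial}\,\mathbf{d}=\mathbf{d}\,\boldsymbol{\partial}$). No gaps.
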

    
    \begin{proof}
    The  corollary  is  an  analog   of  Corollary~\ref{pr-250112-1}.  
    In  particular,  if  we  take $H^*(\mathbf{M}_n )$  as  the  
    de-Rham  cohomology  with  coefficients  in  $\mathbb{R}$,  
    then  the  proof  follows  from  Proposition~\ref{pr-3.2.b2}.  
         \end{proof}

\section{Persistent  bundles  over  configuration  spaces}\label{25-sect4}

In this  section,  we  study  the  persistent  covering  maps  from   ordered  configuration  spaces
 to  unordered  configuration  spaces as  well  as  the  associated  persistent  vector  bundles.  
 We   study  the  configuration  spaces of  Riemannian  manifolds   in  Subsection~\ref{ss3.2}  and  study  the  configuration  spaces  of  graphs  in  Subsection~\ref{ss3.1}.

Let  $(X,d)$  be  a  metric  space  where  $d:  X\times  X\longrightarrow  [0,+\infty]$
is  a  distance.  
Let  $k$  be  a  positive  integer. 
Let  $r\geq  0$. 
The  {\it  $k$-th  ordered  configuration  space  of  hard spheres}  of  radius  $r$ of  $X$ is  
\begin{eqnarray}\label{eq-0.1}
{\rm  Conf}_k(X, r)= \{(x_1,\ldots,x_k)\in  X^k \mid  d(x_i,x_j)>2r {\rm~for~ }  i\neq  j\}
\end{eqnarray}   
   where  $X^k$  is  the  $k$-fold   Cartesian  product  of  $X$. 
   In  particular,  let  $r=0$  in  (\ref{eq-0.1}).  
    The  {\it  $k$-th  ordered  configuration  space}  of      $X$  is
    \begin{eqnarray}\label{eq-0.2}
    {\rm  Conf}_k(X) =     {\rm  Conf}_k(X,0)= \{(x_1,\ldots,x_k)\in  X^k \mid  x_i\neq  x_j{\rm~for~ }  i\neq  j\},  
    \end{eqnarray}
    which  only  depends  on the  topology  of  $X$  and  does  not  depend   on  the  choice  of 
    the  metric $d$ that  is  compatible  with  the  topology.  
Let  $\Sigma_k$  be  the   $k$-th  symmetric  group.  
Let $\Sigma_k$  act  on  $X^k$  by  permuting  the  coordinates  from  the  left
by  (\ref{eq-25-01-31-1}).  
 Then  $ {\rm  Conf}_k(X)$  is  an  open    subspace  of  $X^k$  consisting  of  the  points  with  trivial 
  isotropy groups.  
   We  have a  $\Sigma_k$-invariant  filtration  
  \begin{eqnarray}\label{eq-0.a2}
  {\rm  Conf}_k(X,-)= \{{\rm  Conf}_k(X,r)\mid  r\geq  0\}
  \end{eqnarray}
 of   $ {\rm  Conf}_k(X)$  such  that  
 \begin{enumerate}[(1)]
 \item
  ${\rm  Conf}_k(X,r)$  is  $\Sigma_k$-invariant  for  any  $r\geq  0$;  
 \item
  ${\rm  Conf}_k(X,r_1)\subseteq  {\rm  Conf}_k(X,r_2)$  for  any  $r_2\leq  r_1$;  
 \item
 if  $d(x,y)<+\infty$  for  any  $x,y\in  X$,  then  
  $\bigcap_{r\geq  0}{\rm  Conf}_k(X,r)=\emptyset$.  
 \end{enumerate}

 Let  $2^X$  be  the  power set  of  $X$  whose  elements  are  subsets  of  $X$.    
 The  {\it  $k$-th  unordered  configuration  space  of  hard spheres}  of  radius  $r$ of  $X$ is 
 the   $\Sigma_k$-orbit  space  
 \begin{eqnarray*}
 {\rm  Conf}_k(X, r)/\Sigma_k= \{\{x_1,\ldots,x_k\}\in  2^X  \mid    d(x_i,x_j)>2r {\rm~for~ }  i\neq  j\}
 \end{eqnarray*}
 and  the  {\it  $k$-th  unordered  configuration  space}  of      $X$  is
    \begin{eqnarray*} 
 {\rm  Conf}_k(X)/\Sigma_k =  {\rm  Conf}_k(X,0)/\Sigma_k= \{\{x_1,\ldots,x_k\}\in  2^X \mid  x_i\neq  x_j{\rm~for~ }  i\neq  j\}.    
    \end{eqnarray*}
    
    \begin{lemma}\label{le-2.aa1}
   For any  positive  integer  $k$,
    we  have  a  persistent  covering  map  
    \begin{eqnarray*}
     \pi(k,-):   {\rm  Conf}_k(X,-)\longrightarrow    {\rm  Conf}_k(X,-)/\Sigma_k.
    \end{eqnarray*}    
    \end{lemma}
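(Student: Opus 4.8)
\emph{Proof plan.} I would check, one at a time, the three conditions that Definition~\ref{def-25-01-3}~(1) (together with Definition~\ref{def-0.za1}) imposes on a persistent covering map: that ${\rm Conf}_k(X,-)$ and ${\rm Conf}_k(X,-)/\Sigma_k$ are persistent spaces, that $\pi(k,r)$ is a covering map for every $r\ge 0$, and that the square~(\ref{eq-diag-000}) built from the structure maps at two comparable radii $r_1,r_2$ is a pull-back. (All the notions of Section~\ref{25-s2} are applied here to families of topological spaces rather than of CW-complexes; the arguments are insensitive to this.)

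First I would set up the persistent structure. By properties (1) and (2) following~(\ref{eq-0.a2}), for $r_2\le r_1$ the identity on coordinates gives a $\Sigma_k$-equivariant inclusion $\iota(r_1,r_2)\colon{\rm Conf}_k(X,r_1)\hookrightarrow{\rm Conf}_k(X,r_2)$; these satisfy $\iota(r,r)=\mathrm{id}$ and $\iota(r_1,r_3)=\iota(r_2,r_3)\circ\iota(r_1,r_2)$, so after reindexing the parameter so that the structure maps run in the direction demanded by Definition~\ref{def-0.za1} (for instance along $r\mapsto-r$) the family ${\rm Conf}_k(X,-)$ is a persistent space. Since each $\iota(r_1,r_2)$ is $\Sigma_k$-equivariant it descends to $\bar\iota(r_1,r_2)\colon{\rm Conf}_k(X,r_1)/\Sigma_k\to{\rm Conf}_k(X,r_2)/\Sigma_k$ on orbit spaces, satisfying the same two identities, so ${\rm Conf}_k(X,-)/\Sigma_k$ is a persistent space; and by the very definition of $\bar\iota$ the square with sides $\iota(r_1,r_2)$, $\bar\iota(r_1,r_2)$, $\pi(k,r_1)$, $\pi(k,r_2)$ commutes, so $\pi(k,-)=\{\pi(k,r)\mid r\ge0\}$ is a persistent map. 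Next, each $\pi(k,r)$ is a covering map: as recalled in the introduction, $\Sigma_k$ acts freely and properly discontinuously on ${\rm Conf}_k(X,r)$, so the orbit projection is a ($k!$-sheeted) covering. (Alternatively, for $r>0$ one notes that ${\rm Conf}_k(X,r)$ is $\Sigma_k$-invariant, hence saturated with respect to $\pi(k,0)$, so $\pi(k,r)$ is the restriction of the covering $\pi(k,0)$ over the subspace $\pi(k,0)({\rm Conf}_k(X,r))$, which is again a covering.)

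The remaining, and main, step is the pull-back condition. Fix $r_2\le r_1$ and let $P$ be the fibre product of $\pi(k,r_2)$ and $\bar\iota(r_1,r_2)$, a subspace of ${\rm Conf}_k(X,r_1)/\Sigma_k\times{\rm Conf}_k(X,r_2)$. The canonical map $\Phi\colon{\rm Conf}_k(X,r_1)\to P$, $\Phi(x)=([x]_{r_1},\,\iota(r_1,r_2)(x))$, is continuous and compatible with the projection to ${\rm Conf}_k(X,r_1)/\Sigma_k$; it is injective because its second coordinate is. For surjectivity I would take $([x]_{r_1},y)\in P$: the defining compatibility says $[x]_{r_2}=[y]_{r_2}$, so $y=\sigma x$ for some $\sigma\in\Sigma_k$, and $\Sigma_k$-invariance of ${\rm Conf}_k(X,r_1)$ forces $y\in{\rm Conf}_k(X,r_1)$ with $\Phi(y)=([x]_{r_1},y)$. (It is precisely here that $\Sigma_k$-invariance of the hard-sphere filtration is used — without it $P$ could be strictly larger than ${\rm Conf}_k(X,r_1)$.) Finally $\Phi^{-1}$ is the second projection $P\to{\rm Conf}_k(X,r_2)$ corestricted to ${\rm Conf}_k(X,r_1)$, which is continuous because ${\rm Conf}_k(X,r_1)$ carries the subspace topology; hence $\Phi$ is a homeomorphism and the square~(\ref{eq-diag-000}) is a pull-back. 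Assembling the three points and invoking Definition~\ref{def-25-01-3}~(1) finishes the proof. I expect the only delicate point to be this last step: nothing deep happens, but one must keep careful track of the subspace topologies and invoke $\Sigma_k$-invariance of the filtration at exactly the right place.
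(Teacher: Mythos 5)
Your proposal is correct and follows essentially the same route as the paper: identify each $\pi(k,r)$ as the $k!$-sheeted orbit covering of the free, properly discontinuous $\Sigma_k$-action, descend the $\Sigma_k$-equivariant inclusions to the orbit spaces, and observe that the resulting squares are pull-backs. The only difference is one of detail — the paper merely asserts the pull-back property, while you verify it via the fibre product and correctly isolate the $\Sigma_k$-invariance of the filtration as the point that makes surjectivity work; your remark about reindexing the parameter to match the direction of the structure maps in Definition~\ref{def-0.za1} is a fair observation that the paper leaves implicit.
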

    
    \begin{proof}
    Let $k$  be  any  positive  integer.   Let  $r\geq  0$. 
    Then    
   we  have  a   $k!$-sheeted  covering  map   
    \begin{eqnarray}\label{eq-0b.5}
    \pi(k,r):   {\rm  Conf}_k(X,r)\longrightarrow    {\rm  Conf}_k(X,r)/\Sigma_k.  
    \end{eqnarray}
    In  particular,  let  $r=0$.  We  
have  a   $k!$-sheeted  covering  map   
    \begin{eqnarray*}
    \pi(k)=\pi(k,0):   {\rm  Conf}_k(X)\longrightarrow    {\rm  Conf}_k(X)/\Sigma_k.  
    \end{eqnarray*}
    The $\Sigma_k$-invariant  filtration  (\ref{eq-0.a2})  of  $ {\rm  Conf}_k(X)$  
   induces  a    filtration  
     \begin{eqnarray}\label{eq-0b1}
  {\rm  Conf}_k(X,-)/\Sigma_k= \{{\rm  Conf}_k(X,r)/\Sigma_k\mid  r\geq  0\}
  \end{eqnarray}
   of  $  {\rm  Conf}_k(X)/\Sigma_k$
  such  that  for  any  $  r_2\leq  r_1$,  the  diagram  
  \begin{eqnarray*}
  \xymatrix{
 {\rm  Conf}_k(X,r_1) \ar[r] \ar[d]_{\pi(k,r_1)}  &{\rm  Conf}_k(X,r_2) \ar[d]^{\pi(k,r_2)}\\
{\rm  Conf}_k(X,r_1)/\Sigma_k   \ar[r] &{\rm  Conf}_k(X,r_2)/\Sigma_k
  }
  \end{eqnarray*}
  is  a  pull-back.  Here  the  horizontal  maps  are  canonical  inclusions.  
   Consequently,  
  we  have a     persistent covering  map   
  \begin{eqnarray*}
  \pi(k,-):    {\rm  Conf}_k(X,-)\longrightarrow    {\rm  Conf}_k(X,-)/\Sigma_k
  \end{eqnarray*}
  given  by 
  \begin{eqnarray*}
  \pi(k,-)=\{\pi(k,r)\mid  r\geq  0\}  
  \end{eqnarray*}
  such  that        
  $\pi(k,r)$  is  the  restriction  of  $\pi(k)$  to  ${\rm  Conf}_k(X,r)$  for  any  $r\geq  0$.      
    \end{proof}

 Since  $X$  is  a  CW-complex,  $X^k$  is  a  CW-complex.  
It  follows  that  both  $ {\rm  Conf}_k(X,r)$  and  $ {\rm  Conf}_k(X,r)/ \Sigma_k $  are  
CW-complexes.    
Let  $\Sigma_k$  act  on  $\mathbb{F}^k$  by  permuting  the  coordinates  
from  the  right  by  (\ref{eq-25-01-31-2}).  
Associated to  the  covering  map  $\pi(k,r)$  in  (\ref{eq-0b.5}),  
we  have  a  $O(\mathbb{F}^k)$-bundle  
\begin{eqnarray*}
\xi(X,k,r;\mathbb{F}):  \mathbb{F}^k\longrightarrow   
{\rm  Conf}_k(X,r)\times_{\Sigma_k}   \mathbb{F}^k
\longrightarrow  {\rm  Conf}_k(X,r)/ \Sigma_k.   
\end{eqnarray*}
Consider  the  $(k-1)$-dimensional  subspace
\begin{eqnarray*}
W=\Big\{(x_1,\cdots,x_k)\in \mathbb{F}^k~\big|~
  \sum_{i=1}^k x_i=0\Big\}
\end{eqnarray*}
    of  $\mathbb{F}^k$.  
    Since  $W$  is  $\Sigma_k$-invariant,  
 we   have  a  $O(\mathbb{F}^{k-1})$-bundle 
 \begin{eqnarray*}
\zeta (X,k,r; \mathbb{F}):  
W\longrightarrow  
{\rm  Conf}_k(X,r)\times_{\Sigma_k}W
\longrightarrow {\rm  Conf}_k(X,r)/\Sigma_k.   
\end{eqnarray*}

\begin{theorem}\label{pr-3.2aaz}
  For any  positive  integer  $k$,
    we  have  a   persistent   $O(\mathbb{F}^k)$-bundle  
    \begin{eqnarray}\label{eq-3.96}
 \boldsymbol{\xi}(X,k,-;\mathbb{F})=\{\xi(X,k,r;\mathbb{F}) \mid  r\geq  0\} 
\end{eqnarray}
and  a   persistent   $O(\mathbb{F}^{k-1})$-bundle  
    \begin{eqnarray}\label{eq-3.97}
 \boldsymbol{\zeta}(X,k,-;\mathbb{F})=\{\zeta(X,k,r;\mathbb{F}) \mid  r\geq  0\}    
\end{eqnarray}
over  ${\rm  Conf}_k(X,-)/\Sigma_k$  such that
\begin{eqnarray}\label{eq-02.77}
 \boldsymbol{\xi}(X,k,-;\mathbb{F}) \cong    \boldsymbol{\zeta}(X,k,-;\mathbb{F})\oplus   \boldsymbol{\epsilon}(X,k,-;\mathbb{F}) 
\end{eqnarray}
where  the  righthand  side  of  (\ref{eq-02.77})  is  the  persistent  Whitney  sum  of  
(\ref{eq-3.97})  and  the  persistent  trivial  $\mathbb{F}$-line  bundle
\begin{eqnarray}\label{eq-3.98}
\boldsymbol{\epsilon}(X,k,-;\mathbb{F}) =  \{\epsilon(X,k,r;\mathbb{F}) \mid  r\geq  0\} 
\end{eqnarray}
     over  ${\rm  Conf}_k(X,-)/\Sigma_k$.  
\end{theorem}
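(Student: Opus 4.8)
The plan is to assemble the persistent $O(\mathbb{F}^k)$-bundle $\boldsymbol{\xi}(X,k,-;\mathbb{F})$ and the persistent $O(\mathbb{F}^{k-1})$-bundle $\boldsymbol{\zeta}(X,k,-;\mathbb{F})$ directly from the fibrewise data, then verify that the pull-back squares required by Definition~\ref{def-25-01-3} hold, and finally obtain the splitting (\ref{eq-02.77}) by applying the fibrewise splitting $\xi(X,k,r;\mathbb{F})\cong\zeta(X,k,r;\mathbb{F})\oplus\epsilon(X,k,r;\mathbb{F})$ uniformly in $r$. First I would recall that for each fixed $r\geq 0$ the inclusion of the $\Sigma_k$-decomposition $\mathbb{F}^k=W\oplus W^{\perp}$, with $W^{\perp}$ the diagonal line $\mathbb{F}\cdot(1,1,\dots,1)$ on which $\Sigma_k$ acts trivially, induces an isometric $\Sigma_k$-equivariant splitting; associating this to the covering $\pi(k,r)$ of (\ref{eq-0b.5}) gives the fibrewise isomorphism
\begin{eqnarray*}
\xi(X,k,r;\mathbb{F})\cong \zeta(X,k,r;\mathbb{F})\oplus \epsilon(X,k,r;\mathbb{F})
\end{eqnarray*}
where $\epsilon(X,k,r;\mathbb{F})$ is the trivial $\mathbb{F}$-line bundle ${\rm Conf}_k(X,r)\times_{\Sigma_k}W^{\perp}\to {\rm Conf}_k(X,r)/\Sigma_k$ over the unordered configuration space of hard spheres of radius $r$; since $W^{\perp}$ carries the trivial action this Borel construction is indeed a trivial line bundle. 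All three families are defined for every $r\geq 0$, giving (\ref{eq-3.96}), (\ref{eq-3.97}) and (\ref{eq-3.98}) as families of $O(\mathbb{F}^k)$-, $O(\mathbb{F}^{k-1})$- and $O(\mathbb{F}^1)$-bundles respectively.

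Next I would check the persistence (pull-back) condition. By Lemma~\ref{le-2.aa1} we already have a persistent covering map $\pi(k,-)$, so for $r_2\leq r_1$ the square relating $\pi(k,r_1)$ and $\pi(k,r_2)$ along the canonical inclusion ${\rm Conf}_k(X,r_1)/\Sigma_k\hookrightarrow {\rm Conf}_k(X,r_2)/\Sigma_k$ is a pull-back of coverings. Applying the Borel construction $(-)\times_{\Sigma_k}\mathbb{F}^k$ is a functor that preserves pull-back squares of $\Sigma_k$-spaces, so the induced square of $O(\mathbb{F}^k)$-bundles is again a pull-back; this is exactly the condition in Definition~\ref{def-25-01-3} for $\boldsymbol{\xi}(X,k,-;\mathbb{F})$. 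The same argument with $W$ in place of $\mathbb{F}^k$ gives the pull-back squares for $\boldsymbol{\zeta}(X,k,-;\mathbb{F})$, and with $W^{\perp}$ for $\boldsymbol{\epsilon}(X,k,-;\mathbb{F})$. The required identities Definition~\ref{def-25-01-31-1}~(1),(2) (with $\beta(-,-)^{*}$ in place of $\alpha(-,-)^{*}$) are inherited from the corresponding identities for the persistent covering $\pi(k,-)$, again because the Borel construction is functorial. Because all fibres are equipped with the standard inner product on $\mathbb{F}^k$ (respectively its restrictions to $W$ and $W^{\perp}$), which $\Sigma_k$ preserves, the structure group reduces to the relevant orthogonal group, so these are genuinely persistent $O(\mathbb{F}^{\bullet})$-bundles and not merely persistent vector bundles.

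Finally, for the splitting (\ref{eq-02.77}): the fibrewise isomorphisms $\xi(X,k,r;\mathbb{F})\cong \zeta(X,k,r;\mathbb{F})\oplus\epsilon(X,k,r;\mathbb{F})$ are natural in $r$ because they are all induced by the single $\Sigma_k$-equivariant orthogonal decomposition $\mathbb{F}^k=W\oplus W^{\perp}$, which does not depend on $r$; hence they commute with the pull-back maps $\beta(r_1,r_2)^{*}$ along inclusions, i.e. they constitute an isomorphism of persistent $O(\mathbb{F}^k)$-bundles in the sense of Definition~\ref{def-25-01-97}. Identifying the right-hand side of (\ref{eq-02.77}) with the persistent Whitney sum of (\ref{eq-3.97}) and (\ref{eq-3.98}) is then exactly the content of the definition of persistent Whitney sum together with the fibrewise identity $\zeta(X,k,r;\mathbb{F})\oplus\epsilon(X,k,r;\mathbb{F})=\xi(X,k,r;\mathbb{F})$, and Corollary~\ref{cor-2.2} confirms that the sum lands in $O(\mathbb{F}^{k})$. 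I expect the only genuinely nontrivial point to be the verification that the Borel construction $(-)\times_{\Sigma_k}V$ carries pull-back squares of free $\Sigma_k$-CW-complexes to pull-back squares of fibre bundles; this is where one must use that the actions are free and properly discontinuous (so that ${\rm Conf}_k(X,r)\to{\rm Conf}_k(X,r)/\Sigma_k$ is an honest principal $\Sigma_k$-bundle) and that forming the associated bundle with fixed fibre $V$ commutes with base change — everything else is bookkeeping that follows formally from Lemma~\ref{le-2.aa1} and the functoriality already recorded in Section~\ref{25-s2}.
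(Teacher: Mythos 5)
Your proposal is correct and follows essentially the same route as the paper: the paper likewise obtains the fibrewise splitting $\xi(X,k,r;\mathbb{F})\cong\zeta(X,k,r;\mathbb{F})\oplus\epsilon(X,k,r;\mathbb{F})$ for each $r$ and then observes, via Lemma~\ref{le-2.aa1}, that all three bundles over ${\rm Conf}_k(X,r_1)/\Sigma_k$ are pull-backs along the canonical inclusion $\iota_{r_1,r_2}$ for $r_2\leq r_1$. You merely supply more of the underlying detail (the $\Sigma_k$-invariant decomposition $\mathbb{F}^k=W\oplus W^{\perp}$ with trivial action on the diagonal, and the compatibility of the Borel construction with base change) that the paper leaves implicit.
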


\begin{proof}
Let $k$  be  any  positive  integer.   Let  $r\geq  0$. 
    Then    we  have  a  Whitney  sum  of vector  bundles
\begin{eqnarray}\label{eq-02.2}
\xi(X,k,r;\mathbb{F}) \cong   \zeta(X,k,r;\mathbb{F})\oplus  \epsilon(X,k,r;\mathbb{F}) 
\end{eqnarray}
where  $\epsilon(X,k,r;\mathbb{F})$  is  the  trivial  
$\mathbb{F}$-line  bundle  over  
$  {\rm  Conf}_k(X,r)/\Sigma_k  $.  
Let    $r_2\leq  r_1$.  
With  the  help  of  Lemma~\ref{le-2.aa1},  we  have  pull-backs  of  vector  bundles
\begin{eqnarray*}
\xi(X,k,r_1;\mathbb{F})  &=&  \iota_{r_1,r_2}^* (\xi(X,k,r_2;\mathbb{F}) ),\\
\zeta(X,k,r_1;\mathbb{F})  &=&  \iota_{r_1,r_2}^* (\zeta(X,k,r_2;\mathbb{F}) ),\\
\epsilon(X,k,r_1;\mathbb{F})  &=&  \iota_{r_1,r_2}^* (\epsilon(X,k,r_2;\mathbb{F}) ) 
\end{eqnarray*}
where  
$\iota_{r_1,r_2}:  {\rm  Conf}_k(X,r_1)/\Sigma_k \longrightarrow {\rm  Conf}_k(X,r_2)/\Sigma_k $
is  the  canonical  inclusion.  
Consequently,  we  obtain  the  persistent  vector  bundles  (\ref{eq-3.96}),  
(\ref{eq-3.97})  and  (\ref{eq-3.98})   such  that  
  (\ref{eq-02.77})  is  satisfied.  
\end{proof}

\begin{corollary}
For any  positive  integer  $k$,  
\begin{eqnarray}\label{eq-3.b81}
w( \boldsymbol{\xi}(X,k,-;\mathbb{R}) )= w( \boldsymbol{\zeta}(X,k,-;\mathbb{R}) ).  
\end{eqnarray}
\end{corollary}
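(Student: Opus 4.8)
The plan is to combine the persistent Whitney sum decomposition from Theorem~\ref{pr-3.2aaz} with the multiplicativity of the persistent Stiefel-Whitney class established in Corollary~\ref{cor-2.5a}. First I would invoke the isomorphism (\ref{eq-02.77}) of persistent $O(\mathbb{R}^k)$-bundles,
\[
\boldsymbol{\xi}(X,k,-;\mathbb{R}) \cong \boldsymbol{\zeta}(X,k,-;\mathbb{R}) \oplus \boldsymbol{\epsilon}(X,k,-;\mathbb{R}),
\]
where $\boldsymbol{\epsilon}(X,k,-;\mathbb{R})$ is the persistent trivial $\mathbb{R}$-line bundle (\ref{eq-3.98}) over ${\rm Conf}_k(X,-)/\Sigma_k$. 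Since isomorphic persistent $O(\mathbb{R}^k)$-bundles have equal persistent Stiefel-Whitney classes (the corollary following Definition~\ref{def-250120-1}), the left-hand side of (\ref{eq-3.b81}) equals $w\big(\boldsymbol{\zeta}(X,k,-;\mathbb{R}) \oplus \boldsymbol{\epsilon}(X,k,-;\mathbb{R})\big)$.

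Next I would apply Corollary~\ref{cor-2.5a} to the pair $\boldsymbol{\zeta}(X,k,-;\mathbb{R})$ and $\boldsymbol{\epsilon}(X,k,-;\mathbb{R})$, which gives
\[
w\big(\boldsymbol{\zeta}(X,k,-;\mathbb{R}) \oplus \boldsymbol{\epsilon}(X,k,-;\mathbb{R})\big) = w\big(\boldsymbol{\zeta}(X,k,-;\mathbb{R})\big)\, w\big(\boldsymbol{\epsilon}(X,k,-;\mathbb{R})\big)
\]
as a cup-product in the mod $2$ persistent cohomology ring of ${\rm Conf}_k(X,-)/\Sigma_k$. It remains to identify $w(\boldsymbol{\epsilon}(X,k,-;\mathbb{R}))$ with the persistent cohomology unit: at each level $r\geq 0$ the trivial line bundle $\epsilon(X,k,r;\mathbb{R})$ has total Stiefel-Whitney class $1\in H^*({\rm Conf}_k(X,r)/\Sigma_k;\mathbb{Z}_2)$, and the family $\{1\}_{r\geq 0}$ is visibly compatible with the restriction homomorphisms $\iota_{r_1,r_2}^*$, so by Definition~\ref{def-250120-1} it assembles into the persistent class $w(\boldsymbol{\epsilon}(X,k,-;\mathbb{R}))=1$. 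Substituting this back yields (\ref{eq-3.b81}).

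There is essentially no obstacle in this argument; the only point needing a moment's care is confirming that the level-wise trivial line bundle $\boldsymbol{\epsilon}(X,k,-;\mathbb{R})$ has persistent total Stiefel-Whitney class equal to the persistent cohomology unit, and this is immediate from its construction in Theorem~\ref{pr-3.2aaz} together with the definition of the persistent Stiefel-Whitney class. The whole proof is simply the persistent-homology echo of the classical identity $w(\xi(X,k))=w(\zeta(X,k))$ for ordinary bundles over ${\rm Conf}_k(X)/\Sigma_k$.
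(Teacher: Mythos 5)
Your proposal is correct and follows exactly the paper's own argument: apply the persistent Whitney sum decomposition (\ref{eq-02.77}), the product formula of Corollary~\ref{cor-2.5a}, and the fact that the persistent trivial line bundle has total Stiefel--Whitney class equal to the unit. The only difference is that you spell out the last step (the triviality of $w(\boldsymbol{\epsilon})$) explicitly, which the paper leaves implicit.
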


\begin{proof}
   Take  the  persistent  cohomology  ring  
   \begin{eqnarray*}
   H^*(  {\rm  Conf}_k(X,-)/\Sigma_k;\mathbb{Z}_2) 
   \end{eqnarray*}  
      of  $  {\rm  Conf}_k(X,-)/\Sigma_k$   with  coefficients in  $\mathbb{Z}_2$.  
 By   (\ref{eq-sw.1})  and  (\ref{eq-02.77}),  
  \begin{eqnarray*}
  w( \boldsymbol{\xi}(X,k,-;\mathbb{R}) )& =&   w(  \boldsymbol{\zeta}(X,k,-;\mathbb{R})\oplus  \boldsymbol{\epsilon}(X,k,-;\mathbb{R}) )\\
  &=&  w(  \boldsymbol{\zeta}(X,k,-;\mathbb{R}))  w(  \boldsymbol{\epsilon}(X,k,-;\mathbb{R}) )\\
  &=&   w( \boldsymbol{\zeta}(X,k,-;\mathbb{R})).   
  \end{eqnarray*}
  We  obtain  (\ref{eq-3.b81}).  
\end{proof}

\begin{corollary}
For any  positive  integer  $k$,
\begin{eqnarray}\label{eq-3.b82}
c( \boldsymbol{\xi}(X,k,-;\mathbb{C}) )= c( \boldsymbol{\zeta}(X,k,-;\mathbb{C}) ).  
\end{eqnarray}
\end{corollary}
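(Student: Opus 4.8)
The plan is to mimic, \emph{mutatis mutandis}, the proof of the preceding corollary, replacing the mod $2$ coefficients by integral coefficients and the total Stiefel--Whitney class by the total Chern class. First I would fix the integral persistent cohomology ring $H^*({\rm Conf}_k(X,-)/\Sigma_k;\mathbb{Z})$ as the ambient ring in which all the classes live; by Definition~\ref{def-250120-2} this is where $c(\boldsymbol{\xi}(X,k,-;\mathbb{C}))$ and $c(\boldsymbol{\zeta}(X,k,-;\mathbb{C}))$ are defined, and the relevant persistent structure maps are the pull-backs $\iota_{r_1,r_2}^*$ appearing in Theorem~\ref{pr-3.2aaz}.

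Next I would invoke the isomorphism (\ref{eq-02.77}) of persistent $O(\mathbb{C}^{\bullet})$-bundles, namely $\boldsymbol{\xi}(X,k,-;\mathbb{C}) \cong \boldsymbol{\zeta}(X,k,-;\mathbb{C}) \oplus \boldsymbol{\epsilon}(X,k,-;\mathbb{C})$, together with Corollary~\ref{cor-2.7a}, which gives the multiplicativity of the persistent total Chern class under the persistent Whitney sum in $H^*({\rm Conf}_k(X,-)/\Sigma_k;\mathbb{Z})$. This yields
\begin{eqnarray*}
c(\boldsymbol{\xi}(X,k,-;\mathbb{C})) = c(\boldsymbol{\zeta}(X,k,-;\mathbb{C}) \oplus \boldsymbol{\epsilon}(X,k,-;\mathbb{C})) = c(\boldsymbol{\zeta}(X,k,-;\mathbb{C}))\, c(\boldsymbol{\epsilon}(X,k,-;\mathbb{C})),
\end{eqnarray*}
where the product is the cup product of the integral persistent cohomology ring.

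The only point that needs a word is that $c(\boldsymbol{\epsilon}(X,k,-;\mathbb{C})) = 1$. For each $r \geq 0$ the bundle $\epsilon(X,k,r;\mathbb{C})$ is a trivial $\mathbb{C}$-line bundle over ${\rm Conf}_k(X,r)/\Sigma_k$, so by the normalization of the Chern class $c(\epsilon(X,k,r;\mathbb{C})) = 1$; since $\iota_{r_1,r_2}^*(1) = 1$, the compatibility (\ref{eq-0.cc2}) is automatic, and the family $\{c(\epsilon(X,k,r;\mathbb{C}))\}_{r \geq 0}$ is the unit of the persistent cohomology ring. Substituting this into the displayed equation gives $c(\boldsymbol{\xi}(X,k,-;\mathbb{C})) = c(\boldsymbol{\zeta}(X,k,-;\mathbb{C}))$, which is (\ref{eq-3.b82}). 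There is essentially no obstacle here: the argument is the exact complex mirror of the real case just carried out, and all the substantive content — the splitting (\ref{eq-02.77}) and the Whitney formula for the persistent Chern class — has already been established in Theorem~\ref{pr-3.2aaz} and Corollary~\ref{cor-2.7a}.
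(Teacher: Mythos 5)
Your proposal is correct and follows essentially the same route as the paper: pass to the integral persistent cohomology ring, apply the persistent Whitney sum formula of Corollary~\ref{cor-2.7a} to the splitting (\ref{eq-02.77}), and use that the persistent trivial line bundle has trivial total Chern class. Your explicit justification that $c(\boldsymbol{\epsilon}(X,k,-;\mathbb{C}))=1$ is a small extra detail the paper leaves implicit, but the argument is otherwise identical.
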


\begin{proof}
  Take  the  persistent  cohomology  ring  
   \begin{eqnarray*}
   H^*(  {\rm  Conf}_k(X,-)/\Sigma_k;\mathbb{Z}) 
   \end{eqnarray*}  
      of  $  {\rm  Conf}_k(X,-)/\Sigma_k$   with  coefficients in  $\mathbb{Z}$.  
 By   (\ref{eq-c.1})  and  (\ref{eq-02.77}),  
  \begin{eqnarray*}
  c( \boldsymbol{\xi}(X,k,-;\mathbb{C}) )& =&   c(  \boldsymbol{\zeta}(X,k,-;\mathbb{C})\oplus  \boldsymbol{\epsilon}(X,k,-;\mathbb{C}) )\\
  &=&  c(  \boldsymbol{\zeta}(X,k,-;\mathbb{C}))  c(  \boldsymbol{\epsilon}(X,k,-;\mathbb{C}) )\\
  &=&   c( \boldsymbol{\zeta}(X,k,-;\mathbb{C})).   
  \end{eqnarray*}
  We  obtain  (\ref{eq-3.b82}).  
\end{proof}

We  have  a  persistent  $\Delta$-CW-complex  
\begin{eqnarray}\label{eq-5mboa97}
{\rm  Conf}_\bullet(X, -)  =  ({\rm  Conf}_k(X,-))_{k\geq  1} 
 = \{{\rm  Conf}_\bullet(X, r)\mid  r\geq  0\}   
\end{eqnarray}
 with   face  maps  
      \begin{eqnarray}\label{eq-2501-991}
      \partial_k^i(r):  {\rm  Conf}_{k} (X,r) \longrightarrow  {\rm  Conf}_{k-1} (X,r)
      \end{eqnarray}
       for  any  $r\geq  0$,   any  positive  integer  $k$   and  any  $0\leq  i\leq  k$    
      given  by    
      \begin{eqnarray*}
      \partial_k^i(r):   (x_1,\ldots, x_k)\mapsto        (x_1,\ldots,  \widehat{x_i},\ldots, x_k).  
      \end{eqnarray*}
                The  next  theorem  follows  from  Proposition~\ref{pr-3.2.a1}  
      and  Corollary~\ref{pr-250112-1}. 

 \begin{theorem}\label{th-25-5.1.1}
   We  have  a  persistent  double  complex  
 \begin{eqnarray}\label{eq-5.lllq2}
(C_\bullet(  { \rm  Conf}_\bullet (X,-)), \boldsymbol{\partial},  \mathbf{d}) 
 \end{eqnarray}
 and  a  persistent  chain  complex  
   \begin{eqnarray}\label{eq-5.lllq77}
(H^*( { \rm  Conf}_\bullet (X,-)), \boldsymbol{\partial}) 
 \end{eqnarray}
 where  $H^*({ \rm  Conf}_k (X,-) )$  is  the  persistent  cohomology  ring  of  
 ${ \rm  Conf}_k (X,-) $  for any  $k\geq  1$.   
 Here  $\partial(r) =\sum_{i=0}^k(-1)^i  \partial_k^i(r)$ 
    and  $d(r)$  is  the  cellular  boundary  map  of  $ { \rm  Conf}_\bullet (X,r)$ 
     for  any  $r\geq  0$. 
     \end{theorem}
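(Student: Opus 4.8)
The plan is to exhibit ${\rm Conf}_\bullet(X,-)$ as a persistent $\Delta$-CW-complex in the sense of Definition~\ref{def-7.3} and then quote Proposition~\ref{pr-3.2.a1} and Corollary~\ref{pr-250112-1} directly.

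First, fix $r\geq 0$. I would check that $({\rm Conf}_k(X,r))_{k\geq 1}$ with the coordinate-deletion maps $\partial_k^i(r)$ of (\ref{eq-2501-991}) is a $\Delta$-CW-complex (Definition~\ref{def-7.2}). Three points must be verified: (i) $\partial_k^i(r)$ indeed lands in ${\rm Conf}_{k-1}(X,r)$, since erasing the entry $x_i$ leaves every remaining pairwise distance unchanged, hence still $>2r$; (ii) the $\Delta$-identity (\ref{eq-250112-5}), $\partial_{k-1}^i(r)\partial_k^j(r)=\partial_{k-1}^{j-1}(r)\partial_k^i(r)$ for $0\leq i<j\leq k$, which is the usual bookkeeping for deleting two coordinates in either order; and (iii) each $\partial_k^i(r)$ is cellular, being the restriction to ${\rm Conf}_k(X,r)$ and ${\rm Conf}_{k-1}(X,r)$ of the cellular projection $X^k\to X^{k-1}$ that forgets the $i$-th factor, with respect to the CW structures on these configuration spaces recorded before the statement.

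Next I would handle the filtration direction. For $r_2\leq r_1$ the canonical inclusion $\iota_{r_1,r_2}: {\rm Conf}_k(X,r_1)\hookrightarrow {\rm Conf}_k(X,r_2)$ is cellular, satisfies $\iota_{r,r}={\rm id}$ and $\iota_{r_1,r_3}=\iota_{r_2,r_3}\circ\iota_{r_1,r_2}$, and — because deleting the $i$-th coordinate is given by the same formula before and after inclusion — makes the square (\ref{eq-diag-3.a1}) commute for all $i$ and $k$. (To fit Definition~\ref{def-0.za1} literally one reindexes the poset by $t=-r$, as is tacitly done throughout the paper; this changes nothing.) Hence (\ref{eq-5mboa97}) is a persistent $\Delta$-CW-complex with face maps (\ref{eq-2501-991}).

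Finally, Proposition~\ref{pr-3.2.a1} applied to this persistent $\Delta$-CW-complex produces the persistent double complex (\ref{eq-5.lllq2}), bigraded by the configuration number $k$ and the cellular degree, with vertical differential $\mathbf{d}$ the cellular boundary of ${\rm Conf}_\bullet(X,r)$ and horizontal differential $\boldsymbol{\partial}=\{\sum_{i=0}^k(-1)^i\partial_k^i(r)\mid r\geq 0\}$; and Corollary~\ref{pr-250112-1} produces the persistent chain complex (\ref{eq-5.lllq77}) whose degree-$k$ term is the persistent cohomology ring $H^*({\rm Conf}_k(X,-))$. I expect the only delicate point to be the one already folded into the standing hypotheses: choosing CW structures on the ${\rm Conf}_k(X,r)$ coherently in both $k$ and $r$ so that every coordinate-deletion map and every filtration inclusion is simultaneously cellular. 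Once that is granted, the rest is formal and is precisely the content of the two results from Section~\ref{25-s3}.
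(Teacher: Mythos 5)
Your proposal follows essentially the same route as the paper: exhibit ${\rm Conf}_\bullet(X,-)$ with the coordinate-deletion face maps as a persistent $\Delta$-CW-complex and then invoke Proposition~\ref{pr-3.2.a1} and Corollary~\ref{pr-250112-1}, so it is correct in substance. The one cosmetic difference is the cellularity of the face maps: you assert they are literal restrictions of the cellular projections $X^k\to X^{k-1}$ (which presumes ${\rm Conf}_k(X,r)$ carries the restricted product cell structure, not automatic for an open subspace), whereas the paper sidesteps this by invoking the Cellular Approximation Theorem to choose compatible cell structures — a caveat you yourself flag at the end, so no real gap remains.
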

     
     \begin{proof}
     We  have  a persistent  $\Delta$-CW-complex
     (\ref{eq-5mboa97}).
                By  the  Cellular  Approximation  Theorem,  we  can  choose  appropriate  
      cell  structures  of   $ {\rm  Conf}_\bullet (X,r)$
      such  that  all       the  boundary  maps   (\ref{eq-2501-991})  are  cellular.  
      The  diagram  commutes 
      \begin{eqnarray*}
      \xymatrix{
      {\rm  Conf}_{k} (X,r_1)  \ar[rr]^-{\partial_k^i(r_1)}   \ar[d] &&{\rm  Conf}_{k-1} (X,r_1)  \ar[d]\\ 
      {\rm  Conf}_{k} (X,r_2)  \ar[rr]^-{\partial_k^i(r_2)}   &&{\rm  Conf}_{k-1} (X,r_2)
      }
      \end{eqnarray*}
      for  any  $r_2\leq  r_1  $,  where  
      the  vertical  maps  are  canonical  inclusions.  
      By   Proposition~\ref{pr-3.2.a1},  we  obtain  the    
          persistent  double  complex  
      (\ref{eq-5.lllq2}).  
      By  Corollary~\ref{pr-250112-1},  
     we  obtain  the   persistent  chain  complex  
     (\ref{eq-5.lllq77}).
        \end{proof}

Consider  the  $\Sigma_k$-action  
on  ${\rm  Conf}_k(X,-)$  for  any  $k\geq  1$.  
This  induces  a  $\Sigma_k$-action  
on  the  persistent  cohomology  ring  $H^*( {\rm  Conf}_k(X,-))$.  
The  next  corollary  follows  from  Theorem~\ref{th-25-5.1.1}.

\begin{corollary}\label{co-25-01-15-1}
The  persistent  chain  complex     (\ref{eq-5.lllq77})   has  a    $\Sigma_\bullet$-action  
such  that  the  diagram   commutes 
\begin{eqnarray}\label{eq-250116-91}
\xymatrix{
 H^*( { \rm  Conf}_k (X,r_2))\ar[d]  \ar[r]^-{\sigma^* }  & H^*( { \rm  Conf}_k (X,r_2))\ar[d]\\
  H^*( { \rm  Conf}_k (X,r_1))\ar[r]^-{\sigma^* }& H^*( { \rm  Conf}_k (X,r_1))
}
\end{eqnarray} 
for  any  $r_1\geq  r_2\geq  0$,  any  positive  integer  $k$  and  any  $\sigma\in \Sigma_k$.  
Here  the  vertical  maps  are  induced  by  canonical  inclusions  of  
$ { \rm  Conf}_k (X,r_1)$  into  $ { \rm  Conf}_k (X,r_2)$.  
\end{corollary}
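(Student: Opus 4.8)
The plan is to deduce the corollary from the $\Sigma_k$-equivariance of the inclusion maps of the filtration, using only the contravariant functoriality of cohomology. First I would recall from property (1) after (\ref{eq-0.a2}) that ${\rm Conf}_k(X,r)$ is a $\Sigma_k$-invariant subspace of $X^k$ for every $r\geq 0$, so that the left action (\ref{eq-25-01-31-1}) of $\Sigma_k$ on $X^k$ restricts to an action on each ${\rm Conf}_k(X,r)$ (the action already used in Lemma~\ref{le-2.aa1}). Moreover, for $r_1\geq r_2\geq 0$ the canonical inclusion $\iota_{r_1,r_2}\colon {\rm Conf}_k(X,r_1)\hookrightarrow {\rm Conf}_k(X,r_2)$ is $\Sigma_k$-equivariant, because for each $\sigma\in\Sigma_k$ both $\iota_{r_1,r_2}\circ\sigma$ and $\sigma\circ\iota_{r_1,r_2}$ are the restriction of the single self-map $\sigma\colon X^k\to X^k$ to the subspace ${\rm Conf}_k(X,r_1)$.

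Next I would apply the cohomology functor. By Theorem~\ref{th-25-5.1.1} the persistent chain complex (\ref{eq-5.lllq77}) has $k$-th term the persistent cohomology ring $H^*({\rm Conf}_k(X,-))$, whose persistence structure maps are exactly the homomorphisms $\iota_{r_1,r_2}^*\colon H^*({\rm Conf}_k(X,r_2))\to H^*({\rm Conf}_k(X,r_1))$; these are the vertical maps of the diagram (\ref{eq-250116-91}). I would then \emph{define} the $\Sigma_\bullet$-action to be the family, over $k\geq 1$, of the $\Sigma_k$-actions on $H^*({\rm Conf}_k(X,-))$ induced by the above actions on the spaces, so that $\sigma\in\Sigma_k$ acts on $H^*({\rm Conf}_k(X,r))$ through the cohomology ring automorphism $\sigma^*$ functorially induced by the homeomorphism $\sigma$ (with $\sigma\mapsto(\sigma^{-1})^*$ the associated left action, if one insists on a left action). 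Since $\iota_{r_1,r_2}\circ\sigma=\sigma\circ\iota_{r_1,r_2}$, contravariant functoriality gives $\sigma^*\circ\iota_{r_1,r_2}^*=\iota_{r_1,r_2}^*\circ\sigma^*$, which is precisely the commutativity of (\ref{eq-250116-91}); and because each $\sigma^*$ is a ring automorphism, the square is compatible with cup products, so for each fixed $\sigma$ the family $\{\sigma^*\}_{r\geq 0}$ is a morphism of persistent cohomology rings.

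I do not expect a real obstacle, only a point that must be phrased carefully: what a ``$\Sigma_\bullet$-action on the persistent chain complex (\ref{eq-5.lllq77})'' means. The face maps $\partial_k^i$ delete the $i$-th coordinate and hence are \emph{not} $\Sigma_k$-equivariant in the naive sense --- under $\sigma$ the map $\partial_k^i$ intertwines with $\partial_k^{\sigma^{-1}(i)}$ after relabelling the remaining coordinates --- so the $\Sigma_\bullet$-action should be read levelwise in $k$, together with the compatibility along the persistence parameter expressed by (\ref{eq-250116-91}), which is exactly what the previous paragraph establishes; the corollary asserts nothing about commutation with $\boldsymbol{\partial}$.
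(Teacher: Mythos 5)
Your proposal is correct and follows essentially the same route as the paper: the paper's proof simply observes that the canonical inclusion ${\rm Conf}_k(X,r_1)\hookrightarrow{\rm Conf}_k(X,r_2)$ commutes with the $\Sigma_k$-action on the spaces and then applies the cohomology functor, which is exactly your argument. Your additional remark on what the $\Sigma_\bullet$-action means levelwise (and that the face maps $\partial_k^i$ are not naively equivariant) is a sensible clarification that the paper leaves implicit, but it does not change the substance of the proof.
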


\begin{proof}
The     diagram  commutes 
\begin{eqnarray*}
\xymatrix{
  { \rm  Conf}_k (X,r_1) \ar[d]  \ar[r]^-{\sigma }  &   { \rm  Conf}_k (X,r_1) \ar[d]\\
   { \rm  Conf}_k (X,r_2) \ar[r]^-{\sigma  }&   { \rm  Conf}_k (X,r_2) 
}
\end{eqnarray*} 
where  the  vertical  maps  are   canonical  inclusions  of  
$ { \rm  Conf}_k (X,r_1)$  into  $ { \rm  Conf}_k (X,r_2)$.
Taking   the  cohomology  rings,  
we  obtain    the  commutative  diagram  (\ref{eq-250116-91}).  
\end{proof}

We  have  the  following  observations.  
\begin{enumerate}[(1)]
\item
   If $X$  and  $X'$   are  homeomorphic,  then  ${\rm  Conf}_k(X)$  and   
    ${\rm  Conf}_k(X')$ are  homeomorphic  thus  the  vector  bundles     
    $ \xi(X,k,0;\mathbb{F})$  and  $\xi(X',k,0;\mathbb{F})$
    are  isomorphic; 
    \item  
     If $(X,d)$  and  $(X',d)$   are  isometric,  then   ${\rm  Conf}_k(X,-)$  and   
    ${\rm  Conf}_k(X',-)$ are  isometric  thus the 
      persistent  vector  bundles   
  $\boldsymbol{ \xi}(X,k,-;\mathbb{F})$  and  $\boldsymbol{\xi}(X',k,-;\mathbb{F})$
 are   isomorphic  (cf.   Definition~\ref{def-25-01-97}).  
 \end{enumerate}

\subsection{Configuration  spaces of  Riemannian  manifolds}\label{ss3.2}

Let  $M$  be  a  Riemannian   manifold. 
Let  $d:  M\times  M\longrightarrow  [0,+\infty]$  be  the  distance  on  $M$
such  that  
\begin{eqnarray}\label{eq-dist-riem}
d(p,q)=
\inf_{\gamma:[0,1]\longrightarrow  M\atop \gamma(0)=p{\rm~and~}\gamma(1)=q} {\rm  length}(\gamma) 
\end{eqnarray}
 with 
 \begin{eqnarray*}
{\rm  length}(\gamma)=\int_0^1  |\gamma'(t)|dt   
\end{eqnarray*} 
if  there      exists  a   path $\gamma$  from  $p$  to  $q$
and   $d(p,q)=+\infty$  otherwise.  
 If  $M$  is     complete,  then  by  the  Hopf-Rinow Theorem  (cf.  \cite[p.  138]{oneill}),  
 for  any  
 $p,q\in  M$  with  $d(p,q)<+\infty$,  there  exists  a  geodesic  $\gamma$  on  $M$ 
  such  that  
 $\gamma(0)=p$,  $\gamma(1)=q$  and  ${\rm  length}(\gamma)=d(p,q)$.

Let  $r\geq  0$.    The   $k$-th  ordered  configuration  space  of   $M$ 
of  hard  spheres  with  radius  $r $  is    the  
Riemannian  
submanifold  
\begin{eqnarray}\label{eq-250217a}
{\rm  Conf}_{k} (M,r)=\Big\{(p_1,\ldots,p_k)\in  \prod_k  M
~\big|~
d_M(p_i,p_j)>2r{\rm~for~any~}   i \neq  j\Big\}
\end{eqnarray}
 of  the  product  manifold   $\prod_k  M$. 
 Let  $g$  be the  Riemannian  metric   on  $M$.  
 Then  (\ref{eq-dist-riem})  is  induced  from  $g$.  
 The  $k$-fold  product  metric  $g^k$
   is  a  Riemannian  metric  on  $\prod_k  M$  
 hence  it   induces  a  Riemannian  metric  
 $g^k\mid  _{{\rm  Conf}_{k} (M,r)}$  on  ${\rm  Conf}_{k} (M,r)$.

 \begin{lemma}\label{le-250208}
 For  any  Riemannian  manifold  $M$,  
the  symmetric  group  $\Sigma_k$  acts  on  ${\rm  Conf}_{k} (M,r)$  isometrically.  That  is,  
 for  any  $\sigma\in \Sigma_k$,  the  
 induced  pull-back 
 \begin{eqnarray*}
 \sigma^*: ( T^*{\rm  Conf}_{k} (M,r))^{\otimes  2}\longrightarrow  (T^*{\rm  Conf}_{k} (M,r))^{\otimes  2}
 \end{eqnarray*}
   of  covariant  tensor  fields  on  ${\rm  Conf}_{k} (M,r)$  satisfies  
 \begin{eqnarray}\label{eq-250109-7a}
\sigma^*(g^k\mid  _{{\rm  Conf}_{k} (M,r)}) = g^k\mid  _{{\rm  Conf}_{k} (M,r)}.  
 \end{eqnarray}  
 \end{lemma}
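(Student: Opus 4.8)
The plan is to reduce the statement to the elementary fact that permuting the factors of a Riemannian product is an isometry, and then to observe that this property is inherited by any invariant submanifold. First I would recall the structure of the product metric: at a point $(p_1,\ldots,p_k)\in\prod_k M$ one has $T_{(p_1,\ldots,p_k)}\big(\prod_k M\big)=T_{p_1}M\oplus\cdots\oplus T_{p_k}M$, and $g^k$ is the orthogonal direct sum
\[
g^k_{(p_1,\ldots,p_k)}\big((v_1,\ldots,v_k),(w_1,\ldots,w_k)\big)=\sum_{i=1}^k g_{p_i}(v_i,w_i).
\]

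Next I would compute the pull-back by $\sigma\in\Sigma_k$ directly. The diffeomorphism $\sigma$ of $\prod_k M$ sends $(p_1,\ldots,p_k)$ to $(p_{\sigma(1)},\ldots,p_{\sigma(k)})$, so its differential $d\sigma$ sends the tangent vector $(v_1,\ldots,v_k)$ to $(v_{\sigma(1)},\ldots,v_{\sigma(k)})$. Hence
\[
(\sigma^* g^k)_{(p_1,\ldots,p_k)}\big((v_1,\ldots,v_k),(w_1,\ldots,w_k)\big)=\sum_{i=1}^k g_{p_{\sigma(i)}}(v_{\sigma(i)},w_{\sigma(i)}),
\]
and reindexing the sum by the bijection $j=\sigma(i)$ of $\{1,\ldots,k\}$ returns $\sum_{j=1}^k g_{p_j}(v_j,w_j)$. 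This shows $\sigma^* g^k=g^k$ on all of $\prod_k M$; it is the only computation, and it is immediate.

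Finally I would restrict to the configuration space. Since the condition $d_M(p_i,p_j)>2r$ defining ${\rm Conf}_k(M,r)$ is symmetric in the indices, this open submanifold is $\Sigma_k$-invariant (cf. condition (1) following (\ref{eq-0.a2})), so the inclusion $\iota\colon{\rm Conf}_k(M,r)\hookrightarrow\prod_k M$ intertwines the two $\Sigma_k$-actions. By naturality of the pull-back, $\sigma^*\big(\iota^* g^k\big)=\iota^*\big(\sigma^* g^k\big)=\iota^* g^k$, which is precisely (\ref{eq-250109-7a}) since $g^k|_{{\rm Conf}_k(M,r)}=\iota^* g^k$ and $\sigma^*$ here is the induced pull-back on $(T^*{\rm Conf}_k(M,r))^{\otimes 2}$. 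There is essentially no obstacle in this argument; the one point worth stating carefully is that $d\sigma$ \emph{permutes} the summands of the decomposition $T\big(\prod_k M\big)=\bigoplus_i TM$, so that its action on the covariant $2$-tensor $g^k$ is exactly the same coordinate permutation applied termwise, whence invariance.
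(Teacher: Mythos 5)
Your proof is correct and follows essentially the same route as the paper: both compute $\sigma^* g^k$ on a pair of tangent vectors (or vector fields), use that $d\sigma$ permutes the factors of $T(\prod_k M)=\bigoplus_i TM$, and reindex the sum over the bijection $\sigma$ to recover $g^k$. Your additional framing — proving invariance on all of $\prod_k M$ first and then restricting along the $\Sigma_k$-equivariant inclusion by naturality of pull-back — is a tidy organizational refinement but not a different argument.
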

 
 \begin{proof} 
Take  any   smooth  vector  fields  
  $(V_1,\ldots,V_k),  (U_1,\ldots,U_k)\in    \Gamma(T {\rm  Conf}_{k} (M,r))$  
on the  $k$-th  ordered  configuration  space  (\ref{eq-250217a})   of  hard  spheres  with  
radius  $r$  
 where  $V_i,U_i\in  \Gamma(TM)$  are  smooth  vector  fields  on  $M$  for  $1\leq  i\leq  k$.  
 Then  
 \begin{eqnarray*}
 \sigma^*(g^k)  ((V_1,\ldots,V_k),  (U_1,\ldots,U_k) ) 
 &=& g^k  (\sigma_*(V_1,\ldots,V_k), \sigma_* (U_1,\ldots,U_k) ) \\
 &=&  g^k((V_{\sigma(1)}, \ldots,  V_{\sigma(k)}),  (U_{\sigma(1)}, \ldots,  U_{\sigma(k)}))\\
 &=& \prod_{i=1}^k  g(V_{\sigma(i)},  U_{\sigma(i)})\\
 &=&  \prod_{i=1}^k  g(V_{i},  U_{i})\\
 &=& g^k   ((V_1,\ldots,V_k),  (U_1,\ldots,U_k) ).   
 \end{eqnarray*}
 Here  $\sigma_*$  is  the  tangent  map  of  $\sigma$,  which  
 is  the    permutation  induced  by  $\sigma$  
 on  the  coordinates  of  the  tangent  vector  fields.  
 We  obtain  (\ref{eq-250109-7a}).  
  \end{proof}
  
   The  next  corollary  follows  from  Lemma~\ref{le-250208}.  
 
 \begin{corollary}\label{co-250208-1}  
  For  any  Riemannian  manifold  $M$,  
  the     $\Sigma_k$-action   on   ${\rm  Conf}_{k} (M,r)$  
  induces  an     
    orbit   manifold 
 \begin{eqnarray}\label{eq-250209-8b}
{\rm  Conf}_{k} (M,r)/\Sigma_k=\Big\{\{p_1,\ldots,p_k\} ~\big|~
d_M(p_i,p_j)>2r{\rm~for~any~}   i \neq  j\Big\}  
\end{eqnarray}
with  the  Riemannian  metric  
$(g^k\mid  _{{\rm  Conf}_{k} (M,r)})/\Sigma_k$ 
such  that  
\begin{eqnarray}\label{eq-250208-pull-back-g}
 g^k\mid  _{{\rm  Conf}_{k} (M,r)} =\pi(k,r)^*\big((g^k\mid  _{{\rm  Conf}_{k} (M,r)})/\Sigma_k\big).  
\end{eqnarray}
\end{corollary}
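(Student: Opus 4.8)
The plan is to combine the standard orbit-manifold construction with the isometry statement of Lemma~\ref{le-250208}. First I would recall that, as noted earlier, $\Sigma_k$ acts freely and properly discontinuously on ${\rm  Conf}_k(M,r)$; since this space is an open submanifold of the smooth manifold $\prod_k M$ and the action (\ref{eq-25-01-31-1}) is by diffeomorphisms, the orbit space ${\rm  Conf}_k(M,r)/\Sigma_k$ inherits a unique smooth structure for which the covering map $\pi(k,r)$ of (\ref{eq-0b.5}) is a smooth covering, hence a local diffeomorphism. Identifying an orbit with the unordered tuple of its coordinates yields the description (\ref{eq-250209-8b}) of the underlying manifold.

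Next I would produce the Riemannian metric on the quotient by pushing $g^k\mid_{{\rm  Conf}_k(M,r)}$ forward along $\pi(k,r)$: for an orbit $q$ and tangent vectors $v,w\in T_q\big({\rm  Conf}_k(M,r)/\Sigma_k\big)$, choose a lift $p\in\pi(k,r)^{-1}(q)$ and set
\[
\big((g^k\mid_{{\rm  Conf}_k(M,r)})/\Sigma_k\big)_q(v,w)\ :=\ (g^k\mid_{{\rm  Conf}_k(M,r)})_p\big((d\pi(k,r)_p)^{-1}v,\,(d\pi(k,r)_p)^{-1}w\big),
\]
which is meaningful because $d\pi(k,r)_p$ is a linear isomorphism. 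The crucial point is independence of the lift: any other lift is $\sigma\cdot p$ for a unique $\sigma\in\Sigma_k$, and $\pi(k,r)\circ\sigma=\pi(k,r)$ gives $(d\pi(k,r)_{\sigma\cdot p})^{-1}=d\sigma_p\circ(d\pi(k,r)_p)^{-1}$; then Lemma~\ref{le-250208}, which asserts that $\sigma$ preserves $g^k\mid_{{\rm  Conf}_k(M,r)}$, shows that replacing $p$ by $\sigma\cdot p$ does not change the value. Smoothness, symmetry and positive-definiteness are local and follow because $\pi(k,r)$ restricts to a diffeomorphism onto an open set near each point, under which the formula above is precisely the transported smooth tensor $g^k\mid_{{\rm  Conf}_k(M,r)}$; hence it is a Riemannian metric.

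Finally, (\ref{eq-250208-pull-back-g}) is immediate: evaluating $\pi(k,r)^*\big((g^k\mid_{{\rm  Conf}_k(M,r)})/\Sigma_k\big)$ at a point $p$ on $(v,w)\in T_p{\rm  Conf}_k(M,r)$ gives $\big((g^k\mid_{{\rm  Conf}_k(M,r)})/\Sigma_k\big)_{\pi(k,r)(p)}\big(d\pi(k,r)_p v,\,d\pi(k,r)_p w\big)$, and inserting the displayed definition with the lift $p$ and cancelling $(d\pi(k,r)_p)^{-1}\circ d\pi(k,r)_p$ returns $(g^k\mid_{{\rm  Conf}_k(M,r)})_p(v,w)$. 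The only genuine obstacle is the well-definedness of the pushforward metric, and that is exactly what Lemma~\ref{le-250208} was set up to supply; the rest is the standard fact that a free, properly discontinuous smooth action has a smooth manifold quotient with a local-diffeomorphism projection.
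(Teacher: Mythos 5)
Your proposal is correct and follows essentially the same route as the paper: both define the quotient metric by lifting tangent vectors through the covering map $\pi(k,r)$ and invoke Lemma~\ref{le-250208} to show the value is independent of the chosen lift, from which the pull-back identity (\ref{eq-250208-pull-back-g}) follows immediately. Your write-up is somewhat more explicit about the smooth structure on the orbit space and the verification of smoothness and positive-definiteness, but the key step and the key lemma are identical.
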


\begin{proof}
Let  $\{p_1,\ldots,p_k\}\in  {\rm  Conf}_{k} (M,r)/\Sigma_k$.  
Take  tangent  vectors       
\begin{eqnarray*}
X_{\{p_1,\ldots,p_k\}},  Y_{\{p_1,\ldots,p_k\}}  \in 
 T_{\{p_1,\ldots,p_k\}}\big({\rm  Conf}_{k} (M,r)/\Sigma_k\big). 
\end{eqnarray*} 
Define  
\begin{eqnarray}\label{eq-250209-permute}
\big((g^k\mid  _{{\rm  Conf}_{k} (M,r)})/\Sigma_k\big)(X_{\{p_1,\ldots,p_k\}},  Y_{\{p_1,\ldots,p_k\}})
= g^k(\tilde  X_{(p_{\sigma(1)},\ldots,p_{\sigma(k)})},  \tilde Y_{(p_{\sigma(1)},\ldots,p_{\sigma(k)})})
\end{eqnarray}
where  $\sigma\in \Sigma_k$,  $(p_{\sigma(1)},\ldots,p_{\sigma(k)}) \in   {\rm  Conf}_{k} (M,r)  $    and  
\begin{eqnarray*}
\tilde  X_{(p_{\sigma(1)},\ldots,p_{\sigma(k)})},  \tilde Y_{(p_{\sigma(1)},\ldots,p_{\sigma(k)})}  \in 
 T_{(p_{\sigma(1)},\ldots,p_{\sigma(k)})} {\rm  Conf}_{k} (M,r)  
\end{eqnarray*} 
such  that  
\begin{eqnarray*}
\pi(k,r)(p_{\sigma(1)},\ldots,p_{\sigma(k)}) &=&\{p_1,\ldots,p_k\},  \\
\pi(k,r)_*(\tilde  X_{(p_{\sigma(1)},\ldots,p_{\sigma(k)})})&=&X_{\{p_1,\ldots,p_k\}}, \\
\pi(k,r)_*(\tilde  Y_{(p_{\sigma(1)},\ldots,p_{\sigma(k)})})&=&Y_{\{p_1,\ldots,p_k\}}.  
\end{eqnarray*}
By  Lemma~\ref{le-250208},  (\ref{eq-250209-permute})  does  not  
depend  on  the  choice  of  $\sigma$.  
Hence  (\ref{eq-250209-permute})  is  well-defined.  
Consequently,   we  obtain (\ref{eq-250208-pull-back-g})  
from    (\ref{eq-250209-permute}).       
\end{proof}

By  Corollary~\ref{co-250208-1},  
  the  $k$-th     
  unordered  configuration  space  (\ref{eq-250209-8b})   of   $M$ 
of  hard  spheres  with  radius  $r $       is 
a  Riemannian  manifold.   
  The  next  corollary  follows  from  Lemma~\ref{le-2.aa1}  and  Proposition~\ref{pr-3.2aaz}.

\begin{corollary}\label{co-250118-1}
For  any  Riemannian  manifold  $M$,  
we  have  a    persistent   covering  map  of  Riemannian  manifolds 
\begin{eqnarray}\label{eq-aabnm12}
\pi(k,-):   {\rm  Conf}_{k} (M,-)\longrightarrow {\rm  Conf}_{k} (M,-)/\Sigma_k,   
\end{eqnarray}
which  is  locally  an  isometry,    
and  persistent  vector  bundles  
    \begin{eqnarray}\label{eq-03.88917}
 \boldsymbol{\xi}(M,k,-;\mathbb{F}) \cong    \boldsymbol{\zeta}(M,k,-;\mathbb{F})\oplus   \boldsymbol{\epsilon}(M,k,-;\mathbb{F}).  
\end{eqnarray}
\end{corollary}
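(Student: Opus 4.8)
The plan is to obtain the statement by specializing the metric-space constructions already in place: Lemma~\ref{le-2.aa1} supplies the persistent covering map $\pi(k,-)$ and Theorem~\ref{pr-3.2aaz} supplies the persistent Whitney-sum decomposition, both for an arbitrary metric space $X$; applying these with $X=M$ equipped with the geodesic distance (\ref{eq-dist-riem}) immediately gives the underlying map of CW-complexes and the bundle splitting. What remains is to promote the covering map to a local isometry of Riemannian manifolds, and for this I would invoke Lemma~\ref{le-250208} and Corollary~\ref{co-250208-1}.

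First, fix $r\ge 0$. By (\ref{eq-250217a}) the space ${\rm Conf}_k(M,r)$ is an open submanifold of the product Riemannian manifold $\prod_k M$, hence carries the restricted metric $g^k|_{{\rm Conf}_k(M,r)}$; Corollary~\ref{co-250208-1} equips the orbit space ${\rm Conf}_k(M,r)/\Sigma_k$ with the quotient Riemannian metric and establishes
\begin{eqnarray*}
g^k|_{{\rm Conf}_k(M,r)} = \pi(k,r)^*\big((g^k|_{{\rm Conf}_k(M,r)})/\Sigma_k\big).
\end{eqnarray*}
A covering map whose source metric is the pull-back of the target metric is a local isometry, so each $\pi(k,r)$ is a local isometry. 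Next I would check compatibility with the filtration: for $r_2\le r_1$ the inclusion $\iota_{r_1,r_2}:{\rm Conf}_k(M,r_1)\hookrightarrow{\rm Conf}_k(M,r_2)$ is the inclusion of an open submanifold, hence a local isometry for the restricted metrics, and the induced map on orbit spaces is likewise a local isometry since both quotient metrics descend from $g^k$ via (\ref{eq-250208-pull-back-g}). Therefore the pull-back square
\begin{eqnarray*}
\xymatrix{
{\rm Conf}_k(M,r_1) \ar[r]^-{\iota_{r_1,r_2}} \ar[d]_{\pi(k,r_1)} & {\rm Conf}_k(M,r_2) \ar[d]^{\pi(k,r_2)}\\
{\rm Conf}_k(M,r_1)/\Sigma_k \ar[r] & {\rm Conf}_k(M,r_2)/\Sigma_k
}
\end{eqnarray*}
furnished by Lemma~\ref{le-2.aa1} is a commuting square of Riemannian manifolds and local isometries, which yields (\ref{eq-aabnm12}). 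The decomposition (\ref{eq-03.88917}) is then exactly the conclusion (\ref{eq-02.77}) of Theorem~\ref{pr-3.2aaz} applied with $X=M$, using that ${\rm Conf}_k(M,r)$ is in particular a CW-complex for every $r$.

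The main point requiring care — the expected obstacle — is the gluing of the two ingredients: one must verify that the local-isometry property of each individual $\pi(k,r)$ produced by Corollary~\ref{co-250208-1} is genuinely compatible with the pull-back structure relating the $\pi(k,r)$ across $r$, i.e.\ that the filtration inclusions act as local isometries for the restricted metrics and not merely as smooth open embeddings, and that this descends correctly through the free and properly discontinuous $\Sigma_k$-quotients. This is routine but fussy bookkeeping about restricting Riemannian metrics to open subsets and forming symmetric-group quotients, and it is precisely what glues Lemma~\ref{le-2.aa1} and Theorem~\ref{pr-3.2aaz} to Corollary~\ref{co-250208-1}.
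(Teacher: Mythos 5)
Your proposal is correct and follows essentially the same route as the paper: the paper's proof likewise invokes Corollary~\ref{co-250208-1} to make each $\pi(k,r)$ an isometric $k!$-sheeted covering, and then specializes Lemma~\ref{le-2.aa1} and Theorem~\ref{pr-3.2aaz} to $X=M$. Your additional check that the filtration inclusions respect the restricted metrics is just the bookkeeping the paper leaves implicit.
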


\begin{proof}

For  any  $r\geq  0$,   
by  Corollary~\ref{co-250208-1},    
we  have  an  isometric   $k!$-sheeted  covering  map  
\begin{eqnarray*}
\pi(k,r):   {\rm  Conf}_{k} (M,r)\longrightarrow {\rm  Conf}_{k} (M,r)/\Sigma_k   
\end{eqnarray*}
such  that  the  Riemannian  metric  on  ${\rm  Conf}_{k} (M,r)$
is  the  pull-back  of   the  Riemannian  metric  on  ${\rm  Conf}_{k} (M,r)/\Sigma_k $. 
  Let  $X$  be  $M$  in  Lemma~\ref{le-2.aa1}.  We  have  the  persistent  covering  map 
   (\ref{eq-aabnm12}).  
   Let  $X$  be  $M$  in  Proposition~\ref{pr-3.2aaz}.   We  have  the persistent  vector  bundles  and  the  persistent   
  Whitney  sum  (\ref{eq-03.88917}). 
\end{proof}

           The  next  corollary  follows  from  Theorem~\ref{th-25-5.1.1}.

  \begin{corollary}\label{th-25-4.1.1}
     For  any  Riemannian  manifold  $M$,  
      we  have  a  persistent  double  complex  
 \begin{eqnarray}\label{eq-5.lllmm2}
(\Omega^\bullet(  { \rm  Conf}_\bullet (M,-)), \boldsymbol{\partial},  \mathbf{d}), 
 \end{eqnarray}
 where  $\Omega^\bullet(-)$  is  the  CDGA  of  differential  forms,  
 and  a  persistent  chain  complex  
   \begin{eqnarray}\label{eq-5.mmmq77}
(H^*( { \rm  Conf}_\bullet (M,-)), \boldsymbol{\partial}), 
 \end{eqnarray}
 where  $H^*(- )$  is  the  de-Rham  persistent  cohomology  algebra.   
     \end{corollary}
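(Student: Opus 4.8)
The plan is to run the proof of Theorem~\ref{th-25-5.1.1} verbatim, but with the $\Delta$-manifold machinery in place of the $\Delta$-CW-complex machinery: instead of invoking Proposition~\ref{pr-3.2.a1} and Corollary~\ref{pr-250112-1}, one exhibits $\mathrm{Conf}_\bullet(M,-)$ as a persistent $\Delta$-manifold in the sense of Definition~\ref{def-001a1} and then applies Proposition~\ref{pr-3.2.b2} and Corollary~\ref{pr-250112-2ax}. So the only new content is the verification that $\mathrm{Conf}_\bullet(M,-)$ really is a persistent $\Delta$-manifold; everything after that is a direct appeal to the two cited results.

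First I would check that, for each fixed $r\geq 0$, the family $\mathrm{Conf}_\bullet(M,r)=(\mathrm{Conf}_k(M,r))_{k\geq 1}$ is a $\Delta$-manifold (Definition~\ref{def-8.2}). Each $\mathrm{Conf}_k(M,r)$ is the locus in the product manifold $\prod_k M$ on which the continuous function $(p_1,\dots,p_k)\mapsto \min_{i\neq j} d_M(p_i,p_j)$ exceeds $2r$, hence an open subset of $\prod_k M$ and therefore a smooth manifold (it may be empty for large $r$, which causes no trouble, and the value $+\infty$ that $d_M$ can take on a non-complete $M$ is harmless since the locus is still open). The face maps $\partial_k^i(r)$ of (\ref{eq-2501-991}), which delete the $i$-th point, are restrictions of the smooth coordinate projections $\prod_k M\to \prod_{k-1} M$, hence smooth; they land in $\mathrm{Conf}_{k-1}(M,r)$ because deleting a point preserves the pairwise-distance constraints among the surviving points, and they satisfy the $\Delta$-identity by the usual combinatorics of deletion.

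Next I would supply the persistent structure. For $r_2\leq r_1$ the canonical inclusion $\mathrm{Conf}_k(M,r_1)\hookrightarrow\mathrm{Conf}_k(M,r_2)$ is a smooth open embedding, and since both the face maps and these inclusions are restrictions of the corresponding maps on $\prod_\bullet M$, the square (\ref{eq-diag-3.a1}) commutes and the composition/identity conditions of Definition~\ref{def-0.za1}~(1),(2) hold (as in Theorem~\ref{th-25-5.1.1}, the filtration parameter runs in the opposite direction, so one reparametrizes accordingly in Definition~\ref{def-001a1}). Thus $\mathrm{Conf}_\bullet(M,-)$ is a persistent $\Delta$-manifold. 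Applying Proposition~\ref{pr-3.2.b2} to it yields the persistent double complex (\ref{eq-5.lllmm2}), with $\partial(r)=\sum_{i=0}^k(-1)^i\partial_k^i(r)$ acting on forms, $d(r)$ the de-Rham differential, and a persistent exterior product; applying Corollary~\ref{pr-250112-2ax}, with $H^*$ taken to be de-Rham cohomology with real coefficients, yields the persistent chain complex (\ref{eq-5.mmmq77}) of de-Rham persistent cohomology algebras.

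The only point needing genuine care is the bookkeeping of the two mutually commuting directions: the simplicial direction in the number $k$ of points, handled by the alternating sum of face maps, and the filtration direction in the radius $r$, handled by restriction of differential forms (which is a CDGA morphism, hence automatically compatible with the wedge product). This is exactly what Proposition~\ref{pr-3.2.b2} is designed to package — and it is the reason the $\Delta$-manifold apparatus, rather than the plain $\Delta$-CW-complex apparatus used for Theorem~\ref{th-25-5.1.1}, is invoked here. Once the verification that $\mathrm{Conf}_\bullet(M,-)$ is a persistent $\Delta$-manifold is in place, the corollary is immediate.
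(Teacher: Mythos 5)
Your proof is correct, and it takes a slightly different route from the paper's. The paper's own proof is a two\-line reduction: ``let $X$ be $M$ in Theorem~\ref{th-25-5.1.1}, let $d(r)$ be the exterior derivative,'' and then asserts that the cellular persistent double complex (\ref{eq-5.lllq2}) ``implies'' the de-Rham one (\ref{eq-5.lllmm2}). You instead bypass Theorem~\ref{th-25-5.1.1} entirely and go through the machinery of Subsection~3.3: you verify that $\mathrm{Conf}_\bullet(M,-)$ is a persistent $\Delta$-manifold in the sense of Definition~\ref{def-001a1} (openness of each $\mathrm{Conf}_k(M,r)$ in $\prod_k M$, smoothness of the deletion maps, the $\Delta$-identities, compatibility with the inclusions as $r$ decreases) and then invoke Proposition~\ref{pr-3.2.b2} and Corollary~\ref{pr-250112-2ax}. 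This is arguably the cleaner argument: a cellular double complex does not literally ``imply'' a de-Rham double complex, and what actually justifies the corollary is precisely the persistent $\Delta$-manifold structure you spell out --- which is what the apparatus of Subsection~3.3 was built for, and which the paper's one-line proof leaves implicit. Your remark about the reversal of the filtration parameter (inclusions go from larger $r$ to smaller $r$, so one reparametrizes to match Definition~\ref{def-001a1}) is a point the paper glosses over throughout; handling it explicitly costs nothing and removes an ambiguity. The two approaches buy the same conclusion; yours makes the dependence on Section~3.3 explicit, while the paper's emphasizes the parallel with the cellular case of Theorem~\ref{th-25-5.1.1}.
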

     
     \begin{proof}
     Let  $X$  be  $M$  in  Theorem~\ref{th-25-5.1.1}.  
     For  any  $r\geq  0$,  let  $d(r)$  be  the  exterior  derivative  of  
     $\Omega^\bullet ({ \rm  Conf}_\bullet (M,r))$.  
     Then   (\ref{eq-5.lllq2})  implies   (\ref{eq-5.lllmm2})  and  
     (\ref{eq-5.lllq77})  implies  (\ref{eq-5.mmmq77}).  
     \end{proof}

     The  next  corollary  follows   from  Corollary~\ref{co-25-01-15-1}. 
     
     \begin{corollary}\label{co-250118-91}
      For  any  Riemannian  manifold  $M$,  
      the   persistent  chain  complex     (\ref{eq-5.mmmq77})  of  de-Rham 
      persistent  cohomology   has  a    $\Sigma_\bullet$-action  
such  that  the  diagram   commutes 
\begin{eqnarray}\label{eq-250118-9998}
\xymatrix{
 H^*( { \rm  Conf}_k (M,r_2))\ar[d]  \ar[r]^-{\sigma^* }  & H^*( { \rm  Conf}_k (M,r_2))\ar[d]\\
  H^*( { \rm  Conf}_k (M,r_1))\ar[r]^-{\sigma^* }& H^*( { \rm  Conf}_k (M,r_1))
}
\end{eqnarray} 
for  any  $r_1\geq  r_2\geq  0$  and  any  $\sigma\in \Sigma_k$.  
Moreover,   
  the  de-Rham  persistent  cohomology  algebra 
\begin{eqnarray}\label{eq-250118-mn12g}
H^*({ \rm  Conf}_k (M,-)/\Sigma_k) 
\end{eqnarray}
is  the    $\Sigma_k$-invariant  sub-algebra of  $H^*({ \rm  Conf}_k (M,-) )$. 
     \end{corollary}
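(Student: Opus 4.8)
The plan is to transfer the argument of Corollary~\ref{co-25-01-15-1} to the de-Rham setting and then, level by level in the radius $r$, to identify the de-Rham cohomology algebra of the quotient manifold with the $\Sigma_k$-invariant subalgebra of the cohomology upstairs, using the averaging projection that exists because $\Sigma_k$ is finite and the coefficients are real. First I would fix $k\geq 1$, $\sigma\in\Sigma_k$ and $r_1\geq r_2\geq 0$. By Lemma~\ref{le-250208} the permutation $\sigma$ acts smoothly, indeed isometrically, on ${\rm Conf}_k(M,r)$ for every $r\geq 0$, and it manifestly commutes with the canonical inclusion $\iota_{r_1,r_2}\colon {\rm Conf}_k(M,r_1)\hookrightarrow{\rm Conf}_k(M,r_2)$, both being restrictions of the single self-map $\sigma$ of ${\rm Conf}_k(M)$. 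Pulling back differential forms and passing to de-Rham cohomology --- a contravariant functor on smooth manifolds --- turns this commuting square of spaces into the square (\ref{eq-250118-9998}), with vertical arrows $\iota_{r_1,r_2}^{*}$ and horizontal arrows $\sigma^{*}$. Since by Corollary~\ref{th-25-4.1.1} the family $H^{*}({\rm Conf}_\bullet(M,-))$ is already a persistent chain complex, these $\Sigma_k$-actions for $k\geq 1$, each compatible with the persistence maps by the square just obtained, constitute the asserted $\Sigma_\bullet$-action on (\ref{eq-5.mmmq77}).

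Next I would establish the moreover-part at a fixed radius $r\geq 0$. By Corollary~\ref{co-250208-1} the $k!$-sheeted covering $\pi(k,r)\colon{\rm Conf}_k(M,r)\to{\rm Conf}_k(M,r)/\Sigma_k$ is a local isometry, hence a local diffeomorphism, so the pull-back of forms
\[
\pi(k,r)^{\#}\colon \Omega^{\bullet}\big({\rm Conf}_k(M,r)/\Sigma_k\big)\longrightarrow \Omega^{\bullet}\big({\rm Conf}_k(M,r)\big)
\]
is a well-defined injective homomorphism of CDGAs whose image is exactly the $\Sigma_k$-invariant subalgebra $\Omega^{\bullet}({\rm Conf}_k(M,r))^{\Sigma_k}$: a form on the quotient pulls back to a $\Sigma_k$-invariant form, and conversely a $\Sigma_k$-invariant form descends because $\pi(k,r)$ is a regular $\Sigma_k$-covering. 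As $\Sigma_k$ is finite and we use real coefficients, the averaging operator $P=\frac{1}{k!}\sum_{\sigma\in\Sigma_k}\sigma^{\#}$ is a cochain-algebra projection of $\Omega^{\bullet}({\rm Conf}_k(M,r))$ onto this invariant subcomplex; hence forming $\Sigma_k$-invariants is exact and commutes with taking cohomology, so $\pi(k,r)^{\#}$ induces an isomorphism of graded algebras $H^{*}({\rm Conf}_k(M,r)/\Sigma_k)\cong H^{*}({\rm Conf}_k(M,r))^{\Sigma_k}$.

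Finally I would pass to the persistent level: the inclusions $\iota_{r_1,r_2}$ upstairs and their quotients downstairs are $\Sigma_k$-equivariant and intertwine $\pi(k,r_1)$ with $\pi(k,r_2)$, so the isomorphisms of the previous step for varying $r$ commute with $\iota_{r_1,r_2}^{*}$ and with its analog on the quotients, and therefore assemble into an isomorphism of persistent cohomology algebras identifying (\ref{eq-250118-mn12g}) with the $\Sigma_k$-invariant persistent subalgebra of $H^{*}({\rm Conf}_k(M,-))$. The only step that is not purely formal is the identification at a fixed $r$ in the second paragraph --- that $\pi(k,r)^{\#}$ surjects onto the invariant forms and that invariance is compatible with the exterior derivative and the wedge product; this is the classical description of the de-Rham theory of a quotient by a finite group acting freely and properly discontinuously, and the averaging projection $P$ reduces it to a routine verification, so I expect this to be the main (but mild) obstacle. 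Everything else is naturality of pull-back of forms.
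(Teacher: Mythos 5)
Your proposal is correct and follows essentially the same route as the paper: the commuting square (\ref{eq-250118-9998}) comes from functoriality of pull-back applied to the $\Sigma_k$-action commuting with the canonical inclusions (the paper simply cites Corollary~\ref{co-25-01-15-1} with $X=M$), and the moreover-part comes from identifying the image of $\pi(k,-)^{*}$ on differential forms with the $\Sigma_k$-invariant forms and then passing to de-Rham cohomology. Your explicit use of the averaging projection $\frac{1}{k!}\sum_{\sigma\in\Sigma_k}\sigma^{\#}$ to justify that taking $\Sigma_k$-invariants commutes with passing to cohomology is the one step the paper leaves implicit, and it is exactly the right way to close it.
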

     
     \begin{proof}
     Let  $X$  be  $M$  in Corollary~\ref{co-25-01-15-1}.   
     Then  (\ref{eq-250116-91})  implies  (\ref{eq-250118-9998}).  
     Moreover,  the  persistent  covering  map  (\ref{eq-aabnm12})   induces  
     a  persistent  pull-back  of  differential  forms  
     \begin{eqnarray}\label{eq-250118-7c}
    \pi(k,-)^*:  \Omega^\bullet( {\rm  Conf}_{k} (M,-)/\Sigma_k)\longrightarrow  \Omega^\bullet({\rm  Conf}_{k} (M,-)),           
     \end{eqnarray}
     which  is  a  persistent  chain  map  of  persistent  chain complexes
     with  respect  to  the  exterior  derivative.  
     It  follows  that  (\ref{eq-250118-7c})  is  injective  and    the  image  of  (\ref{eq-250118-7c})  
     is  the  $\Sigma_k$-invariant   forms.  
     Applying  the  de-Rham  persistent  cohomology  functor  to   (\ref{eq-250118-7c}),  
     we  have a  homomorphism  of  persistent  cohomology algebra
      \begin{eqnarray}\label{eq-250118-9d}
    \pi(k,-)^*:  H^*( {\rm  Conf}_{k} (M,-)/\Sigma_k)\longrightarrow  H^*({\rm  Conf}_{k} (M,-)).            
     \end{eqnarray}
     Therefore,  
     (\ref{eq-250118-mn12g})  is  the    $\Sigma_k$-invariant  sub-algebra of  
     $H^*({ \rm  Conf}_k (M,-) )$. 
     \end{proof}

\subsection{Configuration  spaces of  graphs}\label{ss3.1}

Let  $V_G$  be  a  finite  or  countable  
discrete set.  The  elements  in  $V_G$  are  {\it  vertices}.  
Let  $V_G^2/  \mathbb{Z}_2$  
be  the  symmetric  product  of  $V_G$  such  that 
 $(u,v)\sim (v,u)$
for  any  $u,v\in  V_G$.  
Let $E_G$  be   a  subset  of  $V_G^2/  \mathbb{Z}_2$.   
The  elements  in  $E_G$  are  {\it  edges}.  
Then   $G=(V_G,  E_G)$  is   a  graph.      
Let  $n\in \mathbb{N}$.   
A   {\it  path}  of  length  $n$  in  $G$  is  a  sequence  
$v_0v_1\cdots  v_n$  such  that 
$v_i\in  V_G$  for  $0\leq  i\leq  n$  
and  $(v_{j-1},v_j)\in  E_G$  for  $1\leq  j\leq  n$.  
For  any  $u,v\in  V_G$  
such   that  $u\neq  v$,  
let   $d_G(u,v)$  be  the  minimum  length  
of  the  paths   connecting  $u$  and  $v$. 
If  there  does  not  exist  any  path   in  $G$  connecting  $u$  and  $v$,  
then  we  let    $d_G(u,v)$    be  $+\infty$.   
Then  $(V_G,d_G)$  is  a  metric  space.

Let  $k$ be  a  positive  integer.  
The    $k$-th  ordered  configuration  space  of   $V_G$ 
of  hard  spheres  with  radius  $n/2$  is  the  $k$-uniform  
  hyperdigraph  
  (for  example,  
  refer  to  \cite[Subsection~3.1]{hdg}  for hyperdigraphs)  
   whose  elements  are  (ordered)   $k$-sequences  of  vertices
\begin{eqnarray}\label{eq-250209-graph-1}
{\rm  Conf}_{k} (V_G,\frac{n}{2})=\Big\{(v_1,\ldots,v_k)\in  \prod_k  V_G
~\big|~
d_G(v_i,v_j)>n{\rm~for~any~}   i \neq  j\Big\}. 
\end{eqnarray}
The      $k$-th  unordered  configuration  space  of   $V_G$ 
of  hard  spheres  with  radius  $n/2$  
 is   the  $k$-uniform  hypergraph  (for  example,  
  refer  to  \cite{berge}  for hypergraphs)  whose  elements  are   
 (unordered)  $k$-hyperedges   of  vertices 
 \begin{eqnarray}\label{eq-250209-graph-2}
{\rm  Conf}_{k} (V_G,\frac{n}{2})/\Sigma_k=\Big\{\{v_1,\ldots,v_k\} ~\big|~
d_G(v_i,v_j)>n{\rm~for~any~}   i \neq  j\Big\}. 
\end{eqnarray}
Note  that  ${\rm  Conf}_{k} (V_G, 0)$ 
is  the  complete  $k$-uniform  hyperdigraph  on  $V_G$
and 
  ${\rm  Conf}_{k} (V_G, 0)/\Sigma_k$ 
is  the  complete  $k$-uniform  hypergraph  on  $V_G$,  both  of  which  
do   not  depend  on   the  choices  of   $E_G$.     
Hence without  loss  of  generality,  we  may  assume  $n\geq  1$ 
in  (\ref{eq-250209-graph-1})  and  (\ref{eq-250209-graph-2}).    
 The  family  of  hyperdigraphs 
  \begin{eqnarray}\label{eq-2.00}
  {\rm  Conf}_{k} (V_G,- ) = \Big\{{\rm  Conf}_{k} (V_G,\frac{n}{2}) ~\big| ~ n=1,2,\ldots
  \Big\}
  \end{eqnarray}
    is  a  $\Sigma_k$-invariant  filtration  of  $ {\rm  Conf}_{k} (V_G,1/2) $
    such that  
  \begin{eqnarray*}
  {\rm  Conf}_{k} (V_G,\frac{n}{2})  \supseteq {\rm  Conf}_{k} (V_G,\frac{n'}{2}) 
  \end{eqnarray*}
 for  any  $n\leq  n'$.     
   The  family  of  hypergraphs  
      \begin{eqnarray}\label{eq-2.1}
  {\rm  Conf}_{k} (V_G,- )/\Sigma_k=\Big \{{\rm  Conf}_{k} (V_G,\frac{n}{2})/\Sigma_k
  ~\big|~  n=1,2,\ldots\Big\}
  \end{eqnarray}
  is  a  filtration  of  $ {\rm  Conf}_{k} (V_G,1/2)/\Sigma_k$   induced  from  
  (\ref{eq-2.00}).

    An  {\it  independent set}  $S\subseteq  V_G$   of  $G$  
    is  a    set    of  vertices   such that   
    $d_G(u,v)>1$   for  any  $u,v\in  S$ with  $u\neq  v$.  
 The  {\it  independence  complex}  ${\rm  Ind}(G)$  of  $G$ 
 is  the  simplicial  complex  with  vertices  in  $V_G$  
 and  with  simplices  the  finite   independent sets  of  $G$
  (for  example,  refer  to  \cite{ind}  for  independence  complexes).  
 We  observe   that  
 ${\rm  Conf}_{k} (V_G,1/2)/\Sigma_k$  is  the  collection  
 of  all  the  $(k-1)$-simplices  (i.e. $k$-hyperedges)   in   ${\rm  Ind}(G)$.  
 Hence  
  \begin{eqnarray*}
  {\rm  Ind}(G) =  \bigcup_{k=1}^ \infty  
 {\rm  Conf}_{k} (V_G,\frac{1}{2})/\Sigma_k
 \end{eqnarray*}
 and  
 \begin{eqnarray*}
 {\rm  sk}^{l-1}  ({\rm  Ind}(G))=  \bigcup_{k=1}^l 
 {\rm  Conf}_{k} (V_G,\frac{1}{2})/\Sigma_k
 \end{eqnarray*}
 for  any  $l=1,2,\ldots$,   
 where    ${\rm  sk}^{l-1}  ({\rm  Ind}(G))$  
 is  the  $(l-1)$-skeleton  of  ${\rm  Ind}(G)$.  
 In  general,  for  any  $n=1,2,\ldots$, 
   we  consider  
 the  simplicial  complex   
  \begin{eqnarray*}
   {\rm  Ind}(G,\frac{n}{2}) =  \bigcup_{k=1}^ \infty  
 {\rm  Conf}_{k} (V_G,\frac{n}{2})/\Sigma_k   
 \end{eqnarray*}
 where ${\rm  Conf}_{k} (V_G,n/2)/\Sigma_k$  is  the  collection  
 of  all  the  $(k-1)$-simplices  in   ${\rm  Ind}(G,n/2)$,  i.e.
  \begin{eqnarray*}
 {\rm  sk}^{l-1}  ({\rm  Ind}(G,\frac{n}{2}))=  \bigcup_{k=1}^l 
 {\rm  Conf}_{k} (V_G,\frac{n}{2})/\Sigma_k
 \end{eqnarray*}   
 for  any  $l=1,2,\ldots$.  
 Then   
 \begin{eqnarray*}
  {\rm  Ind}(G) = {\rm  Ind}(G,\frac{1}{2}).  
  \end{eqnarray*}
  By   (\ref{eq-2.1}),  
 the  family   
  \begin{eqnarray*}
{\rm  Ind}(G,-) = \{{\rm  Ind}(G,\frac{n}{2}) \mid  n=1,2,\ldots\}
  \end{eqnarray*}
  is  a  filtration  of  ${\rm  Ind}(G) $  such  that
  $ {\rm  Ind}(G,n'/2)$  is  a  simplicial  sub-complex  of   
 $ {\rm  Ind}(G,n/2)$  
  for  any  $n\leq  n'$ 
  and  
    \begin{eqnarray*}
 {\rm  sk}^{l-1}  ({\rm  Ind}(G,\frac{n}{2}))=  {\rm  Ind}(G,\frac{n}{2})   \cap  {\rm  sk}^{l-1}  ({\rm  Ind}(G))  
  \end{eqnarray*}
  for  any  $l=1,2,\ldots$.  
     It  follows  that  
     \begin{eqnarray*}
     {\rm  sk}^{l-1}  ({\rm  Ind}(G,-))=  \Big\{ {\rm  sk}^{l-1}  ({\rm  Ind}(G,\frac{n}{2}))\mid   n-1,2,\ldots 
     \Big\}
     \end{eqnarray*}
     is  a  filtration  of  $ {\rm  sk}^{l-1}  ({\rm  Ind}(G))  $.  
 The  next  corollary  follows  from  Lemma~\ref{le-2.aa1}  and  Proposition~\ref{pr-3.2aaz}.

  \begin{corollary}\label{co-250118-1af}
  For  any  graph  $G$,  we  have  a  persistent  covering  map  
      \begin{eqnarray}\label{eq-250115-1}
     \pi(k,-):   {\rm  Conf}_k(V_G,-)\longrightarrow    {\rm  Conf}_k(V_G,-)/\Sigma_k
    \end{eqnarray}  
   and   persistent  vector  bundles  
    \begin{eqnarray}\label{eq-03.77}
 \boldsymbol{\xi}(V_G,k,-;\mathbb{F}) \cong    \boldsymbol{\zeta}(V_G,k,-;\mathbb{F})\oplus   \boldsymbol{\epsilon}(V_G,k,-;\mathbb{F}).  
\end{eqnarray}
 Here   the  persistent  vector  bundles  in  (\ref{eq-03.77})  
 are over  the  persistent  CW-complex 
  \begin{eqnarray*}
   {\rm  sk}^{k-1}  ({\rm  Ind}(G,-))\setminus  {\rm  sk}^{k-2}  ({\rm  Ind}(G,-)).
   \end{eqnarray*}  
  \end{corollary}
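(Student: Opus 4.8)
The plan is to obtain everything by specializing the constructions of Section~\ref{25-sect4} to the metric space $(V_G,d_G)$ and then re-reading the base in terms of independence complexes. First I would note that $V_G$, being a discrete set, is a $0$-dimensional CW-complex, so the standing hypothesis of Section~\ref{25-sect4} (that $X$ is a CW-complex) holds with $X=V_G$ and $d=d_G$; in particular every subset of $V_G^k$ is a subcomplex, so ${\rm Conf}_k(V_G,r)$ and ${\rm Conf}_k(V_G,r)/\Sigma_k$ are (discrete) CW-complexes for each $r\geq 0$. Applying Lemma~\ref{le-2.aa1} with $X=V_G$ yields the persistent covering map $\pi(k,-)$ of (\ref{eq-250115-1}), and applying Theorem~\ref{pr-3.2aaz} with $X=V_G$ yields the persistent $O(\mathbb{F}^k)$-bundle $\boldsymbol{\xi}(V_G,k,-;\mathbb{F})$, the persistent $O(\mathbb{F}^{k-1})$-bundle $\boldsymbol{\zeta}(V_G,k,-;\mathbb{F})$, the persistent trivial $\mathbb{F}$-line bundle $\boldsymbol{\epsilon}(V_G,k,-;\mathbb{F})$, and the persistent Whitney-sum isomorphism (\ref{eq-03.77}), which is precisely (\ref{eq-02.77}) for this choice of $X$. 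No new verification of the pull-back condition of Definition~\ref{def-25-01-3} is required, since that condition was already checked for an arbitrary metric space.

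Next I would record that, because $d_G$ takes values in $\mathbb{N}\cup\{+\infty\}$, the assignment $r\mapsto {\rm Conf}_k(V_G,r)$ is constant on each half-open interval $[\frac{n}{2},\frac{n+1}{2})$, so the persistent covering map and the persistent bundles factor through the discretely indexed family $\{{\rm Conf}_k(V_G,\frac{n}{2})\}_{n\geq 1}$ of (\ref{eq-2.00}); this is the reason the statement is phrased with the integer parameter $n$. It then remains to identify the base. By the discussion preceding the corollary, ${\rm Conf}_k(V_G,\frac{n}{2})/\Sigma_k$ in (\ref{eq-250209-graph-2}) is exactly the set of $(k-1)$-simplices of ${\rm Ind}(G,\frac{n}{2})$, i.e. the set of open $(k-1)$-cells ${\rm sk}^{k-1}({\rm Ind}(G,\frac{n}{2}))\setminus {\rm sk}^{k-2}({\rm Ind}(G,\frac{n}{2}))$; and for $n\leq n'$ the canonical inclusion ${\rm Conf}_k(V_G,\frac{n'}{2})/\Sigma_k\hookrightarrow {\rm Conf}_k(V_G,\frac{n}{2})/\Sigma_k$ appearing in Lemma~\ref{le-2.aa1} corresponds, under this bijection of cell sets, to the inclusion induced by ${\rm Ind}(G,\frac{n'}{2})\subseteq {\rm Ind}(G,\frac{n}{2})$. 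Hence the persistent CW-complex ${\rm Conf}_k(V_G,-)/\Sigma_k$ is identified with ${\rm sk}^{k-1}({\rm Ind}(G,-))\setminus {\rm sk}^{k-2}({\rm Ind}(G,-))$, over which the persistent bundles of (\ref{eq-03.77}) live, and the corollary follows.

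I do not expect a genuine obstacle here, since the bundles and their Whitney decomposition are inherited wholesale from Lemma~\ref{le-2.aa1} and Theorem~\ref{pr-3.2aaz}; the one place requiring care is the final identification, where one must interpret ``persistent CW-complex'' at the level of $0$-dimensional cell sets --- forced by the discreteness of $V_G$ --- and check that the filtration maps on ${\rm Conf}_k(V_G,-)/\Sigma_k$ and on the $(k-1)$-cells of ${\rm Ind}(G,-)$ agree. Everything else is an immediate application of the two cited results.
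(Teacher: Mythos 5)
Your proposal is correct and follows essentially the same route as the paper: specialize Lemma~\ref{le-2.aa1} and Theorem~\ref{pr-3.2aaz} to $X=V_G$, and identify ${\rm Conf}_k(V_G,\frac{n}{2})/\Sigma_k$ with ${\rm sk}^{k-1}({\rm Ind}(G,\frac{n}{2}))\setminus{\rm sk}^{k-2}({\rm Ind}(G,\frac{n}{2}))$ compatibly with the inclusions induced by ${\rm Ind}(G,\frac{n_2}{2})\subseteq{\rm Ind}(G,\frac{n_1}{2})$. The only difference is ordering (the paper identifies the base first and then cites the two results), plus your extra remark on the half-integer indexing forced by the integer-valued metric, which is consistent with the paper's preceding discussion.
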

  \begin{proof}
    For  any  $n=1,2,\ldots$,   
         \begin{eqnarray*}
        {\rm  Conf}_{k} (V_G,\frac{n}{2} )/\Sigma_k = {\rm  sk}^{k-1}  ({\rm  Ind}(G,\frac{n}{2}))\setminus  {\rm  sk}^{k-2}.    ({\rm  Ind}(G,\frac{n}{2})).   
         \end{eqnarray*}  
         For  any  $n_1\leq  n_2$,   the  canonical  inclusion  
         \begin{eqnarray*}
         \iota_{n_2,n_1}:  {\rm  Ind}(G,\frac{n_2}{2})\longrightarrow  
         {\rm  Ind}(G,\frac{n_1}{2})
         \end{eqnarray*}
         induces  the  canonical  inclusion 
           \begin{eqnarray*}
         \iota_{n_2,n_1}:   {\rm  Conf}_{k} (V_G,\frac{n_2}{2} )/\Sigma_k\longrightarrow  
          {\rm  Conf}_{k} (V_G,\frac{n_1}{2} )/\Sigma_k.   
         \end{eqnarray*}
         Hence   
  \begin{eqnarray}\label{eq-250210-a}
  {\rm  Conf}_{k} (V_G,- )/\Sigma_k =  {\rm  sk}^{k-1}  ({\rm  Ind}(G,-))\setminus  {\rm  sk}^{k-2}  ({\rm  Ind}(G,-)) 
   \end{eqnarray}
   is  a      persistent  CW-complex.   
  Let  $X$  be  $V_G$  in  Lemma~\ref{le-2.aa1}.  We  obtain  the  persistent  covering  map  (\ref{eq-250115-1}).  
   Let  $X$  be  $V_G$  in  Proposition~\ref{pr-3.2aaz}.   
   We  obtain  the persistent  vector  bundles  and  the  persistent   
  Whitney  sum  (\ref{eq-03.77}). 
  \end{proof}

           The  next  corollary  follows  from  Theorem~\ref{th-25-5.1.1}
           and  Corollary~\ref{co-25-01-15-1}.    
 
 \begin{corollary}\label{pr-250116-9s}
    For  any  graph  $G$,  
    we  have  a  persistent  chain  complex  
 \begin{eqnarray}\label{eq-5.lllpq28}
(C_\bullet(  { \rm  Conf}_\bullet (V_G,-)), \boldsymbol{\partial})   
 \end{eqnarray}
with  a  $\Sigma_\bullet$-action   such  that  
 the  diagram   commutes 
\begin{eqnarray}\label{eq-250116-997a}
\xymatrix{
 H^*( { \rm  Conf}_k (V_G,\dfrac{n_2}{2}))\ar[d]  \ar[r]^-{\sigma^* }  
 & H^*( { \rm  Conf}_k (V_G,\dfrac{n_2}{2}))\ar[d]\\
  H^*( { \rm  Conf}_k (V_G,\dfrac{n_1}{2}))\ar[r]^-{\sigma^* }
  & H^*( { \rm  Conf}_k (V_G,\dfrac{n_1}{2}))
}
\end{eqnarray} 
for  any    $n_1\geq  n_2 $,  any  positive  integer  $k$  and  any  $\sigma\in \Sigma_k$. 
 \end{corollary}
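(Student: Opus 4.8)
The plan is to deduce this corollary by specializing Theorem~\ref{th-25-5.1.1} and Corollary~\ref{co-25-01-15-1} to $X = V_G$, the vertex set of $G$ regarded as a $0$-dimensional CW-complex equipped with the geodesic metric $d_G$. First I would verify the hypotheses: for each positive integer $k$ and each $n = 1, 2, \ldots$, the space ${\rm Conf}_k(V_G, n/2)$ of (\ref{eq-250209-graph-1}) is a discrete set, hence a $0$-dimensional CW-complex, and the face maps $\partial_k^i(n/2)$ of (\ref{eq-2501-991}), which delete the $i$-th coordinate, are automatically cellular. Thus Theorem~\ref{th-25-5.1.1} applied to $X = V_G$ produces the persistent double complex $(C_\bullet({\rm Conf}_\bullet(V_G,-)), \boldsymbol{\partial}, \mathbf{d})$. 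Since every ${\rm Conf}_k(V_G, n/2)$ is discrete, its cellular chain complex is concentrated in degree $0$, so $C_m({\rm Conf}_k(V_G, n/2)) = 0$ for $m \ge 1$ and all the cellular boundary maps $d(n/2)$ vanish. Hence this double complex collapses onto its bottom row, which is exactly the persistent chain complex $(C_\bullet({\rm Conf}_\bullet(V_G,-)), \boldsymbol{\partial})$ claimed in (\ref{eq-5.lllpq28}).

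Next I would equip this chain complex with the $\Sigma_\bullet$-action supplied by Corollary~\ref{co-25-01-15-1}: for each $k$ the group $\Sigma_k$ acts on ${\rm Conf}_k(V_G, r)$ by permuting coordinates via (\ref{eq-25-01-31-1}), hence on $C_0({\rm Conf}_k(V_G, r))$ by permuting basis elements, and this is all of $C_\bullet({\rm Conf}_k(V_G, r))$ because the space is discrete. Corollary~\ref{co-25-01-15-1} with $X = V_G$ then gives the commuting square (\ref{eq-250116-91}) on persistent cohomology, valid for all real $r_1 \ge r_2 \ge 0$, every positive integer $k$ and every $\sigma \in \Sigma_k$, the vertical arrows being induced by the canonical inclusions of configuration spaces.

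Finally I would pass from the real parameter to the half-integer parameter. Because $d_G$ takes values in $\{0, 1, 2, \ldots\} \cup \{+\infty\}$, the filtration $r \mapsto {\rm Conf}_k(V_G, r)$ is a step function; in particular, given positive integers $n_1 \ge n_2$, setting $r_1 = n_1/2 \ge r_2 = n_2/2$ and substituting into (\ref{eq-250116-91}) yields the diagram (\ref{eq-250116-997a}) verbatim, which completes the argument. There is no genuinely hard step here; the only point requiring a moment's care is the degeneration of the double complex of Theorem~\ref{th-25-5.1.1} to a single-row chain complex when $X$ is discrete, and that is immediate from the absence of cells of positive dimension.
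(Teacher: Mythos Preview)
Your proposal is correct and follows essentially the same approach as the paper: specialize Theorem~\ref{th-25-5.1.1} and Corollary~\ref{co-25-01-15-1} to $X=V_G$, observe that since $V_G$ is discrete the cellular boundary $\mathbf{d}$ is trivial so the persistent double complex~(\ref{eq-5.lllq2}) collapses to the persistent chain complex~(\ref{eq-5.lllpq28}), and then read off the commutative square~(\ref{eq-250116-997a}) from~(\ref{eq-250116-91}). Your explicit remark on passing from the real parameter $r$ to the half-integer parameter $n/2$ is a helpful clarification that the paper leaves implicit.
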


     \begin{proof}
     It  follows  from   the  persistent  covering  map  
     \begin{eqnarray*}
     \pi(k,-): {\rm  Conf}_{k} (V_G,- )\longrightarrow    {\rm  Conf}_{k} (V_G,- )/\Sigma_k
     \end{eqnarray*}
   and   (\ref{eq-250210-a})  that  
     $ {\rm  Conf}_{k} (V_G,- )$  is  a  persistent  CW-complex.  
     Let  $X$  be  $V_G$  in  Theorem~\ref{th-25-5.1.1}.  
     Then    
     $\mathbf{d}$  is  trivial.  
     The  persistent   double  complex   (\ref{eq-5.lllq2})  
     as  well  as   the    persistent  chain  complex   (\ref{eq-5.lllq77}) 
    reduces to  the  persistent  chain  complex  (\ref{eq-5.lllpq28}).       
         Let  $X$  be  $V_G$  in  Corollary~\ref{co-25-01-15-1}.    
         The  commutative  diagram  (\ref{eq-250116-91})  gives  
         (\ref{eq-250116-997a}).  
              \end{proof}

\section{Obstructions for  strong  totally  geodesic  embeddings}\label{s5}

In  this  section,   we  apply  the  persistent   bundles  over  configuration  spaces  constructed 
in  Section~\ref{25-sect4}   to  give  obstructions  for   totally  geodesic  embeddings.  
In Subsection~\ref{ss.f.1},  we  define  the  strong totally  geodesic  embeddings
  of  Riemannian  manifolds   (cf.    
Definition~\ref{def-25-01-7})  and  give   obstructions   for  strong  totally  geodesic  embeddings
of  Riemannian  manifolds  
  in  Theorem~\ref{th-main1-25jan}.  
  In   Subsection~\ref{ss.f.2},  we  define  the  geodesic  embeddings  of  graphs 
  (cf.    Definition~\ref{def-250120-9})
    and   give   obstructions   for  strong  totally  geodesic  embeddings
of  graphs  
  in  Theorem~\ref{th-main2-25jan}.

\subsection{Obstructions for   strong  totally  geodesic  embeddings  of   
Riemannian  manifolds}\label{ss.f.1}

Let  $M'$  be  a  Riemannian  manifold.  
A  Riemannian  submanifold  $M\subseteq   M'$  is   called  {\it  totally  geodesic}  if  every 
geodesic  of  $M$  is  also  a  geodesic  of  $M'$   (cf.  \cite[p. 104,  Proposition~13]{oneill}).  
We  give  a    condition  stronger 
 than the totally  geodesic  condition  in  the  next  definition.

\begin{definition}\label{def-25-01-7}
We  call  a  Riemannian  submanifold  $M\subseteq   M'$  {\it  strong  totally  geodesic}  
if  for  any  $p,q\in   M$,    every  geodesic  of  $M'$  connecting  $p$ and  $q$ 
is  also  a  geodesic   of  $M$.  
\end{definition}

    \begin{example}
    Let  $m_1<m_2$  be  positive  integers.  
    The  unit  sphere   $S^{m_1}$  is  a  totally  geodesic  submanifold  
    of  the  unit sphere  $S^{m_2}$.  
    However,   $S^{m_1}$  is  not  a  strong  totally geodesic  submanifold  of  $S^{m_2}$  
    since  
   any  pair  of  antipodal  points  in $S^{m_1}$  
   can  be  connected  by  the semi-circles  of  any  great  circles
in  $S^{m_2}$.  
    \end{example}

\begin{lemma}\label{le-25-01-geod1a}
Let  $M'$  be  a  complete  Riemannian  manifold.  
 Then  
any   strong  totally  geodesic submanifold   $M\subseteq  M'$   is  totally  geodesic.    
\end{lemma}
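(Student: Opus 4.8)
The plan is to reduce the statement to a purely local assertion and then exploit the uniqueness of short geodesics in a totally normal neighborhood of $M$. Let $\gamma\colon[0,\ell]\to M$ be a geodesic of $M$; the goal is to show that $\gamma$ is a geodesic of $M'$. Since the geodesic equation $\nabla^{M'}_{\gamma'}\gamma'=0$ is a local condition, it suffices to show that for every $t_0\in[0,\ell]$ there is $\epsilon>0$ such that $\gamma|_{[t_0-\epsilon,\,t_0+\epsilon]}$, up to an affine change of parameter, is a geodesic of $M'$. So first I would fix $t_0$ and, using the existence of totally normal (uniformly normal) neighborhoods, choose a neighborhood $W$ of $\gamma(t_0)$ in $M$ and a number $\delta>0$ such that for each $y\in W$ the map $\exp^M_y$ is a diffeomorphism from the $\delta$-ball of $T_yM$ onto an open set containing $W$, and such that any two points of $W$ are joined in $M$ by a unique minimizing geodesic, which has length $<\delta$. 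Only the intrinsic geometry of $M$ enters here, so completeness of $M$ is not needed — in accordance with the hypothesis, which only asks $M'$ to be complete.

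Next, pick $\epsilon>0$ small enough that $p:=\gamma(t_0-\epsilon)$ and $q:=\gamma(t_0+\epsilon)$ lie in $W$ and $2\epsilon|\gamma'|<\delta$ (recall $|\gamma'|$ is constant along the geodesic $\gamma$). Because $\gamma$ provides a path in $M$ from $p$ to $q$, we have $d_{M'}(p,q)\le d_M(p,q)<+\infty$, so by the Hopf--Rinow Theorem applied to the complete manifold $M'$ there is a minimizing geodesic $\sigma\colon[0,1]\to M'$ with $\sigma(0)=p$, $\sigma(1)=q$ and $\mathrm{length}_{M'}(\sigma)=d_{M'}(p,q)$. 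By Definition~\ref{def-25-01-7}, $\sigma$ is also a geodesic of $M$; in particular its image lies in $M$, and since the Riemannian metric of $M$ is the restriction of that of $M'$, the length of $\sigma$ measured in $M$ equals $d_{M'}(p,q)$. As $\sigma$ is then a curve in $M$ from $p$ to $q$, this forces $d_M(p,q)\le d_{M'}(p,q)$, hence $d_M(p,q)=d_{M'}(p,q)$ and $\sigma$ is a minimizing geodesic of $M$ of length $<\delta$.

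Finally, both $\sigma$ and the geodesic arc $\gamma|_{[t_0-\epsilon,\,t_0+\epsilon]}$ of $M$ join $p$ to $q$, have length $<\delta$, and emanate from $p$; writing each one (after an affine reparametrization onto $[0,1]$) in the form $s\mapsto\exp^M_p(sw)$ with $|w|<\delta$, the condition that the endpoint be $q$ together with injectivity of $\exp^M_p$ on the $\delta$-ball of $T_pM$ forces the same $w=(\exp^M_p)^{-1}(q)$ in both cases. Hence $\gamma|_{[t_0-\epsilon,\,t_0+\epsilon]}$, reparametrized affinely, coincides with $\sigma$. Since $\sigma$ is a geodesic of $M'$, and affine reparametrizations and subarcs of geodesics of $M'$ are again geodesics of $M'$, we get $\nabla^{M'}_{\gamma'}\gamma'=0$ on $(t_0-\epsilon,t_0+\epsilon)$. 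Letting $t_0$ range over $[0,\ell]$, the curve $\gamma$ is a geodesic of $M'$, so $M$ is totally geodesic in $M'$. The step that requires care is the middle one: the hypothesis directly yields only that $\sigma$ lies in $M$ and is a geodesic of $M$, and one must first deduce that $\sigma$ is actually $M$-minimizing — equivalently that $d_M(p,q)=d_{M'}(p,q)$ for points joined by a short $M'$-geodesic — before the uniqueness of short geodesics in the totally normal neighborhood of $M$ can be applied.
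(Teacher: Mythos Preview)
Your argument is correct and follows essentially the same route as the paper: pick two nearby points $p,q$ on a geodesic $\gamma$ of $M$, invoke Hopf--Rinow on the complete $M'$ to obtain a minimizing $M'$-geodesic $\sigma$ from $p$ to $q$, use the strong totally geodesic hypothesis to conclude $\sigma$ is a geodesic of $M$, and then compare lengths to force $\sigma$ and the arc of $\gamma$ to coincide. The paper phrases this as a brief proof by contradiction (if $\gamma$ were not an $M'$-geodesic, the $M'$-minimizer $\gamma'$ would be strictly shorter than the $M$-minimizing arc $\gamma_{p,q}$, yet lie in $M$), whereas you give a direct local argument making the normal-neighborhood step and the uniqueness of short geodesics explicit; your version is more detailed but the underlying mechanism is the same.
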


\begin{proof}
Let  $M\subseteq   M'$   be  a  strong  totally  geodesic  submanifold.  
Let  $\gamma$  be  a  geodesic  of  $M$.  
It  suffices to  prove  $\gamma$  is  a  geodesic  of  $M'$.  
Otherwise,  by  the  completeness  of  $M'$  and  the  Hopf-Rinow Theorem
(cf.  \cite[p.  138]{oneill}),  
 we  can  choose   $p,q\in \gamma$  
 and  a  geodesic  $\gamma'$  of  $M'$  from  $p$  to  $q$  
such  that 
 \begin{eqnarray}\label{eq-250210-b}
 {\rm  length}(\gamma')<{\rm  length}(\gamma_{p,q}).  
 \end{eqnarray}
  Here   $\gamma_{p,q}$   is  the  shortest  geodesic  of  $M$  from  $p$  to  $q$,  
   i.e.   $\gamma_{p,q}$  is the  segment  of  $\gamma$  from  $p$  to  $q$.   
 Since  $M$  is  a  strong  totally  geodesic  submanifold  of  $M'$, 
 by  Definition~\ref{def-25-01-7},   
     $\gamma'$  is  a  geodesic  of  $M$  from  $p$  to  $q$.  
  This  contradicts   with  (\ref{eq-250210-b})
  and   that  $\gamma_{p,q}$   is  the  shortest  geodesic  of  $M$  from  $p$  to  $q$.    
\end{proof}

\begin{corollary}\label{co-25-01-geod1}
Let  $M'$  be  a  complete  Riemannian  manifold.  
 Then  a  Riemannian  submanifold  $M\subseteq   M'$  is   strong  totally  geodesic  iff
  for  any  $p,q\in   M$  and  any  curve  $\gamma$  in  $M'$  from  $p$  to  $q$,  
  \begin{eqnarray}\label{eq-2501geod1}
   \gamma  {\rm~ is~  a ~ geodesic~  of~  }M  \Longleftrightarrow   \gamma
   {\rm~  is ~ a ~ geodesic ~ of ~}  M'. 
  \end{eqnarray}
\end{corollary}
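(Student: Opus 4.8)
The plan is to show that the biconditional (\ref{eq-2501geod1}) is just a repackaging of Definition~\ref{def-25-01-7} together with Lemma~\ref{le-25-01-geod1a}. For the forward direction, I would assume $M\subseteq M'$ is strong totally geodesic and fix $p,q\in M$ and a curve $\gamma$ in $M'$ from $p$ to $q$. If $\gamma$ is a geodesic of $M'$, then Definition~\ref{def-25-01-7} immediately gives that $\gamma$ is a geodesic of $M$; conversely, if $\gamma$ is a geodesic of $M$, then since Lemma~\ref{le-25-01-geod1a} shows that $M$ is totally geodesic, $\gamma$ is also a geodesic of $M'$. This establishes both implications in (\ref{eq-2501geod1}).

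For the backward direction, I would assume that (\ref{eq-2501geod1}) holds for all $p,q\in M$ and all curves $\gamma$ in $M'$ joining them, and verify Definition~\ref{def-25-01-7} directly. Given $p,q\in M$ and a geodesic $\gamma$ of $M'$ connecting them, the ``$\Longleftarrow$'' half of the assumed biconditional forces $\gamma$ to be a geodesic of $M$. Since $p$, $q$ and $\gamma$ were arbitrary, $M$ is strong totally geodesic, as required.

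The only point needing care --- and the real reason completeness of $M'$ is hypothesized --- is the appeal to Lemma~\ref{le-25-01-geod1a}: a geodesic of $M$ need not be length-minimizing when viewed in $M'$, so one cannot conclude it is a geodesic of $M'$ merely from the distance identity $d_M=d_{M'}$ on $M$; one genuinely needs the Hopf--Rinow argument packaged in that lemma. Beyond invoking it, the proof is a routine unwinding of the two definitions, and I do not anticipate any further obstacle.
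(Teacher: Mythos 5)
Your proposal is correct and follows exactly the paper's own route: the paper's proof is the one-line statement that the corollary follows from Definition~\ref{def-25-01-7} and Lemma~\ref{le-25-01-geod1a}, and your argument is precisely the explicit unwinding of that, with the completeness hypothesis entering only through the appeal to the lemma.
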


\begin{proof}
The  corollary  follows  from  Definition~\ref{def-25-01-7}  
and  Lemma~\ref{le-25-01-geod1a}. 
\end{proof}

\begin{lemma}
\label{le-250210-ay}
Let  $M'$  be  a    Riemannian  manifold.  
Let  $M $  be  a  Riemannian  
    submanifold  of  $M'$    
     that   does  not  contain  any   pair  of  conjugate  points  of  $M'$. 
     If  $M\subseteq  M'$  is  totally  geodesic,  then  
     $M\subseteq  M'$  is  strong  totally  geodesic.  
\end{lemma}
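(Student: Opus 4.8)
The goal is to verify, for $M$ as in the statement, the defining property in Definition~\ref{def-25-01-7}: every geodesic $\gamma\colon[0,1]\to M'$ of $M'$ with $p:=\gamma(0)\in M$ and $q:=\gamma(1)\in M$ is also a geodesic of $M$. I would reduce this to the single infinitesimal claim that $\gamma'(0)\in T_pM$. Granting that claim, let $\sigma$ be the geodesic of $M$ with $\sigma(0)=p$ and $\sigma'(0)=\gamma'(0)$; since $M\subseteq M'$ is totally geodesic, $\sigma$ is also a geodesic of $M'$ and has the same initial data as $\gamma$, so $\sigma=\gamma$ on their common interval by uniqueness of geodesics in $M'$. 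Hence $\gamma$ coincides with an $M$-geodesic near $t=0$. A routine maximality argument on $[0,1]$ then completes this reduction: letting $T$ be the supremum of those $t\in[0,1]$ for which $\gamma|_{[0,t]}$ lies in $M$ and is a geodesic of $M$, the set of such $t$ is closed (as $t\to T^-$ one has $\gamma(t)\to\gamma(T)\in M$ and $\gamma'(t)\to\gamma'(T)\in T_{\gamma(T)}M$, because $TM$ is a closed embedded sub-bundle of $TM'|_M$) and open to the right (re-running the previous sentence at $\gamma(T)$ using the totally geodesic hypothesis), so $T=1$.

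It then remains to prove $\gamma'(0)\in T_pM$, and this is where the hypothesis that $M$ contains no pair of conjugate points of $M'$ enters. The plan is as follows. Since $M$ is totally geodesic, every $M'$-geodesic issuing from $p$ with initial velocity in $T_pM$ is an $M$-geodesic, hence stays in $M$; thus $\exp_p^{M'}(U)\subseteq M$ for a small ball $U\subseteq T_pM$ around $0$, and this image is a neighbourhood of $p$ in $M$, so (as $M$ is embedded) $M$ agrees with $\exp_p^{M'}(U)$ near $p$. On the other hand $q=\exp_p^{M'}(\gamma'(0))$. Because $p$ and $q$ are not conjugate along $\gamma$, the map $\exp_p^{M'}$ is a local diffeomorphism along the segment $\{t\gamma'(0):t\in[0,1]\}$, and the $M'$-geodesic joining $p$ to $q$ is, within the relevant normal neighbourhood, unique up to affine reparametrisation. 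Comparing $\gamma$ with the $M$-geodesic emanating from $p$ towards $q$ --- which exists and is $\exp_p^{M'}$ applied to a vector of $T_pM$, by the totally geodesic property --- then forces $\gamma'(0)$ to lie in $T_pM$.

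The main obstacle is exactly this last comparison: upgrading ``no conjugate points'' to a genuine uniqueness statement for the connecting geodesic, so that the given $M'$-geodesic $\gamma$ is pinned to the $M$-geodesic delivered by the totally geodesic hypothesis. Concretely I would first dispatch the case in which $\gamma$ is short enough to lie inside a geodesically convex neighbourhood of $p$, where uniqueness of connecting geodesics is classical and the argument closes at once, and then reduce the general case to the short case by following $\gamma$ from $p$ until its first return to $M$ and invoking the no-conjugate-points hypothesis to control $\exp_p^{M'}$ up to that return. The remaining bookkeeping --- continuity of $\gamma'$ and of the sub-bundle $TM$, and the open--closed argument above --- is routine once the inclusion $\gamma'(0)\in T_pM$ has been secured.
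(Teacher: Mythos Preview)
Your argument and the paper's both hinge on the same step: from the absence of conjugate points, deduce that the $M'$-geodesic from $p$ to $q$ is unique, so that it must coincide with an $M$-geodesic (which, by total geodesy, is also an $M'$-geodesic). You are right to flag this as the ``main obstacle'', but your plan to upgrade ``no conjugate points'' to a genuine uniqueness statement does not close the gap. Absence of conjugate points along a geodesic only says that $\exp_p^{M'}$ is a \emph{local} diffeomorphism along the relevant ray in $T_pM'$; it does not make $\exp_p^{M'}$ injective, and hence does not rule out a second $M'$-geodesic from $p$ to $q$ with a different initial velocity. A clean counterexample: take $M'=\mathbb{R}^2/\mathbb{Z}^2$ the flat torus and $M$ the totally geodesic circle $\{y=0\}$. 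There are no conjugate points anywhere in $M'$, yet $t\mapsto(t/2,t)$ is an $M'$-geodesic from $p=(0,0)$ to $q=(\tfrac12,0)$ that leaves $M$ immediately and returns only at $t=1$. Your short-case/first-return reduction does not help here: the offending geodesic has no intermediate return to $M$, and $\exp_p^{M'}$ is a local diffeomorphism everywhere yet still not injective. Consequently the claim $\gamma'(0)\in T_pM$ fails outright in this example.

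The paper's own proof is terser but makes exactly the same unjustified leap, asserting directly that the absence of conjugate points forces the $M$-geodesic to be the unique $M'$-geodesic from $p$ to $q$. So the gap you identified is genuine and is not a defect of your route relative to the paper's; the statement as written would need a stronger hypothesis --- for instance that $M$ lies in a geodesically convex subset of $M'$, or simply that any two points of $M$ are joined by a unique $M'$-geodesic --- for either argument to go through.
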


\begin{proof}
Suppose $M\subseteq  M' $  is   totally  geodesic.  
 Let  $p,q\in  M$.  
 For  any  geodesic     $\gamma_{p,q}$    of  $M$  from  $p$  to  $q$,     
    it  is  also a  geodesic  of  $M'$.  
 Since  $M$  does  not  contain  any   pair  of  conjugate  points  of  $M'$, 
  for  any  $a \in  \gamma_{p,q}$,  
  it  follows   that    $p$  and  $a$  are     not  conjugate  in  $M'$,     
  and   $a$  and  $q$  are  not  conjugate  in  $M'$. 
  Thus  
  for  any  geodesic     $\gamma_{p,q}$    of  $M$  from  $p$  to  $q$, 
   it  is  the  unique  geodesic 
 of  $M'$  from  $p$  to  $q$.  
 By  Definition~\ref{def-25-01-7},  
  $M\subseteq  M' $  is  strong  totally  geodesic.
\end{proof}

    \begin{lemma}
    \label{le-250118-abc1}
    Let  $M'$  be  a  Riemannian  manifold.  
    Let  $M $  be  a  Riemannian  
    submanifold  of  $M'$.  
    Then  
    \begin{eqnarray}\label{eq-1.77ab} 
    d_{M}(p,q)\geq  d_{M'}(p,q)
    \end{eqnarray}
    for  any  $p,q\in  M$.  
Moreover,   
\begin{enumerate}[(1)]
\item
if    $M\subseteq  M'$   is    strong    totally geodesic,    
 then   the  equality  of  (\ref{eq-1.77ab})  holds  for  any  $p,q\in  M$;     
  \item
  If  the  equality  of  (\ref{eq-1.77ab})  holds  for  any  $p,q\in  M$,
  then  $M\subseteq  M'$  is  totally  geodesic;    
  \item
  If  $M$  does  not  contain  any   pair  of  conjugate  points  of  $M'$   
  and  
   the  equality  of  (\ref{eq-1.77ab})  holds  for  any  $p,q\in  M$,  
  then     $M\subseteq  M'$   is    strong    totally geodesic.  
  \end{enumerate}
    \end{lemma}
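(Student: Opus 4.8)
The plan is to prove the displayed inequality first and then the three implications in order, the whole argument resting on the fact that the metric of a Riemannian submanifold is the restriction of the ambient metric. For the inequality $d_{M}(p,q)\ge d_{M'}(p,q)$: for any piecewise-smooth curve $\gamma$ in $M$ one has $|\gamma'(t)|_{g_M}=|\gamma'(t)|_{g_{M'}}$ at every $t$, because the Riemannian metric $g_M$ of $M$ is $g_{M'}$ restricted to $TM\subseteq TM'|_M$, so ${\rm length}(\gamma)$ computed in $M$ equals ${\rm length}(\gamma)$ computed in $M'$; since every curve in $M$ joining $p$ and $q$ is also such a curve in $M'$, the infimum defining $d_{M}(p,q)$ ranges over a subset of the curves defining $d_{M'}(p,q)$, giving the inequality.

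For (1): given $p,q\in M$ with $M\subseteq M'$ strong totally geodesic, I would take a geodesic $\gamma$ of $M'$ from $p$ to $q$ with ${\rm length}(\gamma)=d_{M'}(p,q)$ --- such a minimizing geodesic is available by the Hopf--Rinow Theorem when $M'$ is complete, and always when $p,q$ lie in a common geodesically convex ball. By Definition~\ref{def-25-01-7}, $\gamma$ is then a geodesic of $M$, in particular a curve in $M$ joining $p$ and $q$, so $d_{M}(p,q)\le{\rm length}(\gamma)=d_{M'}(p,q)$; combined with the inequality above, equality holds in (\ref{eq-1.77ab}). I expect this step to carry the only genuine subtlety of the lemma: one must guarantee that a distance-realizing $M'$-geodesic between the two given points of $M$ exists, which is precisely where completeness of $M'$ (via Hopf--Rinow) enters.

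For (2): assuming equality in (\ref{eq-1.77ab}) throughout $M$, let $\gamma$ be a geodesic of $M$; we want $\gamma$ to be a geodesic of $M'$. Since a geodesic of $M$ is locally length-minimizing, every parameter value has a sub-arc $\gamma|_{[a,b]}$ around it with ${\rm length}(\gamma|_{[a,b]})=d_{M}(\gamma(a),\gamma(b))$; computing this length in $M'$ instead and using the hypothesis, ${\rm length}(\gamma|_{[a,b]})=d_{M}(\gamma(a),\gamma(b))=d_{M'}(\gamma(a),\gamma(b))$, so $\gamma|_{[a,b]}$ realizes the $M'$-distance between its endpoints and, being of constant speed, is a smooth geodesic of $M'$. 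As this holds near every parameter, $\gamma$ is a geodesic of $M'$, so $M\subseteq M'$ is totally geodesic.

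Finally, (3) is immediate from (2) and Lemma~\ref{le-250210-ay}: equality in (\ref{eq-1.77ab}) makes $M\subseteq M'$ totally geodesic by (2), and since by hypothesis $M$ contains no pair of conjugate points of $M'$, Lemma~\ref{le-250210-ay} then gives that $M\subseteq M'$ is strong totally geodesic. The main obstacle is thus the availability of minimizing geodesics in part (1); parts (2) and (3) are routine once the local length-minimizing property of geodesics and Lemma~\ref{le-250210-ay} are in hand.
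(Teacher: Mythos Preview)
Your argument is correct.  The paper's proof agrees with yours on the opening inequality and on (3), but handles (1) and (2) a little differently.  In (1) the paper avoids selecting a single minimizing geodesic: it writes $d_{M'}(p,q)$ as the infimum of lengths over all $M'$-geodesics joining $p$ to $q$, observes that by Definition~\ref{def-25-01-7} each such geodesic is already an $M$-geodesic, and concludes that this infimum dominates $d_M(p,q)$.  That sidesteps whether the infimum is actually attained, but it still tacitly assumes that geodesics (rather than arbitrary curves) suffice to compute $d_{M'}$, so the completeness subtlety you flag is latent in the paper's version as well.  In (2) the paper argues by contradiction: if some $M$-geodesic $\gamma_{p,q}$ failed to be an $M'$-geodesic, one could shrink its endpoints so that $\gamma_{p,q}$ is the shortest $M$-geodesic between them yet has length strictly exceeding $d_{M'}(p,q)$, contradicting the hypothesis.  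Your direct local length-minimization argument is cleaner and avoids the informal ``choose $p$ and $q$ properly'' step.
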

    
    \begin{proof}
    Let  $M\subseteq  M'$  be  a  Riemannian  submanifold.  
    Then  for  any  $p,q\in  M$,  
    \begin{eqnarray*}
d_M(p,q)&=&
\inf_{\gamma:[0,1]\longrightarrow  M\atop \gamma(0)=p{\rm~and~}\gamma(1)=q} {\rm  length}(\gamma) \\
&\geq  &  \inf_{\gamma':[0,1]\longrightarrow  M'\atop \gamma'(0)=p{\rm~and~}\gamma'(1)=q} {\rm  length}(\gamma')\\
&=&   d_{M'}(p,q).  
\end{eqnarray*}

(1)  Suppose  $M\subseteq  M'$   is    strong    totally geodesic.   
Let    $p,q\in  M$.  
Then  by  Definition~\ref{def-25-01-7},       
any  geodesic  $\gamma'$  of  $M'$  in  from  $p$  to  $q$  must  be  
a  geodesic  $\gamma$  of  $M$.  
Thus  
\begin{eqnarray*}
d_{M'}(p,q)&=&
  \inf_{\gamma':[0,1]\longrightarrow  M'{\rm~is~a~geodesic~of~}M'\atop \gamma'(0)=p{\rm~and~}\gamma'(1)=q} {\rm  length}(\gamma')\\
&\geq & \inf_{\gamma:[0,1]\longrightarrow  M{\rm~is~a~geodesic~of~}M\atop \gamma(0)=p{\rm~and~}\gamma(1)=q} {\rm  length}(\gamma)\\
&=& d_{M}(p,q). 
\end{eqnarray*}
We  obtain the  equality  of  (\ref{eq-1.77ab}).

  (2)  Suppose  the  equality  of  (\ref{eq-1.77ab})  holds  for  any  $p,q\in  M$.  
   To  prove  (2),  we  suppose  to  the  contrary  that   $M\subseteq  M'$  is  not  totally  geodesic. 
   Then  we  can  choose  $p,q\in M$  and 
     a    geodesic  $\gamma_{p,q}$  of  $M$  from  $p$ to  $q$ 
   such  that  $\gamma_{p,q}$  is  not  a  geodesic  of  $M'$.  
   Without  loss  of  generality,  
   we  can choose  $p$  and  $q$  properly  such that  $\gamma_{p,q}$
   is     shortest  in  $M$.  
  It  follows  that   
  \begin{eqnarray*}
  d_{M'}(p,q)< {\rm  length}(\gamma_{p,q})= d_M(p,q), 
  \end{eqnarray*}
   which  contradicts  with  our  assumption. 
   Thus  $M\subseteq  M'$  is    totally  geodesic.

  (3)  Suppose  for  any  $p,q\in  M$,   $p$  and  $q$  are  not  conjugate  in  $M'$  and  
the  equality  of  (\ref{eq-1.77ab})  is  satisfied.  
By   (2),   $M\subseteq  M'$  is  totally  geodesic.  By   Lemma~\ref{le-250210-ay},  
  $M\subseteq  M'$  is  strong   totally  geodesic.    
    \end{proof}
    
    \begin{corollary}\label{co-geod2}
    Let 
    $M'$  be  a  complete  Riemannian  manifold.   
   Let  $M $  be  a  Riemannian  
    submanifold  of  $M'$    
     that   does  not  contain  any   pair  of  conjugate  points  of  $M'$. 
    Then   the  followings  are  equivalent 
    \begin{enumerate}[(1)]
    \item
    $M\subseteq  M'$   is   totally  geodesic;  
    \item
      $M\subseteq  M'$   is  strong    totally  geodesic; 
    \item
    $d_M(p,q)=d_{M'}(p,q)$    for  any  $p,q\in   M$.  
    \end{enumerate}
    \end{corollary}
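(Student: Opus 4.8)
The plan is to prove the three statements equivalent by running around the cycle $(1)\Rightarrow(2)\Rightarrow(3)\Rightarrow(1)$, with each arrow supplied by a lemma already proved above; the two standing hypotheses of the corollary --- completeness of $M'$ and the absence of a conjugate pair of $M'$ inside $M$ --- are precisely the inputs those lemmas require.

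For $(1)\Rightarrow(2)$, I would simply quote Lemma~\ref{le-250210-ay}: its hypothesis that $M$ contains no pair of conjugate points of $M'$ is part of the hypothesis of the corollary, so ``totally geodesic'' upgrades to ``strong totally geodesic''. For $(2)\Rightarrow(3)$, I would quote Lemma~\ref{le-250118-abc1}~(1), which says that strong total geodesy forces $d_M(p,q)=d_{M'}(p,q)$ for all $p,q\in M$. For $(3)\Rightarrow(1)$, I would quote Lemma~\ref{le-250118-abc1}~(2), which says that the distance equality forces $M\subseteq M'$ to be totally geodesic. This closes the loop and proves the equivalence; alternatively one can obtain $(3)\Rightarrow(2)$ in one stroke from Lemma~\ref{le-250118-abc1}~(3), again using the conjugate-point-free hypothesis, and then $(2)\Rightarrow(1)$ from Lemma~\ref{le-25-01-geod1a} via completeness of $M'$.

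There is essentially no obstacle here: all the real work was done in Lemmas~\ref{le-25-01-geod1a}, \ref{le-250210-ay} and~\ref{le-250118-abc1}, and the corollary is the bookkeeping that packages them. The only point worth flagging is which hypotheses are actually consumed where --- the conjugate-point condition is used precisely in the passage from ``totally geodesic'' to ``strong totally geodesic'', while completeness of $M'$ is not strictly needed for the cycle above (it enters Lemma~\ref{le-25-01-geod1a} and Corollary~\ref{co-25-01-geod1}, which are not required for this particular chain) and is kept only for uniformity of the hypotheses in the subsection.
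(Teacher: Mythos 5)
Your proposal is correct and is essentially the paper's own proof: the paper also just cites Lemma~\ref{le-250210-ay} for $(1)\Rightarrow(2)$, Lemma~\ref{le-250118-abc1}~(1) for $(2)\Rightarrow(3)$, and then Lemma~\ref{le-250118-abc1}~(3) together with Lemma~\ref{le-25-01-geod1a} for the return implications --- exactly the ``alternative'' route you mention, while your main cycle closes instead via Lemma~\ref{le-250118-abc1}~(2), an immaterial difference. Your remark about which hypotheses are consumed where is accurate and a nice bonus.
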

    
    \begin{proof} 
    (2)  $\Longrightarrow$  (1):  Lemma~\ref{le-25-01-geod1a}.  
    (1)   $\Longrightarrow$  (2):  Lemma~\ref{le-250210-ay}.     
     (2)  $\Longrightarrow$  (3):  Lemma~\ref{le-250118-abc1}~(1).    
     (3)  $\Longrightarrow$  (2):  Lemma~\ref{le-250118-abc1}~(3).   
        \end{proof}

    The  next  lemma  follows  from  Lemma~\ref{le-250118-abc1}~(1).

  \begin{lemma}\label{le-mfd-3.1}
  Let  $\varphi: M\longrightarrow  M'$  
  be  a   strong  totally geodesic  embedding  of   Riemannian  manifolds.  
  Then  we  have  
  an  induced   pull-back  of  persistent  covering  maps  
    \begin{eqnarray}\label{eq-3.x2}
    \xymatrix{
    {\rm  Conf}_k(M,-)\ar[d]_-{\pi(k,-)} \ar[rr] ^-{ {\rm  Conf}(\varphi,-)}
    &&{\rm  Conf}_k(M',-)\ar[d]^-{\pi(k,-)} \\
     {\rm  Conf}_k(M, -)/\Sigma_k  \ar[rr] ^-{ {\rm  Conf}(\varphi,-)/\Sigma_k }
    &&{\rm  Conf}_k(M',-)/\Sigma_k   
    }
  \end{eqnarray}
  for  any   positive  integer  $k$.  
  \end{lemma}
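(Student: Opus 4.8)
The plan is to first reduce the Riemannian hypothesis to a purely metric statement and then to carry out an essentially formal equivariance-and-covering-space verification at each level of the hard-sphere filtration. By Lemma~\ref{le-250118-abc1}~(1), the hypothesis that $\varphi\colon M\hookrightarrow M'$ is strong totally geodesic yields $d_M(p,q)=d_{M'}(\varphi(p),\varphi(q))$ for all $p,q\in M$, so $\varphi$ is a distance-preserving embedding of metric spaces. Consequently, for every $r\geq 0$ and every $(p_1,\dots,p_k)\in{\rm Conf}_k(M,r)$ the inequalities $d_M(p_i,p_j)>2r$ give $d_{M'}(\varphi(p_i),\varphi(p_j))>2r$, so that $(\varphi(p_1),\dots,\varphi(p_k))\in{\rm Conf}_k(M',r)$. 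This defines a continuous map
\begin{eqnarray*}
{\rm Conf}_k(\varphi,r)\colon {\rm Conf}_k(M,r)\longrightarrow {\rm Conf}_k(M',r),\qquad (p_1,\dots,p_k)\longmapsto (\varphi(p_1),\dots,\varphi(p_k)),
\end{eqnarray*}
which is $\Sigma_k$-equivariant by (\ref{eq-25-01-31-1}) and therefore descends to a map ${\rm Conf}_k(\varphi,r)/\Sigma_k$ on the orbit spaces. For $r_2\leq r_1$ these maps commute with the canonical inclusions ${\rm Conf}_k(-,r_1)\hookrightarrow{\rm Conf}_k(-,r_2)$, so the families ${\rm Conf}(\varphi,-)=\{{\rm Conf}_k(\varphi,r)\mid r\geq 0\}$ and ${\rm Conf}(\varphi,-)/\Sigma_k$ are persistent maps and the square (\ref{eq-3.x2}) commutes; its vertical legs are persistent covering maps by Lemma~\ref{le-2.aa1}. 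Since limits in the functor category of persistent spaces are computed levelwise, it then suffices to show that for each fixed $r\geq 0$ the corresponding square of covering maps is a pull-back.

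For that I would use the elementary fact that a commuting square of covering spaces $E\to B$, $E'\to B'$ over a map $B\to B'$ is cartesian if and only if the induced map $E\to B\times_{B'}E'$ is a homeomorphism; since that induced map is a morphism of covering spaces of $B$ it is automatically a local homeomorphism, so one only has to check bijectivity, equivalently that $E\to E'$ restricts to a bijection on every fibre over $B$. Fix $\{p_1,\dots,p_k\}\in{\rm Conf}_k(M,r)/\Sigma_k$. Its $\pi(k,r)$-fibre consists of the $k!$ orderings of the set $\{p_1,\dots,p_k\}$, while the $\pi(k,r)$-fibre over the image point $\{\varphi(p_1),\dots,\varphi(p_k)\}$ consists of the orderings of $\{\varphi(p_1),\dots,\varphi(p_k)\}$. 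Because $\varphi$ is injective, the assignment $(p_{\sigma(1)},\dots,p_{\sigma(k)})\mapsto(\varphi(p_{\sigma(1)}),\dots,\varphi(p_{\sigma(k)}))$ is a bijection between these two fibres; hence the square at parameter $r$ is a pull-back of covering maps.

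Assembling these fibrewise pull-backs over all $r\geq 0$, together with the commutativity with the filtration structure maps noted above, shows that (\ref{eq-3.x2}) is a pull-back of persistent covering maps, which is the assertion. The one delicate point in the argument is the first step, namely passing from the (local, Riemannian) strong totally geodesic hypothesis to the global metric identity $d_M=d_{M'}$ on $M$; this is precisely the content of Lemma~\ref{le-250118-abc1}~(1). Everything afterwards --- the $\Sigma_k$-equivariance of ${\rm Conf}_k(\varphi,r)$, its compatibility with the hard-sphere filtration, and the characterization of cartesian squares of covering maps by fibrewise bijectivity --- is routine.
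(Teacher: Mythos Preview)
Your proof is correct and follows essentially the same approach as the paper: both invoke Lemma~\ref{le-250118-abc1}~(1) to convert the strong totally geodesic hypothesis into the metric identity $d_M=d_{M'}\circ(\varphi\times\varphi)$, then construct the $\Sigma_k$-equivariant maps ${\rm Conf}_k(\varphi,r)$ level by level and check compatibility with the filtration. The paper simply asserts that the resulting commutative square (\ref{eq-3.x2}) is a pull-back of persistent covering maps, whereas you supply the additional fibrewise-bijection argument making this explicit; this extra detail is sound and fills in what the paper leaves to the reader.
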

  
  \begin{proof}
By  Lemma~\ref{le-250118-abc1}~(1),     for  any  positive  integer  $k$  and  any  
$r\geq  0$,  $\varphi$  induces  a   $\Sigma_k$-equivariant  embedding   
\begin{eqnarray*}
{\rm  Conf}_k(\varphi,r):  {\rm  Conf}_k(M,r)\longrightarrow  {\rm  Conf}_k(M',r)
\end{eqnarray*}
such  that  the  diagrams  commute 
\begin{eqnarray*}
\xymatrix{
{\rm  Conf}_k(M,r_2)\ar[r] \ar[d] _-{{\rm  Conf}_k(\varphi,r_2)} 
&{\rm  Conf}_k(M,r_1)\ar[d]^-{{\rm  Conf}_k(\varphi,r_1)}\\
{\rm  Conf}_k(M',r_2) \ar[r] &{\rm  Conf}_k(M',r_1), 
}
\end{eqnarray*}
  where  the  horizontal  maps  are  canonical  inclusions,   
for  any  $r_2\geq  r_1\geq  0$.  
Hence  we  have  a  filtered  $\Sigma_k$-equivariant  map  
\begin{eqnarray*}
{\rm  Conf}_k(\varphi,-):  {\rm  Conf}_k(M,-)\longrightarrow  {\rm  Conf}_k(M',-)  
\end{eqnarray*}
which  induces  a  pull-back of  persistent  covering  maps  by  the  commutative  diagram  (\ref{eq-3.x2}).    
  \end{proof}

    \begin{theorem}
    \label{th-main1-25jan}
      Let  $\varphi: M\longrightarrow  M'$  
  be  a   strong  totally geodesic  embedding  of    Riemannian  manifolds.  
  Then  we  have   induced  a  pull-back    of  persistent  vector  bundles  
  \begin{eqnarray}\label{eq-250120-1}
   \boldsymbol{\xi}(M,k,-;\mathbb{F}) = \big({\rm  Conf}(\varphi,-)/\Sigma_k\big)^*
      \boldsymbol{\xi}(M',k,-;\mathbb{F}). 
  \end{eqnarray}
  Consequently,  we  have  the  pull-back  persistent  Stiefel-Whitney  class
    \begin{eqnarray}\label{eq-250120-7}
  w( \boldsymbol{\xi}(M,k,-;\mathbb{R})) = \big({\rm  Conf}(\varphi,-)/\Sigma_k\big)^*
      w(\boldsymbol{\xi}(M',k,-;\mathbb{R})) 
  \end{eqnarray}
  and  the   pull-back   persistent  Chern  class 
     \begin{eqnarray}\label{eq-250120-8}
  c( \boldsymbol{\xi}(M,k,-;\mathbb{C})) = \big({\rm  Conf}(\varphi,-)/\Sigma_k\big)^*
      c(\boldsymbol{\xi}(M',k,-;\mathbb{C})).
  \end{eqnarray}
    \end{theorem}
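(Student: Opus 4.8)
The plan is to lift the pull-back square of persistent covering maps furnished by Lemma~\ref{le-mfd-3.1} to a pull-back square of the associated persistent $O(\mathbb{F}^k)$-bundles, and then to deduce the statements on characteristic classes from the functoriality recorded in Lemma~\ref{le-0.ba1}.

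Fix a positive integer $k$. Recall from the construction preceding Theorem~\ref{pr-3.2aaz} that for each $r\geq 0$ the bundle $\xi(X,k,r;\mathbb{F})$ is the Borel construction ${\rm Conf}_k(X,r)\times_{\Sigma_k}\mathbb{F}^k$ attached to the covering map $\pi(k,r)$. The basic observation is that the Borel construction commutes with pull-backs: if a covering map $p': E'\to B'$ is pulled back along $h: B\to B'$ to the covering map $p: E\to B$ with $E=h^*E'$, then the canonical map $E\times_{\Sigma_k}\mathbb{F}^k\to h^*(E'\times_{\Sigma_k}\mathbb{F}^k)$ is an isomorphism of $O(\mathbb{F}^k)$-bundles over $B$. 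Applying this for each $r\geq 0$ with $h={\rm Conf}_k(\varphi,r)/\Sigma_k$ and with $p'=\pi(k,r)$ for $M'$ — whose pull-back along $h$ is $\pi(k,r)$ for $M$ by Lemma~\ref{le-mfd-3.1} — yields level-wise isomorphisms
\begin{eqnarray*}
\xi(M,k,r;\mathbb{F})\ \cong\ \big({\rm Conf}_k(\varphi,r)/\Sigma_k\big)^*\xi(M',k,r;\mathbb{F}).
\end{eqnarray*}

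Next I would check compatibility of these isomorphisms with the filtration parameter. For $r_2\geq r_1\geq 0$ the square of $\varphi$-induced maps and canonical inclusions appearing in the proof of Lemma~\ref{le-mfd-3.1} shows that restricting $\xi(M',k,r_2;\mathbb{F})$ down to ${\rm Conf}_k(M,r_1)/\Sigma_k$ along the two edges of that square gives the same bundle, and that the Borel-construction isomorphism above intertwines the two bonding maps $\beta(r_1,r_2)^*$. Hence the level-wise isomorphisms assemble, in the sense of Definition~\ref{def-25-01-97}, into an isomorphism of persistent $O(\mathbb{F}^k)$-bundles over ${\rm Conf}_k(M,-)/\Sigma_k$; since ${\rm Conf}(\varphi,-)/\Sigma_k$ is a persistent map by Lemma~\ref{le-mfd-3.1}, the right-hand side $\big({\rm Conf}(\varphi,-)/\Sigma_k\big)^*\boldsymbol{\xi}(M',k,-;\mathbb{F})$ is a well-defined persistent vector bundle in the sense of Definition~\ref{def-25-01-3}, and the isomorphism is exactly (\ref{eq-250120-1}).

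The identities (\ref{eq-250120-7}) and (\ref{eq-250120-8}) then follow formally. Applying the persistent cohomology functor to the persistent map ${\rm Conf}(\varphi,-)/\Sigma_k$ gives induced homomorphisms of persistent cohomology rings, and by the naturality of the total Stiefel-Whitney class (respectively the total Chern class) used in Lemma~\ref{le-0.ba1}, for every $r\geq 0$ the class $w(\xi(M,k,r;\mathbb{R}))$ (respectively $c(\xi(M,k,r;\mathbb{C}))$) is the pull-back of $w(\xi(M',k,r;\mathbb{R}))$ (respectively $c(\xi(M',k,r;\mathbb{C}))$) along ${\rm Conf}_k(\varphi,r)/\Sigma_k$; by Definition~\ref{def-250120-1} and Definition~\ref{def-250120-2} these level-wise equalities are precisely the persistent equalities (\ref{eq-250120-7}) and (\ref{eq-250120-8}). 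The step I expect to demand the most care is the compatibility check with the filtration, i.e.\ verifying Definition~\ref{def-25-01-97}~(2) for the Borel-construction isomorphisms; the remaining steps are packaged functoriality.
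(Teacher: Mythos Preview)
Your proposal is correct and follows essentially the same approach as the paper: invoke Lemma~\ref{le-mfd-3.1} to obtain the pull-back of persistent covering maps, pass to the associated Borel constructions to get (\ref{eq-250120-1}), and then appeal to functoriality of characteristic classes (via Definition~\ref{def-250120-1} and Definition~\ref{def-250120-2}) for (\ref{eq-250120-7}) and (\ref{eq-250120-8}). The paper's own proof is more terse and leaves implicit the level-wise Borel-construction identification and the filtration compatibility that you spell out.
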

    
    \begin{proof}
    By  Lemma~\ref{le-mfd-3.1},   the  pull-back  (\ref{eq-3.x2})   of  persistent  covering  maps    
    induces  a  pull-back (\ref{eq-250120-1}) of  the  associated persistent  vector  bundles.

    Let  $\mathbb{F}=\mathbb{R}$  in   (\ref{eq-250120-1}).  
    Take  the  persistent  cohomology  ring  with  coefficients  in  $\mathbb{Z}_2$.  
     With  the  help  of   Definition~\ref{def-250120-1},  
     we  obtain (\ref{eq-250120-7}).

     Let  $\mathbb{F}=\mathbb{C}$  in   (\ref{eq-250120-1}).  
    Take  the  persistent  cohomology  ring  with  coefficients  in  $\mathbb{Z}$.  
     With  the  help  of   Definition~\ref{def-250120-2},  
     we  obtain (\ref{eq-250120-8}).  
    \end{proof}

\subsection{Obstructions for  strong  totally geodesic  embeddings  of   graphs}
\label{ss.f.2}

A  graph  $G=(V_{G},  E_{G})$     is  a  {\it  subgraph} 
of    $G'$    if  
 $V_{G}\subseteq  V_{G'}$  and 
  $E_{G}\subseteq E_{G'}$. 
  Let  $G $  be  a  subgraph  of  $G' $.  
  Then  for  any  $u,v\in  V_{G}$,  by  an  analog  of  Lemma~\ref{le-250118-abc1},  
\begin{eqnarray}\label{eq-1.9}
d_{G}(u,v)\geq  d_{G'}(u,v). 
\end{eqnarray}
The  next  definition  is  inspired  by  Corollary~\ref{co-geod2}.  

\begin{definition}\label{def-250120-9}
We  say  that  
$G$  is  a  {\it   strong   totally  geodesic  subgraph}
  of   $G'$  if  the  equality  in  (\ref{eq-1.9})   is  satisfied  for  any  $u,v\in  V_{G}$.  
  \end{definition}
  
  Let  $G$  and  $G'$  be  two graphs.  
  A {\it  morphism}  $\varphi:  G\longrightarrow  G'$  
  is  a  map  $\varphi: V_G\longrightarrow  V_{G'}$  
  such  that   either  $\varphi(u)=\varphi(v)$  or   
   $\{\varphi(u),\varphi(v)\}\in  E_{G'}$   for  any  $\{u,v\}\in  E_G$.  
  Let  $\varphi:  G\longrightarrow  G'$  
  be  a  morphism.  
  We  say  that 
  $\varphi$  is  a  {\it  strong  totally  geodesic  embedding}  
  if  $\varphi: V_G\longrightarrow  V_{G'}$  is  injective  
  and  the  image  $\varphi(G)= (\varphi(V_G), \varphi(E_G))$  
  is  a  strong  totally  geodesic  subgraph  of  $G'$.  
    In this case,  if  we  identify  $G$  and  $\varphi(G)$,  
    then  $G$  is  a      strong   totally  geodesic  subgraph 
  of   $G'$.

  \begin{lemma}\label{le-mfd-3.1y}
  Let  $\varphi: G\longrightarrow  G'$  
  be  a   strong  totally geodesic  embedding  of  graphs.  
  Then  we  have  
  an  induced   pull-back  of  persistent  covering  maps  
    \begin{eqnarray}\label{eq-3.y2}
    \xymatrix{
    {\rm  Conf}_k(V_G,-)\ar[d]_-{\pi(k,-)} \ar[rr] ^-{ {\rm  Conf}(\varphi,-)}
    &&{\rm  Conf}_k(V_{G'},-)\ar[d]^-{\pi(k,-)} \\
     {\rm  Conf}_k(V_G, -)/\Sigma_k  \ar[rr] ^-{ {\rm  Conf}(\varphi,-)/\Sigma_k }
    &&{\rm  Conf}_k(V_{G'},-)/\Sigma_k   
    }
  \end{eqnarray}
  for  any   positive  integer  $k$.  
  \end{lemma}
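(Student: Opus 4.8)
The plan is to follow the proof of Lemma~\ref{le-mfd-3.1} line by line, using in place of Lemma~\ref{le-250118-abc1}~(1) its graph counterpart, Definition~\ref{def-250120-9}. Since $\varphi$ is a strong totally geodesic embedding, after identifying $G$ with its image $\varphi(G)$ we may regard $\varphi(G)$ as a strong totally geodesic subgraph of $G'$; by Definition~\ref{def-250120-9} and (\ref{eq-1.9}) this gives
\begin{eqnarray*}
d_G(u,v) = d_{G'}(\varphi(u),\varphi(v))
\end{eqnarray*}
for all $u,v\in V_G$.

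First I would check that for each positive integer $k$ and each $n\geq 1$, acting by $\varphi$ on coordinates defines a $\Sigma_k$-equivariant injection
\begin{eqnarray*}
{\rm  Conf}_k(\varphi,\tfrac{n}{2}):  {\rm  Conf}_k(V_G,\tfrac{n}{2})\longrightarrow {\rm  Conf}_k(V_{G'},\tfrac{n}{2}),  \qquad (v_1,\ldots,v_k)\mapsto (\varphi(v_1),\ldots,\varphi(v_k)).
\end{eqnarray*}
It is well defined since $d_G(v_i,v_j)>n$ forces $d_{G'}(\varphi(v_i),\varphi(v_j))=d_G(v_i,v_j)>n$; it is injective since $\varphi\colon V_G\to V_{G'}$ is injective; and it is equivariant since permuting coordinates commutes with the coordinatewise action of $\varphi$. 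For $n_1\leq n_2$ one has ${\rm  Conf}_k(V_G,n_2/2)\subseteq {\rm  Conf}_k(V_G,n_1/2)$, and similarly over $G'$, and the square built from these canonical inclusions and the two maps ${\rm  Conf}_k(\varphi,-)$ commutes because both composites are the coordinatewise action of $\varphi$. Thus ${\rm  Conf}(\varphi,-)=\{{\rm  Conf}_k(\varphi,n/2)\mid n\geq 1\}$ is a persistent $\Sigma_k$-equivariant map ${\rm  Conf}_k(V_G,-)\to {\rm  Conf}_k(V_{G'},-)$.

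Passing to $\Sigma_k$-orbit spaces, this equivariant persistent map descends to ${\rm  Conf}(\varphi,-)/\Sigma_k$, and the square (\ref{eq-3.y2}) commutes at each stage $n$ and compatibly with the inclusions in $n$. The one step I would carry out with care --- and the place where injectivity of $\varphi$ is essential --- is verifying that (\ref{eq-3.y2}) is a pull-back of covering maps at each $n$: given $\{v_1,\ldots,v_k\}\in {\rm  Conf}_k(V_G,n/2)/\Sigma_k$ and an ordering $(w_1,\ldots,w_k)$ of the set $\{\varphi(v_1),\ldots,\varphi(v_k)\}$, injectivity of $\varphi$ makes $(\varphi^{-1}(w_1),\ldots,\varphi^{-1}(w_k))$ a well-defined ordering of $\{v_1,\ldots,v_k\}$, so the fibre product of $\pi(k,n/2)$ over $G'$ along ${\rm  Conf}(\varphi,n/2)/\Sigma_k$ is canonically ${\rm  Conf}_k(V_G,n/2)$. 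This identification is natural in $n$, so we obtain the pull-back of persistent covering maps (\ref{eq-3.y2}), exactly as in the final step of the proof of Lemma~\ref{le-mfd-3.1}. I do not expect any genuine difficulty beyond this point; the distance equality and the injectivity of $\varphi$ do all the work.
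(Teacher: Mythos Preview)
Your proposal is correct and follows essentially the same route as the paper: invoke Definition~\ref{def-250120-9} to get the distance equality, build the $\Sigma_k$-equivariant maps ${\rm Conf}_k(\varphi,n/2)$ at each level, check compatibility with the inclusions, and conclude by analogy with Lemma~\ref{le-mfd-3.1}. The paper's own proof is terser --- it simply asserts the $\Sigma_k$-equivariant embedding and the commuting square and then writes ``Analogous with Lemma~\ref{le-mfd-3.1}'' --- whereas you spell out the pull-back verification explicitly; this extra care is fine and changes nothing substantive.
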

  
  \begin{proof}
By  Definition~\ref{def-250120-9},   for  any  positive  integer  $k$  and  any  
$n\in \mathbb{N}$,  $\varphi$  induces  a   $\Sigma_k$-equivariant  embedding   
\begin{eqnarray*}
{\rm  Conf}_k(\varphi,\frac{n}{2}):  {\rm  Conf}_k(V_G,\frac{n}{2})\longrightarrow  {\rm  Conf}_k(V_{G'},\frac{n}{2})
\end{eqnarray*}
such  that  the  diagrams  commute 
\begin{eqnarray*}
\xymatrix{
{\rm  Conf}_k(V_G,\dfrac{n_2}{2})\ar[r] \ar[d] _-{{\rm  Conf}_k(\varphi,\frac{n_2}{2})} 
&{\rm  Conf}_k(V_G,\dfrac{n_1}{2})\ar[d]^-{{\rm  Conf}_k(\varphi,\frac{n_1}{2})}\\
{\rm  Conf}_k(V_{G'},\dfrac{n_2}{2}) \ar[r] &{\rm  Conf}_k(V_{G'},\dfrac{n_1}{2})
}
\end{eqnarray*}
for  any  nonnegative  integers  $n_2\geq  n_1$  where  the  horizontal  maps  are  canonical  inclusions.  
  Analogous with  Lemma~\ref{le-mfd-3.1},  
  we  have a  commutative  diagram   (\ref{eq-3.y2})  
  which  is  a   pull-back  of  persistent  covering  maps.  
  \end{proof}
  
    \begin{theorem}
    \label{th-main2-25jan}
      Let  $\varphi:  G\longrightarrow  G'$  
  be  a   strong  totally geodesic  embedding  of  graphs.  
  Then  we  have   induced  a  pull-back    of  persistent  vector  bundles  
  \begin{eqnarray}\label{eq-250121-1}
   \boldsymbol{\xi}(V_G,k,-;\mathbb{F}) = \big({\rm  Conf}(\varphi,-)/\Sigma_k\big)^*
      \boldsymbol{\xi}(V_{G'},k,-;\mathbb{F}). 
  \end{eqnarray}
  Consequently,  we  have  the  pull-back  persistent  Stiefel-Whitney  class
    \begin{eqnarray}\label{eq-250121-7}
  w( \boldsymbol{\xi}(V_G,k,-;\mathbb{R})) = \big({\rm  Conf}(\varphi,-)/\Sigma_k\big)^*
      w(\boldsymbol{\xi}(V_{G'},k,-;\mathbb{R})) 
  \end{eqnarray}
  and  the   pull-back   persistent  Chern  class 
     \begin{eqnarray}\label{eq-250121-8}
  c( \boldsymbol{\xi}(V_G,k,-;\mathbb{C})) = \big({\rm  Conf}(\varphi,-)/\Sigma_k\big)^*
      c(\boldsymbol{\xi}(V_{G'},k,-;\mathbb{C})).
  \end{eqnarray}
    \end{theorem}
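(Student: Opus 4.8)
The plan is to mirror the proof of Theorem~\ref{th-main1-25jan} almost verbatim, substituting the Riemannian input Lemma~\ref{le-mfd-3.1} with its graph-theoretic analogue Lemma~\ref{le-mfd-3.1y}.

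First I would invoke Lemma~\ref{le-mfd-3.1y} to obtain the pull-back square of persistent covering maps~(\ref{eq-3.y2}). For each $n=1,2,\ldots$ the map ${\rm Conf}_k(\varphi,\tfrac{n}{2})$ is $\Sigma_k$-equivariant, so the associated-bundle construction produces a bundle map ${\rm Conf}_k(V_G,\tfrac{n}{2})\times_{\Sigma_k}\mathbb{F}^k\longrightarrow {\rm Conf}_k(V_{G'},\tfrac{n}{2})\times_{\Sigma_k}\mathbb{F}^k$ covering ${\rm Conf}_k(\varphi,\tfrac{n}{2})/\Sigma_k$. Since the bottom square of~(\ref{eq-3.y2}) is a pull-back of covering maps at level $n$, and since forming the associated $O(\mathbb{F}^k)$-bundle is functorial and takes pull-backs of covering maps to pull-backs of vector bundles, $\xi(V_G,k,\tfrac{n}{2};\mathbb{F})$ is the pull-back of $\xi(V_{G'},k,\tfrac{n}{2};\mathbb{F})$ along ${\rm Conf}_k(\varphi,\tfrac{n}{2})/\Sigma_k$. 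Letting $n$ vary and using that~(\ref{eq-3.y2}) is compatible with the filtration inclusions (as recorded in the proof of Lemma~\ref{le-mfd-3.1y}), these level-wise pull-backs assemble into a persistent map of persistent $O(\mathbb{F}^k)$-bundles realizing~(\ref{eq-250121-1}).

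Next, I would specialize $\mathbb{F}$. Taking $\mathbb{F}=\mathbb{R}$ and working in the persistent cohomology ring with $\mathbb{Z}_2$-coefficients, the naturality of the persistent Stiefel--Whitney class under pull-backs of persistent $O(\mathbb{R}^k)$-bundles, which is exactly Definition~\ref{def-250120-1} together with the functoriality recorded in Lemma~\ref{le-0.ba1}~(1), gives~(\ref{eq-250121-7}). Symmetrically, taking $\mathbb{F}=\mathbb{C}$ and the persistent cohomology ring with $\mathbb{Z}$-coefficients, Definition~\ref{def-250120-2} and Lemma~\ref{le-0.ba1}~(2) yield~(\ref{eq-250121-8}).

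I do not anticipate a genuine obstacle; the one point deserving a sentence of care is that the level-wise pull-backs of the associated vector bundles are compatible with the persistence structure maps, i.e.\ that the square of pull-backs indexed by $n_1\le n_2$ commutes with the one induced by $\varphi$. This follows at once from the commutativity of the corresponding diagram of covering spaces already established in Lemma~\ref{le-mfd-3.1y}, so the whole argument is a formal consequence of the functoriality of the associated-bundle construction and of the persistent characteristic classes developed in Section~\ref{25-s2}, with no new computation needed.
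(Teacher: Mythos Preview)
Your proposal is correct and takes essentially the same approach as the paper: invoke Lemma~\ref{le-mfd-3.1y} to get the pull-back of persistent covering maps~(\ref{eq-3.y2}), pass to the associated persistent $O(\mathbb{F}^k)$-bundles to obtain~(\ref{eq-250121-1}), and then specialize $\mathbb{F}=\mathbb{R}$ (resp.\ $\mathbb{F}=\mathbb{C}$) and use the naturality of the persistent Stiefel--Whitney (resp.\ Chern) class from Definition~\ref{def-250120-1}/Lemma~\ref{le-0.ba1}(1) (resp.\ Definition~\ref{def-250120-2}/Lemma~\ref{le-0.ba1}(2)). The paper additionally cites Corollary~\ref{co-250118-1af} to record that the persistent bundles over the graph configuration spaces are well-defined, and its text reads ``Lemma~\ref{le-mfd-3.1}'' where your ``Lemma~\ref{le-mfd-3.1y}'' is clearly intended, but otherwise the arguments coincide.
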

    
    \begin{proof}
    By  Lemma~\ref{le-mfd-3.1},   the  pull-back  (\ref{eq-3.y2})   of  persistent  covering  maps    
    induces  a  pull-back (\ref{eq-250121-1}) of  the  associated persistent  vector  bundles.

    Let  $\mathbb{F}=\mathbb{R}$  in   (\ref{eq-250121-1}).  
    Take  the  persistent  cohomology  ring  with  coefficients  in  $\mathbb{Z}_2$.  
     With  the  help  of   Definition~\ref{def-250120-1}  and  Corollary~\ref{co-250118-1af},  
     we  obtain (\ref{eq-250121-7}).

     Let  $\mathbb{F}=\mathbb{C}$  in   (\ref{eq-250121-1}).  
    Take  the  persistent  cohomology  ring  with  coefficients  in  $\mathbb{Z}$.  
     With  the  help  of   Definition~\ref{def-250120-2}  and  Corollary~\ref{co-250118-1af},  
     we  obtain (\ref{eq-250121-8}).  
    \end{proof}

\section{Obstructions  for  regular  embeddings}\label{s7}

In  this  section,   we  apply  the  persistent   bundles  over  configuration  spaces  constructed 
in  Section~\ref{25-sect4}   to  give  obstructions  for   regular    embeddings
in   real  and  complex   Euclidean  spaces.  
We  use  the  Stiefel-Whitney  class  of  the  persistent  bundle to  give  obstructions  
for  regular  embeddings  in  $\mathbb{R}^N$  in  Theorem~\ref{25-cor1} 
and  use  the  Chern  class  of  the  persistent  bundle to  give  obstructions  
for  regular  embeddings  in  $\mathbb{C}^N$  in  Theorem~\ref{25-cor2}.   
With  the  help  of  Section~\ref{s5},  we  apply
  Theorem~\ref{25-cor1}   and  Theorem~\ref{25-cor2} 
to  give  obstructions  for  regular  embeddings  of  complete  Riemannian  manifolds  
in  Theorem~\ref{25-cor1zzz}  in Subsection~\ref{ss7.1} 
and  give  obstructions  for  regular  embeddings  of  graphs   
in Theorem~\ref{25-cor2zzz}  and  Theorem~\ref{co-25-01-graph}  in  Subsection~\ref{ss7.2}.

Let  $(X,d)$  be  a  metric  space  where 
$X$  is  a  CW-complex  and   $d:  X\times  X\longrightarrow  [0,+\infty]$  is  
a  metric  on    $X$.  
The  next  definition  is  a  generalization  of  the  regular  embeddings  
in  \cite{high1,high2, Borsuk, chi, cohen1, handel1,handel2, 1996,handel3, topapp}.  

\begin{definition}\label{def-7.1}
We  call  a  map  $f:  X\longrightarrow  \mathbb{F}^N$ 
to  be  
{\it   $(k,r)$-regular}  if  
for  any  distinct  $k$-points  $x_1, \ldots, x_k\in  X$ 
such  that  $d(x_i,x_j)>2r$  for  any  $i\neq  j$,  
their  images  
$f(x_1)$,  $\ldots$,  $f(x_k)$ are  linearly  independent  over 
$\mathbb{F}$.  
We  call  a   $(k,0)$-regular  map    $f:  V_G\longrightarrow  \mathbb{F}^N$  simply  a  {\it  $k$-regular  map}.  
\end{definition}

The  following  observations  are  direct.   
\begin{enumerate}[(1)]
\item
A  $(k,r)$-regular  map  $f:  X\longrightarrow  \mathbb{F}^N$
 is  $(k',r')$-regular  for  
any  $k'\leq  k$  and  any  $r'\geq  r$;  

\item
A   $(k,r)$-regular  map   $f:  X\longrightarrow  \mathbb{F}^N$
is  injective  for   $k\geq 2$;
\item
A   $k$-regular  map    $f:  X\longrightarrow  \mathbb{F}^N$ 
sends  any    $k$   distinct   points     in   $X$
to  a   linearly  independent   $k$  vectors   in   $\mathbb{F}^N$;  

\item
Let  $Y\subseteq  X$.  
  If  $f:  X\longrightarrow  \mathbb{F}^N$ is  a
  $k$-regular  map,  
  then   the  restriction  of  $f$  induces  a  $k$-regular  map  
  $f:  Y\longrightarrow  \mathbb{F}^N$; 
  
  \item
  Let  $Y\subseteq  X$  such  that  
  $d_Y(x,y)=d_X(x,y)$  for  any  $x,y\in  Y$.  
   If  $f:  X\longrightarrow  \mathbb{F}^N$ is  a
  $(k,r)$-regular  map,  
  then   the  restriction  of  $f$  induces  a  $(k,r)$-regular  map  
  $f:  Y\longrightarrow  \mathbb{F}^N$. 
 \end{enumerate}

  Consider  the  persistent  CW-complex  
  \begin{eqnarray*}
  {\rm  Conf}_k(X,-+r)=\{{\rm  Conf}_k(X,r'+r)\mid  r'\geq  0\}   
  \end{eqnarray*}
  with  the   canonical  persistent  $\Sigma_k$-action.  
  We  have  an  orbit  persistent  CW-complex   
  \begin{eqnarray*}
   {\rm  Conf}_k(X,-+r)/\Sigma_k=\{{\rm  Conf}_k(X,r'+r)/\Sigma_k\mid  r'\geq  0\}.   
   \end{eqnarray*}     
  We  have   a   persistent  vector  bundle  
  \begin{eqnarray*}
  \boldsymbol{\xi}(X,k,-+r;\mathbb{F})=\{ {\xi}(X,k,r'+r;\mathbb{F})\mid  r'\geq  0\}
  \end{eqnarray*}
  over      $  {\rm  Conf}_k(X,-+r)/\Sigma_k$.

  Let  $G_k(\mathbb{F}^N)$  and    $G_k(\mathbb{F}^\infty)$  be  the  Grassmannian  
  manifolds  consisting   of  all  the  $k$-dimensional subspaces  of  $\mathbb{F}^N$
  and  all  the  $k$-dimensional subspaces  of  $\mathbb{F}^\infty$  respectively. 
    The  next  lemma   is    a  generalization  of  \cite[Proposition~4.1]{topapp}.

\begin{lemma}
Let  $f:  X\longrightarrow  \mathbb{F}^N$  be  a   $(k,r)$-regular  map.  
   Then  for  each  positive  integer  $k$,  we  have  an  induced   persistent   map  
   \begin{eqnarray*}
   f:  {\rm  Conf}_k(X,-+r)/\Sigma_k\longrightarrow   G_k(\mathbb{F}^N)
   \end{eqnarray*}
  such  that  the  composition 
   \begin{eqnarray}\label{eq-250122-1}
   {\rm  Conf}_k(X,-+r)/\Sigma_k\overset{f}{\longrightarrow}   G_k(\mathbb{F}^N)\overset{\iota}{\longrightarrow}
    G_k(\mathbb{F}^\infty)
   \end{eqnarray}
   is   the  classifying  map  of  the  persistent  vector  bundle
      $\boldsymbol{\xi}(M,k,-+r;\mathbb{F})$.  
\end{lemma}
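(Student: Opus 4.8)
The plan is to define the persistent map levelwise by the ``span'' construction, check it is well defined and continuous and that the levels assemble into a persistent map, and then exhibit an explicit bundle morphism onto the universal bundle to identify the composition (\ref{eq-250122-1}) with a classifying map of $\boldsymbol{\xi}(X,k,-+r;\mathbb{F})$. (Here $M$ in the statement should read $X$; I write $X$ throughout.)

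First I would fix $r'\geq 0$ and define $f$ on ${\rm Conf}_k(X,r'+r)/\Sigma_k$. A point there is an unordered tuple $\{x_1,\dots,x_k\}$ with $d(x_i,x_j)>2(r'+r)\geq 2r$ for $i\neq j$, so by Definition~\ref{def-7.1} (equivalently, by observation~(1) after it) the vectors $f(x_1),\dots,f(x_k)$ are linearly independent in $\mathbb{F}^N$; their span is a well-defined $k$-plane, independent of the ordering, giving the value $f(\{x_1,\dots,x_k\})\in G_k(\mathbb{F}^N)$. For continuity, lift locally to ${\rm Conf}_k(X,r'+r)$: the map $(x_1,\dots,x_k)\mapsto(f(x_1),\dots,f(x_k))$ is continuous into the space of linearly independent $k$-frames in $\mathbb{F}^N$, and the frame-to-span projection onto $G_k(\mathbb{F}^N)$ is continuous; since $\pi(k,r'+r)$ of Lemma~\ref{le-2.aa1} is a quotient (covering) map, the composite descends to a continuous map on the orbit space. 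For the persistent structure, note that for $r_2'\leq r_1'$ the canonical inclusion ${\rm Conf}_k(X,r_1'+r)/\Sigma_k\hookrightarrow{\rm Conf}_k(X,r_2'+r)/\Sigma_k$ composed with $f$ at level $r_2'$ equals $f$ at level $r_1'$ \emph{on the nose}, because both are the same span construction applied to the same tuple; with $G_k(\mathbb{F}^N)$ as constant target, the square of Definition~\ref{def1ax} commutes strictly, so the family $\{f\mid r'\geq 0\}$ is a persistent map.

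Next I would identify $\iota\circ f$ with a classifying map. Fix $r'\geq 0$; recall $\xi(X,k,r'+r;\mathbb{F})$ has total space ${\rm Conf}_k(X,r'+r)\times_{\Sigma_k}\mathbb{F}^k$. Define a map of total spaces to the universal bundle $\gamma_k$ over $G_k(\mathbb{F}^\infty)$ by sending the class of $\big((x_1,\dots,x_k),(a_1,\dots,a_k)\big)$ to $\big(\,{\rm span}\{f(x_1),\dots,f(x_k)\},\ \sum_i a_i f(x_i)\,\big)$. Permuting the $x_i$ by $\sigma$ permutes the $a_i$ by the matching rule (compare (\ref{eq-25-01-31-1}) and (\ref{eq-25-01-31-2})) and fixes both the span and the sum, so this descends from $\mathbb{F}^k$ to $\mathbb{F}^k/\!\!\sim$ and is well defined; it is fiberwise linear and, by linear independence of $f(x_1),\dots,f(x_k)$, an isomorphism onto the tautological fiber, and it covers $\iota\circ f$. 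Hence $\xi(X,k,r'+r;\mathbb{F})\cong(\iota\circ f)^*\gamma_k$, so $\iota\circ f$ is a classifying map at each level. By naturality of pull-backs, restricting $\iota\circ f$ from level $r_1'$ to level $r_2'$ pulls $\gamma_k$ back to the restriction of $\xi(X,k,r_2'+r;\mathbb{F})$, which by Theorem~\ref{pr-3.2aaz} is $\xi(X,k,r_1'+r;\mathbb{F})$; thus the levelwise classifying maps are compatible and assemble into the classifying map of the persistent vector bundle $\boldsymbol{\xi}(X,k,-+r;\mathbb{F})$.

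The main obstacle I anticipate is not depth but bookkeeping: carrying the left $\Sigma_k$-action on ${\rm Conf}_k$ and the right $\Sigma_k$-action on $\mathbb{F}^k$ consistently so that both the Grassmannian-valued map and the bundle morphism genuinely descend to the orbit spaces, and making sure ``classifying map'' is read in the persistent sense (strictly commuting squares), not merely up to levelwise homotopy. The strictness is essentially free here, since a single formula defines $f$ at every filtration level $r'+r$, so the only real work is the equivariance verification sketched above.
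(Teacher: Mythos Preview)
Your proposal is correct and takes essentially the same approach as the paper. The paper phrases the construction via a commutative diagram factoring through the Stiefel manifolds $V_k(\mathbb{F}^N)\to V_k(\mathbb{F}^\infty)$ and then through the tautological total spaces $E_k(\mathbb{F}^N)\to E_k(\mathbb{F}^\infty)$, whereas you write out the span map and the explicit bundle morphism $\big((x_1,\dots,x_k),(a_1,\dots,a_k)\big)\mapsto\big(\operatorname{span}\{f(x_i)\},\sum_i a_i f(x_i)\big)$ directly; these are the same content, and your version is in fact more explicit about the equivariance and continuity bookkeeping that the paper leaves implicit.
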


 \begin{proof}  
  Let  $r'\geq  0$.     Let  $V_k(\mathbb{F}^N)$  and    $V_k(\mathbb{F}^\infty)$  be  the  Stiefel   
  manifolds  consisting   of  all  the  $k$-frames   in   $\mathbb{F}^N$
  and   all  the  $k$-frames  in  $\mathbb{F}^\infty$  respectively.  
 We  have  a  commutative  diagram 
    \begin{eqnarray*}
  \xymatrix{
         {\rm  Conf}_k(X,r'+r) \ar[d]_{\pi_k}\ar[r]  & V_k(\mathbb{F}^N)  
      \ar[d]\ar[r]  &V_k(\mathbb{F}^\infty)\ar[d] \\
      {\rm  Conf}_k(X,r'+r)/\Sigma_k\ar[r]^-{f} &  G_k(\mathbb{F}^N)\ar[r]^-{\iota} &
    G_k(\mathbb{F}^\infty)
  }
\end{eqnarray*}  
where   the  first  vertical  map   is  the   projection   modulo  the  $\Sigma_k$-action  on  
the  coordinates  and  the  second  and  the  third  vertical  maps  are  the  projections  
sending   a  $k$-frame    to  the  vector  space  spanned  by  the  $k$-frame.  
Let $E_k(\mathbb{F}^N)$  be the manifold consisting of 
 $(l,v)\in G_k(\mathbb{F}^N)\times \mathbb{F}^k$
  such that $l$ is a $k$-dimensional subspace of $\mathbb{F}^N$ and $v\in l$.  
Let $E_k(\mathbb{F}^\infty)$ be the manifold consisting of
 $(l',v')\in G_k(\mathbb{F}^\infty)\times \mathbb{F}^k$ 
 such that $l$ is a $k$-dimensional subspace of 
 $\mathbb{F}^\infty$ and $v'\in l'$. 
 Then the following diagram commutes and each square is a pull-back
\begin{eqnarray*}
\xymatrix{
 {\rm  Conf}_k(X,r'+r)\times_{\Sigma_k}\mathbb{F}^k\ar[r]\ar[d] &E_k(\mathbb{F}^N)\ar[r]\ar[d]  & E_k(\mathbb{F}^\infty)\ar[d] \\
 {\rm  Conf}_k(X,r'+r)/\Sigma_k\ar[r]^-f &G_k(\mathbb{F}^N)\ar[r]^-\iota&G_k(\mathbb{F}^\infty). }  
\end{eqnarray*}
Thus $\iota\circ  f$ is a pull-back of vector bundles 
and  is     a classifying map of  $\xi(X,k,r'+r;\mathbb{F})$   as a $O(\mathbb{F}^k)$-bundle.  
Let  $r'_1\geq  r'_2\geq  0$.   The  diagram  commutes
\begin{eqnarray*}
\xymatrix{
 {\rm  Conf}_k(X,r'_1+r)/{\Sigma_k}\ar[r]^-f \ar[d] &G_k(\mathbb{F}^N) \ar@{=}[d] \\
 {\rm  Conf}_k(X,r'_2+r)/\Sigma_k\ar[r]^-f &G_k(\mathbb{F}^N) 
  }  
\end{eqnarray*}
where  the    vertical  arrow  on  the  left   is   the  canonical  inclusion.  
Therefore,  $\iota\circ  f$ is a pull-back of  persistent  vector bundles 
and  is     a classifying map of  the  persistent  vector  bundle 
 $\boldsymbol{\xi}(X,k,-+r;\mathbb{F})$. 
  \end{proof}
  
    Let $R$ be a commutative  ring  with  unit. 
 Apply   the   functor  of   persistent   cohomology  rings   to  (\ref{eq-250122-1}).  
 We  obtain   induced  persistent  homomorphisms  of  persistent  cohomology  rings 
 such  that  the     diagram  commutes  
\begin{eqnarray}
\xymatrix{
H^*(G_k(\mathbb{F}^\infty);R)\ar[r]^-{\iota^*} \ar[rd]_-{(\iota\circ  f)^*} &H^*(G_k(\mathbb{F}^N);R) 
\ar[d]^-{f^*}  \\
&H^*({\rm  Conf}_k(X,-+r)/\Sigma_k;R).     
}
\label{eq-25-diag}
\end{eqnarray}
The  next   theorem   follows  from  (\ref{eq-25-diag})  with  $\mathbb{F}=\mathbb{R}$.

\begin{theorem} \label{25-cor1} 
Let
 $f: X\longrightarrow \mathbb{R}^N$ be  
 a $(k,r)$-regular map.  
 If  
 \begin{eqnarray}\label{eq-25-901}
 \bar w_{t(k)}( \boldsymbol{\xi}(X,k,-+r;\mathbb{R}))\neq 0 
 \end{eqnarray}
   for  some  $t(k)\geq  1$,  then 
 \begin{eqnarray}\label{eq-3.1x}
 N\geq   t(k)+k.
 \end{eqnarray}
\end{theorem}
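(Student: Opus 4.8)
The plan is to run the classical Cohen--Handel obstruction argument at a single filtration level, exploiting the fact that a $(k,r)$-regular map into $\mathbb{R}^N$ produces a classifying map into the \emph{finite} Grassmannian $G_k(\mathbb{R}^N)$, where the dual Stiefel--Whitney class necessarily vanishes above degree $N-k$.

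First I would unwind the hypothesis. By definition, the persistent class $\bar w_{t(k)}(\boldsymbol{\xi}(X,k,-+r;\mathbb{R}))$ is the family $\{\bar w_{t(k)}(\xi(X,k,r'+r;\mathbb{R}))\}_{r'\geq 0}$, where $\bar w(\xi(X,k,r'+r;\mathbb{R}))$ is the multiplicative inverse of $w(\xi(X,k,r'+r;\mathbb{R}))$ in $H^*({\rm Conf}_k(X,r'+r)/\Sigma_k;\mathbb{Z}_2)$. This is a well-defined persistent class: by Lemma~\ref{le-0.ba1}(1) the restriction maps carry the total Stiefel--Whitney class to the total Stiefel--Whitney class, hence they carry its inverse to its inverse. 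Thus (\ref{eq-25-901}) yields a particular $r'\geq 0$ with $\bar w_{t(k)}(\xi(X,k,r'+r;\mathbb{R}))\neq 0$, and it suffices to bound $N$ using this one value of $r'$.

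Next I would invoke the lemma immediately preceding this theorem: the $(k,r)$-regular map $f$ induces a map $f\colon {\rm Conf}_k(X,r'+r)/\Sigma_k\to G_k(\mathbb{R}^N)$ that classifies $\xi(X,k,r'+r;\mathbb{R})$ as an $O(\mathbb{R}^k)$-bundle, i.e.\ $\xi(X,k,r'+r;\mathbb{R})\cong f^*\gamma_N^k$ for the tautological $k$-plane bundle $\gamma_N^k$ over $G_k(\mathbb{R}^N)$. Over $G_k(\mathbb{R}^N)$ the orthogonal complement $(\gamma_N^k)^\perp$ has rank $N-k$ and satisfies $\gamma_N^k\oplus(\gamma_N^k)^\perp\cong\epsilon^N$, so by the Whitney product formula $w(\gamma_N^k)\,w((\gamma_N^k)^\perp)=1$, whence $\bar w(\gamma_N^k)=w((\gamma_N^k)^\perp)$ and in particular $\bar w_i(\gamma_N^k)=0$ for every $i>N-k$. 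Pulling back along $f$ and using functoriality of $w$, and hence of $w^{-1}$, gives $\bar w_{t(k)}(\xi(X,k,r'+r;\mathbb{R}))=f^*\bar w_{t(k)}(\gamma_N^k)$. If $t(k)>N-k$ the right-hand side is zero, contradicting the choice of $r'$; therefore $t(k)\leq N-k$, which is exactly (\ref{eq-3.1x}).

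The one point demanding care is that the whole argument hinges on the classifying map factoring through the \emph{finite-dimensional} Grassmannian $G_k(\mathbb{R}^N)$ rather than $G_k(\mathbb{R}^\infty)$ — this is precisely where finiteness of the target dimension $N$ is used, and it is exactly what the preceding lemma supplies. Beyond that, the proof is levelwise: the persistent structure enters only to make sense of ``$\bar w_{t(k)}(\boldsymbol{\xi})\neq 0$'' as producing a single good parameter $r'$, and plays no further role here.
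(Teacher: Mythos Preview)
Your proof is correct and in fact slightly cleaner than the paper's. Both arguments factor the classifying map of $\xi(X,k,r'+r;\mathbb{R})$ through the finite Grassmannian $G_k(\mathbb{R}^N)$ via the preceding lemma, and both aim to show that $\bar w_{t(k)}$ already vanishes there when $t(k)>N-k$. The paper proceeds by invoking the presentation $H^*(G_k(\mathbb{R}^N);\mathbb{Z}_2)\cong \mathbb{Z}_2[w_1,\ldots,w_k]/(\bar w_{N-k+1},\ldots,\bar w_N)$; since this only forces $\bar w_j=0$ for $N-k+1\le j\le N$, the paper must separately exclude the case $t(k)\ge N+1$, which it does by post-composing $f$ with linear inclusions $\mathbb{R}^N\hookrightarrow\mathbb{R}^M$ for $M=N+1,N+2,\ldots$ to push $t(k)$ to infinity and reach a contradiction. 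Your route via $\bar w(\gamma_N^k)=w((\gamma_N^k)^\perp)$ with $\operatorname{rank}(\gamma_N^k)^\perp=N-k$ kills $\bar w_i(\gamma_N^k)$ for \emph{all} $i>N-k$ in one stroke, so no auxiliary embedding argument is needed. The paper's approach has the minor advantage that its presentation of $H^*(G_k(\mathbb{F}^N))$ is reused verbatim for the complex case in Theorem~\ref{25-cor2}; your argument adapts just as easily using $\bar c(\gamma_N^k)=c((\gamma_N^k)^\perp)$.
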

\begin{proof}
Suppose $f:X\longrightarrow \mathbb{R}^N$ is a  $(k,r)$-regular map. 
Let  $R=\mathbb{Z}_2$  in  (\ref{eq-25-diag}).  
 Then     
  the  induced  map  
  \begin{eqnarray*}
  \iota^*:  H^*(G_k(\mathbb{R}^\infty);\mathbb{Z}_2)\longrightarrow 
  H^*(G_k(\mathbb{R}^{N});\mathbb{Z}_2)  
  \end{eqnarray*}  
  is     
  an  epimorphism  
  \begin{eqnarray}\label{eq-25-7.n}
  \iota^*:  \mathbb{Z}_2[w_1,w_2,\cdots,w_k] \longrightarrow  \frac{{\mathbb{Z}_2[w_1,w_2,\cdots,w_k]}}{ {(\bar w_{N-k+1},\bar w_{N-k+2},\cdots,\bar w_{N})}},   
  \end{eqnarray}
  where $w_i$ is the $i$-th universal Stiefel-Whitney class with $|w_i|=i$,   
 $\bar w_j$ is   the $j$-th degree term in the expansion of $(1+w_1+\cdots+w_k)^{-1}$ and 
 \begin{eqnarray*}
 (\bar w_{N-k+1},\bar w_{N-k+2},\cdots,\bar w_{N}) 
 \end{eqnarray*}
  is the ideal generated by $\bar w_{N-k+1}$, $\bar w_{N-k+2}$, $\cdots$, $\bar w_{N}$.

By  substituting  (\ref{eq-25-7.n})  in  (\ref{eq-25-diag}),  we  have  
   homomorphisms  of  persistent  cohomology  algebras 
    such  that  the     diagram  commutes  
\begin{eqnarray*}
\xymatrix{
  \mathbb{Z}_2[w_1,w_2,\cdots,w_k]\ar[r]^-{\iota^*} \ar[rd] _-{(\iota \circ  f)^*}
  & {\mathbb{Z}_2[w_1,w_2,\cdots,w_k]}/ {(\bar w_{N-k+1},\bar w_{N-k+2},\cdots,\bar w_{N})}
  \ar[d]^{f^*}\\
  & H^*({\rm  Conf}_k(X,-+r)/\Sigma_k;\mathbb{Z}_2). 
}
\end{eqnarray*}
It  follows   that 
\begin{eqnarray}\label{eq-250211-v1}
 (\iota \circ  f)^*\bar w_{N-k+1}  = 
 (\iota \circ  f)^*\bar w_{N-k+2}  
 =\cdots =(\iota \circ  f)^*\bar w_{N} =0
 \end{eqnarray}
    and  
   \begin{eqnarray}\label{eq-250211-v2}
  (\iota \circ  f)^*\bar w_{t(k)}=  \bar w_{t(k)}( \boldsymbol{\xi}(X,k,-+r;\mathbb{R})) \neq  0.  
 \end{eqnarray} 
   It  follows  from  (\ref{eq-250211-v1})  and  (\ref{eq-250211-v2})   that   
   either $t(k)\leq N-k $ or $t(k)\geq N+1$. Suppose $t(k)\geq N+1$. For any $M\geq N$, there exists a $(k,r)$-regular map $i\circ f$ from $X$ into $\mathbb{R}^M$
where $i$ is a  linear  embedding    of $\mathbb{R}^N$ into $\mathbb{R}^M$.
Hence by  applying  the  above  argument,  
either $t(k)\leq  M-k $ or $t(k)\geq  M+1$. Let $M=N+1,N+2,\cdots$ subsequently.
 We obtain $t(k)\geq N+2, N+3, \cdots$  which contradicts that $t(k)$ is finite. 
 Therefore, we  have   (\ref{eq-3.1x}).  
\end{proof}

The  next  corollary  follows  from  Theorem~\ref{25-cor1}. 

\begin{corollary}\label{25-cor1a} 
 Let
 $f: X\longrightarrow \mathbb{R}^N$ be  
 a $(k,r)$-regular map.  
 Then  
 \begin{eqnarray}\label{eq-3.1xxx}
 N\geq    \sup_{r'\geq  r} t(r',k)+k 
 \end{eqnarray}
 where  for  each  $r'\geq  r$,  
 \begin{eqnarray}\label{eq-250211-im-w}
 t(r',k)= \sup  \{t\mid  \bar w_{t}( {\xi}(X,k,r'+r;\mathbb{R}))\neq 0  \}.  
 \end{eqnarray}
\end{corollary}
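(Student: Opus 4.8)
The plan is to deduce this directly from Theorem~\ref{25-cor1} by letting the shift parameter of the persistent bundle range over all $r'\ge r$. Two elementary remarks set this up. First, by observation~(1) following Definition~\ref{def-7.1}, a $(k,r)$-regular map is automatically $(k,r')$-regular for every $r'\ge r$. Second, by the construction of the persistent bundle at the start of Section~\ref{s7}, the vector bundle $\xi(X,k,r'+r;\mathbb{R})$ appearing in (\ref{eq-250211-im-w}) is exactly the term at $s=r$ of the persistent bundle $\boldsymbol\xi(X,k,-+r';\mathbb{R})=\{\xi(X,k,s+r';\mathbb{R})\mid s\ge 0\}$.

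First I would fix $r'\ge r$ and apply Theorem~\ref{25-cor1} with the parameter $r$ there taken to be $r'$; this is legitimate since $f$ is $(k,r')$-regular. Thus, for every integer $t\ge 1$ with $\bar w_t(\boldsymbol\xi(X,k,-+r';\mathbb{R}))\neq 0$ one gets $N\ge t+k$. By the second remark above, if $\bar w_t(\xi(X,k,r'+r;\mathbb{R}))\neq 0$ then the persistent Stiefel--Whitney class $\bar w_t(\boldsymbol\xi(X,k,-+r';\mathbb{R}))$ has a nonzero component (the one at $s=r$) and is therefore nonzero as a persistent cohomology class; hence $N\ge t+k$. Taking the supremum over all integers $t\ge 1$ satisfying $\bar w_t(\xi(X,k,r'+r;\mathbb{R}))\neq 0$ gives $N\ge t(r',k)+k$ whenever such a $t$ exists, while if no such $t$ exists then $t(r',k)\le 0$ and $N\ge k\ge t(r',k)+k$ holds trivially, since $f$ carries $k$ distinct points at pairwise distance $>2r'$ to linearly independent vectors (the bound being vacuous when no such configuration exists).

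Then I would let $r'$ range over $[r,\infty)$: since $N\ge t(r',k)+k$ for every such $r'$, taking the supremum yields $N\ge \sup_{r'\ge r}t(r',k)+k$, which is (\ref{eq-3.1xxx}). As a byproduct the same inequalities give $t(r',k)\le N-k$ for all $r'$, so the right-hand supremum is finite and the bound is not vacuous; this also makes it unnecessary to assume separately that the supremum defining $t(r',k)$ is attained, since the set of integers $t\ge 1$ with $\bar w_t(\xi(X,k,r'+r;\mathbb{R}))\ne 0$ is automatically bounded above once such an $f$ exists.

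I do not anticipate any real difficulty here: all of the geometric content already resides in Theorem~\ref{25-cor1}, and the corollary is essentially a repackaging. The only point that needs a moment's care is the bookkeeping identifying $\xi(X,k,r'+r;\mathbb{R})$ with the $s=r$ slice of $\boldsymbol\xi(X,k,-+r';\mathbb{R})$ together with the (immediate) observation that nonvanishing of a single slice of a persistent Stiefel--Whitney class forces the persistent class itself to be nonzero.
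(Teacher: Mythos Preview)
Your proof is correct and follows essentially the same idea as the paper: both deduce the corollary directly from Theorem~\ref{25-cor1} by unwinding the meaning of a nonzero persistent Stiefel--Whitney class. The paper's proof is slightly more economical in that it applies Theorem~\ref{25-cor1} only once, with the original parameter $r$, and then simply observes that the persistent condition $\bar w_{t(k)}(\boldsymbol{\xi}(X,k,-+r;\mathbb{R}))\neq 0$ is equivalent to the existence of some $r'\ge 0$ with $\bar w_{t(k)}(\xi(X,k,r'+r;\mathbb{R}))\neq 0$; you instead re-apply the theorem for each $r'\ge r$ (using that $(k,r)$-regularity implies $(k,r')$-regularity) and then pick out the single slice $s=r$ of $\boldsymbol{\xi}(X,k,-+r';\mathbb{R})$. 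The difference is purely bookkeeping, and your slightly more elaborate route has the minor advantage of making the trivial edge cases (empty index set, finiteness of the supremum) explicit.
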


\begin{proof}
 To  get  rid  of  the  persistence  in  Theorem~\ref{25-cor1},   
 we  note  that    (\ref{eq-25-901})   is    equivalent  to  the  condition  that  there  exists
   $r'\geq  0$  such  that  
     \begin{eqnarray*}
      \bar w_{t(k)}(  {\xi}(X,k,r'+r;\mathbb{R}))\neq 0.  
      \end{eqnarray*}  
Consequently, by Theorem~\ref{25-cor1},  we  obtain  (\ref{eq-3.1xxx}).  
\end{proof}

The  next   theorem   follows  from  (\ref{eq-25-diag})  with  $\mathbb{F}=\mathbb{C}$.

\begin{theorem} \label{25-cor2} 
Let
 $f: X\longrightarrow \mathbb{C}^N$ be  
 a  complex   $(k,r)$-regular map.  
 If  
 \begin{eqnarray}\label{eq-25-902}
 \bar  c_{t(k)}( \boldsymbol{\xi}(X,k,-+r;\mathbb{C}))\neq 0 
 \end{eqnarray}
   for  some  $t(k)\geq  1$,  then 
 \begin{eqnarray}\label{eq-3.1y}
 N\geq   t(k)+k.
 \end{eqnarray}
\end{theorem}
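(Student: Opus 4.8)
The plan is to repeat the argument proving Theorem~\ref{25-cor1} almost verbatim, replacing the real Grassmannians, mod~$2$ coefficients and Stiefel--Whitney classes by the complex Grassmannians, integral coefficients and Chern classes. First I would specialize the commutative diagram (\ref{eq-25-diag}) to the case $\mathbb{F}=\mathbb{C}$ and $R=\mathbb{Z}$, so that $\iota\circ f$ is a classifying map of the persistent vector bundle $\boldsymbol{\xi}(X,k,-+r;\mathbb{C})$ into $G_k(\mathbb{C}^\infty)$ which factors through $G_k(\mathbb{C}^N)$.

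Next, recall the classical presentation $H^*(G_k(\mathbb{C}^\infty);\mathbb{Z})=\mathbb{Z}[c_1,\ldots,c_k]$ with $|c_i|=2i$, and that the inclusion $\iota\colon G_k(\mathbb{C}^N)\hookrightarrow G_k(\mathbb{C}^\infty)$ induces an epimorphism
\begin{eqnarray*}
\iota^*\colon \mathbb{Z}[c_1,\ldots,c_k]\longrightarrow \frac{\mathbb{Z}[c_1,\ldots,c_k]}{(\bar c_{N-k+1},\bar c_{N-k+2},\ldots,\bar c_N)},
\end{eqnarray*}
where $c_i$ is the $i$-th universal Chern class and $\bar c_j$ is the degree-$2j$ term in the expansion of $(1+c_1+\cdots+c_k)^{-1}$. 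Substituting this into (\ref{eq-25-diag}) exactly as in the proof of Theorem~\ref{25-cor1} yields
\begin{eqnarray*}
(\iota\circ f)^*\bar c_{N-k+1}=(\iota\circ f)^*\bar c_{N-k+2}=\cdots=(\iota\circ f)^*\bar c_N=0,
\end{eqnarray*}
while by hypothesis (\ref{eq-25-902}) one has $(\iota\circ f)^*\bar c_{t(k)}=\bar c_{t(k)}(\boldsymbol{\xi}(X,k,-+r;\mathbb{C}))\neq 0$. Comparing degrees forces $t(k)\leq N-k$ or $t(k)\geq N+1$.

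Finally I would eliminate the possibility $t(k)\geq N+1$ by the stabilization argument of Theorem~\ref{25-cor1}: composing $f$ with a $\mathbb{C}$-linear embedding $\mathbb{C}^N\hookrightarrow\mathbb{C}^M$ gives a complex $(k,r)$-regular map into $\mathbb{C}^M$ for every $M\geq N$, so the same dichotomy applied with $M=N+1,N+2,\ldots$ would make $t(k)$ arbitrarily large, contradicting its finiteness. Hence $t(k)\leq N-k$, which is (\ref{eq-3.1y}). The only step requiring care rather than mechanical transcription is invoking the correct integral cohomology presentation of $G_k(\mathbb{C}^N)$ together with the surjectivity of $\iota^*$ and the explicit description of its kernel (the complex counterpart of (\ref{eq-25-7.n})); once that is in place, the rest is a formal translation under $w_i\mapsto c_i$, $\mathbb{Z}_2\mapsto\mathbb{Z}$, and the doubling of degrees.
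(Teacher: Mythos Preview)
Your proposal is correct and follows essentially the same approach as the paper: specialize diagram~(\ref{eq-25-diag}) to $\mathbb{F}=\mathbb{C}$, $R=\mathbb{Z}$, use the presentation of $H^*(G_k(\mathbb{C}^N);\mathbb{Z})$ as a quotient of $\mathbb{Z}[c_1,\ldots,c_k]$ by $(\bar c_{N-k+1},\ldots,\bar c_N)$, and then conclude via the same stabilization trick as in Theorem~\ref{25-cor1}. In fact you spell out the final dichotomy and stabilization step more explicitly than the paper does, which simply says ``by an analog of the proof of Theorem~\ref{25-cor1}.''
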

\begin{proof} 
Suppose $f:X\longrightarrow \mathbb{C}^N$ is a  complex  $(k,r)$-regular map. 
Let  $R=\mathbb{Z}$  in  (\ref{eq-25-diag}).  
 Then     
  the  induced  map  
  \begin{eqnarray*}
  \iota^*:  H^*(G_k(\mathbb{C}^\infty);\mathbb{Z})\longrightarrow 
  H^*(G_k(\mathbb{C}^{N});\mathbb{Z})  
  \end{eqnarray*}  
  is     
  an  epimorphism  
  \begin{eqnarray}\label{eq-25-7.nc}
  \iota^*:  \mathbb{Z}[c_1,c_2,\cdots,c_k] \longrightarrow  \frac{{\mathbb{Z} [c_1,c_2,\cdots,c_k]}}{ {(\bar  c_{N-k+1},\bar  c_{N-k+2},\cdots,\bar  c_{N})}},   
  \end{eqnarray}
  where $c_i$ is the $i$-th universal Chern  class with $|c_i|=2i$,   
 $\bar  c_j$ is   the $j$-th degree term in the expansion of $(1+c_1+\cdots+c_k)^{-1}$ and 
 \begin{eqnarray*}
 (\bar   c_{N-k+1},\bar  c_{N-k+2},\cdots,\bar  c_{N}) 
 \end{eqnarray*}
  is the ideal generated by $\bar  c_{N-k+1}$, $\bar  c_{N-k+2}$, $\cdots$, $\bar  c_{N}$.

By  substituting  (\ref{eq-25-7.nc})  in  (\ref{eq-25-diag}),  we  have  
   homomorphisms  of  persistent  cohomology  rings  
    such  that  the     diagram  commutes  
\begin{eqnarray*}
\xymatrix{
  \mathbb{Z}[c_1,  c_2,\cdots,c_k]\ar[r]^-{\iota^*} \ar[rd] _-{(\iota \circ  f)^*}
  & {\mathbb{Z} [c_1,c_2,\cdots,c_k]}/ {(\bar c_{N-k+1},\bar  c_{N-k+2},\cdots,\bar  c_{N})}
  \ar[d]^{f^*}\\
  & H^*({\rm  Conf}_k(X,-+r)/\Sigma_k;\mathbb{Z}).  
}
\end{eqnarray*}
It  follows   that 
\begin{eqnarray}\label{eq-250211-c1}
(\iota \circ  f)^*\bar  c_{N-k+1}= 
(\iota \circ  f)^*\bar  c_{N-k+2}= 
\cdots=(\iota \circ  f)^*\bar  c_{N}=0
\end{eqnarray}
   and  
   \begin{eqnarray}\label{eq-250211-c2}
  (\iota \circ  f)^*\bar c_{t(k)}=  \bar  c_{t(k)}( \boldsymbol{\xi}(X,k,-+r;\mathbb{C})) \neq  0.  
 \end{eqnarray} 
   By  an  analog  of  the  proof  of   Theorem~\ref{25-cor1},   
     (\ref{eq-3.1y})  follows  from  (\ref{eq-250211-c1})  and  (\ref{eq-250211-c2}).  
\end{proof}

The  next  corollary  follows  from  Theorem~\ref{25-cor2}. 

\begin{corollary}\label{25-cor2a} 
 Let
 $f: X\longrightarrow \mathbb{C}^N$ be  
 a   complex   $(k,r)$-regular map.  
 Then  
 \begin{eqnarray}\label{eq-3.1yyy}
 N\geq    \sup_{r'\geq  r} t(r',k)+k 
 \end{eqnarray}
 where  for  each  $r'\geq  r$,  
 \begin{eqnarray}\label{eq-250211-im-c}
 t(r',k)= \sup  \{t\mid  \bar  c_{t}( {\xi}(X,k,r'+r;\mathbb{C}))\neq 0  \}.  
 \end{eqnarray}
\end{corollary}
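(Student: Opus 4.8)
The plan is to deduce Corollary~\ref{25-cor2a} from the persistent statement Theorem~\ref{25-cor2} in the same way Corollary~\ref{25-cor1a} is deduced from Theorem~\ref{25-cor1}: the only work is to translate ``the persistent Chern class is nonzero'' into a statement at a single level of the filtration and then to pass to a supremum. Throughout one works in the integral persistent cohomology ring $H^*({\rm Conf}_k(X,-+r)/\Sigma_k;\mathbb{Z})$.

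First I would unwind Definition~\ref{def-250120-2}: the persistent Chern class $c(\boldsymbol{\xi}(X,k,-+r;\mathbb{C}))$ is by construction the family $\{c(\xi(X,k,r'+r;\mathbb{C}))\mid r'\geq 0\}$ of ordinary total Chern classes, so its degree-$t$ component $\bar c_{t}(\boldsymbol{\xi}(X,k,-+r;\mathbb{C}))$ is nonzero in the persistent cohomology ring precisely when $\bar c_{t}(\xi(X,k,r'+r;\mathbb{C}))\neq 0$ for at least one filtration parameter $r'$. In particular, the hypothesis (\ref{eq-25-902}) of Theorem~\ref{25-cor2} for a prescribed degree $t(k)$ is equivalent to the existence of some level at which $\bar c_{t(k)}$ of the corresponding finite-stage bundle is nonzero.

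Next I would apply Theorem~\ref{25-cor2} to every degree $t$ for which this occurs: since $f$ is a complex $(k,r)$-regular map, each such $t$ forces $N\geq t+k$ by (\ref{eq-3.1y}). Taking the supremum over all admissible degrees yields
\begin{eqnarray*}
N \;\geq\; \sup\{\,t \mid \bar c_{t}(\boldsymbol{\xi}(X,k,-+r;\mathbb{C})) \neq 0\,\} + k,
\end{eqnarray*}
and since the set on the right is the union over the filtration parameter $r'$ of the sets $\{\,t \mid \bar c_{t}(\xi(X,k,r'+r;\mathbb{C})) \neq 0\,\}$, the resulting double supremum is exactly $\sup_{r'\geq r} t(r',k)+k$, which is (\ref{eq-3.1yyy}).

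I do not expect a genuine obstacle here: this is a purely formal corollary, the substantive content --- the Grassmannian classifying-map construction and the epimorphism (\ref{eq-25-7.nc}) onto the truncated Chern ring --- having already been carried out in Theorem~\ref{25-cor2}. The only points that need a little care are the bookkeeping with the shift of the filtration parameter by $r$ and the justification of the interchange of suprema, both of which are immediate once the relevant index set is written as the union above.
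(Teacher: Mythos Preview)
Your proposal is correct and follows essentially the same approach as the paper's own proof: both observe that the persistent nonvanishing condition (\ref{eq-25-902}) amounts to nonvanishing of $\bar c_{t(k)}(\xi(X,k,r'+r;\mathbb{C}))$ at some single filtration level $r'\geq 0$, and then invoke Theorem~\ref{25-cor2} together with a passage to the supremum. Your write-up is in fact somewhat more explicit than the paper's about the interchange of suprema, but the argument is the same.
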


\begin{proof}
To  get  rid  of  the  persistence  in  Theorem~\ref{25-cor2},
  we   note  that    (\ref{eq-25-902})   is    equivalent  to  the  condition  that  there  exists
   $r'\geq  0$  such  that  
     \begin{eqnarray*}
      \bar  c_{t(k)}(  {\xi}(X,k,r'+r;\mathbb{C}))\neq 0.  
      \end{eqnarray*}  
Consequently, by Theorem~\ref{25-cor2},  we  obtain  (\ref{eq-3.1yyy}).  
\end{proof}

\subsection{Obstructions  for  regular  embeddings  of  Riemannian  manifolds}\label{ss7.1}

\begin{lemma}\label{le-25jan09z}
 Let  $M'$  be  a     Riemannian  manifold. 
    Let  $M$  be  a   strong  totally  geodesic  submanifold
  of   $M'$.    
   If  $f:  M'\longrightarrow  \mathbb{F}^N$ is  a
  $(k,r)$-regular  map,  
  then   the  restriction  of  $f$  to  $M$   induces  a  $(k,r)$-regular  map  
  $f:  M\longrightarrow  \mathbb{F}^N$.    
  \end{lemma}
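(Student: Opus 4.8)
The plan is to reduce the statement to observation~(5) recorded after Definition~\ref{def-7.1}, the only extra ingredient being that the induced metric on $M$ agrees with the geodesic distance of $M'$ restricted to $M$. First I would invoke Lemma~\ref{le-250118-abc1}~(1): since $M\subseteq M'$ is strong totally geodesic, $d_M(p,q)=d_{M'}(p,q)$ for every $p,q\in M$. This is the substantive input; the rest is unwinding the definition of $(k,r)$-regularity.

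Next I would fix $k$ distinct points $x_1,\ldots,x_k\in M$ with $d_M(x_i,x_j)>2r$ for all $i\neq j$, which is the hypothesis under which the restricted map must be tested. Using the equality of distances just recorded, $d_{M'}(x_i,x_j)=d_M(x_i,x_j)>2r$ for all $i\neq j$, so $x_1,\ldots,x_k$, regarded as points of $M'$, are distinct and have pairwise $M'$-distances exceeding $2r$. Applying the hypothesis that $f\colon M'\longrightarrow\mathbb{F}^N$ is $(k,r)$-regular then yields that $f(x_1),\ldots,f(x_k)$ are linearly independent over $\mathbb{F}$. Since $x_1,\ldots,x_k\in M$ were arbitrary, the restriction $f|_M$ meets the defining condition of Definition~\ref{def-7.1}, hence is $(k,r)$-regular; the same argument applies verbatim for $\mathbb{F}=\mathbb{C}$, as Definition~\ref{def-7.1} is uniform in $\mathbb{F}$.

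I do not anticipate a genuine obstacle here: the entire content is the direction of the distance comparison. Without the strong totally geodesic hypothesis one only has $d_M(p,q)\geq d_{M'}(p,q)$ (Lemma~\ref{le-250118-abc1}), which goes the wrong way and would fail to transport the constraint $d_M(x_i,x_j)>2r$ into the constraint $d_{M'}(x_i,x_j)>2r$ needed to invoke $(k,r)$-regularity of $f$ on $M'$. It is precisely the equality $d_M=d_{M'}$ on $M$, furnished by Lemma~\ref{le-250118-abc1}~(1), that makes the restriction argument valid.
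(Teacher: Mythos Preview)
Your proposal is correct and follows exactly the paper's approach: the paper's proof simply cites Lemma~\ref{le-250118-abc1}~(1) together with observation~(5) after Definition~\ref{def-7.1}, and you have done the same, merely unpacking observation~(5) explicitly rather than invoking it by name.
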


\begin{proof}
The  lemma  follows  from  Lemma~\ref{le-250118-abc1}~(1)  and   the  observation  (5)  
  of   Definition~\ref{def-7.1}.  
\end{proof}

Theorem~\ref{25-cor1} and  Theorem~\ref{25-cor2}  can  be  applied  to  
    (real  and   complex)  $(k,r)$-regular   embedding  problems  of  Riemannian  manifolds 
where  the  distance   is  given  by  (\ref{eq-dist-riem}).

\begin{theorem} \label{25-cor1zzz} 
Let  $M'$  be  a    Riemannian  manifold.  
\begin{enumerate}[(1)]
\item

Let
 $f: M'\longrightarrow \mathbb{R}^N$ be  
 a $(k,r)$-regular map.  
 Then 
  \begin{eqnarray}\label{eq-3.1zzz}
 N\geq   \sup\{t(k)\mid  \bar w_{t(k)}( \boldsymbol{\xi}(M,k,-+r;\mathbb{R}))\neq 0 \}+k 
 \end{eqnarray}
 where  $M$  runs  over  all    strong  totally  geodesic   submanifolds  
 of   $M'$;
 \item
 Let
 $f: M'\longrightarrow \mathbb{C}^N$ be  
 a  complex  $(k,r)$-regular map.  
 Then 
  \begin{eqnarray}\label{eq-3.1hhh}
 N\geq   \sup\{t(k)\mid  \bar  c_{t(k)}( \boldsymbol{\xi}(M,k,-+r;\mathbb{C}))\neq 0 \}+k 
 \end{eqnarray}
 where  $M$  runs  over  all     strong  totally  geodesic   submanifolds  
 of   $M'$.  
 \end{enumerate} 
\end{theorem}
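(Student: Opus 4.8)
The plan is to reduce this statement to the non-persistent-in-$M$ obstructions already established, namely Theorem~\ref{25-cor1} for part (1) and Theorem~\ref{25-cor2} for part (2), applied not to $M'$ directly but to each strong totally geodesic submanifold $M\subseteq M'$ separately, and then to take a supremum over all such $M$.

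First I would fix a strong totally geodesic submanifold $M\subseteq M'$. By Lemma~\ref{le-25jan09z}, the restriction $f|_M\colon M\longrightarrow\mathbb{F}^N$ of the given $(k,r)$-regular map $f$ is again $(k,r)$-regular. The only subtle point is precisely here: strong total geodesy forces $d_M(p,q)=d_{M'}(p,q)$ for all $p,q\in M$ by Lemma~\ref{le-250118-abc1}~(1), so any $k$-tuple of points of $M$ with pairwise $d_M$-distances exceeding $2r$ is also a $k$-tuple in $M'$ with pairwise $d_{M'}$-distances exceeding $2r$; consequently their images are linearly independent in $\mathbb{F}^N$ because $f$ is $(k,r)$-regular on $M'$. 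This is exactly observation (5) following Definition~\ref{def-7.1}.

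Next, applying Theorem~\ref{25-cor1} with $X=M$ to the $(k,r)$-regular map $f|_M$: for every $t(k)\ge 1$ with $\bar w_{t(k)}(\boldsymbol{\xi}(M,k,-+r;\mathbb{R}))\neq 0$ we obtain $N\ge t(k)+k$, hence $N\ge \sup\{t(k)\mid \bar w_{t(k)}(\boldsymbol{\xi}(M,k,-+r;\mathbb{R}))\neq 0\}+k$ for that fixed $M$. Since this holds for every strong totally geodesic submanifold $M\subseteq M'$, taking the supremum over all such $M$ yields \eqref{eq-3.1zzz}. For part (2) the argument is verbatim the same, using Theorem~\ref{25-cor2} and the persistent Chern class in place of Theorem~\ref{25-cor1} and the persistent Stiefel--Whitney class, to obtain \eqref{eq-3.1hhh}.

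The proof is essentially a packaging step, so I do not expect a serious obstacle; the one place where care is genuinely needed is the stability of the $(k,r)$-regularity condition under restriction, which is where the \emph{strong} total-geodesy hypothesis (rather than mere total geodesy) is actually used. Under a non-distance-preserving inclusion the hard-sphere condition downstairs need not transfer upstairs, and the persistent bundle $\boldsymbol{\xi}(M,k,-+r;\mathbb{F})$ would fail to be the pullback of $\boldsymbol{\xi}(M',k,-+r;\mathbb{F})$ as in Theorem~\ref{th-main1-25jan}, so the obstruction for $M$ could not be deduced from one for $M'$.
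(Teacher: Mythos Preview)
Your proof is correct and follows precisely the paper's own argument: fix a strong totally geodesic submanifold $M\subseteq M'$, invoke Lemma~\ref{le-25jan09z} to see that $f|_M$ is again $(k,r)$-regular, apply Theorem~\ref{25-cor1} (resp.\ Theorem~\ref{25-cor2}) with $X=M$, and then take the supremum over all such $M$. Your additional remarks on why the \emph{strong} total-geodesy hypothesis is the crucial ingredient are accurate and match the role of Lemma~\ref{le-250118-abc1}~(1) in the paper.
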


\begin{proof}
Let      $M\subseteq  M'$  be  a  strong   totally  geodesic   submanifold.

 (1)   By  Lemma~\ref{le-25jan09z},  we  have  a  $(k,r)$-regular  map 
 $f: M\longrightarrow \mathbb{R}^N$.    
   By   Theorem~\ref{25-cor1},  we  obtain  (\ref{eq-3.1zzz}).  
   
  (2)   By  Lemma~\ref{le-25jan09z},  we  have  a  complex  $(k,r)$-regular  map 
 $f: M\longrightarrow \mathbb{C}^N$.    
   By   Theorem~\ref{25-cor2},  we  obtain  (\ref{eq-3.1hhh}).  
\end{proof}

\subsection{Obstructions  for  regular  embeddings  of  graphs}
\label{ss7.2}

Let  $G=(V_G,E_G)$  be  a  graph.  A  {\it   non-vanishing  weight}  on  $G$   is  
a  function   $w:  E_G\longrightarrow  (0, +\infty)$  assigning  each  edge  with  a  positive  number.  
 Identify   any  edge $e\in  E_G$  with  a  $1$-cell  consisting  of 
 an  open  segment  of  length  $w(e)$.  
  Then  we  have  a  $1$-dimensional  CW-complex  $|G|_w$,
  which  is  homeomorphic  to  the  geometric  realization  $|G|$   of  $G$,  with  a  metric  
  $d_w:  |G|_w\times  |G|_w\longrightarrow 
  [0, +\infty]$   such  that  for  any  $x,y\in  |G|_w$,  
   $d_w(x,y) $  is  the  infimum  of   the   lengths   of  paths  in  $|G|_w$  connecting  $x$  and  $y$ 
   and  $d_w(x,y)=+\infty$  if  $x$  and  $y$  belong  to  different  path  components  of  $|G|_w$.

Let  $G$  and  $G'$  be  graphs.  
Let  $w$  and  $w'$  be    non-vanishing  weights   on  $G$  
 and  $G'$  respectively.  
 Suppose  $G$  is  a  subgraph  of  $G'$  and  $w$  is  the  restriction  of  $w'$  on  $G$.    
 Then   for  any  $x,y\in  |G|_w$,  
 \begin{eqnarray}\label{eq-250211-weight1}
 d_{w}(x,y)\geq  d_{w'}(x,y). 
 \end{eqnarray}
 The  next  definition  is  a  weighted  version  of  Definition~\ref{def-250120-9}. 
 
\begin{definition}\label{def-250120-10}  
We  say  that  
$(G,w)$  is  a  {\it   strong   totally  geodesic  weighted subgraph}
  of   $(G',w')$  if  the  equality  in  (\ref{eq-1.9})   is  satisfied  for  any  $x,y\in  |G|_w$.  
  \end{definition}

In  particular,  
if  we  let   $w'$  to  be    the  unit  weight 
 with  constant  value  $1$  on  any  edges  of  $G'$,  
then  $(G,1)$  is  a     strong   totally  geodesic  weighted subgraph 
  of   $(G',1)$  iff  $G$  is  a   strong   totally  geodesic    subgraph   of  $G'$.

\begin{lemma}\label{le-7.25jan0h}
 Let  $(G,w)$  be  a  strong  totally  geodesic  weighted   subgraph 
  of    $(G',w')$.     
   If  $f:  |G'|_{w'}\longrightarrow  \mathbb{F}^N$  is  a
  $(k,r)$-regular  map,  
  then   the  restriction  of  $f$  on  $|G|_w$  induces  a  $(k,r)$-regular  map  
  $f:  |G|_w\longrightarrow  \mathbb{F}^N$,  where  $G$  is  equipped  with 
  the  weight  $w=w'\mid_G$.   
  \end{lemma}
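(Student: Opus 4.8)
The plan is to verify that $f$ restricted to $|G|_w$ is still $(k,r)$-regular by checking the defining condition of Definition~\ref{def-7.1} directly, using the fact that the metric $d_w$ is \emph{inherited} from $d_{w'}$ when $(G,w)$ is a strong totally geodesic weighted subgraph. The key observation is that the $(k,r)$-regularity condition only involves (a) the ambient linear independence of images and (b) the pairwise distances among the chosen points; since the restriction preserves (a) trivially and Definition~\ref{def-250120-10} ensures (b) is preserved (distances in $|G|_w$ agree with those in $|G'|_{w'}$), the restricted map inherits the property.

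First I would take any $k$ distinct points $x_1,\ldots,x_k\in |G|_w$ with $d_w(x_i,x_j)>2r$ for all $i\neq j$. Since $|G|_w$ is, as a metric space, a subspace of $|G'|_{w'}$, these points are distinct points of $|G'|_{w'}$. By Definition~\ref{def-250120-10}, the equality $d_w(x,y)=d_{w'}(x,y)$ holds for all $x,y\in |G|_w$ (noting that (\ref{eq-1.9}) in that definition should read $d_w=d_{w'}$; combined with the inequality (\ref{eq-250211-weight1}) this is the strong totally geodesic condition). Hence $d_{w'}(x_i,x_j)=d_w(x_i,x_j)>2r$ for all $i\neq j$. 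Applying the hypothesis that $f:|G'|_{w'}\longrightarrow \mathbb{F}^N$ is $(k,r)$-regular, the images $f(x_1),\ldots,f(x_k)$ are linearly independent over $\mathbb{F}$. Since the restriction of $f$ to $|G|_w$ sends $x_i$ to the same image $f(x_i)$, the images under the restricted map are linearly independent as well. This is precisely the $(k,r)$-regularity of $f|_{|G|_w}$.

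Concretely, the proof is essentially a one-line invocation of observation~(5) following Definition~\ref{def-7.1}, applied with $X=|G'|_{w'}$ and $Y=|G|_w$: that observation states that if $Y\subseteq X$ satisfies $d_Y(x,y)=d_X(x,y)$ for all $x,y\in Y$ and $f:X\longrightarrow \mathbb{F}^N$ is $(k,r)$-regular, then $f|_Y$ is $(k,r)$-regular. The hypothesis of Lemma~\ref{le-7.25jan0h} supplies exactly the distance-equality condition via Definition~\ref{def-250120-10}, so the lemma is immediate. The analogy with Lemma~\ref{le-25jan09z} for Riemannian manifolds (which used Lemma~\ref{le-250118-abc1}~(1)) is exact, with Definition~\ref{def-250120-10} playing the role of the strong totally geodesic submanifold condition.

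There is essentially no obstacle here: the statement is a direct corollary of the definitions, and the only subtlety worth stating explicitly is that strong totally geodesic weighted subgraph is \emph{precisely} the condition that the induced metric on $|G|_w$ coincides with the restriction of the metric on $|G'|_{w'}$ — which is exactly what observation~(5) of Definition~\ref{def-7.1} requires. Thus the plan is simply: cite Definition~\ref{def-250120-10} to get the distance equality, then cite observation~(5) after Definition~\ref{def-7.1} to conclude.
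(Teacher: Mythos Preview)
Your proposal is correct and matches the paper's proof exactly: the paper simply states that the lemma follows from Definition~\ref{def-250120-10} and observation~(5) after Definition~\ref{def-7.1}, which is precisely the argument you give.
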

  
  \begin{proof}
  The  lemma  follows  from    Definition~\ref{def-250120-10}    and   the  observation  (5)  
  of   Definition~\ref{def-7.1}.   
  \end{proof}

Theorem~\ref{25-cor1} and  Theorem~\ref{25-cor2}  can  be  applied  to  
    (real  or  complex)  $(k,r)$-regular   embedding  problems  of  weighted  graphs   
with   the  distance   given  in  the  first  paragraph  of  this  subsection.

\begin{theorem} \label{25-cor2zzz} 
Let  $G'$  be  a  graph  with  a  non-vanishing  weight  $w'$.  
\begin{enumerate}[(1)]
\item

Let
 $f: |G'|_{w'}\longrightarrow \mathbb{R}^N$ be  
 a $(k,r)$-regular map.  
 Then 
  \begin{eqnarray}\label{eq-3.1zzz-g}
 N\geq   \max \{t(k)\mid  \bar w_{t(k)}( \boldsymbol{\xi}(|G|_w,k,-+r;\mathbb{R}))\neq 0 \}+k; 
 \end{eqnarray}

 \item
 Let
 $f: |G'|_{w'}\longrightarrow \mathbb{C}^N$ be  
 a  complex  $(k,r)$-regular map.  
 Then 
  \begin{eqnarray}\label{eq-3.1hhh-g}
 N\geq   \max  \{t(k)\mid  \bar  c_{t(k)}( \boldsymbol{\xi}(|G|_w,k,-+r;\mathbb{C}))\neq 0 \}+k.  
 \end{eqnarray}
 \end{enumerate} 
 Here  in  both  (\ref{eq-3.1zzz-g})  and   (\ref{eq-3.1hhh-g}),   
    $(G,w)$  runs  over  all  strong    totally  geodesic  weighted  subgraphs  
 of   $(G',w')$  such  that  $w=w'|_G$. 
\end{theorem}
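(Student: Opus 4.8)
The plan is to deduce the statement directly from Theorem~\ref{25-cor1} and Theorem~\ref{25-cor2} by restriction to subgraphs, exactly as Theorem~\ref{25-cor1zzz} in Subsection~\ref{ss7.1} is deduced for Riemannian manifolds. First I would fix an arbitrary strong totally geodesic weighted subgraph $(G,w)$ of $(G',w')$ with $w=w'|_G$, so that $d_w(x,y)=d_{w'}(x,y)$ for all $x,y\in|G|_w$ by Definition~\ref{def-250120-10}. By Lemma~\ref{le-7.25jan0h} (which in turn rests on observation~(5) following Definition~\ref{def-7.1}), the restriction of $f$ to $|G|_w$ is again $(k,r)$-regular, giving a $(k,r)$-regular map $f\colon|G|_w\longrightarrow\mathbb{F}^N$ with $\mathbb{F}=\mathbb{R}$ in part~(1) and $\mathbb{F}=\mathbb{C}$ in part~(2). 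Taking $X=|G|_w$ (with the metric $d_w$, so that $\boldsymbol{\xi}(|G|_w,k,-+r;\mathbb{F})$ is the persistent bundle produced by Theorem~\ref{pr-3.2aaz}) in Theorem~\ref{25-cor1} respectively Theorem~\ref{25-cor2}, we obtain $N\ge t(k)+k$ for every $t(k)\ge 1$ with $\bar w_{t(k)}(\boldsymbol{\xi}(|G|_w,k,-+r;\mathbb{R}))\ne 0$ (respectively $\bar c_{t(k)}(\boldsymbol{\xi}(|G|_w,k,-+r;\mathbb{C}))\ne 0$). Since $(G,w)$ was arbitrary, taking the supremum over all such indices $t(k)$ and over all such subgraphs yields (\ref{eq-3.1zzz-g}) and (\ref{eq-3.1hhh-g}) with $\sup$ in place of $\max$.

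It then remains to replace $\sup$ by $\max$, i.e.\ to see that the supremum is attained. This uses the existence of $f$ once more: since $f\colon|G'|_{w'}\to\mathbb{F}^N$ has $N$ finite, Theorem~\ref{25-cor1} (respectively Theorem~\ref{25-cor2}) forces $t(k)\le N-k$ for \emph{every} index $t(k)$ that can occur in the supremum, uniformly over all strong totally geodesic weighted subgraphs $(G,w)$; hence the supremum runs over a set of positive integers bounded above by $N-k$ and is therefore a maximum. One could alternatively note, as in the introduction, that every graph embeds in $\mathbb{R}^3$ and so by \cite{high1,high2} admits a $k$-regular, hence a $(k,r)$-regular, embedding into some finite-dimensional Euclidean space, which again bounds the relevant $t(k)$ from above.

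Because every step invokes a result already established, I do not expect a genuine obstacle; the main points requiring care are bookkeeping ones. Specifically, one must check that under the strong totally geodesic hypothesis the metric $d_w$ on $|G|_w$ is literally the one inherited from $d_{w'}$ on $|G'|_{w'}$, so that Lemma~\ref{le-7.25jan0h} applies verbatim and the restricted map is genuinely $(k,r)$-regular; and one must identify the persistent bundle $\boldsymbol{\xi}(|G|_w,k,-+r;\mathbb{F})$ appearing in the conclusion with the bundle over ${\rm Conf}_k(|G|_w,-+r)/\Sigma_k$ to which Theorem~\ref{25-cor1} and Theorem~\ref{25-cor2} apply, this being immediate from the construction in Section~\ref{25-sect4}. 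With these checks in place the theorem follows as a corollary of Theorem~\ref{25-cor1} and Theorem~\ref{25-cor2}.
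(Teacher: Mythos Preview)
Your proposal is correct and follows essentially the same approach as the paper: fix a strong totally geodesic weighted subgraph $(G,w)$, use Lemma~\ref{le-7.25jan0h} to restrict $f$ to a $(k,r)$-regular map on $|G|_w$, and then invoke Theorem~\ref{25-cor1} (resp.\ Theorem~\ref{25-cor2}). You are actually more careful than the paper in one respect: the paper's proof simply writes $\max$ without justification, whereas you explicitly argue that the supremum is attained because the indices $t(k)$ are bounded above by $N-k$.
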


\begin{proof}
Let    $(G,w)$  be   a      strong   totally  geodesic  weighted subgraph 
  of   $(G',w')$  such  that  $w=w'\mid_G$.

 (1)   By  Lemma~\ref{le-7.25jan0h},  we  have  a  $(k,r)$-regular  map 
 $f:  |G|_w\longrightarrow \mathbb{R}^N$.    
   By   Theorem~\ref{25-cor1},  we  obtain  (\ref{eq-3.1zzz-g}).  
   
  (2)   By  Lemma~\ref{le-7.25jan0h},  we  have  a  complex  $(k,r)$-regular  map 
 $f: |G|_w\longrightarrow \mathbb{C}^N$.    
   By   Theorem~\ref{25-cor2},  we  obtain  (\ref{eq-3.1hhh-g}).  
\end{proof}

Now  we  consider  integer-valued  weights  on  graphs.  

\begin{lemma}
\label{co-2501991}
Let  $w: E_G\longrightarrow  \{1,2,\ldots\}$  be  a  integer-valued  non-vanishing  weight  on  $G$.  
Let  ${\rm  sd}(G;w)$  be  the  subdivision  of  $G$   
adding  $w(e)-1$  vertices  to  each  $e\in E_G$  dividing  $e$  equally  into  $w(e)$  edges.  
Then   $|{\rm  sd}(G;w)|_1$  is   isometrically  homeomorphic  to  $|G|_w$. 
Moreover,     
\begin{eqnarray}\label{eq-25-01-w-5}
\boldsymbol{\xi}(V_{{\rm  sd}(G;w)},k,-;\mathbb{F})=j^* \boldsymbol{\xi}(|G|_w,k,-;\mathbb{F})
\end{eqnarray}
where  
\begin{eqnarray}\label{eq-25-01-w-3}
j:  {\rm  Conf}_k(V_{{\rm  sd}(G;w)}, -)/\Sigma_k\longrightarrow   {\rm  Conf}_k(|G|_w, -)/\Sigma_k 
\end{eqnarray}
is  an  embedding  induced  by  the  canonical  inclusion of   the  $0$-skeleton  
  \begin{eqnarray*}
  V_{{\rm  sd}(G;w)}\subseteq   |{\rm  sd}(G;w)|_1  \cong   |G|_w.  
  \end{eqnarray*}
\end{lemma}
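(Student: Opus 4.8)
The plan is to prove the two assertions separately: the first by exhibiting an explicit isometry, and the second by reducing it to the standard fact that restricting a covering map, and hence its associated $O(\mathbb{F}^k)$-bundle, to a subspace of the base is a pull-back.

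First I would establish the isometric homeomorphism. Both $|{\rm sd}(G;w)|_1$ and $|G|_w$ are built from the same family of topological edges: an edge $e\in E_G$, realized in $|G|_w$ as a segment of length $w(e)$, is cut by the $w(e)-1$ inserted vertices into $w(e)$ consecutive unit segments, and these are precisely the edges of ${\rm sd}(G;w)$ lying over $e$. Hence there is a canonical homeomorphism $h\colon |{\rm sd}(G;w)|_1\to |G|_w$ that is the identity on the underlying point set. Since $d_1$ on $|{\rm sd}(G;w)|_1$ and $d_w$ on $|G|_w$ are both defined as the infimum of the lengths of connecting paths, and $h$ carries a path of length $\ell$ to a path of the same length, $h$ is an isometry. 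Restricting $h$ to the $0$-skeleton gives an isometric embedding $\iota\colon V_{{\rm sd}(G;w)}\hookrightarrow |G|_w$ identifying $V_{{\rm sd}(G;w)}$ with the ``integer points'' of $|G|_w$; moreover the combinatorial distance $d_{{\rm sd}(G;w)}$ between two vertices coincides with $d_1$ between them, because a length-minimizing path between vertices can be taken to be an edge-path, whose length is its combinatorial length. This last identification --- and the fact that it relies on every edge of ${\rm sd}(G;w)$ having unit length, i.e. on the subdivision being equal --- is the one step I would flag as needing care.

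Next I would compare the configuration spaces. Because $\iota$ is an isometric embedding, for every $n\geq 1$ we have ${\rm Conf}_k(V_{{\rm sd}(G;w)},n/2)={\rm Conf}_k(|G|_w,n/2)\cap (V_{{\rm sd}(G;w)})^k$, and the right-hand side is $\Sigma_k$-invariant; hence ${\rm Conf}_k(V_{{\rm sd}(G;w)},n/2)$ is exactly the preimage, under the covering map $\pi(k,n/2)$ of $|G|_w$ from Lemma~\ref{le-2.aa1}, of the subspace ${\rm Conf}_k(V_{{\rm sd}(G;w)},n/2)/\Sigma_k$. These inclusions are compatible with decreasing $n$, so they assemble into the persistent embedding $j$ of (\ref{eq-25-01-w-3}), and for each $n$ the square built from the two covering maps and the two inclusions is a pull-back of covering maps.

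Finally I would pass to associated bundles. A pull-back square of covering maps induces a pull-back square of the associated $O(\mathbb{F}^k)$-bundles, so $\xi(V_{{\rm sd}(G;w)},k,n/2;\mathbb{F})=j^*\xi(|G|_w,k,n/2;\mathbb{F})$ for every $n$; and since pull-backs compose, these identifications are compatible with the structure maps of the two persistent bundles supplied by Theorem~\ref{pr-3.2aaz}, which yields (\ref{eq-25-01-w-5}). The main obstacle, as indicated above, is the verification that the distance used to define ${\rm Conf}_k(V_{{\rm sd}(G;w)},-)$ as a (graph) configuration space agrees with the restriction of $d_w$ to $V_{{\rm sd}(G;w)}$; once that is in hand, the bundle statement is pure functoriality of pull-backs.
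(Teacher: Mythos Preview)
Your proposal is correct and follows essentially the same approach as the paper: establish that the identity map gives an isometry $(|G|_w,d_w)\cong(|{\rm sd}(G;w)|_1,d_1)$, observe that the inclusion of the $0$-skeleton $(V_{{\rm sd}(G;w)},d_{{\rm sd}(G;w)})\hookrightarrow(|{\rm sd}(G;w)|_1,d_1)$ is an isometric embedding, and then deduce the pull-back of persistent bundles from the induced map of persistent configuration spaces. Your version is in fact more explicit than the paper's, particularly in justifying why the graph distance $d_{{\rm sd}(G;w)}$ agrees with the restriction of $d_1$ and in spelling out the pull-back square of covering maps; the paper simply asserts these steps.
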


\begin{proof}
Recall  that  
$V_G$  is  equipped  with  the  metric  $d_G$  introduced  in  Subsection~\ref{ss3.1}.   
On  the  other  hand,   $|G|_w$  is  equipped  with  the  metric  $d_w$  
and    $|{\rm  sd}(G;w)|_1$  is  equipped  with  the  metric  $d_1$  where $1$  is  the  constant 
weight  assigning each  edge   in  $  {\rm  sd}(G;w)$   with  the  constant  $1$.  
The  identity  map  is  an  isometry 
\begin{eqnarray}\label{eq-25-01-w-1}
{\rm  id}:  (|G|_w,  d_w)  \overset{\cong}{\longrightarrow}  (|{\rm  sd}(G;w)|_1, d_1).  
\end{eqnarray}
  The  canonical  inclusion    of   the  $0$-skeleton  
     is   an  isometric  embedding 
     \begin{eqnarray}\label{eq-25-01-w-2}
      (V_{{\rm  sd}(G;w)},d_{{\rm  sd}(G;w)}) 
       \longrightarrow    (|{\rm  sd}(G;w)|_1, d_1).
     \end{eqnarray}   
 It  follows  from (\ref{eq-25-01-w-1})  and  (\ref{eq-25-01-w-2})  that  
  (\ref{eq-25-01-w-3})  is  a  map  of  persistent  CW-complexes  induced  by  
  (\ref{eq-25-01-w-2}).  
  Consequently,  (\ref{eq-25-01-w-3})  induces  a  pull-back   (\ref{eq-25-01-w-5}) of  persistent  vector  bundles.  
\end{proof}

The  next  theorem  follows  from  Theorem~\ref{25-cor2zzz} 
and  Lemma~\ref{co-2501991}.  

\begin{theorem}\label{co-25-01-graph}
Let  $G'$  be  a  graph  with  an  integer-valued  non-vanishing  weight  $w'$.  
\begin{enumerate}[(1)]
\item

Let
 $f: |G'|_{w'}\longrightarrow \mathbb{R}^N$ be  
 a $(k,r)$-regular map.  
 Then 
  \begin{eqnarray}\label{eq-3.1zzz99}
 N\geq  
  \max \{t(k)\mid  \bar w_{t(k)}( \boldsymbol{\xi}(V_{{\rm  sd}(G;w)},k,-+r;\mathbb{R}))\neq 0 \}+k; 
 \end{eqnarray}
 \item
 Let
 $f: |G'|\longrightarrow \mathbb{C}^N$ be  
 a  complex  $(k,r)$-regular map.  
 Then 
  \begin{eqnarray}\label{eq-3.1hhh99}
 N\geq   
 \max  \{t(k)\mid  \bar  c_{t(k)}( \boldsymbol{\xi}(V_{{\rm  sd}(G;w)},k,-+r;\mathbb{C}))\neq 0 \}+k.   
 \end{eqnarray}
 \end{enumerate} 
  Here   in  both  (\ref{eq-3.1zzz99})  and   (\ref{eq-3.1hhh99}),   
    $(G,w)$  runs  over  all  strong    totally  geodesic  weighted  subgraphs  
 of   $(G',w')$  such  that  $w=w'|_G$. 
\end{theorem}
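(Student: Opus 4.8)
The plan is to deduce Theorem~\ref{co-25-01-graph} from Theorem~\ref{25-cor2zzz} by comparing the two characteristic-class quantities via the embedding of persistent CW-complexes supplied by Lemma~\ref{co-2501991}. The hypothesis of Theorem~\ref{co-25-01-graph} is the special case of integer-valued $w'$ of the hypothesis of Theorem~\ref{25-cor2zzz}, so Theorem~\ref{25-cor2zzz}(1) already gives, for every strong totally geodesic weighted subgraph $(G,w)$ of $(G',w')$ with $w=w'|_G$,
\[
N\ \geq\ \max\{\,t(k)\mid \bar w_{t(k)}(\boldsymbol{\xi}(|G|_w,k,-+r;\mathbb{R}))\neq 0\,\}+k ,
\]
and likewise Theorem~\ref{25-cor2zzz}(2) yields the analogous bound in Chern classes. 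It remains only to check that the right-hand side of (\ref{eq-3.1zzz99}) (resp. (\ref{eq-3.1hhh99})) is no larger than the right-hand side above.

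For this I would fix such a $(G,w)$ and invoke Lemma~\ref{co-2501991}. Since $w'$, hence $w$, is integer-valued, ${\rm sd}(G;w)$ is defined, and the inclusion of its $0$-skeleton $V_{{\rm sd}(G;w)}\subseteq |{\rm sd}(G;w)|_1\cong |G|_w$ is an isometric embedding; shifting all radii by $r$, it induces an embedding of persistent CW-complexes
\[
j:\ {\rm Conf}_k(V_{{\rm sd}(G;w)},-+r)/\Sigma_k\ \longrightarrow\ {\rm Conf}_k(|G|_w,-+r)/\Sigma_k
\]
with $\boldsymbol{\xi}(V_{{\rm sd}(G;w)},k,-+r;\mathbb{R})=j^*\boldsymbol{\xi}(|G|_w,k,-+r;\mathbb{R})$, which is (\ref{eq-25-01-w-5}) up to the trivial reparametrization $r'\mapsto r'+r$. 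A classifying map of $j^*\boldsymbol{\xi}(|G|_w,k,-+r;\mathbb{R})$ is obtained by precomposing a classifying map of $\boldsymbol{\xi}(|G|_w,k,-+r;\mathbb{R})$ with $j$, so by functoriality of the persistent Stiefel-Whitney class (Lemma~\ref{le-0.ba1}(1), as in the proof of Theorem~\ref{th-main2-25jan}), applied to each dual universal class $\bar w_{t(k)}\in H^*(G_k(\mathbb{R}^\infty);\mathbb{Z}_2)$,
\[
\bar w_{t(k)}\bigl(\boldsymbol{\xi}(V_{{\rm sd}(G;w)},k,-+r;\mathbb{R})\bigr)\ =\ j^*\,\bar w_{t(k)}\bigl(\boldsymbol{\xi}(|G|_w,k,-+r;\mathbb{R})\bigr).
\]
Consequently, if the left-hand side is nonzero then so is $\bar w_{t(k)}(\boldsymbol{\xi}(|G|_w,k,-+r;\mathbb{R}))$, so the set of indices $t(k)$ appearing in (\ref{eq-3.1zzz99}) is contained in the index set appearing in the bound of the previous paragraph; taking maxima, and then the maximum over all admissible $(G,w)$, yields (\ref{eq-3.1zzz99}). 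The complex statement (\ref{eq-3.1hhh99}) follows word for word, with $\mathbb{Z}_2$ replaced by $\mathbb{Z}$, Stiefel-Whitney classes by Chern classes, Theorem~\ref{25-cor2zzz}(1) by Theorem~\ref{25-cor2zzz}(2), and Lemma~\ref{le-0.ba1}(1) by Lemma~\ref{le-0.ba1}(2).

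I do not anticipate a genuine obstacle; the single point that needs care is the direction of the estimate. Pulling back along $j$ can only kill cohomology classes, never create them, so replacing the ``fine'' metric model $|G|_w$ by the combinatorial model $V_{{\rm sd}(G;w)}$ produces a (generally) weaker but still valid lower bound for $N$ --- which is precisely the content of (\ref{eq-3.1zzz99})--(\ref{eq-3.1hhh99}). It is also worth noting that all the extrema here are honest maxima rather than suprema: every graph embeds in $\mathbb{R}^3$ and therefore, by \cite{high1,high2}, admits a $k$-regular (and a fortiori a $(k,r)$-regular) map into some finite-dimensional Euclidean space, so the admissible $N$ are bounded below and the relevant index sets are finite.
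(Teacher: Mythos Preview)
Your proposal is correct and follows essentially the same route as the paper: invoke Theorem~\ref{25-cor2zzz} for each admissible $(G,w)$, then use the pull-back identity of Lemma~\ref{co-2501991} together with functoriality of the (dual) Stiefel--Whitney and Chern classes to see that nonvanishing over $V_{{\rm sd}(G;w)}$ forces nonvanishing over $|G|_w$, hence the maxima compare in the right direction. Your write-up is in fact a bit more careful than the paper's (you make explicit the shift $-\mapsto -+r$ in (\ref{eq-25-01-w-5}) and the functoriality step via Lemma~\ref{le-0.ba1}), and your closing remark on max versus sup is consistent with the discussion in the introduction.
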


\begin{proof}
  By  (\ref{eq-25-01-w-5}), 
\begin{eqnarray*}
  \bar w_{t(k)}( \boldsymbol{\xi}(V_{{\rm  sd}(G;w)},k,-+r;\mathbb{R}))\neq  0 
   \Longrightarrow  
 \bar w_{t(k)}( \boldsymbol{\xi}(|G|_w,k,-+r;\mathbb{R}))\neq 0. 
\end{eqnarray*}
Thus 
\begin{eqnarray}
 && \max \{t(k)\mid  \bar w_{t(k)}( \boldsymbol{\xi}(|G|_w,k,-+r;\mathbb{R}))\neq 0 \}
 \nonumber\\
 &\geq&
 \max  \{t(k)\mid  \bar w_{t(k)}( \boldsymbol{\xi}(V_{{\rm  sd}(G;w)},k,-+r;\mathbb{R}))\neq 0 \}.     
 \label{eq-inequal1}
\end{eqnarray}
Therefore,  
(\ref{eq-3.1zzz99})  follows  from  (\ref{eq-3.1zzz-g})   and  (\ref{eq-inequal1}).  
 Similarly,  by  (\ref{eq-25-01-w-5}), 
\begin{eqnarray*}
  \bar  c_{t(k)}( \boldsymbol{\xi}(V_{{\rm  sd}(G;w)},k,-+r;\mathbb{C}))\neq  0 
   \Longrightarrow  
 \bar  c_{t(k)}( \boldsymbol{\xi}(|G|_w,k,-+r;\mathbb{C}))\neq 0. 
\end{eqnarray*}
Thus 
\begin{eqnarray}
 && \max  \{t(k)\mid  \bar  c_{t(k)}( \boldsymbol{\xi}(|G|_w,k,-+r;\mathbb{C}))\neq 0 \}
 \nonumber\\
 &\geq&
 \max  \{t(k)\mid  \bar  c_{t(k)}( \boldsymbol{\xi}(V_{{\rm  sd}(G;w)},k,-+r;\mathbb{C}))\neq 0 \}.     
 \label{eq-inequal2}
\end{eqnarray}
Therefore,  
(\ref{eq-3.1hhh99})  follows  from  (\ref{eq-3.1hhh-g})   and  (\ref{eq-inequal2}).  
\end{proof}

\section{Applications  of  the  regular embeddings  to  geometric  realizations  of  
the  independence  complexes }
\label{s-777}

 The  geometric  realizations  of  simplicial  complexes  attract  attention    in   
 computer  science  (cf.  \cite{geom-real}).  
Let  $V$  be  a  discrete  set  with a  total  order  $\prec$.  
An  {\it  abstract  simplicial  complex}  $\mathcal{K}$  with  vertices  from  $V$  
 is  a  family  of   nonempty  subsets  of  $V$  such  that 
 for   any  $\sigma\in  \mathcal{K}$  and  any  nonempty  subsets  $\tau$  of  $\sigma$,  
 it holds  $\tau\in \mathcal{K}$  (cf.  \cite[p.  107]{at}).  
 An  element     $\sigma\in \mathcal{K}$  is  called  a  {\it  simplex}    (cf.  \cite[p.  103]{at}). 
 Let $\sigma=\{v_0,v_1,\ldots,v_n\}$    be   a  simplex
   where  $v_0\prec v_1\prec \cdots\prec  v_n$.
  A  {\it  geometric  realization}  of  $\sigma$ 
  is  a  map  $i_\sigma:  \sigma\longrightarrow \mathbb{R}^{n+1}$  such  that  
  $i_\sigma(v_k)= e_k$ for  $0\leq  k\leq  n$,  where  
  $e_0$,  $e_1$,  $\ldots$,  $e_n$  is  a   basis  of  $\mathbb{R}^{n+1}$. 
  Given    a  simplicial  complex  $\mathcal{K}$,
  let  $V_{\mathcal{K}}=\bigcup_{\sigma\in \mathcal{K}}  \sigma$  
 be  the  set  of  vertices  of $\mathcal{K}$.    
 A  {\it  geometric  realization}  of     $\mathcal{K}$   
 is   a  map  $i_\mathcal{K}:   V_{\mathcal{K}}\longrightarrow  \mathbb{R}^{N+1}$ 
  such  that  for  any  $\sigma\in \mathcal{K}$,  
  the  restriction  
  $i_\sigma=  i_\mathcal{K}\mid_{\sigma}$  induces  
 a  geometric  realization  of  $\sigma$ in  an  affine  subspace  of    $\mathbb{R}^N$.

 Let  $(X,d)$  be  a  metric  space. 
 Consider  the  {\it  persistent  independence    complex}  
 \begin{eqnarray}\label{eq-7.oqa}
   {\rm  Ind}(X,-)=\bigcup_{k\geq   1}{\rm  Conf}_{k }(X,-)/\Sigma_k  
 \end{eqnarray}
 where  the  parameter  is  $r\in  [0, +\infty]$  
 such  that  (\ref{eq-7.oqa})  is  a  filtered  simplicial  complex.  
  In  particular,  
  \begin{enumerate}[(1)]
  \item
  if  $X$  is  a  Riemannian  manifold     with   a  Riemannian  metric  $g$,    
  then    ${\rm  Ind}(X,-)$  is  a  persistent  $\Delta$-manifold  with  the  induced  
  Riemannian  metric  $g^k/\Sigma_k$; 
  \item
  if  $X$  is   the  vertex  set     of  a  graph,  then  ${\rm  Ind}(X,-)$  is  a  persistent  simplicial  complex 
  such  that  the  vertices  are  discrete.    
  \end{enumerate}

   The  $k$-regularity    can be  weakened  to 
  the  affine  $k$-regularity (cf.  \cite[Definition~2.4]{high1}).     
   We  call  a   map  $f:  X\longrightarrow  \mathbb{R}^N$   
   {\it  affine   $(k,r)$-regular}  if  for  any  distinct  
   $k$-points  $x_1,\ldots, x_k\in  X$  such  that  $d(x_i,x_j)>2r$  for  any  $i\neq  j$,  
  their  images  $f(x_1)$,  $\ldots$,  $f(x_k)$  are affinely  independent in  $\mathbb{R}^N$.   
     Note  that  the  affine $(k,0)$-regularity  implies  the  affine  $(l,0)$-regularities   for any  
     $2\leq  l\leq  k$;  and  the  affine  $(k,r)$-regularity  implies the  affine  $(k,s)$-regularity  for  
     any  $s\geq  r$.  
  
  \begin{proposition}\label{pr-7.ind-reg1}
  Suppose  an  embedding   $f:  X\longrightarrow  \mathbb{R}^N$   is  
  affine  $(l,r)$-regular  for  any  $2\leq  l\leq  k$.  
  Then  $f$    
induces  a  persistent  geometric  realization  of  
the  $(k-1)$-skeleton  of  ${\rm  Ind}(X,-+r)$  in  $\mathbb{R}^N$.   
 \end{proposition}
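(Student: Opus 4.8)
The plan is to unwind the definitions of the $(k-1)$-skeleton of the persistent independence complex and of a (persistent) geometric realization, and then to read off the required affine independence directly from the hypothesis.

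First I would recall from Subsection~\ref{ss3.1} that for each $r'\geq 0$ one has ${\rm sk}^{k-1}({\rm Ind}(X,r'+r))=\bigcup_{j=1}^{k}{\rm Conf}_j(X,r'+r)/\Sigma_j$, so that the simplices of this skeleton are exactly the finite subsets $\sigma=\{x_0,\ldots,x_n\}\subseteq X$ with $n+1\leq k$ and $d(x_i,x_j)>2(r'+r)$ for $i\neq j$; in particular $d(x_i,x_j)>2r$ for every such $\sigma$, and the vertex set is all of $X$ since every singleton is a simplex. Then I would fix the standard affine inclusion $\iota:\mathbb{R}^N\hookrightarrow\mathbb{R}^{N+1}$, $y\mapsto(y,1)$, and put $i=\iota\circ f:X\longrightarrow\mathbb{R}^{N+1}$; the candidate realization at filtration level $r'$ is the restriction of $i$ to $X=V_{{\rm sk}^{k-1}({\rm Ind}(X,r'+r))}$, with the ordering of each simplex inherited from a fixed total order on $X$ as in Section~\ref{s-777}.

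The main step is to check the simplex condition. Let $\sigma=\{x_0,\ldots,x_n\}$ be a simplex of ${\rm sk}^{k-1}({\rm Ind}(X,r'+r))$. If $n=0$ there is nothing to verify. If $1\leq n\leq k-1$, set $l=n+1$, so $2\leq l\leq k$ and $x_0,\ldots,x_n$ are distinct with pairwise distances $>2r$; by the assumption that $f$ is affine $(l,r)$-regular, the points $f(x_0),\ldots,f(x_n)$ are affinely independent in $\mathbb{R}^N$, equivalently $i(x_0),\ldots,i(x_n)$ are linearly independent in $\mathbb{R}^{N+1}$ and lie in the affine hyperplane $\{x_{N+1}=1\}\cong\mathbb{R}^N$. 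Hence they span an $n$-dimensional affine subspace $A_\sigma$ of that hyperplane, and sending $\sigma$ to the affine $n$-simplex on $i(x_0),\ldots,i(x_n)$ is a geometric realization of $\sigma$ in an affine subspace of $\mathbb{R}^N$ in the sense of Section~\ref{s-777}. Since $f$ is an embedding, $i$ is injective, so distinct vertices are sent to distinct points; therefore $i|_X$ is a geometric realization of ${\rm sk}^{k-1}({\rm Ind}(X,r'+r))$.

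Finally I would address persistence. For $r_1'\geq r_2'\geq 0$ the complex ${\rm sk}^{k-1}({\rm Ind}(X,r_1'+r))$ is a simplicial subcomplex of ${\rm sk}^{k-1}({\rm Ind}(X,r_2'+r))$ on the same vertex set $X$, and both realizations are restrictions of the single map $i=\iota\circ f$; hence they are compatible with the canonical inclusions of the filtration, so the family $\{\,i|_X\mid r'\geq 0\,\}$ is a persistent geometric realization of the $(k-1)$-skeleton of ${\rm Ind}(X,-+r)$. (When $k=1$ the range $2\leq l\leq k$ is empty, the $(k-1)$-skeleton is the $0$-skeleton, and $i|_X$ works by injectivity of $f$ alone.) The only delicate bookkeeping is matching the affine-regularity index $l=n+1$ with the skeletal dimension bound and passing from ``affinely independent in $\mathbb{R}^N$'' to an honest geometric realization through the lift $\iota$; beyond this there is no real obstacle, since the statement is essentially a reformulation of the hypothesis.
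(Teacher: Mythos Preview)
Your proof is correct and follows essentially the same route as the paper: both arguments reduce the claim to checking that the vertices of each simplex in the skeleton are sent to affinely independent points, and both read this off directly from the affine $(l,r)$-regularity hypothesis before taking the persistence. Your extra step of lifting via $\iota: y\mapsto(y,1)$ is just a careful rendering of the paper's definition of geometric realization into $\mathbb{R}^{N+1}$, which the paper's own proof leaves implicit.
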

 
 \begin{proof}
 Let  $s\geq  r$.  
 Then 
  $f:  X\longrightarrow  \mathbb{R}^N$   is  
  affine  $(l,s)$-regular  for  any  $2\leq  l\leq  k$.    
   Let  $\sigma\in  {\rm  sk}^{k-1}({\rm  Ind}(X,s))$.  
   Then  the  number  of  vertices  in  $\sigma$  is  smaller  than  or  equal to  $k$  
   and   $d(u,v)>2s$   for  any  distinct  vertices  $u$  and  $v$  in  $\sigma$.  
   Thus  $f(\sigma)=\{f(v)\mid  v\in\sigma\}$,  i.e.  
       the  images  of the  vertices  in  $\sigma$,  
   is   affinely   independent  in  $\mathbb{R}^N$.  
   Hence  $f\mid _\sigma $  is  a  geometric  realization  of  $\sigma$.  
   Therefore,  $f$  induces   a  geometric  realization  of   ${\rm  sk}^{k-1}({\rm  Ind}(X,s))$ 
   in  $\mathbb{R}^N$.   Taking  the  persistence,  we  obtain  a  
   persistent  geometric  realization  of  
${\rm  sk}^{k-1}({\rm  Ind}(X,-+r))$   in  $\mathbb{R}^N$. 
 \end{proof}
 
 \begin{corollary}\label{pr-7.ind-reg}
Suppose  an  embedding   $f:  X\longrightarrow  \mathbb{R}^N$   is  
  $(l,r)$-regular  for  any  $2\leq  l\leq  k$.  
  Then  $f$    
induces  a  persistent  geometric  realization  of  
the  $(k-1)$-skeleton  of  ${\rm  Ind}(X,-+r)$  in  $\mathbb{R}^N$.   
 \end{corollary}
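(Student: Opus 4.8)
The plan is to deduce the statement directly from Proposition~\ref{pr-7.ind-reg1}, the only new ingredient being the elementary observation that linear independence of a tuple of vectors is a strictly stronger condition than affine independence of the corresponding tuple of points. Concretely, I would first record the linear-algebra fact: if $v_1,\ldots,v_l\in\mathbb{R}^N$ are linearly independent, then the points $v_1,\ldots,v_l$ are affinely independent. This is immediate: if $\sum_{i=1}^l\lambda_i v_i=0$ with $\sum_{i=1}^l\lambda_i=0$, then linear independence forces $\lambda_i=0$ for all $i$, which is exactly the defining condition for affine independence. I would also note that the converse is false (two distinct nonzero collinear vectors are affinely independent but linearly dependent), so the implication genuinely runs only in the direction we need.

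Next I would apply this pointwise. By hypothesis $f\colon X\to\mathbb{R}^N$ is $(l,r)$-regular for every $2\le l\le k$ in the sense of Definition~\ref{def-7.1}; hence for any distinct $x_1,\ldots,x_l\in X$ with $d(x_i,x_j)>2r$ for $i\ne j$, the images $f(x_1),\ldots,f(x_l)$ are linearly independent in $\mathbb{R}^N$, and therefore affinely independent by the previous step. Thus $f$ is affine $(l,r)$-regular for every $2\le l\le k$, and being an embedding is preserved.

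Finally I would invoke Proposition~\ref{pr-7.ind-reg1} verbatim for this $f$: it produces a persistent geometric realization of the $(k-1)$-skeleton of ${\rm Ind}(X,-+r)$ in $\mathbb{R}^N$, already carrying the filtration structure induced by the parameter $r'\ge 0$, which is precisely the asserted conclusion.

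I do not expect any genuine obstacle here; the argument is a one-line reduction. The only point demanding a moment's care is that the regularity hypothesis is stated in terms of linear independence of the position vectors $f(x_i)$ (hence is not translation invariant), whereas the geometric-realization conclusion concerns affine independence of the image points, so one must be sure to use the implication ``linear independence $\Rightarrow$ affine independence'' and not its false converse.
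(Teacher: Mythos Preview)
Your proposal is correct and follows exactly the paper's own argument: observe that linear independence implies affine independence, deduce that $(l,r)$-regularity implies affine $(l,r)$-regularity, and then invoke Proposition~\ref{pr-7.ind-reg1}. The paper's proof is terser but identical in content.
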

 
   \begin{proof}
The  linear   independence  of  $l$-vectors    implies  the affine   independence  of  these  vectors.  
Thus  the $(l,r)$-regularity  of a  map  
$f: X\longrightarrow\mathbb{R}^N$ implies  the  affine  $(l,r)$-regularity  of  $f$.  
The  proof   follows  from  Proposition~\ref{pr-7.ind-reg1}.     
   \end{proof}


    \bigskip

Shiquan Ren

Address:
School  of  Mathematics and Statistics,  Henan University,  Kaifeng   475004,  China.

e-mail:  renshiquan@henu.edu.cn

  \end{document}